\DeclareMathOperator{\im}{Im}
\DeclareMathOperator{\ind}{ind}
\DeclareMathOperator{\supp}{supp}
\DeclareMathOperator{\vol}{vol}
\DeclareMathOperator{\Res}{Res}
\DeclareMathOperator{\res}{Res}
\DeclareMathOperator{\id}{Id}
\DeclareMathOperator{\E}{\mathcal{E}}
\DeclareMathOperator{\Op}{Op}
\DeclareMathOperator{\ran}{ran}
\DeclareMathOperator{\divv}{div}
\DeclareMathOperator{\Lie}{\mathcal{L}}
\newcommand{\mc}{\mathcal}
\newcommand{\WF}{\mathrm{WF}}
\newcommand{\llangle}{\langle\!\langle}
\newcommand{\rrangle}{\rangle\!\rangle}
\newcommand{\lk}{\operatorname{lk}}
\newcommand{\re}{\mathrm{Re}}
\newcommand{\SRB}{\Omega_{\mathrm{SRB}}^+}
\newcommand{\SRBs}{\Omega_{\mathrm{SRB}}^-}
\newcommand{\M}{\mc{M}}
\newcommand{\e}{\mathbf{e}}
\theoremstyle{plain}
\newtheorem{theorem}{Theorem}[section]
\newtheorem{lemma}[theorem]{Lemma}
\newtheorem{Remark}[theorem]{Remark}
\newtheorem{prop}[theorem]{Proposition}
\newtheorem{conj}[theorem]{Conjecture}
\newtheorem{corollary}[theorem]{Corollary}
\numberwithin{equation}{section}
\begin{document}

\title{Resonant forms at zero for dissipative Anosov flows}
\author[M. Ceki\'{c}]{Mihajlo Ceki\'{c}}
\date{\today}
\address{Institut f\"ur Mathematik, Universit\"at Z\"urich, Winterthurerstrasse 190,
CH-8057 Z\"urich, Switzerland}
\email{mihajlo.cekic@math.uzh.ch}
\author[G.P. Paternain]{Gabriel P. Paternain}
\address{Department of Pure Mathematics and Mathematical Statistics, University of Cambridge, Cambridge CB3 0WB, UK and Department of Mathematics, University of Washington, Seattle, WA 98195, USA.}
\email{gpp24@uw.edu}

\dedicatory{To the memory of Will Merry}

\maketitle

\begin{abstract}
	We study resonant differential forms at zero for transitive Anosov flows on $3$-manifolds.
We pay particular attention to the dissipative case, that is, Anosov flows that do not preserve an absolutely continuous measure.	
Such flows have two distinguished Sinai-Ruelle-Bowen 3-forms, $\Omega_{\text{SRB}}^{\pm}$, and the cohomology classes $[\iota_{X}\Omega_{\text{SRB}}^{\pm}]$ (where $X$ is the infinitesimal generator of the flow) play a key role in the determination of the space of resonant 1-forms.
When both classes vanish we associate to the flow a {\it helicity} that naturally extends the classical notion associated with null-homologous volume preserving flows. We provide a general theory that includes horocyclic invariance of resonant 1-forms and SRB-measures as well as the local geometry of the maps $X\mapsto [\iota_{X}\Omega_{\text{SRB}}^{\pm}]$ near a null-homologous volume preserving flow. Next, we study several relevant classes of examples. Among these are thermostats associated with holomorphic quadratic differentials, giving rise to quasi-Fuchsian flows as introduced by Ghys \cite{Ghys-92}. For these flows we compute explicitly all resonant 1-forms at zero, we show that $[\iota_{X}\Omega_{\text{SRB}}^{\pm}]=0$
and give an explicit formula for the helicity. In addition we show that a generic time change of a quasi-Fuchsian flow is semisimple and thus the order of vanishing of the Ruelle zeta function at zero is $-\chi(M)$, the same as in the geodesic flow case.
In contrast, we show that if $(M,g)$ is a closed surface of negative curvature, the Gaussian thermostat driven by a (small) harmonic 1-form has a Ruelle zeta function whose order of vanishing at zero is $-\chi(M)-1$.
\end{abstract}

\tableofcontents

\section{Introduction}
If $X$ is a vector field on a closed manifold $\mc{M}$, then typically the $L^2$ spectrum of $X$ is not discrete but it consists of essential spectrum. However, when $X$ is \emph{Anosov} (or \emph{uniformly hyperbolic}), there is a hidden discrete \emph{resonance} spectrum obtained by the action of $X$ on some spaces of \emph{anisotropic regularity}. This spectrum, and the associated \emph{resonant states} (eigenfunctions of $X$) encode many significant properties of the flow, for instance one can read off whether $X$ is ergodic or mixing (with respect to the Sinai-Ruelle-Bowen measure), or even exponentially mixing. Furthermore, the analogously defined space of resonant states $\Res_0^k$ of the Lie derivative $\Lie_X$ (acting on $k$-forms) at zero carries rich topological data, for instance it is related to the Betti numbers of $\mc{M}$ or to dynamical invariants of the flow generated by $X$, and is related to the periodic orbit spectrum (the set of periods of periodic orbits) through the \emph{Ruelle zeta function}. In this paper, we are interested in this spectrum of $\Lie_X$ at zero.

In general, the dependence of the resonant space spectrum of $\Lie_X$ at zero on various topological or geometrical invariants of $\mc{M}$ and $X$ is now understood to be very subtle. The goal of this paper is to cast some light on the more difficult case when $X$ does not preserve a smooth measure and to illustrate the general picture with important classes of examples.

\subsection{Geometric multiplicities of resonant forms}

Assume $\mc{M}$ is a closed smooth $3$-manifold, equipped with a smooth vector field $X$ generating an \emph{Anosov} flow $\varphi_t$. This means that there is a continuous and flow invariant splitting of the tangent space
\[T \mc{M} = \mathbb{R} X \oplus E_s \oplus E_u,\]
into flow, stable, and unstable directions, respectively, and such that for some constants $C, \nu > 0$ and an arbitrary metric $|\bullet|$ on $\mc{M}$, and all $x \in \mc{M}$:
\begin{align}\label{eq:anosov-def}
\begin{split}
|d\varphi_{t}(x) \cdot v| \leq Ce^{-\nu |t|} \cdot |v|, \quad \begin{cases} t \geq 0,\quad v \in E_s(x),\\
t \leq 0,\quad v \in E_u(x).
\end{cases}
\end{split}
\end{align}
It is well-known that the geodesic flow on the unit tangent bundle $\mc{M} = SM$ of a Riemannian manifold $(M, g)$ with negative sectional curvature is Anosov, see \cite{Anosov-67}. Denote $E_{u/s}^* := (\mathbb{R}X \oplus E_{u/s})^\perp \subset T^*\mc{M}$, where $\bullet^\perp$ is the annihilator of $\bullet$.

Denote by $\Omega^k$ the bundle of differential $k$-forms and by $\Omega_0^k = \Omega^k \cap \ker \iota_X \subset \Omega^k$ the bundle of $k$-forms in the kernel of the contraction with the vector field. Write $\mc{D}'(\mc{M}; \Omega^k)$ for the space of distributional sections of $\Omega^k$. Given $\ell \in \mathbb{Z}_{\geq 1}$, introduce the spaces of \emph{generalised resonant $k$-forms at zero} by
\begin{align*}
	\Res^{k, \ell} &:= \{u \in \mc{D}'(\mc{M}; \Omega^k) \mid \Lie_X^{\ell} u = 0, \WF(u) \subset E_u^*\}, \quad &&\Res^{k, \infty} := \bigcup_{\ell \geq 1} \Res^{k, \ell},\\
	\Res_0^{k, \ell} &:= \Res^{k, \ell} \cap \ker \iota_X, \quad &&\Res_0^{k, \infty} := \Res^{k, \infty} \cap \ker \iota_X,
\end{align*}
where $\WF(u) \subset T^*M \setminus 0$ denotes the \emph{wavefront set} of a distribution, see \cite[Chapter 8]{Hormander-90}, and $\Lie_X = \iota_X d + d\iota_X$ is the Lie derivative. These spaces are finite dimensional: this is a non-trivial fact that follows from the construction of \emph{anisotropic Sobolev spaces} tailored to the flow, on which $\Lie_X$ acts as a Fredholm operator (see \cite{Faure-Sjostrand-11} or \cite{Dyatlov-Zworski-16}).

Write $m_{k, 0}^{\infty} := \dim \Res_0^{k, \infty}$ for the \emph{algebraic multiplicity}. When $\ell = 1$, denote $\Res_0^k := \Res_0^{k, 1}$ and $\Res^k := \Res^{k, 1}$, the spaces of \emph{resonant $k$-forms at zero}, and write $m_{k, 0} := \dim \Res_0^{k}$ and $m_{k} := \dim \Res^{k}$ for the \emph{geometric multiplicities}. Say that the action of $\Lie_X$ on $\Omega_0^k$ or $\Omega^k$ is \emph{semisimple} (at zero), if $\Res_0^k = \Res_0^{k, \infty}$ or $\Res^k = \Res^{k, \infty}$, respectively.

The \emph{Ruelle zeta function}
\begin{equation}\label{eq:RZF}
\zeta_{\mathrm{R}}(s):= \prod_{\gamma}{\big(1 - e^{-sT_\gamma}\big)}, \quad \re(s) \gg 1,
\end{equation}
is a converging product for $\re(s)$ large enough and admits a meromorphic continuation to $s \in \mathbb{C}$ by the work of Giulietti-Liverani-Pollicott \cite{Giulietti-Liverani-Pollicott-13} and Dyatlov-Zworski \cite{Dyatlov-Zworski-16}. Here the product is over all primitive closed orbits of the flow. Define the \emph{order of vanishing of the zeta function at zero} to be the unique integer $m_{\mathrm{R}}(0)$ such that $s^{-m_{\mathrm{R}}(0)} \zeta_{\mathrm{R}}(s)$ is holomorphic and non-zero at $s = 0$. According to \cite[Section 3]{Dyatlov-Zworski-17} we have: 
\begin{equation}\label{eq:order-of-vanishing-formula}
	m_{\mathrm{R}}(0) = m_{1, 0}^\infty - m_{0, 0}^\infty - m_{2, 0}^\infty.
\end{equation}
It is known that if $X$ is a \emph{transitive} Anosov vector field (i.e.  it has a dense orbit), then the action of $\Lie_X$ on $\Omega^0$ and $\Omega^2_0$ is semisimple and $m_{0, 0} = m_{2, 0} = 1$ (see Proposition \ref{prop:k=0,2,3} below). Under this assumption, $m_{1, 0}^\infty$ is hence the only remaining unknown in \eqref{eq:order-of-vanishing-formula}.

We now introduce some important dynamical invariants of the $X$. Denote by $\SRB$ the Sinai-Ruelle-Bowen (SRB) measure of $X$, that is, the unique invariant probability measure such that $\WF(\SRB) \subset E_u^*$; similarly introduce the SRB measure $\SRBs$ for the flow $-X$ (see \S \ref{ssec:SRB-entropy} for more details). We may equivalently regard $\Omega_{\mathrm{SRB}}^\pm$ as (distributional) $3$-forms, normalised so that they have integral 1. Since $\omega^\pm := \iota_X \Omega_{\mathrm{SRB}}^\pm$ are closed, the $2$-forms $\omega^\pm$ define de Rham cohomology classes in $H^2(\mc{M})$ that are Poincar\'e dual to the classical \emph{winding cycles} of the SRB measures (also known as \emph{asymptotic cycles} \cite{Schwartzman-57}). If \emph{both} winding cycles vanish, i.e. $[\omega^{+}]_{H^2(\mc{M})} = [\omega^{-}]_{H^2(\mc{M})} = 0$, by Lemma \ref{lemma:hodge-decomposition} below we may write $\omega^\pm = d\tau^\pm$ for some $\tau^+ \in \mc{D}'(\mc{M}; \Omega^1)$ (resp. $\tau^- \in \mc{D}'(\mc{M}; \Omega^1)$) with $\WF(\tau^+) \subset E_{u}^*$ (resp. $\WF(\tau^+) \subset E_{u}^*$) and it is possible to define the \emph{helicity} $\mc{H}(X)$ by
\begin{equation}\label{eq:helicity}
	\mathcal{H}(X) := \int_{\mc{M}} \tau^+(X)~\SRBs = \int_{\mc{M}} \tau^-(X)\, \SRB,
\end{equation}
thanks to the wavefront set conditions. This quantity is independent of any choices (see Section \ref{sec:general}) and it agrees with the well-known concept of helicity in the volume preserving case. The following statement generalises \cite[Theorem 1.2]{Cekic-Paternain-20}:

\def\mystrut{\vrule height13pt depth7pt width0pt}
\begin{theorem}\label{thm:general}
	Assume $X$ generates a transitive Anosov flow on $\mc{M}$. The \emph{\bf geometric} multiplicity for the action of $\Lie_X$ on $\Omega_0^1$ and $\Omega^1$ is determined as a function of $[\omega^\pm]_{H^2(\mc{M})}$ and $\mc{H}(X)$ by the following table:
	\vspace{5pt}
\begin{center}
\begin{tabular}{|l|c|c|c|c|c|}
\hline
 ~~~~~~~~~~{\rm \textbf{Cases}} & \begin{tabular}{@{}c@{}}$[\omega^+] \neq 0$ \\ $[\omega^-] \neq 0$\end{tabular} & \begin{tabular}{@{}c@{}}$[\omega^+] \neq 0$ \\ $[\omega^-] = 0$\end{tabular} & \begin{tabular}{@{}c@{}}$[\omega^+] = 0$ \\ $[\omega^-] \neq 0$\end{tabular} & \begin{tabular}{@{}c@{}}$[\omega^+] = [\omega^-] = 0$ \\ $\mc{H}(X) \neq 0$\end{tabular} & \begin{tabular}{@{}c@{}}$[\omega^+] = [\omega^-] = 0$ \\ $\mc{H}(X) = 0$\end{tabular}\\
\hline\hline
\mystrut $d(\Res_0^1)$ & $0$ & $0$ & $\mathbb{C} \omega^+$ & $0$ & $\mathbb{C} \omega^+$\\
\hline
\mystrut $m_{1, 0} = \dim \Res_0^1$ & $b_1(\mc{M}) - 1$ & $b_1(\mc{M})$ & $b_1(\mc{M})$ & $b_1(\mc{M})$ & $b_1(\mc{M}) + 1$\\
\hline
\mystrut $d(\Res^1)$ & $0$ & $0$ & $\mathbb{C} \omega^+$ & $\mathbb{C} \omega^+$ & $\mathbb{C} \omega^+$\\
\hline
\mystrut $m_1 = \dim \Res^1$ & $b_1(\mc{M}) $ & $b_1(\mc{M})$ & $b_1(\mc{M})+1$ & $b_1(\mc{M})+1$ & $b_1(\mc{M}) + 1$\\
\hline
\end{tabular}
\end{center}
Moreover, the map $\Res^1\cap \ker d\ni u\mapsto [u]\in H^{1}(\mc{M})$ is an isomorphism.
\end{theorem}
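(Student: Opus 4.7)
The plan is to reduce everything to three elementary bookkeeping maps on $\Res^1$. First, I would establish the ``moreover'' clause by showing $\Res^1\cap\ker d\simeq H^1(\mc{M})$; second, I would identify the functional $\iota_X:\Res^1\cap\ker d\to\mathbb{C}$ with the cohomological pairing with $[\omega^-]$; third, I would compute $d(\Res^1)\subset\Res_0^2=\mathbb{C}\omega^+$ and its subspace $d(\Res_0^1)$. Combining these via the short exact sequences
\[
0\to\Res^1\cap\ker d\to\Res^1\xrightarrow{d}d(\Res^1)\to 0,\qquad 0\to\Res_0^1\cap\ker d\to\Res_0^1\xrightarrow{d}d(\Res_0^1)\to 0,
\]
together with $\Res_0^1\cap\ker d=\ker\bigl(\iota_X|_{\Res^1\cap\ker d}\bigr)$, yields every entry of the table. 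Throughout I would use Proposition~\ref{prop:k=0,2,3} to identify $\Res^0=\mathbb{C}\cdot 1$ and $\Res_0^2=\mathbb{C}\omega^+$.

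\emph{Cohomological isomorphism.} For injectivity of $u\mapsto[u]$, if $u=dh$ distributionally then $d\Lie_Xh=0$, so $\Lie_Xh$ is a constant, which is $0$ upon pairing with $\SRB$; thus $\Lie_Xh=0$, and ellipticity of $d$ on functions ($\sigma(d)(\xi)=\xi$ is injective for $\xi\ne 0$) gives $\WF(h)=\WF(dh)\subset E_u^*$. Hence $h\in\Res^0$ is constant and $u=0$. For surjectivity, given a smooth closed representative $\alpha$, the ansatz $u=\alpha-d\phi$ turns $\Lie_Xu=0$ (via Cartan) into $\Lie_X\phi=\iota_X\alpha-c$. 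By the Fredholm theory of \cite{Faure-Sjostrand-11,Dyatlov-Zworski-16}, $\Lie_X$ on the $\WF\subset E_u^*$ anisotropic Sobolev space on functions has kernel $\mathbb{C}\cdot 1$ and cokernel dual-paired with the reverse SRB $\SRBs$ (of $E_s^*$-wavefront), so the equation is solvable precisely when $c=\int\iota_X\alpha\,\SRBs$, and one checks $[u]=[\alpha]$, $\WF(u)\subset E_u^*$, and $\iota_Xu=c$.

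\emph{Identification of $\iota_X$.} For $u\in\Res^1\cap\ker d$, since $u\wedge\SRBs$ vanishes as a $4$-form on a $3$-manifold, the Leibniz rule for $\iota_X$ reads $0=\iota_Xu\cdot\SRBs-u\wedge\omega^-$, valid distributionally because $E_u^*\cap E_s^*=\{0\}$ (H\"ormander's product theorem). Integrating yields $\iota_Xu=\int_{\mc{M}} u\wedge\omega^-=\langle[u]\smile[\omega^-],[\mc{M}]\rangle$. Consequently $\Res_0^1\cap\ker d$, the kernel of $\iota_X$ on $\Res^1\cap\ker d\simeq H^1(\mc{M})$, has dimension $b_1(\mc{M})-1$ or $b_1(\mc{M})$ according as $[\omega^-]\ne 0$ or $[\omega^-]=0$, by Poincar\'e duality.

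\emph{Computing $d(\Res^1)$ and $d(\Res_0^1)$.} If $[\omega^+]\ne 0$ then no exact form is cohomologous to $\omega^+$, forcing $d(\Res^1)=d(\Res_0^1)=0$. If $[\omega^+]=0$, write $\omega^+=d\beta$ with $\beta$ smooth and correct $\beta$ by the Fredholm solvability above to a resonant primitive $\tilde\tau^+:=\beta+df\in\Res^1$ with $d\tilde\tau^+=\omega^+$ and $\iota_X\tilde\tau^+=\int\iota_X\beta\,\SRBs=\mc{H}(X)$ (the last equality being the very definition of helicity). Hence $d(\Res^1)=\mathbb{C}\omega^+$ in this case. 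Any $v\in\Res_0^1$ with $dv=\omega^+$ decomposes as $v=\tilde\tau^+-w$ with $w\in\Res^1\cap\ker d$ and $\iota_Xw=\mc{H}(X)$; by the previous step such $w$ exists iff $\mc{H}(X)$ lies in $\langle H^1(\mc{M})\smile[\omega^-],[\mc{M}]\rangle$, which equals $\mathbb{C}$ when $[\omega^-]\ne 0$ and $\{0\}$ when $[\omega^-]=0$. This matches the top row of the table, and dimension counts from the two exact sequences produce $m_{1,0}$ and $m_1$ in all five columns. The principal technical obstacle is the Fredholm identification of the cokernel of $\Lie_X$ on functions as $\mathbb{C}\SRBs$ via pairing across the $E_u^*$/$E_s^*$ anisotropic spaces, which is precisely what makes the constant $c$ and the helicity $\mc{H}(X)$ into honest cohomological quantities.
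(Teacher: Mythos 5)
Your argument is correct in substance and produces every entry of the table, and its ingredients are essentially the paper's own, repackaged: your solvability criterion for $\Lie_X\phi=\iota_X\alpha-c$ is Proposition \ref{prop:existence}; your functional $u\mapsto\iota_X u=\int_{\mc{M}}u\wedge\omega^-$ on $\Res^1\cap\ker d\simeq H^1(\mc{M})$ is the winding cycle $W^-$ appearing in Lemma \ref{lemma:mapS}; and your obstruction ``is there $w\in\Res^1\cap\ker d$ with $\iota_Xw=\mc{H}(X)$'' is exactly how the paper separates the last two columns (Lemmas \ref{lemma:T}, \ref{lemma:auxiliary-tau}, Proposition \ref{prop:res1}). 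The organization differs, though: you prove the ``moreover'' isomorphism $\Res^1\cap\ker d\simeq H^1(\mc{M})$ directly (surjectivity included) and then read off $m_{1,0}$ and $m_1$ from the two short exact sequences, whereas the paper computes $\Res_0^1$ via the maps $T$ and $S$, computes $\Res^1$ via the splitting $\Res^1=\Res_0^1\oplus\mathbb{C}\sigma$ of Lemma \ref{lemma:auxres}, and obtains surjectivity of $u\mapsto[u]$ only a posteriori by dimension count. Your route buys a self-contained proof of the final assertion; the paper's route isolates the distinguished form $\sigma$ with $\iota_X\sigma=1$, which carries independent meaning (Remark \ref{remark:tc}).

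Two wavefront slips need repair, neither fatal. First, in the injectivity step you pair the constant $\Lie_Xh$ against $\SRB$: since $\WF(h)\subset E_u^*$ and $\WF(\SRB)\subset E_u^*$ as well, the integration by parts $\langle Xh,\SRB\rangle=-\langle h,\Lie_X\SRB\rangle$ is not licensed by H\"ormander's product criterion; pair against $\SRBs$ instead (as you correctly do in the surjectivity step, and as the paper does in the proof of Proposition \ref{prop:res1}). Second, when $[\omega^+]=0$ you cannot write $\omega^+=d\beta$ with $\beta$ \emph{smooth}: in the dissipative case $\omega^+=\iota_X\SRB$ is genuinely distributional, so it has no smooth primitive. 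Take instead $\beta\in\mc{D}'_{E_u^*}(\mc{M};\Omega^1)$ with $d\beta=\omega^+$, provided by Lemma \ref{lemma:hodge-decomposition} (or apply $\Pi_1^+$ as in the paper); the correction $\tilde\tau^+=\beta+df$ via Proposition \ref{prop:existence} and the evaluation $\iota_X\tilde\tau^+=\int_{\mc{M}}\iota_X\beta\,\SRBs$ still make sense because $\WF(\beta)\subset E_u^*$ pairs against $E_s^*$, and this value equals $\mc{H}(X)$ by independence of the choice of primitive (Stokes), not literally ``by definition.'' You also use tacitly that $\iota_Xu$ is constant for $u\in\Res^1$; that is the first step of Lemma \ref{lemma:auxres} and worth one line. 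With these corrections the proof is complete.
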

\vspace{5pt}

Note that there are two more cases compared to \cite[Theorem 1.2]{Cekic-Paternain-20}, due to the fact that $\SRB \neq \SRBs$ if the flow does not preserve a smooth volume; these are the second and third columns (the contact case belongs to the fourth column). We would like to point out that, to the best of our knowledge, there are no known examples of Anosov flows with zero helicity. Observe that Theorem \ref{thm:general} is really a statement about the 1-dimensional oriented foliation determined by $X$, as the resonant spaces $\Res_{0}^{1}$ and the conditions defining the various cases are all independent of time changes of $X$. The theorem shows that $\dim \Res^1$ is also invariant under time changes. However, it should be noted that in principle some of the resonant forms in $\Res^1$ could be altered under time changes (in the first, third, and fourth cases) but not their key properties. We explain this in more detail in Remark \ref{remark:tc}. 

Finally, we remark that Theorem \ref{thm:general} should have an analogue in the \emph{non-transitive} case (examples of such Anosov flows were constructed in \cite{Franks-Williamas-80}): indeed, then there are finitely many SRB measures in bijection with the kernel of $\Lie_X$ on $\Res^3$ by \cite[Theorem 1]{Butterley-Liverani-07} or \cite[Theorem 3]{GuedesBonthonneau-Guillarmou-Hilgert-Weich-21}, and in principle a similar analysis applies.

Given that $[\omega^{\pm}]$ and $\mathcal H(X)$ feature prominently in Theorem \ref{thm:general} we next give some additional insights into their properties.

\subsection{Helicity of the SRB measures} The helicity is traditionally defined in the context of null-homologous volume preserving flows \cite{AK_98}. One of our ancillary objectives is to explain that in the Anosov case, this quantity can be defined in dissipative situations using the SRB measures and that it captures similar features as in the volume preserving case. For the quantity to be well-defined we require both cohomology classes $[\omega^{\pm}]=0$ as explained above. Equation \eqref{eq:helicity} defines $\mathcal{H}(X)$ taking advantage of the wave front set conditions and hence as a distributional pairing and in this form it is well suited for our proof of Theorem \ref{thm:general}. However, a natural question immediately arises: is it possible to express $\mathcal{H}(X)$ using \emph{linking forms} as in \cite{Contreras-95} or \cite{Kotschick-Vogel-03}? This would interpret $\mc{H}(X)$ introduced by \eqref{eq:helicity} as an averaged linking number.

We show that this is indeed the case. Let $K \in \mc{D}'(\mc{M} \times \mc{M}; \pi_1^*\Omega^1 \otimes \pi_2^*\Omega^1)$ be the Schwartz kernel of the Green operator $G$ of a fixed background Riemannian metric $g$; by definition, if $\Delta_g$ denotes the Hodge Laplacian on $1$-forms, $G$ is given by $0$ and $\Delta_g^{-1}$ on the $\ker \Delta_g$ and its $L^2$ complement, respectively. Here $\pi_1$ and $\pi_2$ denote projections onto the first and second factors of $\mc{M} \times \mc{M}$, respectively. The \emph{linking form} $L \in \mc{D}'(\mc{M} \times \mc{M}; \pi_1^*\Omega^1 \otimes \pi_2^*\Omega^1)$ is defined as the double form
\[L(x, y) := \star_y d_y K(x, y),\]
where $\star$ denotes the Hodge star operator. It satisfies the property that when integrated over two knots in $\mc{M}$ it gives the \emph{linking number} of the two knots, see \cite[Proposition 1]{Kotschick-Vogel-03}. Set $\Lambda(x,y):=L(x,y)(X(x),X(y)) \in  \mc{D}'(\mc{M} \times \mc{M})$; it is not hard to see that $\Lambda$ is smooth outside the diagonal $\Delta(\mc{M})$ and at the diagonal it has a singularity of type $d(x,y)^{-1}$ (where $d(x, y)$ is the distance between $x$ and $y$). Also, as explained in Section \ref{section:helicity}, the wavefront set calculus implies that $\Lambda(x,y) \, \SRB(x) \times \SRBs(y)$ is well defined as a distribution. We have

\begin{theorem}\label{thm:helicity-formula}
The function $\Lambda|_{\mc{M} \times \mc{M} \setminus \Delta(\mc{M})}$ is integrable with respect to $\SRB \times \SRBs$. The following formula holds:
	\begin{align*}
		\mc{H}(X) &= \int_{(x, y) \in \mc{M} \times \mc{M}} \Lambda(x, y)\, \SRB(x) \times \SRBs(y)\\ 
		               &= \lim_{R \to 0^+} \int_{(x, y) \in \mc{M} \times \mc{M} \setminus B_R} \Lambda(x, y)\, \SRB(x) \times \SRBs(y)
	\end{align*}
	where in the last line $(B_R)_{R > 0}$ is any nested family of neighbourhoods of $\Delta(\mc{M}) \subset \mc{M} \times \mc{M}$, such that $\cap_{R > 0} B_R = \Delta(\mc{M})$, and the integral is interpreted classically.
\end{theorem}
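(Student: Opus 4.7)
My plan is to realise $\tau^+$ as an integral against the linking kernel $L$, use Fubini in the wavefront calculus to express the helicity as a joint distributional pairing against $\SRB \times \SRBs$, and then upgrade the pairing to a classical integral by a cutoff-and-integrability argument. The ingredients to construct $\tau^+$ come from Hodge theory in dimension three: the identities $G_2 = \star G_1 \star$ and $d^*|_{\Omega^2} = \star d \star$ (where $G_k$ is the Green operator on $k$-forms, so that $G_1$ has Schwartz kernel $K$) combine to give $d^*G_2 = \star d G_1 \star$, and reading off Schwartz kernels yields
\[
(d^*G_2\omega)(y) \;=\; \int_x L(x,y)\wedge_x \omega(x)
\]
for smooth $2$-forms $\omega$. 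Apply this to $\omega = \omega^+ = \iota_X\SRB$: since $L(x,y)\wedge\SRB(x)$ is a $4$-form on a $3$-manifold and vanishes, Cartan's formula converts the wedge to a contraction and gives
\[
\tau^+(y) \;:=\; \int_x \bigl(\iota_{X(x)} L(x,y)\bigr)\,\SRB(x),
\]
a well-defined distributional $1$-form by the wavefront calculus, since $\WF(L) \subset N^*\Delta$ and $\WF(\SRB) \subset E_u^*$ are transverse. Approximating $\SRB$ by smooth forms and passing to the limit yields $d\tau^+ = \omega^+$, and wavefront propagation through $N^*\Delta$ gives $\WF(\tau^+) \subset E_u^*$; hence $\tau^+$ is admissible in \eqref{eq:helicity}.

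Contracting with $X(y)$ and pairing with $\SRBs$,
\[
\mc{H}(X) \;=\; \int_y \tau^+(X(y))\,\SRBs(y) \;=\; \int_y \int_x \Lambda(x,y)\,\SRB(x)\,\SRBs(y) \;=\; \langle \Lambda, \SRB\times\SRBs\rangle,
\]
where Fubini is justified in the wavefront calculus. The joint pairing makes sense because $\WF(\Lambda)\subset N^*\Delta = \{(x,x;\xi,-\xi)\}$ meets $\WF(\SRB\times\SRBs)\subset (\{0\}\cup E_u^*(x))\times (\{0\}\cup E_s^*(y))$ only at the zero section, using the Anosov transversality $E_u^*\cap E_s^* = \{0\}$. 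Now with a smooth cutoff $\chi_R: \mc{M}^2\to[0,1]$ vanishing on $B_R$ and equal to $1$ off $B_{2R}$, the product $\chi_R\Lambda$ is smooth on $\mc{M}^2$, and continuity of the H\"ormander pairing under smooth multiplication gives $\int\chi_R\Lambda\,d\SRB\,d\SRBs \to \mc{H}(X)$ as $R\to 0^+$. To replace $\chi_R$ by the indicator $\mathbf{1}_{\mc{M}^2\setminus B_R}$ and obtain both assertions of the theorem, it suffices to show the classical integrability $\int|\Lambda|\,d\SRB\,d\SRBs < \infty$.

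The main obstacle is this integrability estimate: $|\Lambda|\sim d(x,y)^{-1}$ is Lebesgue-integrable in six dimensions, but $\SRB\times\SRBs$ is a singular measure. I would establish it via the local product structure of the Anosov flow near $\Delta$. In flow-box coordinates $(t,u,s)$ adapted to $\mathbb R X \oplus E_u \oplus E_s$, disintegrate $\SRB = \rho(t,u,s)\,dt\,du\,d\nu(s)$ with $\rho$ a smooth density in the weak-unstable directions and $\nu$ the (possibly singular) transverse measure, and symmetrically $\SRBs = \rho'\,dt\,ds\,d\nu'(u)$. For $x=(t_1,u_1,s_1)$, $y=(t_2,u_2,s_2)$ in the chart, substitute $(\tau,\eta,\sigma)=(t_1-t_2,\, u_1-u_2,\, s_1-s_2)$: the product measure becomes $\rho\rho'\,d\tau\,d\eta\,d\sigma\,dt_2\,d\nu(s_1)\,d\nu'(u_2)$, while the integrand $1/\sqrt{\tau^2+\eta^2+\sigma^2}$ is Lebesgue-integrable over $|\tau|,|\eta|,|\sigma|\leq 2R$ with value $O(R^2)$, and the residual integration against $\rho\rho'\,dt_2\,d\nu(s_1)\,d\nu'(u_2)$ is uniformly bounded. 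Summing over a finite partition of unity gives $\int_{B_{2R}}|\Lambda|\,d\SRB\,d\SRBs = O(R^2)$, which furnishes integrability and, via dominated convergence, both conclusions of the theorem.
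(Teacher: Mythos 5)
Your first half — realising a primitive of $\omega^+$ via the Green operator, so that the helicity becomes the distributional pairing of $\Lambda$ with $\SRB\times\SRBs$ justified by $\WF(\Lambda)\subset N^*\Delta(\mc{M})$ and $E_u^*\cap E_s^*=\{0\}$ — is essentially the computation in the paper (there it is phrased as $\mc{H}(X)=\int Pf^-\cdot f^+\,d\vol_g$ with $P=\iota_XGd^*\iota_X\star\in\Psi^{-2}$). The second half, however, has two genuine gaps. First, the step ``continuity of the H\"ormander pairing under smooth multiplication gives $\int\chi_R\Lambda\,d\SRB\,d\SRBs\to\mc{H}(X)$'' is not justified: the cutoffs $\chi_R$ degenerate at the diagonal as $R\to0$, so $\chi_R\Lambda\to\Lambda$ in the $\mc{D}'_{N^*\Delta}$ topology is precisely what needs proof, and proving it amounts to the same identification of the distributional pairing with a limit of classical integrals that you are trying to establish. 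The paper handles exactly this point with a dedicated mollification lemma: it smooths $\Lambda$ itself by operators $E_\varepsilon$, proves both convergence $E_\varepsilon\Lambda\to\Lambda$ in $\mc{D}'_\Gamma$ and the pointwise domination $|E_\varepsilon\Lambda|\leq C\,d(x,y)^{-1}$, and then concludes by dominated convergence together with the fact that $\Delta(\mc{M})$ is $\SRB\times\SRBs$-null. Without some substitute for that lemma (or for the nullity of the diagonal), your scheme does not close.

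Second, the integrability of $d(x,y)^{-1}$ against $\SRB\times\SRBs$ — the heart of the theorem, which the paper imports from the thermodynamic-formalism estimates of Coles--Sharp for equilibrium states with potentials $-r^u$ and $r^s$ — is not established by your disintegration argument as written. There are no smooth flow-box coordinates adapted to $\mathbb{R}X\oplus E_u\oplus E_s$ (the splitting is only H\"older, the weak bundles only $C^{1+\alpha}$), the conditional densities of $\SRB$ along weak-unstable plaques are H\"older along leaves and merely measurable transversally, not smooth, and the two disintegrations must be performed in a common bi-foliation (local product) chart for the weak-stable and weak-unstable foliations. In such a chart the comparison $d(x,y)\approx|\tau|+|\eta|+|\sigma|$ is exactly what has to be controlled: if the chart is only H\"older with exponent $\theta$, your change of variables yields an integrand of size $|\zeta|^{-1/\theta}$ in three Lebesgue directions, which is integrable only for $\theta>1/3$, so the regularity of the foliation charts enters quantitatively and must be invoked (e.g.\ the $C^{1+\alpha}$ regularity of weak foliations in dimension $3$). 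The underlying idea is attractive — $\SRB$ has absolutely continuous conditionals along weak-unstable leaves, $\SRBs$ along weak-stable leaves, and these directions jointly span, so the product measure is ``absolutely continuous in the difference variables'' — and with the foliation regularity made precise it could yield an alternative to the Coles--Sharp input; but in its present form the estimate, and hence the integrability claim of the theorem, is not proved.
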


We note that the first equality in the theorem is a direct consequence of the wavefront set calculus (somewhat similar to \eqref{eq:helicity}) and of using the Green operator $G$. Indeed, we have
\[
	\mc{H}(X) = \int_{\M} \tau^+ \wedge d\tau^- = \int_{\M} G d^* d\tau^+ \wedge d\tau^- = \int_{(x, y) \in \mc{M} \times \mc{M}} \Lambda(x, y)\, \SRB(x) \times \SRBs(y),
\]
where in the second equality we used that $\tau^+$ is equal to $Gd^*d\tau^+$ up to closed terms (see \eqref{eq:G-formula} below), and in the third one that $\star_y L(x, y)$ is the Schwartz kernel of $Gd^*$ (see \eqref{eq:G-1} below). The second equality of the theorem is its main component and we emphasise that its proof requires some care due to the singular nature of the SRB measures. Moreover, using recent results of Coles-Sharp \cite{Coles-Sharp-21} it is also possible by means of Theorem \ref{thm:helicity-formula} to express the helicity in terms of linkings of closed orbits; see Proposition \ref{prop:classical-helicity}. Finally we observe that the expression
\[\mc{H}(X) = \int_{\M} \tau^+ \wedge \omega^{-}= \int_{\M} \tau^- \wedge \omega^{+}\]
allows us also to directly interpret the helicity as a linking between the two SRB measures when both are homologically trivial (i.e. when $[\omega^\pm]_{H^2(\M)} = 0$).

\subsection{Perturbation theory}\label{subsec:perturbation} Given the relevance of the classes $[\omega^{\pm}]$ it is natural to try to understand in more detail the structure of those Anosov vector fields $X$ for which $[\omega^{\pm}(X)]=0$ at least for $X$ close to $X_{0}$, where $X_{0}$ preserves a volume form $\Omega$ and is null-homologous (i.e. $[\omega^{\pm}(X_0)]=0$). For what follows we fix such a vector field $X_0$ and we work in a fixed $C^{N}$-neighbourhood $\mathcal U$ of $X_0$ for $N$ sufficiently large. Set
\[\mathcal W^{\pm}:=\{X\in C^{N}(\mc{M},T\mc{M}) \mid \;[\omega^{\pm}(X)]=0\}\cap \mathcal U,\;\;\;\mathcal W:=\mathcal W^{+}\cap\mathcal W^{-}.\]
We shall show that $\mc{W}^\pm$ are $C^1$ Banach submanifolds near $X_0$ of codimension $b_1(\M)$, the first Betti number of $\M$. The intersection $\mc{W}^+ \cap \mc{W}^-$ is transversal, that is, it is a $C^1$ Banach submanifold of codimension $2b_1(\M)$. For the proof, see Lemma \ref{lemma:banach-manifold} below. These results can be complemented with results on helicity: assuming additionally $\mathcal H(X_0)=0$, the set $\{X \mid \mathcal H(X)=0\}\subset \mathcal W$ is locally a $C^1$ Banach submanifold of codimension 1 (see Proposition \ref{prop:helicity-zero} and Figure \ref{figure:W-plus-minus}). We reiterate that we do not know if such an $X_0$ exists.

\begin{center}
\begin{figure}[htbp!]
\includegraphics[scale=0.75	]{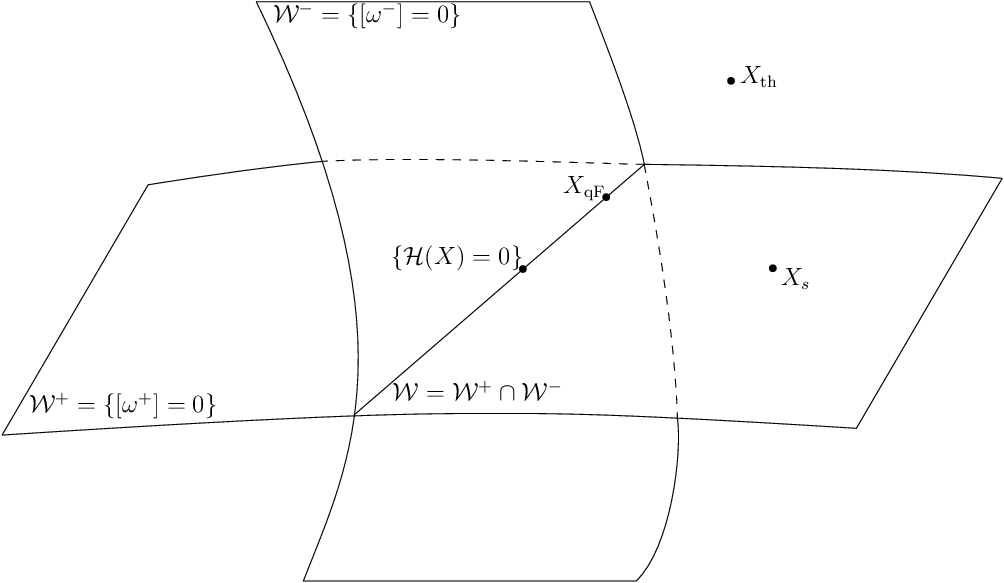}
\caption{Local geometry of vector fields according to the value of the winding cycle and the helicity. Flows $X_s \in \mc{W}^+$, but $X_s \not \in \mc{W}^-$, are constructed in Proposition \ref{prop:newexamples}, quasi-Fuchsian flows $X_{\mathrm{qF}} \in \mc{W}$ are studied in Section \ref{sec:qhd}, and thermostat flows satisfying $X_{\mathrm{th}} \not \in \mc{W}^+ \cup \mc{W}^-$ are provided by Theorem \ref{thm:beta}.}
\label{figure:W-plus-minus}
\end{figure}
\end{center}

\subsection{Horocyclic invariance of resonant forms} Resonant states and forms often exhibit an additional kind of invariance. This is most evident on constant negatively curved surfaces, where a resonant state $u$ at $s \in \mathbb{C}$ satisfying $(X + s)u = 0$ (and $\WF(u) \subset E_u^*$) also satisfies $(X + s + 1) U_-u=0$, where $U_-$ is the smooth vector field spanning the unstable foliation $E_u$. Since there are no resonances in the right half-plane, we conclude that $U_-^ku = 0$ for $k$ large enough (this is studied in detail in \cite{Dyatlov-Faure-Guillarmou-15}).

When we change the constant curvature setting to an arbitrary Anosov flow on $\M$, the vector field $U_-$ spanning $E_u$ becomes only H\"older regular in general and the definition of the derivative $U_-u$ is not immediately clear. However, when $X$ is a contact vector field and $|\re(s)|$ is sufficiently small, $u$ and $U_-$ have enough regularity for $U_-u$ to exist as was observed by Faure-Guillarmou \cite{Faure-Guillarmou-18} who showed $U_-u = 0$ in this situation. This observation was later used to show the existence of the first \emph{spectral band} for resonances by Guillarmou and the first author \cite{Cekic-Guillarmou-21} (see Remark \ref{rem:horocyclic-invariance-general} for a further discussion). Let us also mention that \cite{Tsujii-18} and \cite{Faure-Tsujii-21} introduce a more abstract notion of `horocycle operators'.

In this paper, for \emph{arbitrary} transitive Anosov flows in dimension $3$, we are able to prove a version of horocyclic invariance for closed resonant $1$-forms $u \in \Res_0^1$ (see Lemmas \ref{lemma:horo-I} and \ref{lemma:horocyclic-invariance-new}), including the fact that they are zero on the weak unstable bundle $\mathbb{R}X \oplus E_u$. The idea is to work with the regularity of the \emph{weak unstable bundle}, which is $C^{1 + \alpha}$-regular for some $\alpha > 0$. Similar results were proved in the constant negative curvature setting, see \cite{Cekic-Dyatlov-Delarue-Paternain-22} and \cite{Kuster-Weich-20}. We note that in contrast to \cite{Faure-Guillarmou-18} where the horocyclic invariance holds near the imaginary axis, the resonant forms at zero are ``deep inside" the resonance spectrum (and as is well-known the leading resonance is given by the topological entropy). There are several reasons making this possible: 1) contracting with vector fields costs less derivatives than differentiation, and 2) $d\Res_0^1$ is at most $1$-dimensional, so elements $\Res_0^1$ can be shown to have slightly more regularity than expected.

In addition, we are able to show that the SRB measures also satisfy a form of horocyclic invariance under the hypothesis that one weak bundle is smooth, see Lemma \ref{lemma:horocyclic-invariance-SRB-smooth}.

\subsection{Semisimplicity} Having sorted out the resonant forms at zero, the only remaining obstacle to compute the order of vanishing at zero of the Ruelle zeta function using geometric multiplicities is semisimplicity for $\Lie_{X}$ acting on $\Omega_{0}^{1}$. As explained in \cite{Cekic-Paternain-20} this is a subtle issue that could in principle depend on the parametrisation of the flow. For example \cite[Theorem 1.4]{Cekic-Paternain-20} shows that there are time changes of geodesic flows of hyperbolic surfaces that are not semisimple.
Here we propose a conjecture in this direction.

\begin{conj} Let $X$ be a transitive Anosov vector field on a closed 3-manifold $\mc{M}$.
There exists an open and dense set $\mathcal O\subset C^{\infty}(\mc{M}; \mathbb{R}_{>0})$ such that
for $f\in \mathcal O$, the action of $\Lie_{fX}$ on $\Omega_{0}^{1}$ is semisimple.
\end{conj}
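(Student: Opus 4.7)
The plan is to treat openness and density separately. For \emph{openness}, Theorem~\ref{thm:general} tells us that the geometric multiplicity $m_{1,0}(Y)$ depends only on the oriented foliation determined by $Y$, so $m_{1,0}(fX)$ is constant in $f\in C^\infty(\mc{M};\mathbb{R}_{>0})$. On the other hand, $m_{1,0}^\infty(fX)$ is upper semi-continuous in $f$ (in the $C^N$ topology for $N$ large, and hence in $C^\infty$): the total resonance multiplicity of $\Lie_{fX}$ in a fixed small disk around $0$ is locally constant on the Fredholm family of operators acting on anisotropic Sobolev spaces, so under perturbation resonances can only leave $0$. Thus $\mc{O} := \{f : m_{1,0}^\infty(fX) = m_{1,0}\}$ is open.

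Next I would reduce semisimplicity to non-degeneracy of a \emph{finite-dimensional pairing}. Since $\iota_{fX}\alpha = 0$ for $\alpha\in\Omega_0^1$, one has $\Lie_{fX}|_{\Omega_0^1} = f\,\Lie_X|_{\Omega_0^1}$, so $\Lie_{fX}$ fails to be semisimple at $0$ iff some $w/f$, with $w\in\Res_0^1\setminus\{0\}$, lies in the image of $\Lie_X$ on the anisotropic space (this reduces to the case of Jordan blocks of size $2$ by linear algebra). By Fredholm duality for $\Lie_X$, this happens iff $\langle w/f,\,v\rangle_{\mc{M}} = 0$ for every $v$ in the co-resonant space $\Res_0^{1,-} := \Res_0^1(-X)$. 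Applying Theorem~\ref{thm:general} to both $X$ and $-X$ shows $\dim\Res_0^1 = \dim\Res_0^{1,-}$, so semisimplicity is equivalent to non-vanishing of $D(f) := \det B_f$, where
\[B_f:\Res_0^1\times\Res_0^{1,-}\to\mathbb{C},\qquad B_f(w,v) = \int_{\mc{M}}\frac{\alpha\wedge w\wedge v}{f},\]
for any fixed $1$-form $\alpha$ with $\alpha(X)=1$. The integrand is a well-defined distribution on $\mc{M}$ because $\WF(w)\subset E_u^*$, $\WF(v)\subset E_s^*$, and $E_u^*\cap(-E_s^*)=\{0\}$; independence of the choice of $\alpha$ follows from the pointwise identity $\wedge^3\Omega_0^1 = 0$ (since $\Omega_0^1$ has rank $2$ along the Anosov vector field).

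Density then comes from analyticity. Choosing bases, $D(f)$ is a polynomial of degree $n := \dim\Res_0^1$ in the linear functionals $f\mapsto\int\eta_{ij}/f$ of fixed distributions $\eta_{ij}=\alpha\wedge w_j\wedge v_i$. Provided $D\not\equiv 0$ on $C^\infty(\mc{M};\mathbb{R}_{>0})$, there exists $f_1>0$ with $D(f_1)\neq 0$; then for any $f_0$ the function $t\mapsto D\bigl(f_0 + t(f_1-f_0)\bigr)$ is real-analytic in $t$ (use $1/(f_0+t(f_1-f_0)) = \sum_k (-t)^k (f_1-f_0)^k/f_0^{k+1}$) and nonzero at $t=1$, so its zeros are isolated and $f_0$ is approximated by points where $D\neq 0$. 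The \textbf{main obstacle} is therefore to rule out $D\equiv 0$, equivalently to exhibit at least one time change $f_*$ for which $\Lie_{f_*X}|_{\Omega_0^1}$ is semisimple. Identical vanishing of $D$ would force the products $\eta_{ij}$ to satisfy a severely non-generic linear constraint as distributions on $\mc{M}$, despite having transverse wavefront sets. I would try to rule this out by combining horocyclic invariance of closed resonant $1$-forms (Lemmas~\ref{lemma:horo-I} and \ref{lemma:horocyclic-invariance-new}) with the transversality $E_u^*\cap(-E_s^*)=\{0\}$, following the template of the explicit quasi-Fuchsian analysis of Section~\ref{sec:qhd} where semisimplicity under generic time change is proven by direct computation. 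Extending this rigidity statement to arbitrary transitive Anosov flows in dimension $3$ appears to require genuinely new ideas beyond those developed in the present paper, which is why we leave the conjecture open.
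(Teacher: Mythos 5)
First, note that the statement you are addressing is stated in the paper as a \emph{conjecture} and is explicitly left open there; the paper contains no proof of it, and your proposal, as you yourself acknowledge, does not close it either. What your plan does do is reproduce, essentially correctly, the reduction the paper sketches right after the conjecture. The identity $\Lie_{fX}u=f\,\Lie_X u$ on $\Omega_0^1$, the time-change invariance of $\Res_0^1$ and $\Res_{0*}^1$, and the equivalence of semisimplicity of $\Lie_{fX}$ with non-degeneracy of $B_f(w,v)=\int_{\mc{M}} f^{-1}\,\alpha\wedge w\wedge v$ are exactly Lemma \ref{lemma:semisimple} applied to $fX$, i.e.\ the pairing $A_{\mathbf a}$ appearing in Lemma \ref{lemma:generic-semisimplicity}; the equality $\dim\Res_0^1=\dim\Res_{0*}^1$ needed to pass to a determinant is the remark following Proposition \ref{prop:res1}. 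Your openness argument (constancy of the geometric multiplicity under time change plus upper semicontinuity of the algebraic multiplicity) is in line with \cite[Lemmas 6.1--6.2]{Cekic-Paternain-20}, and your observation that a \emph{single} semisimple time change yields density is precisely the paper's remark that the conjecture is equivalent to exhibiting one such time change. Your analyticity argument along segments does work, but justify it by holomorphy of $t\mapsto (f_0+t(f_1-f_0))^{-1}\in C^\infty(\mc{M};\mathbb{C})$ on a complex neighbourhood of $[0,1]$ rather than by the single geometric series you write, which converges only when $|t|\,\|(f_1-f_0)/f_0\|_{\infty}<1$; the paper phrases the same step through the linear-algebra Lemma \ref{lemma:lin-alg}.

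The genuine gap is the one you flag yourself: producing one $f_*$ with $D(f_*)\neq 0$, equivalently ruling out $D\equiv 0$ on positive time changes. Given the rest of your reduction this is equivalent to the conjecture itself, and it is implied by (though a priori weaker than) a positive answer to the paper's Question: if $u\in\Res_0^1$, $u_*\in\Res_{0*}^1$ and $u\wedge u_*=0$, must $u=0$ or $u_*=0$? The paper resolves this only under the extra hypothesis that a weak bundle is smooth, by combining horocyclic invariance with the product Lemma \ref{lemma:product} and the support rigidity of \cite{Weich-17}, which is what gives Lemma \ref{lemma:generic-semisimplicity} and the semisimplicity statement in Theorem \ref{thm:QFF}. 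Your hope of running the same scheme for an arbitrary transitive Anosov flow fails exactly where the paper stops: when the weak bundles are merely H\"older, the invariance relations of Lemmas \ref{lemma:horo-I} and \ref{lemma:horocyclic-invariance-new} involve coefficients of limited regularity and divergence-type terms, so they do not furnish the smooth first-order systems required by Lemma \ref{lemma:product} or by Weich's unique continuation. Your text should therefore be recorded as a correct account of the reduction and of the open core, not as a proof.
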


A couple of remarks are in order: the conjecture is equivalent to showing that there is just {\it one} time change that is semisimple. This is easily seen using a pairing between resonant and \emph{coresonant spaces} (resonant spaces for $-X$ are decorated with an additional subscript $*$) and noticing that semisimplicity is characterised in terms of the non-degeneracy of this pairing, see Lemma \ref{lemma:semisimple}. Moreover, using the pairing the conjecture is implied by a positive answer to the following intriguing question:

\medskip

\noindent{\bf Question.} Given $u\in \Res_{0}^{1}$ and $u_{*}\in \Res_{0*}^{1}$ with $u\wedge u_{*}=0$, is it true that $u=0$ or $u_{*}=0$?

\medskip

Lemma \ref{lemma:product} provides a positive answer to this question for an interesting class of flows, namely those Anosov flows with smooth weak bundles, see Theorem \ref{thm:QFF} below. Note that semisimplicity is known in a limited number of circumstances, e.g. for Reeb Anosov flows by \cite{Dyatlov-Zworski-17} and for perturbations by \cite{Cekic-Paternain-20}.

\subsection{Examples} Let us illustrate Theorem \ref{thm:general} with some specific examples.

\subsubsection{Suspensions} 
Let us first consider Anosov flows that are topologically orbit equivalent to the suspension of a linear hyperbolic toral automorphism $A\in \mathrm{SL}(2,\mathbb{Z})$. The suspension of $A$ lives on a solvable manifold $\mc{M}_{A}$ with $b_{1}(\mc{M}_{A})=1$. 
We have the following:

\begin{corollary} Let $X$ be any Anosov vector field on $\mc{M}_{A}$. Then $[\omega^{\pm}]\neq 0$, $\Res^{1}_{0}=\Res^{1}_{0*}=\{0\}$
and  $\zeta_{\text{\rm R}}(s)$ has a pole of order 2 at $s=0$, that is, $m_{\mathrm{R}}(0) = -2$.
\label{cor:suspensions}
\end{corollary}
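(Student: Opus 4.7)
The strategy is first to verify that both winding cycles are non-zero, then to invoke Theorem \ref{thm:general} and upgrade geometric to algebraic multiplicity.

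\emph{Topology of $\mc{M}_A$.} Viewing $\mc{M}_A$ as a $\mathbb{T}^2$-bundle over $S^1$, the Wang sequence gives
\begin{equation*}
H^1(\mc{M}_A;\mathbb{R}) \cong \ker\!\left(A^* - I \,\big|\, H^1(\mathbb{T}^2;\mathbb{R})\right) \oplus \mathbb{R}.
\end{equation*}
Since $A$ is hyperbolic, $\det(A-I) = 2 - \mathrm{tr}(A) \neq 0$, so the kernel is trivial and $b_1(\mc{M}_A) = 1$, generated by $\alpha := \pi^* d\theta$ where $\pi \colon \mc{M}_A \to S^1$ is the bundle projection.

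\emph{Non-vanishing of the winding cycles.} For any Anosov vector field $X$ on $\mc{M}_A$, on a 3-manifold one has $\alpha \wedge \iota_X \Omega_{\mathrm{SRB}}^\pm = \alpha(X)\, \Omega_{\mathrm{SRB}}^\pm$, whence
\begin{equation*}
\langle [\alpha],\, [\omega^\pm]\rangle \;=\; \int_{\mc{M}_A} \alpha(X)\, \Omega_{\mathrm{SRB}}^{\pm}.
\end{equation*}
By the classification of Anosov flows on solvmanifolds (Plante, Ghys, Barbot), $X$ is topologically orbit-equivalent to the algebraic suspension $X_0$ of $A$ via some homeomorphism $h \colon \mc{M}_A \to \mc{M}_A$. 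Since $h_*$ acts on $H_1(\mc{M}_A)/\mathrm{torsion} \cong \mathbb{Z}$ by $\pm 1$, and each closed orbit of $X_0$ has homology class a positive multiple of the generator of this $\mathbb{Z}$ (being ``vertical'' in the suspension), every closed orbit $\gamma$ of $X$ satisfies $\int_\gamma \alpha = \langle [\alpha],[\gamma]\rangle \in \mathbb{Z}$ of constant non-zero sign. Invariant probability measures of a transitive Anosov flow lie in the weak-$\ast$ closure of the convex hull of periodic-orbit measures (Sigmund/Bowen), so both SRB measures inherit the same sign for $\int \alpha(X)\, \Omega_{\mathrm{SRB}}^{\pm}$, and therefore $[\omega^\pm] \neq 0$ in $H^2(\mc{M}_A;\mathbb{R}) \cong \mathbb{R}$.

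\emph{Conclusion and order of vanishing.} Theorem \ref{thm:general} in the column $[\omega^\pm] \neq 0$ forces $m_{1,0} = b_1(\mc{M}_A) - 1 = 0$, i.e.\ $\Res_0^1 = \{0\}$; the same argument applied to $-X$ gives $\Res_{0*}^1 = \{0\}$. To upgrade to the algebraic multiplicity, suppose $u \in \Res_0^{1,\infty}$ is non-zero with $\Lie_X^\ell u = 0$ and $\ell \geq 1$ minimal. Then $v := \Lie_X^{\ell-1} u$ satisfies $\Lie_X v = 0$; the wavefront condition $\WF(v) \subset E_u^*$ is preserved under $\Lie_X$; and $\iota_X v = \Lie_X^{\ell-1}(\iota_X u) = 0$ since $[\iota_X, \Lie_X] = 0$. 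Hence $v$ is a non-zero element of $\Res_0^1 = \{0\}$, a contradiction, so $m_{1,0}^\infty = 0$. Combining with $m_{0,0}^\infty = m_{2,0}^\infty = 1$ (transitivity of any Anosov flow on $\mc{M}_A$) and \eqref{eq:order-of-vanishing-formula} yields $m_{\mathrm{R}}(0) = -2$. The main obstacle in this proof is the appeal to orbit-equivalence rigidity on $\mc{M}_A$: without it, the constancy of the sign of winding across closed orbits could fail in principle.
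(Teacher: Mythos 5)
Your overall architecture matches the paper's: Plante's classification to reduce to the suspension of $A$, $b_1(\mc{M}_A)=1$ together with Theorem \ref{thm:general} to get $\Res_0^1=\Res_{0*}^1=\{0\}$, the trivial-semisimplicity upgrade (which is exactly your induction on $\ell$, and is what the paper means by ``trivially semisimple''), $m_{0,0}^\infty=m_{2,0}^\infty=1$ from Proposition \ref{prop:k=0,2,3}, and finally \eqref{eq:order-of-vanishing-formula}. All of that is correct. The problem is the step establishing $[\omega^\pm]\neq 0$.

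From ``every periodic-orbit measure satisfies $\int \alpha(X)\,d\mu_\gamma=\tfrac{1}{T_\gamma}\int_\gamma\alpha\geq \tfrac{1}{T_\gamma}>0$'' together with Sigmund density you may only conclude $\int \alpha(X)\,\Omega_{\mathrm{SRB}}^\pm\geq 0$: weak-$*$ limits of convex combinations preserve non-strict inequalities only, and since $T_\gamma\to\infty$ while you control just $\int_\gamma\alpha\geq 1$, the per-orbit averages could a priori accumulate at $0$. ``Inheriting the sign'' is therefore not justified, and non-negativity does not exclude $[\omega^\pm]=0$. To close the gap you need a uniform bound $\int_\gamma\alpha\geq c\,T_\gamma$ with $c>0$ independent of $\gamma$ — which does hold here, because the orbit equivalence with the suspension reparametrises time by a continuous additive cocycle that is bounded above and below on the compact manifold, so the winding number of a closed orbit of $X$ is comparable to its period (equivalently, one can invoke the Schwartzman--Fried cross-section criterion) — or you can argue as the paper does: since no closed orbit of $X$ is null-homologous, $X$ is not homologically full, and then \cite[Theorem 1]{Sharp_93} forces $[\omega^\pm]\neq 0$, the SRB measures being equilibrium states of H\"older potentials. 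With either repair the remainder of your proof goes through unchanged.
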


\begin{proof} By \cite[Theorem B]{Plante_81}, any Anosov flow on $\mc{M}_{A}$ is topologically orbit equivalent to the suspension of $A$. Hence, it follows that the trivial homology class does not contain closed orbits of $X$ and thus $X$ is not homologically full. By \cite[Theorem 1]{Sharp_93} we must have $[\omega^{\pm}]\neq 0$. Since $b_{1}(\mc{M}_{A})=1$, Theorem \ref{thm:general} gives that $\Res_{0}^{1}=\Res_{0*}^{1}=\{0\}$ and therefore $\Lie_{X}$ is trivially semisimple on $\Omega_{0}^{1}$. Using \eqref{eq:order-of-vanishing-formula} we deduce that $\zeta_{\text{\rm R}}(s)$ has a pole of order 2 at $s=0$.
\end{proof}

\subsubsection{Quasi-Fuchsian flows and the coupled vortex equations} An interesting class of Anosov flows arises from the data given by a closed Riemann surface and a holomorphic differential of degree $m$ on the surface as we now explain.

Let $(M, g)$ be a closed oriented Riemannian surface and denote by $K_g$ its Gauss curvature. Introduce the \emph{unit sphere bundle} as
\[SM := \{(x, v) \in TM \mid |v|_g = 1\},\]
which carries a natural smooth measure $\Omega$ given locally by the product of the volume form on $M$ and the Lebesgue measure in the spherical fibres. Denote by $X$ the geodesic vector field and by $V$ the generator of (oriented) rotations in the fibres of $SM$. Define the canonical bundle $\mc{K} := (T_{\mathbb{C}}^*M)^{1, 0}$ to be the holomorphic part of the complexified cotangent bundle $T_{\mathbb{C}}^*M$. For $m \in \mathbb{Z}_{\geq 0}$, there is a natural map
\[\pi_m^*: C^\infty(M; \mc{K}^{\otimes m}) \to C^\infty(SM), \quad \pi_m^*T(x, v):= T_x(v, v, \dotsc, v).\]
If $A \in C^\infty(M; \mc{K}^{\otimes m})$, we may set $\lambda = \im(\pi_m^*A)$ and we will be interested in the flow generated by the vector field $F := X + \lambda V$. Say that the pair $(g, A)$ satisfies the \emph{coupled vortex equations} if:
\begin{equation}\label{eq:coupled-vortex-intro}
	\bar{\partial} A = 0, \quad K_g = -1 + (m - 1)|A|_{g}^2.
\end{equation}
If \eqref{eq:coupled-vortex-intro} are satisfied, \cite[Theorem 5.1]{Mettler-Paternain-19} shows that $F$ is Anosov, and by \cite[Remark 5.3]{Mettler-Paternain-19} for any $A$ as above there is a unique metric in the conformal class $[g]$ that solves \eqref{eq:coupled-vortex-intro}. Hence, this class of flows is parametrised by the data $([g],A)$. Moreover, by \cite[Theorem 5.5]{Mettler-Paternain-19} $F$ preserves an absolutely continuous measure if and only if $A=0$, showing that when $A \neq 0$ we are in a genuinely dissipative scenario.

The case $m=2$ is special and in some sense these flows occupy a distinguished place among dissipative Anosov flows, much in the same way as geodesic flows of hyperbolic surfaces are special among contact/volume preserving flows. The main reason they are special is that they have both weak bundles of class $C^\infty$, but the SRB measures are singular as long as $A\neq 0$ and hence they are not algebraic.
Ghys introduced and studied this type of flows in \cite[Th\'eor\`eme B]{Ghys-92}.  His construction produces a smooth orientable Anosov foliation $\phi_{[g_{1}],[g_{2}]}$ on the bundle of positive half-lines tangent to the surface $M$. Here, $g_{i}$ for $i=1,2$ are metrics of constant curvature $-1$ and $[g_{i}]$ are the corresponding points in Teichm\"uller space $\mathcal T(M)$. Any flow parametrising $\phi_{[g_{1}],[g_{2}]}$  has smooth weak stable foliation $C^{\infty}$-conjugate to the weak stable foliation of the constant curvature metric $g_{1}$ and smooth weak unstable foliation $C^{\infty}$-conjugate to the weak unstable foliation of the constant curvature metric $g_{2}$. Moreover, a volume form is preserved if and only if $[g_{1}]=[g_{2}]$. Ghys named these flows \emph{quasi-Fuchsian flows}, because his construction is analogous to the construction of quasi-Fuchsian groups obtained by coupling two Fuchsian groups. Theorem 4.6 in \cite{Ghys_93} ensures that for $m=2$, the flow of $F$ is $C^{\infty}$-orbit equivalent to a quasi-Fuchsian flow, so we shall call such an $F$ also a quasi-Fuchsian vector field. As we noted above these are parametrised by $T^*\mathcal T(M)$ and in forthcoming work \cite{CP_25} we will show that they exhaust all possible $\phi_{[g_{1}],[g_{2}]}$. Working with $F$ instead of $\phi_{[g_{1}],[g_{2}]}$ has many advantages for our purposes as it gives us access to the vertical Fourier analysis introduced by Guillemin-Kazhdan \cite{Guillemin-Kazhdan-80} which in turn will allow us to compute all resonant 1-forms at zero and the helicity of the SRB measures. As far as we are aware this is the first explicit helicity calculation in a dissipative context.

\begin{theorem}\label{thm:QFF}
	Let $F$ be a quasi-Fuchsian vector field. Then $m_{1, 0} = b_1(M)$, $[\omega^{\pm}] = 0$, and the helicity is given by
	\[\mc{H}(F) = \frac{1 + \frac{1}{2} \e^+ (F)}{2\pi \vol_g(M) + \int_{SM} a^2\, \Omega}.\]
	The function $a$ is the unique H\"older solution to the equation
	\[Fa + \Big(1+\frac{1}{2}V\lambda\Big)a = -\lambda\]
	and $\e^+ (F)$ is the entropy production of the SRB measure which is given explicitly by
	\[\e^+ (F)=-\int_{SM}V\lambda\,\SRB.\]
	Moreover, for an open and dense set of $f \in C^\infty(SM; \mathbb{R}_{>0})$, the action of $\Lie_{fF}$ on $\Omega_0^1$ is semisimple and so for the time-changed vector field $fF$, we have $m_{\mathrm{R}}(0) = -\chi(M)$.
\end{theorem}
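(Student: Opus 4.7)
The plan is to combine the general theory of Theorem \ref{thm:general} with two structural features of quasi-Fuchsian flows, namely smoothness of both weak bundles and the vertical Fourier decomposition on $SM$ due to Guillemin-Kazhdan. First, I verify $[\omega^{\pm}] = 0$. Since both weak bundles are smooth, Lemma \ref{lemma:horocyclic-invariance-SRB-smooth} yields horocyclic invariance for the SRB measures. Under the Gysin isomorphism $H^{2}(SM) \cong H^{1}(M)$ (valid because the Euler number of $SM \to M$ is $\chi(M) \neq 0$), the class $[\omega^{\pm}]$ is detected by the pairings $\int_{SM} \pi^{*}\phi(F)\,\Omega_{\mathrm{SRB}}^{\pm}$ against harmonic 1-forms $\phi$ on $M$, which horocyclic invariance combined with vertical symmetry forces to vanish. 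For $\Res_{0}^{1}$, by Lemma \ref{lemma:horocyclic-invariance-new} any $u \in \Res_{0}^{1}$ vanishes on the smooth weak unstable bundle, so $u = f \alpha^{s}$ where $\alpha^{s}$ is the smooth stable 1-form and $f$ is H\"older with wavefront in $E_{u}^{*}$. The scalar transport equation coming from $\Lie_{F}u = 0$ (together with closedness of $u$ extracted from the table of Theorem \ref{thm:general}) decouples under vertical Fourier expansion, and its solutions correspond bijectively to harmonic 1-forms on $M$, giving the isomorphism $\Res_{0}^{1} \cap \ker d \cong H^{1}(M)$ and $m_{1,0} = b_{1}(SM) = b_{1}(M)$.

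Next, for the helicity I make the ansatz $\tau^{+} = Z^{-1}(\alpha_{F} + a\, V^{*})$, where $\alpha_{F}$ is the smooth 1-form with $\alpha_{F}(F) = 1$ and $\alpha_{F}|_{E_{u}\oplus E_{s}} = 0$, $V^{*}$ is the dual of $V$, and $a \in C^{\alpha}(SM)$, $Z > 0$ are to be determined. Imposing $d\tau^{+} = \omega^{+} = \iota_{F}\Omega_{\mathrm{SRB}}^{+}$ in the smooth frame $(X,H,V)$ on $SM$ and using the coupled vortex equations \eqref{eq:coupled-vortex-intro} yields on the one hand the transport equation $Fa + (1+\tfrac{1}{2}V\lambda)a = -\lambda$ for $a$, and on the other hand the normalising constant $Z = 2\pi\vol_{g}(M) + \int_{SM} a^{2}\,\Omega$, the latter by wedging with $\alpha_{F}$, using $\alpha_{F}\wedge\iota_{F}\Omega_{\mathrm{SRB}}^{+} = \Omega_{\mathrm{SRB}}^{+}$, and integrating by parts against $V^{*}$. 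Since $\alpha_{F}(F) = 1$ and $V^{*}(F) = \lambda$, one then has
\[\mc{H}(F) = \int_{SM} \tau^{+}(F)\,\Omega_{\mathrm{SRB}}^{-} = Z^{-1}\Big(1 + \int_{SM} a\lambda\,\Omega_{\mathrm{SRB}}^{-}\Big),\]
and combining the $F$-invariance of $\Omega_{\mathrm{SRB}}^{-}$ with the transport equation for $a$ reduces $\int_{SM} a\lambda\,\Omega_{\mathrm{SRB}}^{-}$ to $\tfrac{1}{2}\e^{+}(F) = -\tfrac{1}{2}\int_{SM} V\lambda\,\Omega_{\mathrm{SRB}}^{+}$, giving the stated formula.

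Finally, for semisimplicity I invoke Lemma \ref{lemma:product}: since the weak bundles are smooth, the Question from Section \ref{subsec:perturbation} has a positive answer for $F$, namely that $u \wedge u_{*} = 0$ with $u \in \Res_{0}^{1}$ and $u_{*} \in \Res_{0*}^{1}$ forces one of them to vanish. By Lemma \ref{lemma:semisimple} and the open-dense argument outlined after the Conjecture, this yields semisimplicity of $\Lie_{fF}$ on $\Omega_{0}^{1}$ for a generic time change $f$. Combined with semisimplicity in degrees $0$ and $2$ (Proposition \ref{prop:k=0,2,3}) and formula \eqref{eq:order-of-vanishing-formula}, this gives $m_{\mathrm{R}}(0) = m_{1,0} - 2 = b_{1}(M) - 2 = -\chi(M)$. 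The principal technical difficulty lies in the helicity step: constructing $\tau^{+}$ with the correct wavefront set, deriving both the equation for $a$ and the normalising constant $Z$ from $d\tau^{+} = \omega^{+}$, and then reducing the resulting distributional integral against the singular measure $\Omega_{\mathrm{SRB}}^{-}$ to $\tfrac{1}{2}\e^{+}(F)$. The horocyclic invariance of the SRB measures together with the wavefront-set calculus are what keep the relevant pairings well-defined throughout.
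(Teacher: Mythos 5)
Your outline reproduces the correct global architecture (Fourier analysis on $SM$, a primitive $\tau^+$ of $\omega^+$ for the helicity, the product lemma plus a generic time change for semisimplicity, and the count $m_{\mathrm{R}}(0)=b_1(M)-2=-\chi(M)$), but two of its load-bearing steps have genuine gaps. First, the vanishing of the winding cycles is not established by your argument: horocyclic invariance of the SRB measures (Lemma \ref{lemma:horocyclic-invariance-SRB-smooth}) only gives $(Y^u-2\lambda+2a)f=0$ and says nothing about the pairings $\int_{SM}\pi^*\phi(F)\,\Omega_{\mathrm{SRB}}^{\pm}$; ``vertical symmetry'' is not an argument, and the vanishing is genuinely nontrivial (compare Theorem \ref{thm:beta}, where it fails). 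In the paper the logic runs the other way: Lemma \ref{lemma:horo-m=2} reduces $\Res_0^1$ to the system \eqref{eq:horo-m=2}, Lemma \ref{lemma:closed} proves \emph{directly} from the recurrence relations and the positivity $1+\tfrac12\e^+(F)>0$ that every (co)resonant $1$-form is closed -- without invoking the table of Theorem \ref{thm:general} -- then Lemma \ref{lemma:res01} and the index computation for $\mu_\pm$ give $\dim\Res_{0(*)}^1=b_1(M)$, and only then does Lemma \ref{lemma:mapS} (the image of $S_*$ is exactly $\ker W^{+}$) force $[\omega^+]=0$, with $\mc{H}(F)\neq0$ read off from the classification afterwards (Lemma \ref{lemma:winding-cycle-SRB}). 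Your appeal to ``closedness of $u$ extracted from the table of Theorem \ref{thm:general}'' is circular: which column applies depends on $[\omega^\pm]$ and on $\mc{H}(F)\neq 0$, precisely what is being proved.

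Second, the helicity step does not go through as written. The ansatz $\tau^+=Z^{-1}(\alpha_F+a\,V^*)$ is unjustified: $\alpha_F$ (the annihilator of $E_u\oplus E_s$) is only H\"older here since these flows are not contact, and, more importantly, the actual primitive has $\iota_V\tau^+=c$ solving the coupled system \eqref{eq:tau-+-equations}, whose second equation contains the singular SRB density $f^+$; nothing forces $c$ to be the H\"older function $Z^{-1}(\alpha_F(V)+a)$, and you never verify $d\tau^+=\omega^+$ for your ansatz nor derive $Z$. Your final reduction is also incorrect as stated: multiplying $Fa+(1+\tfrac12 V\lambda)a=-\lambda$ by $a$ and using invariance of $\SRBs$ gives $\int a\lambda\,\SRBs=-\int a^2\,\SRBs-\tfrac12\int V\lambda\,a^2\,\SRBs$, which is not $\tfrac12\e^+(F)=-\tfrac12\int V\lambda\,\SRB$ (note even the measures differ). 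The paper's computation (Lemma \ref{lemma:helicity-explicit}) instead multiplies the first equation of \eqref{eq:tau-+-equations} by $\lambda$, uses the identity \eqref{eq:identity-random-success} and repeated integration by parts against the smooth volume $\Omega$, which is where both $2\pi\vol_g(M)$ and $\int_{SM}a^2\,\Omega$ (via $-\int a\lambda\,\Omega=\int a^2\,\Omega$) actually arise. Finally, in the semisimplicity step, Lemma \ref{lemma:product} only yields that the complements of $\supp h$ and $\supp h_*$ cover $SM$; to conclude $h\equiv0$ or $h_*\equiv0$ you still need the unique-continuation dichotomy of Weich (resonant states vanish identically or have full support), which is missing from your sketch, though the remainder of that step (Lemma \ref{lemma:lin-alg}, Lemma \ref{lemma:semisimple}, and formula \eqref{eq:order-of-vanishing-formula}) matches the paper.
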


Curiously, we do not know if the action of $\Lie_{F}$ on $\Omega_0^1$ is semisimple, except for $A$ small.

\subsubsection{Flows with $[\omega^{+}]=0$ and $[\omega^{-}]\neq 0$.} The next interesting class of examples that arises in relation to Theorem \ref{thm:general} are flows in the second and third columns of the table. In fact, if $X$ is an example with $[\omega^{+}]=0$ and $[\omega^{-}]\neq 0$, then $-X$ is an example with $[\omega^{+}]\neq 0$ and $[\omega^{-}]=0$. They have the special feature that $\Res^{1}_{0}$ contains non-closed 1-forms, something unseen in the volume preserving case. That such examples must exist can now be easily inferred from the perturbation picture described in \S \ref{subsec:perturbation}, however we prefer to go one step further and give explicit smooth examples with the additional feature that the weak stable bundle is $C^{\infty}$. This is carried out in Proposition \ref{prop:newexamples}.
Incidentally, for these examples we show that semisimplicity for $\Lie_{X}$ acting on $\Omega^1$ holds but fails for $\Lie_{-X}$.
Also semisimplicity holds for $\Lie_{\pm X}$ acting on $\Omega_{0}^{1}$.

\subsubsection{Homologically full flows with $[\omega^{\pm}]\neq 0$} The case of suspensions studied in Corollary \ref{cor:suspensions} certainly provides examples of flows with $[\omega^{\pm}]\neq 0$. However, here we provide such examples which in addition are \emph{homologically full}, i.e. every homology class contains a closed orbit. The examples live on the unit tangent bundle of a surface with negative curvature (and hence they are topologically orbit equivalent to a geodesic flow) and are in fact \emph{Gaussian thermostats} or \emph{W-flows} as introduced in \cite{Wojtkowski_00} (they are reparametrisations of the geodesics of a Weyl connection).

Let $(M,g)$ be a closed oriented surface of negative Gaussian curvature and let $\rho$ be a smooth closed 1-form. Consider $\lambda := \pi_1^*\rho \in C^\infty(SM)$ defined by lifting the $1$-form $\rho$ via $\pi_1^*\rho(x,v) = \rho_{x}(v)$, and $F := X + \lambda V$. By \cite[Theorem 5.2]{Wojtkowski_00} the flow of $F$ is Anosov. Moreover, by \cite{Dairbekov-Paternain-07} the flow  preserves an absolutely continuous measure if and only if $\rho = 0$. Observe that these flows are reversible, i.e. the flip $(x,v)\mapsto (x,-v)$ conjugates the flow of $F$ with the flow of $-F$.

\begin{theorem} 
	Let $F = X + \pi_1^*\rho\,V$ be the generator of a Gaussian thermostat with $\rho$ \emph{harmonic} and non-zero. Then $[\omega^{+}]=-[\omega^{-}]\neq 0$ and thus $m_{1, 0} = b_1(M) - 1$. Moreover, for $\rho$ small enough, the action of $\Lie_{F}$ on $\Omega_{0}^{1}$ is semisimple and the order of vanishing of the Ruelle zeta function of $F$ is $m_{\mathrm{R}}(0) = -\chi(M)-1$.	
\label{thm:beta}	
	
\end{theorem}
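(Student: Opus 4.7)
The plan proceeds in four steps. First, establish $[\omega^+]\neq 0$ by computing its pairing with $[\pi^*(*\rho)]\in H^1(\mc{M})$ and identifying the answer with minus the entropy production. Second, use reversibility to deduce $[\omega^-]=-[\omega^+]$, placing us in the first column of Theorem~\ref{thm:general} and yielding $m_{1,0}=b_1(M)-1$. Third, treat the semisimplicity claim as a perturbation from the geodesic flow via \cite{Cekic-Paternain-20}. Finally, apply \eqref{eq:order-of-vanishing-formula} together with $m_{0,0}^\infty=m_{2,0}^\infty=1$ from Proposition~\ref{prop:k=0,2,3}.

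\emph{Nonvanishing of the winding cycle (main step).} Since $\rho$ is harmonic, so is $*\rho$, in particular closed. A direct computation gives $(\pi^*(*\rho))(F)(x,v) = (*\rho)_x(v) = \rho_x(Jv) = V\lambda(x,v)$, where $J$ denotes the almost complex structure rotating by $\pi/2$. The Liouville volume $\Omega$ on $SM$ is preserved by both $X$ and $V$, so $\mathrm{div}_\Omega F = V\lambda$, whence
\[\e^+(F) = -\int_{SM} V\lambda\,\SRB.\]
By \cite{Dairbekov-Paternain-07} the hypothesis $\rho\neq 0$ forces $F$ to admit no absolutely continuous invariant measure, so $\SRB$ is singular and the standard positivity of entropy production yields $\e^+(F)>0$. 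Consequently
\[\langle [\omega^+], [\pi^*(*\rho)]\rangle = \int_{SM} (\pi^*(*\rho))(F)\,\SRB = \int_{SM} V\lambda\,\SRB = -\e^+(F) \neq 0.\]
Since $\chi(M)<0$, the Gysin sequence gives $H^1(\mc{M})=\pi^*H^1(M)$, and Poincaré duality on the closed 3-manifold $\mc{M}$ then implies $[\omega^+]\neq 0$.

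\emph{Reversibility and $m_{1,0}$.} The flip $\sigma(x,v)=(x,-v)$ is orientation-preserving on $SM$ and satisfies $\sigma_*X=-X$, $\sigma_*V=V$, $\lambda\circ\sigma=-\lambda$, hence $\sigma_*F=-F$, and so $\sigma_*\SRB=\SRBs$ by intrinsicality of the SRB measures. For any closed $\eta\in\Omega^1(M)$ the function $(\pi^*\eta)(F)(x,v)=\eta_x(v)$ is odd under $\sigma$, giving $\langle[\omega^-],[\pi^*\eta]\rangle=-\langle[\omega^+],[\pi^*\eta]\rangle$; by Poincaré duality and $H^1(\mc{M})=\pi^*H^1(M)$ this forces $[\omega^-]=-[\omega^+]$ in $H^2(\mc{M})$, both nonzero. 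The first column of Theorem~\ref{thm:general} now yields $m_{1,0}=b_1(\mc{M})-1=b_1(M)-1$.

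\emph{Semisimplicity and conclusion.} At $\rho=0$, $F$ is the geodesic flow, a Reeb Anosov flow, hence semisimple on $\Omega_0^1$ by \cite{Dyatlov-Zworski-17}. The perturbation result of \cite{Cekic-Paternain-20} extends this to $F_\rho = X + \pi_1^*\rho\,V$ for $\rho$ sufficiently small. Then $m_{1,0}^\infty=m_{1,0}=b_1(M)-1$, and \eqref{eq:order-of-vanishing-formula} gives $m_{\mathrm R}(0) = (b_1(M)-1)-1-1 = -\chi(M)-1$. The main obstacle is the nonvanishing step: harmonicity of $\rho$ is used essentially, not just closedness, since we need $*\rho$ to also define a class in $H^1(M)$; the clean identification $V\lambda=\pi_1^*(*\rho)$ is what makes it possible to compute the pairing directly against the singular SRB measure.
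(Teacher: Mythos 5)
Your computation of the winding cycles is correct and runs close to the paper's: you evaluate $W^+$ on the closed form $\pi^*(\star\rho)$ and identify the result with the entropy production (up to a sign coming from conventions, since in the paper's frame $\pi_1^*(\star\rho)=-V\lambda$), then use Ruelle's inequality together with the Dairbekov--Paternain characterisation of dissipativity to get $\mathbf{e}^+(F)>0$, and reversibility for $[\omega^-]=-[\omega^+]$. The paper reaches the same conclusion slightly differently, via the auxiliary form $\theta$ with $\pi_1^*\theta=f_1+f_{-1}$, Lemma \ref{lemma:windind-cycle-theta} and Theorem A of \cite{Dairbekov-Paternain-07}, but your shortcut is legitimate, and the deduction $m_{1,0}=b_1(M)-1$ from Theorem \ref{thm:general} is as in the paper.

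The genuine gap is the semisimplicity step. There is no perturbation result in \cite{Cekic-Paternain-20} asserting that semisimplicity at zero persists under small perturbations; on the contrary, \cite[Theorem 1.4]{Cekic-Paternain-20} exhibits arbitrarily small time-changes of hyperbolic geodesic flows for which semisimplicity on $\Omega_0^1$ \emph{fails}, so a black-box appeal of this kind cannot work. The difficulty is quantitative: at $\rho=0$ one has $m_{1,0}=m_{1,0}^\infty=b_1(M)$, while for $\rho\neq0$ your first step forces $m_{1,0}=b_1(M)-1$, so exactly one eigenvalue must leave the geometric kernel. What perturbation theory gives for free is only upper semicontinuity of the algebraic multiplicity (\cite[Lemma 6.2]{Cekic-Paternain-20}), i.e. $m_{1,0}^\infty\leq b_1(M)$ for small $\rho$; semisimplicity is the stronger statement $m_{1,0}^\infty=b_1(M)-1$, namely that the lost eigenvalue genuinely moves away from $s=0$ rather than forming a Jordan block at zero. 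If a Jordan block formed, one would have $m_{1,0}^\infty=b_1(M)$ and \eqref{eq:order-of-vanishing-formula} would give $m_{\mathrm{R}}(0)=-\chi(M)$ instead of $-\chi(M)-1$, so both conclusions of the theorem hinge on excluding this. The paper closes the gap by an explicit splitting argument: it considers $\mc{T}(\rho)=\Lie_{F_\rho}\widetilde{\Pi}_1^+(\rho)\alpha$, proves $D_0\mc{T}$ is injective using the positivity $W=\llangle u,\mc{J}^*u\rrangle>0$ from \cite[Lemma 7.2]{Cekic-Paternain-20}, and then computes the linearisations of the perturbed eigenvalue $z_s$ (showing $z_0=\dot z=0$ and $\ddot z<0$), so that the resonance splits off zero for small $\rho\neq0$. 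Some argument of this type is indispensable; as written, your step 3 assumes precisely the point that has to be proved.
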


A Gaussian thermostat as in Theorem \ref{thm:beta} may also be seen as the geodesic flow of the unique affine connection compatible with $g$ and with torsion $\mathcal T(Y,Z)=-\star \rho(Y)Z+\star\rho(Z)Y$ \cite{PW_08}.
Interestingly enough, we will also show that it is possible to produce flows with similar features when $\rho$ is \emph{exact} and non-zero, see Proposition \ref{prop:betaexact}.

\subsection{Outline of the paper}
\begin{itemize}

\item In Section \ref{sec:preliminaries} we recall some preliminary notions from microlocal analysis and resonances for Anosov flows. 
\item In Section \ref{sec:general} we study general $3$-dimensional Anosov flows that do not necessarily preserve a smooth measure and we determine the dimension of the resonant spaces of Lie derivatives at zero in terms of the helicity and the winding cycles of the associated SRB measures, proving Theorem \ref{thm:general}. 
\item In Section \ref{sec:horocyclic} we show horocyclic invariance of resonant states and in Section \ref{sec:perturbation} we study the local manifold structure of $\mathcal W^{\pm}$.
\item Section \ref{section:onesmooth} discusses examples of Anosov flows with smooth weak stable bundle, $[\omega^{+}]=0$ and $[\omega^{-}]\neq 0$.
\item Section \ref{sec:thermo} introduces thermostats and provides a proof of Theorem \ref{thm:beta}.
\item Section \ref{sec:qhd} is devoted to quasi-Fuchsian flows arising from the coupled vortex equations determined by a pair $([g],A)$, where $A$ is a quadratic holomorphic differential. Theorem \ref{thm:QFF} is proved here.
\item Section \ref{section:helicity} establishes the relation between the helicity and the linking form given by Theorem \ref{thm:helicity-formula}.

\item Finally, there are two appendices: Appendix \ref{app:A} deduces some standard properties of the ladder operators $\eta_\pm$, while Appendix \ref{app:B} studies the behaviour of the geodesic vector field under re-scaling.

\end{itemize}

\medskip

\noindent {\bf Acknowledgements.} 
MC has received funding from an Ambizione grant (project number 201806) from the Swiss National Science Foundation. During the course of writing this paper, he was also supported by the European Research Council (ERC) under the European Union’s Horizon 2020 research and innovation programme (grant agreement No. 725967). GPP thanks the University of Z\"urich for hospitality while this work was in progress; he was supported by NSF grant DMS-2347868 during completion of this work. The authors are grateful to Thibault Lefeuvre for pointing out the reference \cite{GuedesBonthonneau-Lefeuvre-19}, to Sebasti\'an Mu\~noz-Thon for spotting several typos, and to the referee for numerous comments and suggestions that considerably improved the presentation.

\section{Preliminaries}\label{sec:preliminaries}

Let $X$ generate an Anosov flow $\varphi_t$ on a closed manifold $\mc{M}$. Denote by $\Omega^k$ the vector bundle of differential $k$-forms, and by $\Omega_0^k = \Omega^k \cap \ker \iota_X$ the bundle of $k$-forms that are in the kernel of the contraction $\iota_X$ with the vector field $X$.

Let $\mc{E}$ be a vector bundle over $\mc{M}$. Denote by $\mc{D}'(\mc{M}; \mc{E})$ the space of distributional sections of $\mc{E}$. Given a closed conic set $\Gamma \subset T^*M$, denote
\[\mc{D}'_{\Gamma}(\mc{M}; \mc{E}) = \{u \in \mc{D}'(\mc{M}; \mc{E}) \mid \WF(u) \subset \Gamma\}.\]
For $m \in \mathbb{R}$, denote the space of \emph{pseudodifferential operators} of order $m$ acting on sections of $\E$ over $\M$ by $\Psi^m(\M; \E)$; when this action is on functions, write simply $\Psi^m(\M)$. For background on distribution theory and the wavefront set, and pseudodifferential operators, see \cite[Chapters II and VIII]{Hormander-90}, and \cite[Chapter XVIII]{Hormander-07-III} or \cite[Appendix E]{Dyatlov-Zworski-19}, respectively.

Let us note that the treatment of the Pollicott-Ruelle resonances as eigenvalues of the flow generator 	in anisotropic Banach or Hilbert spaces has been developed in the past twenty years by Baladi \cite{Baladi-05}, Baladi-Tsujii \cite{Baladi-Tsujii-07}, Blank-Keller-Liverani \cite{Blank-Keller-Liverani-02}, Gou\"ezel-Liverani \cite{Gouezel-Liverani-06}, and Liverani \cite{Liverani-04}. In this paper (see \S \ref{ssec:resolvent} below) we take the viewpoint of microlocal methods for dynamical resonances, introduced by Faure-Sj\"ostrand \cite{Faure-Sjostrand-11} and Dyatlov-Zworski \cite{Dyatlov-Zworski-16}.

\subsection{The resolvent and Pollicott-Ruelle resonances}\label{ssec:resolvent}

For $s \in \mathbb{C}$ with sufficiently large real part, we may define the resolvent $R_k^+(s) = (\Lie_X + s)^{-1}$ acting on $L^2(\mc{M}; \Omega^k)$ by the expression:
\begin{equation}\label{eq:resolvent-identity}
	(\Lie_X + s)^{-1} = \int_0^\infty e^{-st} \varphi_{-t}^*\, dt: L^2(\mc{M}; \Omega^k) \to L^2(\mc{M}; \Omega^k),
\end{equation}
where $\varphi_{-t}^*$ denotes the pullback operation by $\varphi_{-t}$. It has been shown by \cite{Faure-Sjostrand-11} and \cite{Dyatlov-Zworski-16} that $R_k^+(s)$ admits a meromorphic extension to the entire complex plane as a map $R_k^+(s): C^\infty(\mc{M}; \Omega^k) \to \mc{D}'(\mc{M}; \Omega^k)$. At a pole $s_0 \in \mathbb{C}$, the resolvent admits a Laurent expansion (see \cite{Dyatlov-Zworski-16}):
\begin{equation}\label{eq:laurent}
	R_k^+(s) = R^{+, H}_k(s; s_0) + \sum_{i = 1}^{J_k(s_0)}\frac{(-(\Lie_X + s_0))^{i - 1}\Pi_k^+(s_0)}{(s - s_0)^{i}}.
\end{equation}
Here $\Pi_k^+(s_0)$ is a finite rank operator that extends to a map $\Pi_k^+(s_0): \mc{D}'_{E_u^*}(\mc{M}; \Omega^k) \to \mc{D}'_{E_u^*}(\mc{M}; \Omega^k)$ and satisfies $\Pi_k^+(s_0)^2 = \Pi_k^+(s_0)$. Moreover, $J_k(s_0) \geq 1$ is the least integer such that we have $(\Lie_X + s_0)^{J_k(s_0)} \Pi_k^+(s_0) = 0$ and $R^{+, H}_k(s; s_0)$ is a holomorphic function defined near $s = s_0$. The map $R_k^{+, H}(s; s_0)$ also extends to a map $R_k^{+, H}(s; s_0): \mc{D}'_{E_u^*}(\M; \Omega^k) \to \mc{D}'_{E_u^*}(\M; \Omega^k)$ by \cite{Dyatlov-Zworski-16}. We call the poles of $R_k^+(s)$ \emph{resonances}.

Given $s_0 \in \mathbb{C}$, and $\ell \in \mathbb{Z}_{\geq 1}$ we introduce (similarly as in the introduction)
\[\Res^{k, \ell} (s_0) = \{u \in \mc{D}'_{E_u^*}(\mc{M}; \Omega^k) \mid\,(\Lie_X + s_0)^{\ell}u = 0\}.\]
Denote $\Res^k(s_0) := \Res^{k, 1}(s_0)$ and $\Res^{k, \infty}(s_0) := \cup_{\ell \geq 1} \Res^{k, \ell}(s_0)$, and call the elements of $\Res^{k}(s_0)$ and $\Res^{k, \infty}(s_0)$ \emph{resonant states} and \emph{generalised resonant states}, respectively. It can be checked that $s_0 \in \mathbb{C}$ is a pole of $R_k^+(s)$ if and only if $\Res^k(s_0)$ is non-trivial, and that $\ran \Pi^+_k(s_0) = \Res^{k, \infty}(s_0)$. By the residue theorem, we have
\begin{equation}\label{eq:projector+}
	\Pi_k^+ = \frac{1}{2\pi i} \oint_{s_0} R_k^+(s)\, ds, 
\end{equation}
where $\oint_{s_0}$ denotes integration along a small contour around $s = s_0$. Say that \emph{semisimplicity} holds for the action of $\Lie_X$ on $\Omega^k$ at $s_0$ if $\Res^{k, \infty}(s_0) = \Res^k(s_0)$.

Similarly, the resolvent for the backwards flow $R_k^-(s) = (-\Lie_X + s)^{-1}$ is well-defined on $L^2(\mc{M}; \Omega^k)$ for all $s \in \mathbb{C}$ with large real part. It also admits a meromorphic extension as a map $R_k^-(s): C^\infty(\mc{M}; \Omega^k) \to \mc{D}'(\mc{M}; \Omega^k)$. For $s_0 \in \mathbb{C}$, call the resonant states of $-\Lie_X + s_0$ \emph{coresonant states} and decorate the analogous spaces with an additional subscript $*$: $\Res^{k, \ell}_*(s_0)$, $\Res^{k, \infty}_*(s_0)$, $\Res^{k}_*(s_0)$. The projector at $s = s_0$ is denoted by $\Pi_k^-(s_0)$.

One may define analogous objects for the action of $\Lie_X$ on the bundle $\Omega_0^k$. In that case, denote the resulting objects with an additional zero subscript:
\[\Res^{k, \ell}_{0(*)}(s_0), \Res^{k, \infty}_{0(*)}(s_0), \Res^{k}_{0(*)}(s_0), \Pi_{k, 0}(s_0)^{\pm}.\]
For $s_0 = 0$, in order to simplify the notation we omit the $0$ in parentheses and denote the resulting objects as
\[\Res^{k, \ell}_{0(*)}, \Res^{k, \infty}_{0(*)}, \Res^{k}_{0(*)}, \Pi_k^\pm, \Pi_{k, 0}^{\pm}.\]
Since $\Pi_k^+$ is given by the contour integral \eqref{eq:projector+} (and similarly for $\Pi_k^-$), and the Lie derivative $\Lie_X$ commutes with the exterior differential $d$ and the contraction $\iota_X$, we have on the domain of $\Pi_k^\pm$:
\begin{equation}\label{eq:commute-projector}
	d \Pi_k^\pm = \Pi_{k + 1}^\pm d, \quad \Pi_{k - 1}^\pm \iota_X = \iota_X \Pi_{k}^\pm. 
\end{equation}

\subsection{The pairings}\label{ssec:pairing}
Let $\alpha \in C^\infty(\mc{M}; \Omega^1)$ be an arbitrary $1$-form such that $\alpha(X) = 1$. Introduce the following non-degenerate bilinear pairing $\llangle{\bullet, \bullet}\rrangle$ (similar to the contact case studied in \cite{Cekic-Dyatlov-Delarue-Paternain-22}):
\[u \in C^\infty(\mc{M}; \Omega^{k}_0),\,\, u_* \in C^\infty(\mc{M}; \Omega^{2 - k}_0),\,\, \llangle{u, u_*}\rrangle := \int_{\mc{M}} \alpha \wedge u \wedge u_*.\]
Note that the pairing does not depend on the choice of $\alpha$, and that by the wavefront set calculus it extends to $\mc{D}'_{E_u^*}(\M; \Omega_0^k) \times \mc{D}'_{E_s^*}(\M; \Omega_0^{2 - k})$.

If $A: C^\infty(\mc{M}; \Omega^{k}_{0}) \to \mc{D}'(\mc{M}; \Omega^{k}_0)$ is a continuous operator, denote by $A^T: C^\infty(\mc{M}; \Omega^{2 - k}_{0}) \to \mc{D}'(\mc{M}; \Omega^{2 - k}_0)$ its transpose with respect to $\llangle{\bullet, \bullet}\rrangle$. In particular, observe it holds that:
\begin{equation}\label{eq:transpose-easy}
	(\Lie_X + s)^T = -\Lie_X + s, \qquad s \in \mathbb{C}.
\end{equation}
It follows that, using \eqref{eq:projector+} and meromorphic continuation from $s \in \mathbb{C}$ with large real part:
\begin{equation}\label{eq:proj-tranpose}
	(\Pi_{k, 0}^+)^T = \Pi_{2 - k, 0}^-.
\end{equation}
More precisely, from \eqref{eq:transpose-easy}, for large $\re (s)$ we have $\llangle{(\Lie_X + s)^{-1}u, u_*}\rrangle = \llangle{u, (-\Lie_X + s)^{-1}u_*}\rrangle$, and by meromorphic continuation for all $s \in \mathbb{C}$. Using \eqref{eq:projector+} proves the claim. Moreover we can relate semisimplicity for $\Lie_X$ on $\Omega_0^k$ with the pairing $\llangle{\bullet, \bullet}\rrangle$:

\begin{lemma}\label{lemma:semisimple} The Lie derivative $\Lie_X$ acting on $\Omega^1_0$ is semisimple at $s = 0$ if and only if $\llangle \bullet,\bullet\rrangle$ is non-degenerate on $\Res_0^1 \times \Res^{1}_{0*}$.
\end{lemma}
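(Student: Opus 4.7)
The plan is to exploit the transpose identity $(\Pi_{1,0}^+)^T = \Pi_{1,0}^-$ from \eqref{eq:proj-tranpose} together with the nilpotent structure of $\Lie_X$ on the generalised eigenspace at zero. I would begin by establishing the auxiliary fact that $\llangle\bullet,\bullet\rrangle$ is \emph{always} non-degenerate on $\Res_0^{1,\infty} \times \Res_{0*}^{1,\infty}$: this pairing makes sense because $E_u^* \cap E_s^* = \{0\}$ and the wavefront set calculus applies. Given $u \in \Res_0^{1,\infty}$ pairing to zero with every element of $\Res_{0*}^{1,\infty}$, the identities $u = \Pi_{1,0}^+ u$ and $\Pi_{1,0}^- \tilde u_* \in \Res_{0*}^{1,\infty}$ combine with the transpose relation to give
\[\llangle u, \tilde u_*\rrangle = \llangle \Pi_{1,0}^+ u, \tilde u_*\rrangle = \llangle u, \Pi_{1,0}^- \tilde u_*\rrangle = 0 \quad \text{for every } \tilde u_* \in C^\infty(\M; \Omega_0^1),\]
and a short local computation with a smooth frame of the rank $2$ bundle $\ker \iota_X$ then recovers $u = 0$.

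Next I would observe that semisimplicity of $\Lie_X$ on $\Omega_0^1$ is equivalent to semisimplicity of $-\Lie_X$ on $\Omega_0^1$: indeed $\Lie_X \Pi_{1,0}^+ = 0$ transposes via \eqref{eq:proj-tranpose} to $-\Lie_X \Pi_{1,0}^- = 0$. Consequently, under semisimplicity, $\Res_0^1 = \Res_0^{1,\infty}$ and $\Res_{0*}^1 = \Res_{0*}^{1,\infty}$, and the non-degeneracy on the geometric resonant spaces is inherited from the generalised ones by the first paragraph.

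For the reverse implication I would argue by contrapositive. If semisimplicity fails, then $N := \Lie_X|_{\Res_0^{1,\infty}}$ is a nonzero nilpotent endomorphism of a finite dimensional vector space, so $\ran N \cap \ker N$ is nontrivial, furnishing a nonzero $w = \Lie_X v$ with $v \in \Res_0^{1,\infty}$ and $w \in \Res_0^1$. For every $u_* \in \Res_{0*}^1$,
\[\llangle w, u_*\rrangle = \llangle \Lie_X v, u_*\rrangle = -\llangle v, \Lie_X u_*\rrangle = 0,\]
so $w$ lies in the radical and the pairing is degenerate on $\Res_0^1 \times \Res_{0*}^1$.

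The only genuinely technical point is the auxiliary non-degeneracy in the first paragraph, but once \eqref{eq:proj-tranpose} has been invoked it reduces to the elementary statement that a distribution $u \in \mc{D}'(\M; \Omega_0^1)$ which annihilates $\alpha \wedge \tilde u_*$ against every smooth $\tilde u_* \in C^\infty(\M; \Omega_0^1)$ must itself vanish; the remaining steps are essentially formal consequences of the transpose relation and the nilpotency of $\Lie_X$ on the generalised eigenspace.
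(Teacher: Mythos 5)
Your proof is correct and follows essentially the same route as the paper: both directions rest on the transpose identity $(\Pi_{1,0}^+)^T=\Pi_{1,0}^-$ (giving coresonant semisimplicity and the reduction to pairing against smooth forms) and on the integration-by-parts identity $\llangle \Lie_X v, u_*\rrangle=-\llangle v,\Lie_X u_*\rrangle$ applied to the nilpotent part. The only difference is organisational: you isolate the unconditional non-degeneracy on $\Res_0^{1,\infty}\times\Res_{0*}^{1,\infty}$ (the $\llangle\bullet,\bullet\rrangle$-analogue of Proposition \ref{prop:non-degenerate}) as a separate step, whereas the paper runs the same computation inline.
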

\begin{proof} 
	Assume first that the pairing is non-degenerate and $\Lie_X^2 u = 0$ for some $u \in \mc{D}'_{E_u^*}(\mc{M}; \Omega_0^1)$. Set $v := \Lie_X u$. Then $v \in \Res_0^1$ and for any $u_* \in \Res_{0*}^1$ we have
	\[\llangle{v, u_*}\rrangle = \llangle{\Lie_X u, u_*}\rrangle = -\llangle{u, \Lie_Xu_*}\rrangle = 0.\]
	By non-degeneracy $\Lie_X u = v = 0$ and the semisimplicity condition is verified.
	
	For the other direction, assume that the semisimplicity condition holds. First we check that semisimplicity holds also for coresonant states. Indeed, since by assumption $\Lie_X \Pi^+_{1, 0} = \Pi^+_{1, 0} \Lie_X \equiv 0$, transposing we get $\Lie_X \Pi_{1, 0}^- \equiv 0$, which proves the claim. 
	
	Next, assume $u \in \Res_0^1$ satisfies $\llangle{u, u_*}\rrangle = 0$ for all $u_* \in \Res_{0*}^1$. Take any $\varphi \in C^\infty(\mc{M}; \Omega^1_0)$ and compute:
	\[0 = \llangle{u, \Pi^{-}_{1, 0}\varphi}\rrangle = \llangle{u, \varphi}\rrangle,\]
	where in the second equality we used \eqref{eq:proj-tranpose} and $\Pi^{+}_{1, 0}u = u$. By the non-degeneracy of $\llangle{\bullet, \bullet}\rrangle$, we conclude that $u \equiv 0$, proving the result.
\end{proof} 

There is another non-degenerate pairing on $C^\infty(\M; \Omega^k) \times C^\infty(\M; \Omega^{3 - k})$, with an extension to $\mc{D}'_{E_u^*}(\M; \Omega^k) \times \mc{D}'_{E_s^*}(\M; \Omega^{3 - k})$ given by
\[\langle{u, u_*}\rangle := \int_{\M} u \wedge u_*.\]
Note that similarly to $\llangle{\bullet, \bullet}\rrangle$, with respect to this pairing the transpose satisfies 
\begin{equation}\label{eq:proj-transpose-2}
	(\Pi_k^+)^T = \Pi_{3 - k}^-.
\end{equation}
In fact, we have:
\begin{prop}\label{prop:non-degenerate}
 The pairing $\langle{\bullet, \bullet}\rangle$ is non-degenerate on $\Res^{k, \infty} \times \Res^{3 - k, \infty}_*$. Moreover, the action of $\Lie_X$ on $\Omega^k$ is semisimple if and only if the pairing $\langle{\bullet, \bullet}\rangle$ is non-degenerate on $\Res^{k} \times \Res^{3 - k}_*$.
\end{prop}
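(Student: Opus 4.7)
The plan is to mirror the argument used for Lemma \ref{lemma:semisimple}, replacing the pairing $\llangle{\bullet,\bullet}\rrangle$ by $\langle{\bullet,\bullet}\rangle$ and exploiting the transpose relation \eqref{eq:proj-transpose-2} together with the identifications $\Res^{k,\infty} = \ran \Pi_k^+$ and $\Res_*^{3-k,\infty} = \ran \Pi_{3-k}^-$. Throughout I use that $\langle{\bullet,\bullet}\rangle$ extends by the wavefront set calculus to $\mc{D}'_{E_u^*}(\M;\Omega^k) \times \mc{D}'_{E_s^*}(\M;\Omega^{3-k})$ (since $E_u^* \cap (-E_s^*) = \emptyset$), and that $\Lie_X^T = -\Lie_X$ with respect to $\langle{\bullet,\bullet}\rangle$, which follows from $\int_\M \Lie_X(u \wedge u_*) = 0$ on a closed manifold via Cartan's magic formula.

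For non-degeneracy on the generalised spaces, I assume $u \in \Res^{k,\infty}$ satisfies $\langle u, u_* \rangle = 0$ for all $u_* \in \Res_*^{3-k,\infty}$. For an arbitrary test form $\varphi \in C^\infty(\M;\Omega^{3-k})$, the section $\Pi_{3-k}^- \varphi$ lies in $\Res_*^{3-k,\infty}$, so
\begin{equation*}
0 = \langle u, \Pi_{3-k}^- \varphi \rangle = \langle (\Pi_{3-k}^-)^T u, \varphi \rangle = \langle \Pi_k^+ u, \varphi \rangle = \langle u, \varphi \rangle,
\end{equation*}
using \eqref{eq:proj-transpose-2} and $\Pi_k^+ u = u$. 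Since $\varphi$ is arbitrary, $u = 0$; a symmetric argument treats the other slot.

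For the semisimplicity characterisation, note first that $\Lie_X$ commutes with $\Pi_{3-k}^-$ since both come from the Laurent expansion of the same resolvent. In the forward direction, semisimplicity on $\Omega^k$ at zero gives $\Lie_X \Pi_k^+ \equiv 0$; transposing and invoking \eqref{eq:proj-transpose-2} yields $\Pi_{3-k}^- \Lie_X \equiv 0$, and combined with commutation this gives $\Lie_X \Pi_{3-k}^- \equiv 0$. Therefore $\Res^{k,\infty} = \Res^k$ and $\Res_*^{3-k,\infty} = \Res_*^{3-k}$, and non-degeneracy on these smaller spaces is inherited from the first part. For the converse, suppose the pairing is non-degenerate on $\Res^k \times \Res_*^{3-k}$ but semisimplicity fails; then there exist $\ell \geq 2$ and $u \in \Res^{k,\infty}$ with $\Lie_X^\ell u = 0$ and $v := \Lie_X^{\ell-1} u \neq 0$. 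Now $v \in \Res^k \setminus \{0\}$, while for every $u_* \in \Res_*^{3-k}$,
\begin{equation*}
\langle v, u_* \rangle = \langle \Lie_X^{\ell-1} u, u_* \rangle = (-1)^{\ell-1} \langle u, \Lie_X^{\ell-1} u_* \rangle = 0
\end{equation*}
because $\Lie_X u_* = 0$, contradicting non-degeneracy. The only delicate point throughout is ensuring the pairings and transposes extend to distributional sections via the wavefront set calculus, but this is already guaranteed by the standing hypotheses $\WF \subset E_u^*$ (resp. $E_s^*$) on the (co)resonant states.
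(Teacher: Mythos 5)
Your proof is correct and follows essentially the same route as the paper: the first part is the identical projector–transpose argument via \eqref{eq:proj-transpose-2} testing against $\Pi_{3-k}^-\varphi$ for smooth $\varphi$, and the second part mirrors the proof of Lemma \ref{lemma:semisimple} (transferring semisimplicity to the coresonant side by transposing, and deriving the converse by pairing $\Lie_X^{\ell-1}u$ against coresonant states), which is exactly what the paper invokes.
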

\begin{proof}
	It suffices to consider some $u \in \Res^{k, \infty}$ such that 
\[\langle{u, u_*}\rangle = 0, \quad \forall u_* \in \Res_*^{k, \infty},\]
and show $u = 0$. In particular, for any $\varphi \in C^\infty(\M; \Omega^{3-k})$ it holds that
\[0 = \langle{u, \Pi_{3-k}^- \varphi}\rangle = \langle{u, \varphi}\rangle,\]
where we used \eqref{eq:proj-transpose-2} and $\Pi_k^+ u = u$ in the second equality. Therefore $u = 0$, by the fact that the pairing on $C^\infty(\M; \Omega^k) \times C^\infty(\M; \Omega^{3 - k})$ is non-degenerate.

The second claim is proved analogously to Lemma \ref{lemma:semisimple}, using \eqref{eq:proj-transpose-2}.
\end{proof}

\subsection{Mapping properties of the resolvent}

For $s \in \mathbb{R}$, denote by $C_*^s(\mc{M})$ the H\"older-Zygmund space of regularity $s$ on $\M$. See \cite[Section 2]{GuedesBonthonneau-Lefeuvre-20} or \cite[Section 1.8]{Taylor-III-96edition} for a definition and background. In particular, a pseudodifferential operator of order $m$ is a bounded map $C_*^s(\M) \to C_*^{s - m}(\M)$. For $s$ integer and non-negative, these spaces agree with the usual H\"older spaces, and satisfy $C^k(\mc{M}) \subsetneqq C^k_*(\mc{M})$ for $k \in \mathbb{Z}_{\geq 0}$. Let us denote $C_*^{s-}(\M) := \cap_{t < s} C_*^{t}(\M)$. We need the following statement about the mapping properties of the resolvent on H\"older-Zygmund spaces:

\begin{prop}\label{prop:mapping}
	Let $X$ generate an Anosov flow. Then:
	\begin{enumerate}[itemsep=5pt]
	\item[1.] Let $r \in C^{\alpha}(\mc{M})$ for some $\alpha > 0$, such that $\liminf_{t \to \infty}\inf_{x\in\mc{M}} \frac{1}{t} \int_0^t \varphi_{-p}^*r(x)\, dp > \nu$ for some $\nu > 0$. Then there exist $\delta, \delta_1 > 0$, such that $(X + r + s)^{-1}$ exists and is holomorphic as a map $C^0(\M) \to C^0(\M)$ and $C^{\delta}(\M) \to C^{\delta}(\M)$ in the region $\re(s) > -\delta_1 > - \nu$.
	\item[2.] There exists $C > 0$, such that for any $\alpha > 0$ the resolvent $R_0^+(s) = (X + s)^{-1}: C^\alpha_*(\mc{M}) \to C^{-\alpha}_*(\mc{M})$ is meromorphic for $\re(s) > -C\alpha$. In particular, the holomorphic part of the resolvent at zero $R^{+,H}_0: C^\infty(\mc{M}) \to \mathcal{D}'(\mc{M})$ extends as an operator $C^\alpha_*(\mc{M}) \to C^{-\alpha}_*(\mc{M})$ for all $\alpha > 0$.
	\end{enumerate}
\end{prop}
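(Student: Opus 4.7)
\textbf{Part 1.} My plan is to solve the transport equation $(X + r + s)u = f$ directly along orbits. Setting $v(t) := u(\varphi_t x)$ reduces it to a scalar linear ODE, and the orbit-bounded solution produces the explicit formula
\begin{equation*}
(X + r + s)^{-1} f(x) \;=\; \int_0^\infty e^{-s t}\,\exp\!\Bigl(-\!\int_0^t \varphi_{-\tau}^*r(x)\,d\tau\Bigr)\,\varphi_{-t}^*f(x)\,d t.
\end{equation*}
By the uniform liminf hypothesis one picks $\nu' \in (\nu, \liminf_{t\to\infty}\tfrac{1}{t}\int_0^t \varphi_{-p}^*r\,dp)$ and $T > 0$ with $\int_0^t \varphi_{-\tau}^*r\,d\tau \geq \nu' t$ for all $t \geq T$ and all $x$; together with the trivial bound on $[0,T]$ this gives $\exp\bigl(-\int_0^t\varphi_{-\tau}^*r\,d\tau\bigr) \leq C e^{-\nu' t}$. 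The $C^0 \to C^0$ boundedness and holomorphy in the half-plane $\re(s) > -\delta_1$, for any $\delta_1 \in (0,\nu)$, are immediate. For the $C^\delta$ estimate one combines the Anosov derivative growth $\|d\varphi_{-t}\|_\infty \leq C e^{Ct}$ with $r \in C^\alpha$ to bound the $\delta$-H\"older seminorms of $\varphi_{-t}^*f$ and of the multiplicative weight by $C e^{C\delta t}$ times the $C^\delta$ norms of the inputs; shrinking $\delta$ so that $C\delta < \nu' - \delta_1$ restores absolute convergence in $C^\delta$, hence holomorphy.

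\textbf{Part 2.} Here I would invoke the anisotropic Fredholm calculus of Faure--Sj\"ostrand \cite{Faure-Sjostrand-11} and Dyatlov--Zworski \cite{Dyatlov-Zworski-16} in its H\"older--Zygmund form as developed in \cite{GuedesBonthonneau-Lefeuvre-20}. For each $N > 0$ one builds a microlocal escape weight $e^{N G_N}$ with $G_N \in S^0_{\log}(T^*\M)$ negative on a conic neighbourhood of $E_u^*$, positive on a conic neighbourhood of $E_s^*$, and with $X_H G_N \leq -c$ outside a compact set. Conjugating $X+s$ by $\mathrm{Op}(e^{NG_N})$ yields an operator that is elliptic plus relatively compact on the associated anisotropic H\"older--Zygmund space $\mc{H}^{N}(\M)$, and the standard semiclassical Fredholm argument shows that $R_0^+(s): \mc{H}^{N} \to \mc{H}^{N}$ extends meromorphically to the half-plane $\re(s) > -\kappa N$ for some $\kappa > 0$ independent of $N$.

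The decisive step is then to prove continuous inclusions
\[
C^{\alpha}_*(\M) \;\hookrightarrow\; \mc{H}^{c_1 \alpha}(\M) \;\hookrightarrow\; C^{-\alpha}_*(\M),
\]
for some $c_1 > 0$ independent of $\alpha$, after which the meromorphic continuation in $\re(s) > -\kappa c_1 \alpha =: -C\alpha$ follows by sandwiching, and the claim about $R_0^{+, H}$ is read off the Laurent expansion \eqref{eq:laurent}.

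The main obstacle is precisely this pair of inclusions: $\mathrm{Op}(e^{NG_N})$ is an anisotropic frequency weight while $C^{\pm\alpha}_*$ are isotropic, so one has to extract from $|NG_N| \lesssim N\log\langle\xi\rangle$ a clean statement about gaining or losing $N$ H\"older--Zygmund derivatives in each microlocal region. This calls for the Littlewood--Paley / paradifferential framework of \cite{GuedesBonthonneau-Lefeuvre-20}, where an analogous meromorphic continuation for $(X+s)^{-1}$ is carried out; the content to verify here is merely that the threshold constant is linear in $\alpha$, which reduces to tracking the dependence on $N$ both of $\kappa$ (which is independent by construction) and of the embedding constants $c_1$ produced by the comparison argument.
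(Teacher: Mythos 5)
Your argument is correct and is essentially the paper's: the same resolvent integral \eqref{eq:resolvent-formula} gives the $C^0\to C^0$ statement, and the only cosmetic difference is that you bound the $\delta$-H\"older seminorms of $\varphi_{-t}^*f$ and of the multiplicative weight directly, whereas the paper gets the $C^\delta\to C^\delta$ bound by interpolating between $C^0$ and $C^1$; both give the needed $e^{C\delta t}$ growth to be absorbed by $e^{-\nu' t}$ after shrinking $\delta$.

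\textbf{Part 2.} Here there is a genuine gap, and you have named it yourself: the two inclusions $C_*^\alpha(\M)\hookrightarrow \mc{H}^{c_1\alpha}\hookrightarrow C_*^{-\alpha}(\M)$, together with the uniformity in $N$ of the width of the continuation strip, \emph{are} the quantitative content of Item 2, and deferring them to ``the framework of \cite{GuedesBonthonneau-Lefeuvre-20}'' does not discharge them. The weight $\Op(e^{NG_N})$ is a variable-order operator, and comparing the resulting anisotropic space with the isotropic scale $C_*^{\pm\alpha}$ is exactly where the work lies; in particular, if the weighted space is built on the usual $L^2$-based scale (as the spaces $\mc{H}_{rG}$ used elsewhere in this paper), the embeddings into and from $C_*^{\pm\alpha}$ fail for small $\alpha$ because of dimension-dependent Sobolev losses, so the entire Faure--Sj\"ostrand/Dyatlov--Zworski construction would have to be re-done on the H\"older--Zygmund scale with all constants tracked linearly in $\alpha$. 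This is precisely why the paper proceeds differently, following \cite[Theorem 4.2]{GuedesBonthonneau-Lefeuvre-19}: one fixes $A\in\Psi^0(\M)$ microlocally equal to $\mathbbm{1}$ on a conic neighbourhood of $E_u^*$ (with $\WF(A)$ avoiding $E_s^*\oplus E_0^*$), sets $\|u\|_\star:=\|Au\|_{C_*^{-\alpha}}+\|(\mathbbm{1}-A)u\|_{C_*^{\alpha}}$, and averages $\|\varphi_{-t}^*u\|_\star$ over a fixed time window $[0,T]$ chosen by the Anosov property. With this choice the inclusions $C_*^\alpha\subset\mathbf{C}^\alpha\subset C_*^{-\alpha}$ are immediate (no weight calculus needed), the propagator bound $\|\varphi_{-t}^*\|_{\mathbf{C}^\alpha\to\mathbf{C}^\alpha}\leq C_0e^{C_0\alpha t}$ delivers the linear-in-$\alpha$ threshold, and the radial sink/source estimates in H\"older--Zygmund spaces of \cite{GuedesBonthonneau-Lefeuvre-20} give the semi-Fredholm estimate $\|u\|_{\mathbf{C}^\alpha}\leq C(\|(X+s)u\|_{\mathbf{C}^\alpha}+\|Ku\|_{\mathbf{C}^\alpha})$ for $\re(s)>-C\alpha$, after which the analytic Fredholm theorem yields the meromorphic continuation and the claim about $R_0^{+,H}$ follows from the Laurent expansion \eqref{eq:laurent}, as you say. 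If you wish to keep your escape-function formulation, you must actually carry out the variable-order paradifferential calculus on the $C_*^s$ scale and prove both embeddings (and the independence of $\kappa$ from $N$) rather than assert them; as written, the decisive step of your Part 2 is missing.
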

\begin{proof}
	For Item 1, it suffices to use the resolvent formula
	\begin{equation}\label{eq:resolvent-formula}	
		(X + r + s)^{-1} = \int_0^\infty e^{-st} e^{-\int_0^t \varphi_{-p}^*r\, dp} \varphi_{-t}^*\, dt.
	\end{equation}
	The conclusion on the mapping properties as a map $C^0(\M) \to C^0(\M)$ follows directly from the formula, and the case of $C^\delta(\M) \to C^\delta(\M)$ follows for small $\delta > 0$ by using interpolation between $C^0(\M)$ and $C^1(\M)$.
	
	Item 2 follows from the recent result \cite[Theorem 4.2]{GuedesBonthonneau-Lefeuvre-19} which was proved in the setting of the unit tangent bundle of a cusp manifold, but the proof carries over to our setting. Let us sketch the proof for completeness. Pick $A \in \Psi^0(\M)$ such that $A = \mathbbm{1}$ on a conical neighbourhood $\mc{C}_u^0$ of $E_u^*$ and $\WF(A)$ is contained in a larger conical neighbourhood $\mc{C}_u^1$ which does not intersect $E_s^* \oplus E_0^*$ (by definition, $E_0^*$ is the annihilator of $E_u \oplus E_s$). For $\alpha > 0$ introduce the anisotropic norm
	\[\|u\|_{\star} := \|Au\|_{C_*^{-\alpha}} + \|(\mathbbm{1} - A)u\|_{C_*^\alpha}.\]
	Choose $T > 0$ such that for all $t \geq T$ we have $\Phi_t(\mc{C}_u^1) \subset \mc{C}_u^0$, where $\Phi_t(x, \xi) = (\varphi_t x, \xi \circ (d\varphi_{t}(x))^{-1})$ is the Hamiltonian lift on $\varphi_t$ to $T^*\M$ (this choice is possible by the Anosov condition). Finally, as in \cite[equation (4.7)]{GuedesBonthonneau-Lefeuvre-19} introduce the norm:
	\[\|u\|_{\mathbf{C}^\alpha} := \int_0^T \|\varphi_{-t}^*u\|_{\star}\, dt,\]
	which defines a Banach space $\mathbf{C}^\alpha(\M)$. It is straightforward to show the continuous inclusions 
	\[C_*^{\alpha}(\M) \subset \mathbf{C}^\alpha(\M) \subset C_*^{-\alpha}(\M),\]
	so our conclusion will follow if we can show that $R_0^+(s)$ extends as a family of meromorphic operators acting on $\mathbf{C}^\alpha(\M)$ in the required region. As in \cite[Lemma 4.3]{GuedesBonthonneau-Lefeuvre-19}, it is possible to show there is a constant $C_0 > 0$ such that the propagator $\varphi_{-t}^*$ satisfies the bound
	\begin{equation}\label{eq:propagator-estimate}
		\|\varphi_{-t}^*u\|_{\mathbf{C}^\alpha} \leq C_0e^{C_0t\alpha} \|u\|_{\mathbf{C}^\alpha}, \quad \alpha > 0.
	\end{equation}
	Next, by the radial sink/source estimates in H\"older-Zygmund spaces proved in \cite{GuedesBonthonneau-Lefeuvre-20}, following the proof of \cite[Proposition 4.3]{GuedesBonthonneau-Lefeuvre-19} and using \eqref{eq:propagator-estimate}, it is possible to show that there exists a $C > 0$ such that for $\re(s) > -C\alpha$:
	\[\|u\|_{\mathbf{C}^\alpha} \leq C(\|(X + s)u\|_{\mathbf{C}^\alpha} + \|Ku\|_{\mathbf{C}^\alpha}), \qquad u \in C^\infty(\M),\]
	where $K$ is a smoothing operator. From this estimate, it follows that $X + s$ is a semi-Fredholm operator (it has finite dimensional kernel and closed range) with domain $\mc{D}(X) = \{u \in \mathbf{C}^\alpha(\M) \mid Xu \in \mathbf{C}^\alpha(\M)\}$ and hence $R_0^+(s): \mathbf{C}^\alpha(\M) \to \mathbf{C}^\alpha(\M)$ admits a meromorphic continuation by the analytic Fredholm theorem, completing the proof.
\end{proof}

\subsection{Geometry of surfaces}\label{ssec:geometry-surfaces}
For details about the content of this section, see \cite{Guillemin-Kazhdan-80, Merry-Paternain-11, Singer-Thorpe-76}. Let $(M, g)$ be an oriented closed surface and let $X$ be the geodesic vector field on the unit sphere bundle $SM$. Denote by $V$ the generator of the fibrewise rotation action and by $H$ the horizontal vector field. Let $K_g$ be the Gauss curvature of $(M, g)$. Then:
\begin{align}\label{eq:surface-geometry}
\begin{split}
	[H, V] &= X,\\
	[V, X] &= H,\\
	[X, H] &= KV.
\end{split}
\end{align}
Dually to the global frame $\{X, H, V\}$ define a global frame of $1$-forms $\{\alpha, \beta, \psi\}$. Set $\Omega := \alpha \wedge \psi \wedge \beta = \alpha \wedge d\alpha$ to be the canonical volume form on $SM$. For $k \in \mathbb{Z}$ and $x \in M$, introduce
\[\mho_k(x) = \{f \in C^\infty(S_xM) \mid V f = ik f\}.\]
It is straightforward to see that $\mho_k \to M$ has the structure of a smooth line bundle; a smooth section of this bundle may be seen as a smooth function on $SM$. In fact if $\mc{K} := (T_{\mathbb{C}}^*M)^{1, 0}$ is the associated \emph{canonical} bundle and $\mc{K}^{-1} := (T_{\mathbb{C}}^*M)^{0, 1}$, then for any $k \in \mathbb{Z}$, $\mho_k$ may be identified with the tensor power $\mc{K}^{\otimes k}$. For any $k \in \mathbb{Z}_{\geq 0}$ and $x \in M$, there are natural maps
\[\pi_{k}^*: (\otimes_S^k T_{\mathbb{C}}^*M)_x \to C^\infty(S_xM), \quad \pi_k^*T(v) = T(v, \dotsc , v),\]
where $\otimes_S^k T_{\mathbb{C}}^*M$ denotes the bundle of symmetric tensors of degree $k$. Denote by $\otimes_S^k T_{\mathbb{C}}^*M|_{0-\mathrm{tr}}$ the sub-bundle of \emph{trace-free} symmetric tensors, where a symmetric $k$-tensor $T$ at $x$ is trace free if for an orthonormal basis $(\mathbf{e}_i)_{i = 1}^2$ of $T_xM$, $T(\mathbf{e}_1, \mathbf{e}_1, \dotso) + T(\mathbf{e}_2, \mathbf{e}_2, \dotso) = 0$. It is straightforward to see that there are vector bundle isomorphisms
\[\forall k \in \mathbb{Z}_{> 0}, \quad \otimes_S^k T_{\mathbb{C}}^*M|_{0-\mathrm{tr}} = \mc{K}^{\otimes k} \oplus \mc{K}^{-\otimes k}; \quad \forall k \in \mathbb{Z}, \quad \pi_{|k|}^*: \mc{K}^{\otimes k} \xrightarrow{\cong} \mho_k.\]
For $k \in \mathbb{Z}$, set $H_k := C^\infty(M; \mho_k) \subset C^\infty(SM)$. Then $\pi_{|k|}^*$ identifies smooth sections of $\mc{K}^k$ with $H_k$.

By decomposing into eigenstates of $V$, it is straightforward to see that:
\[L^2(SM) = \bigoplus_{k \in \mathbb{Z}} L^2(M; \mho_k),\]
and for $f \in L^2(SM)$, let us write $f = \sum_{k \in \mathbb{Z}} f_k$ for the corresponding decomposition. We refer to $f_k$ as the \emph{Fourier mode of degree $k$} of $f$. Define the \emph{degree} of $f$ to be the maximal $k \in \mathbb{Z}_{\geq 0} \cup \{\infty\}$ such that $f_k \neq 0$ or $f_{-k} \neq 0$. Introduce the \emph{raising/lowering} operators $\eta_\pm$:
\[\eta_+ = \frac{X - iH}{2}, \quad \eta_- = \frac{X + iH}{2}.\]
 By \eqref{eq:surface-geometry} it follows that
\[[\eta_\pm, V] = \mp i \eta_\pm,\]
and therefore $\eta_\pm: H_{k} \to H_{k \pm 1}$. Another consequence of \eqref{eq:surface-geometry} is the following commutation relation:
\begin{equation}\label{eq:eta+eta-commutator}
	[\eta_+, \eta_-] = \frac{i}{2} K_gV.
\end{equation}

The Hodge star operator $\star: \Omega^1 \xrightarrow{\cong} \Omega^1$ is given by (oriented) rotation by $\frac{\pi}{2}$, and $\star: \Omega^0 \xrightarrow{\cong} \Omega^2$ by $\star(\mathbf{1}) = d\vol_g$, the volume form of $g$, where $\mathbf{1}$ is the constant function equal to $1$ everywhere. It can be checked that the co-differential on $1$-forms is given by $d^* = -\star d \star$. Moreover, if $X_-$ is defined on $H_{-1} \oplus H_1$ by $X_- (f_{-1} + f_1) = \eta_+ f_{-1} + \eta_- f_1$, by Proposition \ref{prop:X-} we have:
\begin{equation}\label{eq:X-}
	X_- \pi_1^* \gamma = -\frac{1}{2} \pi_0^* d^* \gamma, \quad \forall \gamma \in C^\infty(M; \Omega^1).
\end{equation}
Finally, it is standard and follows from the expressions for $X$ and $H$ in local isothermal coordinates (see \eqref{eq:isothermal-X-H}) that:
\begin{equation}\label{eq:X-simple}
		X \pi_0^* f = \pi_1^*(d f),\quad H \pi_0^* f = -\pi_1^*(\star d f), \quad \forall f \in C^\infty(M),
\end{equation}
and also similarly that:
\[V\pi_1^*\gamma = -\pi_1^*(\star \gamma), \quad \iota_H\pi^*(\star \gamma) = \pi_1^*\gamma, \quad \forall \gamma \in C^\infty(M; \Omega^1).\]

\subsection{Decomposition of differential forms}

In this section we give a standard result about decomposition of differential forms, similar to \cite[Lemma 2.1]{Dyatlov-Zworski-17}. Denote by $d^*$ the co-differential on $(\mc{M}, g)$, where $g$ is an arbitrary Riemannian metric on $\mc{M}$. Denote by $\Delta_k := d^* d + d d^*$ the \emph{Hodge Laplacian} on $\Omega^k$.

\begin{lemma}\label{lemma:hodge-decomposition}
	Let $\Gamma \subset T^*\mc{M} \setminus 0$ be a closed conic set. Let $u \in \mc{D}'_{\Gamma}(\mc{M}; \Omega^k) \cap C_*^s(\mc{M}; \Omega^k)$ for some $s \in \mathbb{R}$. Then there exist $v \in \mc{D}'_{\Gamma}(\mc{M}; \Omega^{k - 1}) \cap C_*^{s + 1}(\mc{M}; \Omega^{k-1})$, $w \in \mc{D}'_{\Gamma}(\mc{M}; \Omega^{k + 1}) \cap C_*^{s + 2}(\mc{M}; \Omega^{k+1})$, and $\theta \in C^\infty(\mc{M}; \Omega^{k})$, with $d^* v = 0$, $d w = 0$, such that
	\[u = dv + d^* w + \theta.\]
Moreover, if $du = 0$ then we may assume $w = 0$, thus we have $d\theta = 0$, and the cohomology class $[\theta] \in H^k(\M)$ is well-defined.
\end{lemma}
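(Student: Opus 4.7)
The plan is to use Hodge theory on the closed Riemannian manifold $(\mc{M}, g)$ combined with the fact that an elliptic pseudodifferential parametrix preserves both wavefront sets and H\"older-Zygmund regularity. The Hodge Laplacian $\Delta_k = d\delta + \delta d$ is an elliptic, self-adjoint operator in $\Psi^2(\mc{M}; \Omega^k)$ with finite-dimensional kernel $\mc{H}^k$, the space of harmonic $k$-forms. Consequently, it admits a Green's operator $G_k \in \Psi^{-2}(\mc{M}; \Omega^k)$ satisfying
$$\Delta_k G_k = G_k \Delta_k = I - H_k,$$
where $H_k$ denotes the $L^2$-orthogonal projection onto $\mc{H}^k$ and has smooth Schwartz kernel, so that $H_k$ maps distributions into $C^\infty(\mc{M}; \Omega^k)$.

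Applying this identity to $u$ and expanding $\Delta_k = d\delta + \delta d$, I would write
$$u = d(\delta G_k u) + \delta(d G_k u) + H_k u,$$
and then set
$$v := \delta G_k u, \quad w := d G_k u, \quad \theta := H_k u.$$
The identities $\delta v = \delta^2 G_k u = 0$ and $d w = d^2 G_k u = 0$ are then immediate from $d^2 = \delta^2 = 0$, while $\theta \in C^\infty(\mc{M}; \Omega^k)$ because harmonic forms on a closed manifold are smooth.

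To verify the wavefront set and regularity claims, I would use that the parametrix $G_k$, being a pseudodifferential operator of order $-2$, maps $C_*^{s}(\mc{M}; \Omega^k)$ continuously into $C_*^{s + 2}(\mc{M}; \Omega^k)$ and is pseudolocal; in particular $\WF(G_k u) \subset \WF(u) \subset \Gamma$. Applying the differential operators $d$ and $\delta$ (of order $1$, hence pseudolocal) then yields the desired H\"older-Zygmund regularity for $v$ and $w$ and preserves the inclusion of wavefront sets in $\Gamma$. This appeals only to standard mapping properties of $\Psi$DOs on H\"older-Zygmund spaces, as recalled in Proposition~\ref{prop:mapping} and \cite[Chapter XVIII]{Hormander-07-III}.

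The main (mild) obstacle is purely bookkeeping: one must track H\"older-Zygmund regularity and the microlocal constraint simultaneously through the parametrix. However, since $G_k$ is a genuine pseudodifferential operator, this is a direct consequence of the calculus, and the argument is essentially parallel to \cite[Lemma 2.1]{Dyatlov-Zworski-17}, where the analogous splitting is proved in the smooth category. I do not expect any new technical difficulty beyond applying the standard microlocal toolkit.
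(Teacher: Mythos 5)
Your proof is correct and follows essentially the same route as the paper: the paper's proof applies an elliptic parametrix $Q_k\in\Psi^{-2}(\mc{M};\Omega^k)$ of $\Delta_k$ and sets $v=\delta Q_k u$, $w=dQ_k u$, $\theta=u-\Delta_kQ_ku$, which differs from your exact Green operator $G_k$ only by a smoothing operator and so yields the identical wavefront-set and H\"older--Zygmund bookkeeping. The only distinction is cosmetic (parametrix modulo smoothing versus $\Delta_kG_k=I-H_k$), and your spelled-out microlocal verification is exactly what the paper leaves implicit.
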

\begin{proof}
	Let $Q_k \in \Psi^{-2}(\mc{M}; \Omega^k)$ be an elliptic parametrix of $\Delta_k$, i.e. so that
	\[Q_k \Delta_k - \id,\, \Delta_k Q_k - \id \in \Psi^{- \infty}(\mc{M}; \Omega^k).\]
	Setting $\theta := u - \Delta_k Q_k u$, $v := d^* Q_k u$, and $w := d Q_k u$, concludes the proof of the first claim.
	
	For the second claim, notice that using $[\Delta_k, d] = 0$ and the properties of the parametrix
		\[
			dQ_k \equiv Q_k \Delta_k d Q_k \equiv Q_k d \Delta_k Q_k \equiv Q_k d \mod \Psi^{-\infty}(\M).
		\]
		Thus, using the proof of the first claim, we may assume that $w = 0$. If we write $u = dv_i + \theta_i$ for some $v_i, \theta_i$ as above for $i = 1, 2$, then $\theta_1 - \theta_2 = d(v_2 - v_1) \in C^\infty(\M; \Omega^k)$. Using $d^*(v_2 - v_1) = 0$, elliptic regularity implies $v_2 - v_1$ is smooth and thus $[\theta]$ is well-defined, finishing the proof.
\end{proof}

\subsection{SRB measures and entropy production}\label{ssec:SRB-entropy} A probability measure $\SRB$ on $\mathcal{M}$ is called a Sinai-Ruelle-Bowen (SRB) measure if there is a probability volume form $\Omega$ on $\mathcal{M}$ such that (see \cite{Young-02})
\begin{equation}
	\SRB = \lim_{T \to \infty} \frac{1}{T} \int_0^T \varphi_{-t}^*\Omega\, dt,
\end{equation}
in the weak limit sense (i.e. when paired with any smooth function). It is known that for \emph{transitive} Anosov flows (recall that by definition transitive flows have a dense orbit), there exists a unique SRB measure, see for instance \cite{Butterley-Liverani-07}. This measure may be seen as a resonant state for the Lie derivative acting on differential forms of top degree. For a microlocal point of view, see \cite[Theorem 3]{GuedesBonthonneau-Guillarmou-Hilgert-Weich-20} who characterise SRB measures as invariant measures whose wavefront set lies in $E_u^*$. In this paper, we will use the latter description. Similarly, there is a unique SRB measure for the flow $-X$ that we denote by $\SRBs$ (it has wavefront set in $E_s^*$).

Next we examine the transformation law for SRB measures under a (positive) time-change $f \in C^\infty(\M)$. Assume $X$ is transitive with the unique SRB measure $\SRB$. Then the unique SRB measure $\SRB(fX)$ of $fX$ is given by the formula:
\[\SRB(fX) = \frac{f^{-1}\SRB}{\int_{\M} f^{-1}\SRB}.\]

Given an arbitrary smooth volume form $\Omega$ on $\M$, the \emph{entropy production} of the SRB measure is given by :
\[\e^{+}(X) := -\int_{\M} \divv_\Omega(X)\, \Omega_{\mathrm{SRB}}^+.\]
This quantity does not depend on the chosen volume form $\Omega$, since given a smooth function $f>0$, $\divv_{f\Omega}(X)-\divv_{\Omega}(X)=X(\log f)$.
Note that
\[\e^{+}(-X) = \int_{\M}\divv_\Omega(X)\, \Omega_{\mathrm{SRB}}^{-} =: \e^{-}(X).\]
An important property of entropy production was proved by Ruelle \cite[Theorem 1.2]{Ruelle-96}, who shows
\[\e^{+}(X) \geq 0,\] 
with equality if and only if $X$ preserves a smooth measure. Later in the article we shall see several interesting classes of Anosov vector fields with $\e^{+}(X)>0$.

\section{Dissipative Anosov flows -- general case}\label{sec:general}

In this section we discuss resonant forms at zero for an arbitrary topologically transitive Anosov flow $X$ on an oriented $3$-manifold $\mathcal{M}$, equipped with a Riemannian metric $g$ and a smooth volume form $\Omega$ that integrates to one.

As a first step, we analyse the spaces $\res_0^k$ for $k = 0, 2$ and $\Res^3$. The classification for $k = 1$ is relegated to Theorem \ref{thm:general}.

\begin{prop}\label{prop:k=0,2,3}
	We have:
	\begin{enumerate}
		\item[1.] $\dim \Res^{0, \infty} = \dim \Res ^{3, \infty} = 1$. The space $\Res^0 = \Res^{0, \infty}$ is spanned by constant functions, and $\Res^3$ is spanned by the SRB measure $\Omega_{\mathrm{SRB}}^+$;
		\item[2.] $\dim \Res_0^{2, \infty} = 1$. The space $\Res_0^2$ is spanned by $\iota_X\Omega_{\mathrm{SRB}}^+$.
	\end{enumerate}
\end{prop}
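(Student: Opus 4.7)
The plan is to handle the three cases in the order $\Res^{3,\infty}$, $\Res^{0,\infty}$, $\Res_0^{2,\infty}$, reducing the second to the first via the duality pairing of \S\ref{ssec:pairing} and the third via a fibrewise bundle isomorphism. For the top forms, I would show $\Res^3 = \Res^{3,\infty} = \mathbb{C}\,\Omega_{\mathrm{SRB}}^+$. By definition an element of $\Res^3$ is an $X$-invariant distributional 3-form with wavefront set in $E_u^*$, i.e.\ a distributional signed measure on $\M$ invariant under the flow with the right microlocal regularity; the microlocal characterisation of SRB measures recalled in \S\ref{ssec:SRB-entropy} (\cite[Theorem 1]{Butterley-Liverani-07}, \cite[Theorem 3]{GuedesBonthonneau-Guillarmou-Hilgert-Weich-20}) together with uniqueness of the SRB measure for a transitive Anosov flow identifies this space with $\mathbb{C}\,\Omega_{\mathrm{SRB}}^+$. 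For semisimplicity I would induct on the vanishing order: if $\Lie_X^\ell \omega = 0$ with $\ell \geq 2$, set $v := \Lie_X \omega \in \Res^{3,\infty}$; then $\Lie_X^{\ell - 1} v = 0$ gives by induction $v = c\,\Omega_{\mathrm{SRB}}^+$, and since $d\omega = 0$ for degree reasons one has $\Lie_X \omega = d(\iota_X \omega)$, whose distributional pairing with the constant $1$ vanishes by Stokes, forcing $c = 0$.

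For $\Res^{0,\infty}$, the analogous argument applied to $-X$ (also transitive Anosov) yields $\Res^{3,\infty}_* = \mathbb{C}\,\Omega_{\mathrm{SRB}}^-$. Proposition \ref{prop:non-degenerate} makes the pairing $\langle\cdot,\cdot\rangle$ non-degenerate on $\Res^{0,\infty} \times \Res^{3,\infty}_*$, so $\dim \Res^{0,\infty} = 1$; since the constant function $1$ is a non-zero element of $\Res^0 \subset \Res^{0,\infty}$, it spans the whole space and $\Res^0 = \Res^{0,\infty} = \mathbb{C}$.

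For $\Res_0^{2,\infty}$, a pointwise count in dimension three gives $\dim \Omega^3 = \dim \Omega_0^2 = 1$ wherever $X \neq 0$, while $\iota_X \iota_X = 0$ sends $\Omega^3$ into $\Omega_0^2$ and non-vanishing of $X$ ensures injectivity. Hence $\iota_X \colon \Omega^3 \to \Omega_0^2$ is an isomorphism of line bundles; it induces an isomorphism on distributional sections preserving wavefront sets, and commutes with $\Lie_X$ by Cartan's magic formula $[\Lie_X, \iota_X] = 0$. Therefore it restricts to an isomorphism $\Res^{3,\infty} \xrightarrow{\cong} \Res_0^{2,\infty}$, and the top-form step yields $\Res_0^2 = \Res_0^{2,\infty} = \mathbb{C}\,\iota_X \Omega_{\mathrm{SRB}}^+$.

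The only substantive input is the microlocal uniqueness of the SRB measure used in the first step, which I would cite rather than reprove; the remaining arguments are pure duality via the pairing of \S\ref{ssec:pairing} and an elementary line-bundle isomorphism in dimension three.
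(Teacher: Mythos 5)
Your proposal is correct and follows essentially the same route as the paper: citing the microlocal characterisation/uniqueness of the SRB measure for $\Res^3$, Stokes' theorem for semisimplicity on $\Omega^3$, the non-degenerate pairing of Proposition \ref{prop:non-degenerate} to handle $\Res^{0,\infty}$, and the reduction of $\Omega_0^2$ to $\Omega^3$ via $\iota_X$. Your phrasing of the last step as the line-bundle isomorphism $\iota_X:\Omega^3\to\Omega_0^2$ intertwining $\Lie_X$ is just a structural repackaging of the paper's step ``write $u=h\,\iota_X\Omega$, so $\Lie_X(h\Omega)=0$'', with the added benefit of giving $\Res^{3,\ell}\cong\Res_0^{2,\ell}$ for all $\ell$ at once.
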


\begin{proof}
	The proof is a straightforward consequence of well-known facts. By \cite[Theorem 3]{GuedesBonthonneau-Guillarmou-Hilgert-Weich-21} (or \cite[Theorem 1]{Butterley-Liverani-07}), the space $\Res^3$ for a transitive flow is spanned by the SRB measure $\SRB$. Moreover, semisimplicity holds: if $\Lie_X^2 \widetilde{\Omega} = 0$ for some $\widetilde{\Omega} \in \mc{D}'_{E_u^*}(\M; \Omega^3)$, then $\Lie_X \widetilde{\Omega} = c \SRB$ for some $c \in \mathbb{C}$. Stokes' theorem implies that $c = 0$, showing semisimplicity for the action of $\Lie_X$ on $\Omega^3$ and the semisimplicity for the action of $X$ on $\Omega^0$ (functions) follows by Proposition \ref{prop:non-degenerate}. By the latter fact it follows that $\Res^{0, \infty}$ is spanned by constant functions.
	
	Next, if $\Lie_X u = 0$ for some $u \in \mathcal{D}'_{E_u^*}(\M; \Omega_0^2)$, then $du \in \mc{D}'_{E_u^*}(\M; \Omega^3)$ and so $du = c\SRB$ for some $c \in \mathbb{C}$. By Stokes' theorem we get $c = 0$ and thus $du = 0$. Write $u = h \cdot \iota_X \Omega$ for some $h \in \mathcal{D}'_{E_u^*}(\mc{M})$. This implies $\Lie_X (h\Omega) = 0$ and so $h \Omega \in \Res^3$. This gives that $\Res_0^2$ is spanned by $\iota_X\Omega_{\mathrm{SRB}}^+$. Finally, to show that $\dim \Res_0^{2, \infty} = 1$, consider $u\in \mathcal{D}'_{E_u^*}(\M,\Omega_0^2)$ with $\Lie_{X}^2u=0$ and set
$v=\Lie_{X}u$. We wish to show that $v=0$. Since $v\in  \Res_0^2$, there is a constant $c$ such that $v=c\iota_X\Omega_{\mathrm{SRB}}^+$. Thus
\[c=c\int_{\M}\Omega_{\mathrm{SRB}}^+= c\int_{\M}\alpha\wedge \iota_X\Omega_{\mathrm{SRB}}^+=\int_{M}\alpha\wedge v   =\int_{\M}\alpha\wedge \iota_{X}du=\int_{\M}du=0\]
and therefore $v=0$ as desired. (Alternatively, by the version of Lemma \ref{lemma:semisimple} for $\Res^2_0 \times \Res_{0*}^0$, semisimplicity follows from the semisimplicity of $-\Lie_X$ on $\Omega^0$ established in the previous paragraph.)
	
\end{proof}

\subsection{Characterisation of $\Res_0^1$} Let us follow a similar strategy as in \cite[Section 4]{Cekic-Paternain-20}. By Proposition \ref{prop:k=0,2,3}, the two SRB measures are given by
\[\Omega_{\mathrm{SRB}}^\pm = \Pi_{3}^\pm (\Omega).\]
Recall that here $\Pi_{k}^+$ and $\Pi_{k}^-$ are the projections onto $\Res^{k, \infty}$ and $\Res_*^{k, \infty}$, respectively, introduced in \S \ref{ssec:resolvent}. Next, introduce the notation
\[\omega := \iota_X \Omega, \quad \omega^\pm := \iota_X \Omega_{\mathrm{SRB}}^\pm.\]
Define the \emph{winding cycle $W^+$ (resp. $W^-$)} by setting, for any $u \in \mathcal{D}'_{E_s^*}(\M; \Omega^1)$ (resp. $u \in \mathcal{D}'_{E_u^*}(\M; \Omega^1)$)
\[W^\pm(u) = \int_{\M} u(X)\, \Omega_{\mathrm{SRB}}^\pm.\]
When we additionally impose that $du = 0$, the cohomology class $[u]_{H^1(\mc{M})}$ of $u$ is well defined by Lemma \ref{lemma:hodge-decomposition}, $W^\pm(u)$ only depend on $[u]_{H^1(\mc{M})}$ since $\Omega_{\mathrm{SRB}}^\pm$ are flow invariant, and so in particular $W^\pm$ descend to $H^1(\mc{M})$. In what follows for simplicity we will often drop the index under the cohomology class. Note that $W^\pm \equiv 0$ (on $\ker d$) if and only if $[\omega^\pm] = 0$, by Lemma \ref{lemma:hodge-decomposition} and Poincar\'e duality.

Say that the SRB measure $\SRB$ (resp. $\SRBs$) is \emph{exact} or \emph{null-homologous} if $[\omega^+] = 0$ (resp. $[\omega^-] = 0$). Equivalently, by Lemma \ref{lemma:hodge-decomposition} this means that $\omega^+ = d\tau^+$ (resp. $\omega^- = d\tau^-$) for some $\tau^+ \in \mathcal{D}'_{E_u^*}(\mc{M}; \Omega^1)$ (resp. $\tau^- \in \mathcal{D}'_{E_s^*}(\mc{M}; \Omega^1)$). Still equivalently, applying the projector $\Pi_2^+$ (resp. $\Pi_2^-$) and using \eqref{eq:commute-projector}, we may assume that $\tau^+ \in \Res^{1, \infty}$ (resp. $\tau^- \in \Res_{*}^{1, \infty}$). Note that $\iota_X \tau^+ \in \Res^{0, \infty}$ (resp. $\iota_X \tau^- \in \Res_*^{0, \infty}$), and so by Proposition \ref{prop:k=0,2,3} we know $\iota_X \tau^+ = c^+$ (resp. $\iota_X \tau^- = c^-$) for some constants $c^\pm$, which implies $\tau^+ \in \Res^1$ (resp. $\tau^- \in \Res_*^1$). It may be assumed that $c^\pm \in \mathbb{R}$.

Let us introduce the \emph{helicity $\mathcal{H}(X)$} with respect to the SRB measures. Assume that \emph{both} $\Omega_{\mathrm{SRB}}^\pm$ are null-homologous, that is, $[\omega^+] = [\omega^-] = 0$. Introduce
\[\mathcal{H}(X) := c^+ = \int_{\mc{M}} \tau^+(X)\, \SRBs = \int_{\mc{M}} \tau^+ \wedge d\tau^- = \int_{\mc{M}} \tau^- \wedge d\tau^+ = \int_{\mc{M}} \tau^-(X)\, \SRB = c^-,\]
where we used Stokes' theorem in the fourth equality. It also follows from Stokes' theorem that $\mc{H}(X)$ is independent of the choices of both primitives $\tau^\pm$ of $\omega^\pm$, so the helicity $\mc{H}(X) \in \mathbb{R}$ is a well-defined quantity.

\begin{prop}\label{prop:existence}
	Let $f \in \mathcal{D}'_{E_u^*}(\mc{M})$ and assume $\int_{\mc{M}} f\, \SRBs = 0$. Then there exists $u \in \mathcal{D}'_{E_u^*}(\mc{M})$ such that $Xu = f$.
\end{prop}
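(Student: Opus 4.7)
The natural strategy is to invert $X$ on the complement of the resonant subspace by using the meromorphic resolvent $R_0^+(s) = (X+s)^{-1}$. By Proposition \ref{prop:k=0,2,3}, the action of $X$ on $\Omega^0$ is semisimple at $s=0$ and $\Res^{0,\infty} = \Res^0 = \mathbb{C}\cdot 1$. Hence the Laurent expansion \eqref{eq:laurent} at $s_0 = 0$ reduces to
\[
R_0^+(s) \;=\; R_0^{+,H}(s;0) \;+\; \frac{\Pi_0^+}{s},
\]
and, as recalled in \S\ref{ssec:resolvent}, both $R_0^+(s)$ and $\Pi_0^+$ extend continuously as maps $\mc{D}'_{E_u^*}(\mc{M}) \to \mc{D}'_{E_u^*}(\mc{M})$.

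The first key step is to identify $\Pi_0^+$ explicitly on the distributional class we care about, namely
\[
\Pi_0^+ f \;=\; \Big(\int_{\mc{M}} f\,\SRBs\Big)\cdot 1, \qquad f \in \mc{D}'_{E_u^*}(\mc{M}).
\]
To see this, note that the right-hand side makes sense as a distributional pairing: since $\WF(f) \subset E_u^*$ and $\WF(\SRBs) \subset E_s^*$ are disjoint subsets of $T^*\mc{M}\setminus 0$, the wavefront set calculus makes the integral well-defined. Since $\Res^{0,\infty}$ is one-dimensional and spanned by the constant $1$, we have $\Pi_0^+ f = c(f)\cdot 1$ for some linear functional $c$. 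To evaluate $c(f)$, use the pairing $\langle\cdot,\cdot\rangle$ of \S\ref{ssec:pairing} and the transpose identity \eqref{eq:proj-transpose-2}, which gives $(\Pi_0^+)^T = \Pi_3^-$; since $\Pi_3^-\SRBs = \SRBs$ (as $\SRBs \in \Res_*^3$) and $\int \SRBs = 1$,
\[
c(f) \;=\; \int_{\mc{M}} (\Pi_0^+ f)\,\SRBs \;=\; \langle \Pi_0^+ f,\,\SRBs\rangle \;=\; \langle f,\,\Pi_3^-\SRBs\rangle \;=\; \int_{\mc{M}} f\,\SRBs.
\]

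With this identification, the hypothesis $\int_{\mc{M}} f\,\SRBs = 0$ says precisely that $\Pi_0^+ f = 0$, so $R_0^+(s)f$ is holomorphic at $s=0$ as an element of $\mc{D}'_{E_u^*}(\mc{M})$. I would then set
\[
u \;:=\; R_0^{+,H}(0)\,f \;\in\; \mc{D}'_{E_u^*}(\mc{M}),
\]
and apply $X$ to both sides. The identity $(X+s)R_0^+(s) = \mathrm{id}$ holds for $\re(s)\gg 1$ and persists under meromorphic continuation; substituting the Laurent expansion and letting $s \to 0$ (with the $s^{-1}\Pi_0^+ f$ term absent by assumption) yields $X u = f$, as desired.

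The one point requiring some care is the first step, the identification of $\Pi_0^+$ on distributional inputs: a priori the formulas from \S\ref{ssec:pairing} are stated for smooth objects, so I would need to invoke the continuous extension of $\Pi_0^+$ and the wavefront-set based extension of $\langle \cdot,\cdot\rangle$ to $\mc{D}'_{E_u^*}(\mc{M})\times\mc{D}'_{E_s^*}(\mc{M};\Omega^3)$ to justify the chain of equalities above. Once this is in place, the rest of the argument is a direct application of the Laurent expansion of the resolvent.
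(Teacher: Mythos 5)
Your proposal is correct and follows essentially the same route as the paper's proof: the Laurent expansion $R_0^+(s) = R_0^{+,H}(s) + \Pi_0^+/s$ from Proposition \ref{prop:k=0,2,3}, the identification of $\Pi_0^+ f$ via the transpose identity $(\Pi_0^+)^T = \Pi_3^-$ and the wavefront-set extension of the pairing (which is exactly how the paper shows $\Pi_0^+ f = 0$), and then $u := R_0^{+,H} f$ with $(X+s)R_0^{+,H}(s) + \Pi_0^+ = \id$ evaluated at $s=0$. Your extra care in writing $\Pi_0^+ f = c(f)\cdot 1$ and evaluating $c(f)$ explicitly is just a slightly more spelled-out version of the same computation.
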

\begin{proof}
		By Proposition \ref{prop:k=0,2,3} and \eqref{eq:laurent}, near zero we may write
		\begin{equation*}
		 	R_0^+(s) = R^{+, H}_0(s) + \frac{\Pi_{0}^+}{s}.
		\end{equation*}
		Therefore, by applying $X + s$ to this equation we obtain close to zero
		\begin{equation}\label{eq:resolvent+}
				(X + s) R^{+, H}_0(s) + \Pi_{0}^+ = \id.
		\end{equation}
		Introduce $u := R^{+, H}_0f$, which lies in $\mathcal{D}'_{E_u^*}(\mc{M}; \mathcal{E})$ by the mapping properties of $R^{+, H}_0$. Then, assuming $\Pi_{0}^+ f = 0$ we have by \eqref{eq:resolvent+} evaluated at $s = 0$:
		\begin{equation*}
				f = f - \Pi_{0}^+f = XR_0^{+, H}f = X u. 
		\end{equation*}
		Now we prove that $\Pi_{0}^+ f  = 0$. For this, use that by \eqref{eq:proj-transpose-2} we have $(\Pi_{0}^+)^T = \Pi_{3}^-$ and
		\begin{equation*}
			\Pi_0^+f = \langle{\Pi_{0}^+f, \SRBs}\rangle = \langle{f, \Pi_{3}^-(\SRBs)}\rangle = \langle{f, \SRBs}\rangle = 0,
		\end{equation*}
		where in the first equality we used that $\Pi_0^+f$ is constant and that $\SRBs$ integrates to $1$, and in the third equality that $\Pi_3^-\SRBs = \SRBs$ by definition, thus completing the proof.
\end{proof}

We proceed with an auxiliary lemma:

\begin{lemma}\label{lemma:auxiliary-tau}
	There exists $\widetilde{\tau} \in \Res^{1}$ with $\iota_X \widetilde{\tau} = 1$ and $d\widetilde{\tau} = 0$, if and only if $[\omega^-] \neq 0$.
\end{lemma}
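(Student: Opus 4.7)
The plan is to prove both implications separately.

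For the forward direction, I assume such $\widetilde{\tau}$ exists and argue by contradiction from $[\omega^-]=0$. In that case we may write $\omega^-=d\tau^-$ for some $\tau^-\in\Res_*^1$. I evaluate $\int_{\mc{M}}\widetilde{\tau}\wedge\omega^-$ in two ways. Using the 3-dimensional identity $\alpha\wedge\iota_X\Omega=\alpha(X)\,\Omega$ (valid for a 1-form $\alpha$ and a 3-form $\Omega$, since $\alpha\wedge\Omega=0$), the integral equals $\int_{\mc{M}}\widetilde{\tau}(X)\,\SRBs=\int_{\mc{M}}\SRBs=1$. On the other hand, substituting $\omega^-=d\tau^-$ and using $d\widetilde{\tau}=0$, the Leibniz rule gives $\widetilde{\tau}\wedge d\tau^-=-d(\widetilde{\tau}\wedge\tau^-)$, and Stokes' theorem on the closed manifold $\mc{M}$ forces this integral to vanish, yielding the desired contradiction.

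For the reverse direction, I assume $[\omega^-]\neq 0$ and construct $\widetilde{\tau}$ explicitly. The winding cycle $W^-$ descends to a functional on $H^1(\mc{M})$ which is non-trivial exactly when $[\omega^-]\neq 0$ (by Poincar\'e duality, as recorded in the preamble to this lemma). I therefore choose a smooth closed 1-form $\beta\in C^\infty(\mc{M};\Omega^1)$ with $W^-(\beta)=\int_{\mc{M}}\beta(X)\,\SRBs=1$, using that smooth representatives cover $H^1(\mc{M})$ by de Rham. Set $f:=1-\beta(X)\in C^\infty(\mc{M})$; this satisfies $\int_{\mc{M}}f\,\SRBs=0$, so Proposition \ref{prop:existence} furnishes some $u\in\mc{D}'_{E_u^*}(\mc{M})$ with $Xu=f$. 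I then put $\widetilde{\tau}:=\beta+du$. By construction $\WF(\widetilde{\tau})\subset E_u^*$, $d\widetilde{\tau}=0$, and $\iota_X\widetilde{\tau}=\beta(X)+Xu=\beta(X)+1-\beta(X)=1$, whence $\Lie_X\widetilde{\tau}=d\iota_X\widetilde{\tau}+\iota_X d\widetilde{\tau}=0$; thus $\widetilde{\tau}\in\Res^1$ with the required properties.

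The only delicate point is justifying the distributional Stokes step in the forward direction. The wedge product $\widetilde{\tau}\wedge\tau^-$ is well-defined by the wavefront set calculus, since $\WF(\widetilde{\tau})\subset E_u^*$ and $\WF(\tau^-)\subset E_s^*$ are disjoint conic subsets of $T^*\mc{M}\setminus 0$ (and both are linear in the fibre, so the antipodal condition is automatic). The Leibniz rule persists distributionally, and $\int_{\mc{M}}d\eta=0$ for any distributional 2-form $\eta$ on a closed 3-manifold follows directly from the duality definition of the exterior derivative applied to the constant function $1$. Beyond this bookkeeping I do not expect any further obstacle.
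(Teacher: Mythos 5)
Your proof is correct and follows essentially the same route as the paper: the reverse direction is the identical construction (a smooth closed $1$-form with unit winding cycle, corrected by $du$ where $Xu = 1-\beta(X)$ via Proposition \ref{prop:existence}), and your forward direction simply unpacks the paper's appeal to the fact that $W^-$ vanishes on closed forms when $[\omega^-]=0$, by pairing $\widetilde{\tau}$ with a primitive $\tau^-$ of $\omega^-$ and applying the wavefront-set calculus and Stokes' theorem. No gaps; the distributional bookkeeping you flag is exactly what the paper also relies on (e.g.\ in the definition of the helicity).
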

\begin{proof}
	Assume first that such $\widetilde{\tau}$ exists. Then $W^-(\widetilde{\tau}) = 1$ and so $[\omega^-] \neq 0$ (recall that $W^- \equiv 0$ if and only if $[\omega^-] = 0$, by Lemma \ref{lemma:hodge-decomposition} and Poincar\'e duality), proving one of the implications. To see the other direction, assume $[\omega^-] \neq 0$. There is a closed, smooth $1$-form $\eta$ such that $W^-(\eta) = 1$. By Proposition \ref{prop:existence} this implies we can solve
	\[XF = 1 - \iota_X \eta, \quad F \in \mc{D}'_{E_u^*}(\mc{M}).\]
	Set $\widetilde{\tau} = dF + \eta$. Then $\widetilde{\tau}$ satisfies all of the assumptions of the lemma, completing the proof. 
\end{proof}

The next lemma determines $d(\Res_0^1)$ as a function of $[\omega^\pm]$ and $\mc{H}(X)$.
\begin{lemma}\label{lemma:T} There is a linear map $T:\Res_0^1 \to \mathbb{C}$ such that $du=T(u)\omega^+$, where $u\in \Res_0^1$. The map $T$ satisfies the following:
\begin{enumerate}
\item[1.] If $[\omega^+]\neq 0$, or $[\omega^+] = [\omega^-] = 0$ and $\mc{H}(X) \neq 0$, then $T$ is trivial;
\item[2.] If $[\omega^+] = 0$, and either $[\omega^-] = 0$ with $\mc{H}(X) = 0$, or $[\omega^-] \neq 0$, then $T$ is surjective.
\end{enumerate}
\end{lemma}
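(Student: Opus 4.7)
The plan is to first observe that $T$ is well-defined. For $u\in\Res_0^1$, the 2-form $du$ lies in $\Res_0^2$: its wavefront set is contained in $E_u^*$ (since the exterior derivative preserves wavefront sets), $\Lie_X du = d\Lie_X u = 0$, and $\iota_X du = \Lie_X u - d\iota_X u = 0$. By Proposition \ref{prop:k=0,2,3}, $\Res_0^2 = \mathbb{C}\omega^+$, so $du = T(u)\omega^+$ for a unique scalar, with linearity immediate.

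\textbf{Case 1: vanishing of $T$.} I would treat the two triggering hypotheses separately. First, assume $[\omega^+]\neq 0$. For any closed smooth $\eta\in C^\infty(\mc{M};\Omega^1)$, the product $u\wedge\eta$ is a well-defined distributional 2-form with wavefront in $E_u^*$, and distributional Stokes (i.e.\ $\langle d\alpha,\mathbf{1}\rangle=-\langle\alpha,d\mathbf{1}\rangle=0$) gives $\int_{\mc{M}} d(u\wedge\eta)=0$. Expanding, this reads $T(u)\int_{\mc{M}}\omega^+\wedge\eta=0$. Since $\int_{\mc{M}}\omega^+\wedge\eta$ realises (up to sign) the pairing of $[\omega^+]$ with $[\eta]$, Poincar\'e duality supplies an $\eta$ making this non-zero, forcing $T(u)=0$. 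In the second sub-case, $[\omega^+]=[\omega^-]=0$ with $\mc{H}(X)\neq 0$, I would fix the primitive $\tau^+\in\Res^1$ with $d\tau^+=\omega^+$ and $\iota_X\tau^+=\mc{H}(X)$ constructed immediately before the lemma. Then $v:=u-T(u)\tau^+\in\mc{D}'_{E_u^*}(\mc{M};\Omega^1)$ is closed, so the equivalence $[\omega^-]=0\Leftrightarrow W^-|_{\ker d}\equiv 0$ forces $W^-(v)=0$. A direct computation using $\iota_Xu=0$ gives $W^-(v)=-T(u)\mc{H}(X)$, and dividing by $\mc{H}(X)$ yields $T(u)=0$.

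\textbf{Case 2: surjectivity of $T$.} It suffices to exhibit a single $u\in\Res_0^1$ with $du=\omega^+$. If $[\omega^-]\neq 0$, Lemma \ref{lemma:auxiliary-tau} furnishes a closed $\widetilde{\tau}\in\Res^1$ with $\iota_X\widetilde{\tau}=1$; then $u:=\tau^+ - c^+\widetilde{\tau}$ satisfies $\iota_X u = 0$, so $u\in\Res_0^1$, while $du=d\tau^+=\omega^+$ delivers $T(u)=1$. If instead $[\omega^+]=[\omega^-]=0$ with $\mc{H}(X)=0$, then $c^+=\mc{H}(X)=0$, so the primitive $\tau^+$ already lies in $\Res_0^1$ and $d\tau^+=\omega^+$ gives $T(\tau^+)=1$.

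\textbf{Expected obstacle.} The only genuinely delicate point I anticipate is the distributional Stokes argument in the first sub-case of Case 1: one must justify the wedge, verify the vanishing of $\int d(u\wedge\eta)$ via the wavefront calculus, and correctly identify $\int\omega^+\wedge\eta$ with the Poincar\'e-dual pairing witnessing $[\omega^+]\neq 0$. The remaining three sub-cases are essentially bookkeeping, provided the apparatus already in place is used -- the primitives $\tau^\pm$ with their prescribed values of $\iota_X$, Lemma \ref{lemma:auxiliary-tau}, Proposition \ref{prop:k=0,2,3}, and the criterion $[\omega^\pm]=0\Leftrightarrow W^\pm|_{\ker d}\equiv 0$.
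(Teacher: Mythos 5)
Your proposal is correct and follows essentially the same route as the paper: well-definedness of $T$ via Proposition \ref{prop:k=0,2,3}, the identity $W^-(u-T(u)\tau^+)=-T(u)c^+$ combined with $[\omega^-]=0\Rightarrow W^-|_{\ker d}\equiv 0$ for the vanishing cases, and the explicit elements $\tau^+-c^+\widetilde{\tau}$ (via Lemma \ref{lemma:auxiliary-tau}) respectively $\tau^+$ itself for surjectivity. The only difference is that you spell out the sub-case $[\omega^+]\neq 0$ by pairing $du=T(u)\omega^+$ with smooth closed $1$-forms and invoking Poincar\'e duality, a step the paper treats as immediate; this is a harmless elaboration of the same argument.
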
 

\begin{proof} Let $u \in \res_0^1$. Then $du \in \Res_0^2$, so $du = c \omega^+$ for some $c \in \mathbb{C}$ by Proposition \ref{prop:k=0,2,3}. Setting $T(u) := c$ defines a linear map such that $du = T(u)\omega^+$. It clearly follows that $T\equiv 0$ if $[\omega^+]\neq 0$.

If $[\omega^+] = 0$, then $\omega^+ = d\tau^+$ for some $\tau^+ \in \Res^{1}$ with $\iota_X \tau^+ = c^+$, where $c^+ \in \mathbb{C}$, and we have
\[d(u-T(u)\tau^+)=0.\]
Now note that, using $\iota_X u = 0$, and the invariance of $\SRB$
\begin{equation}\label{eq:Tu}
	-T(u) c^+ = W^-(u-T(u)\tau^+).
\end{equation}
Assume first that $[\omega^-] = 0$; then also $W^- \equiv 0$. If $c^+ = \mc{H}(X) \neq 0$, \eqref{eq:Tu} implies $T \equiv 0$; if $c^+ = \mc{H}(X) = 0$, then $\tau^+ \in \Res_0^1$ and $d\tau^+ = \omega^+ \neq 0$, so $T \not \equiv 0$.

Next, assume that $[\omega^-] \neq 0$. By Lemma \ref{lemma:auxiliary-tau}, there is $\widetilde{\tau} \in \Res^1$ such that $d\widetilde{\tau} = 0$ and $\iota_X \widetilde{\tau} = 1$. Therefore $\tau^+ - c^+ \widetilde{\tau} \in \Res_0^1$ and $d(\tau^+ - c^+ \widetilde{\tau}) = \omega^+ \neq 0$, so $T \not \equiv 0$. This completes the proof.
\end{proof}

Next, we compute the dimension of the space of closed elements of $\Res_0^1$.

\begin{lemma}\label{lemma:mapS}
	There is an injection
	\begin{equation}
			S: \ker T \hookrightarrow H^1(\mc{M}).
	\end{equation}
	The injection can be described as follows: let $u \in \ker T$. Then there exists $F \in \mathcal{D}'_{E_u^*}(\mc{M})$, such that
	\begin{equation}
		u - dF \in C^\infty(\mc{M}; \Omega^1)
	\end{equation}
	and also $d(u - dF) = 0$. The injection map is given by 
	\begin{equation}
		S: \ker T \ni u \mapsto [u - d F] \in H^1(\mc{M}).
	\end{equation}
An element $[\eta] \in H^1(\mc{M})$ is in the image of $S$ if and only if
\[W^-(\eta) = \int_{\mc{M}} \eta(X)\, \SRBs = 0.\]
Finally, we have:
\begin{enumerate}
	\item[1.] $\dim S(\ker T) = b_{1}(\mc{M})$ if $[\omega^-] = 0$;
	\item[2.] $\dim S(\ker T) = b_{1}(\mc{M}) - 1$ if $[\omega^-] \neq 0$.
\end{enumerate}
\end{lemma}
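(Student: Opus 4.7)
The plan is to construct the injection $S$ by stripping off the exact-but-singular part of $u \in \ker T$ using pseudodifferential Hodge theory, and then to identify its image as the kernel of the functional $W^-$ on $H^1(\mc{M})$. Fix a background metric and let $G \in \Psi^{-2}(\mc{M}; \Omega^1)$ be the Green operator of the Hodge Laplacian $\Delta_1 = d\delta + \delta d$, so that $\Delta_1 G = \id - P_H$ with $P_H$ the smoothing projection onto harmonic $1$-forms, and $[d, G] = 0$. For $u \in \ker T$ (so $du = 0$ and $\WF(u) \subset E_u^*$), I set $F := \delta G u$; since $\delta$ and $G$ are pseudodifferential, $\WF(F) \subset E_u^*$, and the Hodge identity combined with $dGu = Gdu = 0$ gives $u - dF = P_H u$, a smooth harmonic $1$-form. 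Any two such primitives $F, F'$ differ by a distribution whose differential is smooth and distributionally exact, hence smoothly exact (smooth and distributional de Rham cohomologies agree), so $S(u) := [u - dF] \in H^1(\mc{M})$ is well defined.

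For injectivity, if $S(u) = 0$ then $u = dF$ with $F \in \mc{D}'_{E_u^*}(\mc{M})$, and contracting with $X$ gives $XF = \iota_X u = 0$; Proposition \ref{prop:k=0,2,3} then forces $F$ to be constant and $u$ to vanish. For the image, I would prove $[\eta] \in S(\ker T)$ iff $W^-(\eta) = 0$. The forward direction is a direct computation: if $\eta = u - dF$ with $u \in \ker T$, then $\eta(X) = -XF$, and $\int_{\mc{M}} XF \cdot \SRBs = 0$ follows from flow invariance of $\SRBs$ and Stokes applied to $\Lie_X(F \SRBs) = d\iota_X(F \SRBs)$. Here the product $F \SRBs$ is a well-defined distributional top-form by the wavefront calculus, since $E_u^* \cap E_s^* = \emptyset$ in $T^*\mc{M} \setminus 0$. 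The converse direction is the only place where a genuinely dynamical input enters: given smooth closed $\eta$ with $W^-(\eta) = 0$, apply Proposition \ref{prop:existence} to $f := -\eta(X) \in C^\infty(\mc{M})$ (which satisfies $\int_{\mc{M}} f \SRBs = -W^-(\eta) = 0$) to obtain $F \in \mc{D}'_{E_u^*}(\mc{M})$ with $XF = -\eta(X)$. Then $u := \eta + dF$ lies in $\mc{D}'_{E_u^*}(\mc{M}; \Omega^1)$, is closed, satisfies $\iota_X u = 0$ and hence $\Lie_X u = 0$, so $u \in \ker T$ with $S(u) = [\eta]$.

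The image of $S$ is therefore exactly $\ker W^- \subset H^1(\mc{M})$. By Poincar\'e duality (equivalently, the observation recorded after the definition of $W^\pm$ in this section), $W^-$ vanishes identically on $H^1(\mc{M})$ iff $[\omega^-] = 0$, which yields $\dim S(\ker T) = b_1(\mc{M})$ when $[\omega^-] = 0$ and $b_1(\mc{M}) - 1$ otherwise. I expect the main technical subtlety to be reconciling the wavefront control on $F$ with the smoothness of $u - dF$; routing the construction through the pseudodifferential Green operator $G$ rather than a direct appeal to Lemma \ref{lemma:hodge-decomposition} supplies both properties simultaneously and keeps the rest of the argument essentially algebraic.
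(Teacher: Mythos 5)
Your proposal is correct and follows essentially the same route as the paper: strip off the singular exact part by Hodge theory (the paper invokes Lemma \ref{lemma:hodge-decomposition} via an elliptic parametrix and checks well-definedness by elliptic regularity, whereas you use the exact Green operator and the agreement of smooth and distributional de Rham cohomology — a cosmetic variation), prove injectivity via $XF=0$ and Proposition \ref{prop:k=0,2,3}, characterise the image via Proposition \ref{prop:existence} as $\ker W^-|_{H^1(\mc{M})}$, and conclude the dimension count from whether $[\omega^-]$ vanishes. No gaps.
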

\begin{proof}
  Let $u \in \ker T$, so that $d u = 0$. By Lemma \ref{lemma:hodge-decomposition} there is an $F \in \mathcal{D}'_{E_u^*}(\mc{M})$ such that $u - d F \in C^{\infty}(\mc{M}; \Omega^1)$ is closed. We claim that the class $S(u) = [u - dF] \in H^1(\mc{M})$ is independent of our choice of $F$. Let $G \in \mathcal{D}'_{E_u^*}(\mc{M})$ be arbitrary such that $u - dG$ is smooth and closed. Then $d(F-G) \in C^\infty(\mc{M}; \Omega^1)$, so by Lemma \ref{lemma:hodge-decomposition}, $F-G$ is smooth and thus $[u-dF] = [u-dG]$.
	
	For injectivity, let us assume that $u - dF$ is exact. Moreover, without loss of generality we may assume $u=dF$. Then $\iota_X u = 0$ implies $XF = 0$, so by Proposition \ref{prop:k=0,2,3} we have that $F$ is constant, so $u = 0$. 

If $[\eta]$ is in the image of $S$, then $\eta=u-dF$ for some $F \in \mathcal{D}'_{E_u^*}(\mc{M})$. Contracting with $X$, we see that $\eta(X)=-XF$, so integrating gives
\[W^-(\eta) = \int_{\mc{M}} \eta(X)\, \SRBs = 0.\]
Conversely, if the last integral is zero, Proposition \ref{prop:existence} gives an $F \in \mathcal{D}'_{E_u^*}(\mc{M})$ such that $\eta(X) = -XF$, so $u := \eta + dF \in \ker T$ and $S(u) = [\eta]$.   

Finally, observe $S(\ker T) = \ker W^-|_{H^1(\mc{M})}$. Since $W^- \equiv 0$ if and only if $[\omega^-] = 0$, the conclusions follow.
\end{proof}

Let us now put everything together and compute $\Res_0^1$ in terms of $[\omega^\pm]$ and $\mc{H}(X)$.

\begin{proof}[Proof of Theorem \ref{thm:general} for $\Res_{0}^{1}$]
	This is a direct consequence of the rank-nullity theorem and Lemmas \ref{lemma:T} and \ref{lemma:mapS}, since we may write
	\begin{align*}
		\dim \Res_0^1 = \dim S(\ker T) + \dim \ran T.
	\end{align*}
\end{proof}

\subsection{Characterisation of $\Res^{1}$} In this subsection we compute  the dimension of $\Res^1$ and we complete the proof of Theorem \ref{thm:general}.
To this end we start with a simple lemma.

\begin{lemma} An element $u\in \Res^1$ if and only if $\iota_{X}du=0$ and $\iota_{X}u$ is constant. If there exists an element $\sigma\in \Res^{1}$ with $\iota_{X}\sigma=1$, then
\[\Res^1=\Res^{1}_{0}\oplus \mathbb{R}\sigma.\]
If no such element exists, then $\Res^1=\Res^{1}_{0}$.
\label{lemma:auxres}

\end{lemma}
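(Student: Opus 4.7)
The proof is essentially formal once one combines Cartan's magic formula with the classification of $\Res^0$ from Proposition \ref{prop:k=0,2,3}. The plan is as follows.

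For the first equivalence, I would start by observing that contraction with the smooth vector field $X$ preserves the wavefront set condition, so $\iota_X u \in \mc{D}'_{E_u^*}(\mc{M})$ whenever $u \in \mc{D}'_{E_u^*}(\mc{M}; \Omega^1)$. Cartan's formula reads $\Lie_X u = d \iota_X u + \iota_X du$, and since $[\Lie_X, \iota_X]=\iota_{[X,X]}=0$, one has $X(\iota_X u) = \iota_X \Lie_X u$. If $u \in \Res^1$, then $X(\iota_X u) = 0$, i.e. $\iota_X u \in \Res^0$, so by Proposition \ref{prop:k=0,2,3} the function $\iota_X u$ is a constant. Cartan's formula then forces $\iota_X du = -d\iota_X u = 0$. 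Conversely, if $\iota_X u$ is constant and $\iota_X du = 0$, then Cartan's formula gives $\Lie_X u = 0$ directly, so $u \in \Res^1$.

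For the decomposition, I would introduce the linear map
\[
\Phi : \Res^1 \longrightarrow \mathbb{R}, \qquad \Phi(u) := \iota_X u,
\]
which is well-defined (and takes values in constants) by the first part, and whose kernel is by definition $\Res^1_0$. If $\sigma \in \Res^1$ with $\iota_X \sigma = 1$ exists, then $\Phi$ is surjective and every $u \in \Res^1$ splits as
\[
u = \bigl(u - (\iota_X u)\,\sigma\bigr) + (\iota_X u)\,\sigma,
\]
where the first summand lies in $\ker \Phi = \Res^1_0$ and the second in $\mathbb{R}\sigma$; the sum is direct since contracting any identity $u' + c\sigma = 0$ with $u' \in \Res^1_0$ against $X$ yields $c = 0$. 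If no such $\sigma$ exists, then the image of $\Phi$ cannot contain any non-zero value (otherwise one could rescale a pre-image to produce such a $\sigma$), so $\Phi \equiv 0$ and $\Res^1 = \Res^1_0$.

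I do not anticipate any genuine obstacle: the only technical point is the wavefront set bookkeeping required to guarantee that $\iota_X u$ is an admissible element of $\Res^0$, and this is immediate from the fact that multiplication/contraction by smooth tensors does not enlarge the wavefront set.
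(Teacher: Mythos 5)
Your proof is correct and follows essentially the same route as the paper: Cartan's formula together with Proposition \ref{prop:k=0,2,3} shows $\iota_X u$ is constant (the paper applies $\iota_X$ to $\Lie_X u = \iota_X du + d\iota_X u = 0$, which is the same computation as your commutation $X(\iota_X u)=\iota_X\Lie_X u$), and the splitting $u = (u-(\iota_X u)\sigma) + (\iota_X u)\sigma$ is exactly the paper's decomposition. The wavefront-set remark and the map $\Phi$ are harmless repackagings of the same argument.
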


\begin{proof} An element $u\in \Res^1$ if and only if ${\mathcal L}_{X}u=\iota_{X}du+d\iota_{X}u=0$. From this it follows that $\iota_{X}d\iota_{X}u=0$ and thus $\iota_{X}u\in \Res^{0}$, hence $\iota_Xu$ is constant by Proposition \ref{prop:k=0,2,3}.

Suppose there exists $\sigma\in \Res^1$ such that $\iota_{X}\sigma=1$. Given any $u\in \Res^1$, we have $u-(\iota_{X}u)\sigma\in \Res_{0}^{1}$
and thus $\Res^1=\Res^{1}_{0}\oplus \mathbb{R}\sigma$. If no such $\sigma$ exists, then given any $u\in \Res^1$ we must have $\iota_{X}u=0$ and $u\in \Res_{0}^{1}$.
\end{proof}

We are now ready to prove the following proposition, which completes the proof of Theorem \ref{thm:general}:

\def\mystrut{\vrule height13pt depth7pt width0pt}

\begin{prop}
Assume $X$ generates a transitive Anosov flow on $\mc{M}$. The resonant 1-forms for $\Lie_X$ on $\Omega^1$ at zero are determined in terms of $[\omega^\pm]_{H^2(\mc{M})}$, $\mc{H}(X)$, and $\Res_{0}^{1}$ by the following table:
\begin{center}
\begin{tabular}{|l|c|c|c|c|c|}
\hline
 ~~~~~~~~~~{\rm \textbf{Cases}} & \begin{tabular}{@{}c@{}}$[\omega^+] \neq 0$ \\ $[\omega^-] \neq 0$\end{tabular} & \begin{tabular}{@{}c@{}}$[\omega^+] \neq 0$ \\ $[\omega^-] = 0$\end{tabular} & \begin{tabular}{@{}c@{}}$[\omega^+] = 0$ \\ $[\omega^-] \neq 0$\end{tabular} & \begin{tabular}{@{}c@{}}$[\omega^+] = [\omega^-] = 0$ \\ $\mc{H}(X) \neq 0$\end{tabular} & \begin{tabular}{@{}c@{}}$[\omega^+] = [\omega^-] = 0$ \\ $\mc{H}(X) = 0$\end{tabular}\\
\hline\hline
\mystrut $\Res^1$ & $\Res^{1}_{0}\oplus\mathbb{R}\sigma$ & $\Res_{0}^{1}$ & $\Res_{0}^{1}\oplus\mathbb{R}\sigma$ & $\Res_{0}^{1}\oplus\mathbb{R}\sigma$ & $\Res_{0}^{1}$\\
\hline
\mystrut $d(\Res^1)$ & $0$ & $0$ & $\mathbb{C}\,\omega^{+}$ & $\mathbb{C}\,\omega^{+}$ & $\mathbb{C}\,\omega^{+}$\\
\hline
\mystrut $\dim \Res^1$ & $b_1(\mc{M}) $ & $b_1(\mc{M})$ & $b_1(\mc{M})+1$ & $b_1(\mc{M})+1$ & $b_1(\mc{M}) + 1$\\
\hline
\end{tabular}
\end{center}
In the table, the form $\sigma\in \Res^1$ satisfies $\iota_{X}\sigma=1$ as in Lemma \ref{lemma:auxres}.
Moreover, the map 
\[\Res^1\cap \ker d\ni u\mapsto [u]\in H^{1}(\mc{M})\]
is an isomorphism.
\label{prop:res1}
\end{prop}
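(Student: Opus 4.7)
The plan is to leverage Lemma \ref{lemma:auxres} which reduces the computation of $\Res^1$ to the question: does there exist $\sigma \in \Res^1$ with $\iota_X \sigma = 1$? I will handle each of the five columns of the table by asking this existence question and then read off the other rows from what is already established about $\Res^1_0$.

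For the two cases where the answer is obviously yes, namely Cases 1 and 3 (those with $[\omega^-]\neq 0$), Lemma \ref{lemma:auxiliary-tau} directly produces a closed $\sigma$ with $\iota_X\sigma = 1$. For Case 4, where $[\omega^+] = 0$ and $\mc{H}(X) \neq 0$, I take $\sigma = \tau^+/\mc{H}(X)$, using the primitive constructed before Lemma \ref{lemma:T}; this $\sigma$ satisfies $d\sigma = \omega^+/\mc{H}(X)$ and is not closed. The more delicate step is ruling out $\sigma$ in Cases 2 and 5. In both, suppose such a $\sigma$ exists. From $\iota_X\sigma = 1$ and $\Lie_X\sigma = 0$ we get $d\sigma \in \Res^2_0 = \mathbb{C}\omega^+$, so $d\sigma = c\omega^+$. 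In Case 2 ($[\omega^+]\neq 0$, $[\omega^-]=0$), if $c = 0$ then $\sigma$ is closed with $\iota_X\sigma = 1$, contradicting Lemma \ref{lemma:auxiliary-tau}; if $c\neq 0$ then $\omega^+ = d(\sigma/c)$ is exact, contradicting $[\omega^+]\neq 0$. In Case 5 ($[\omega^+] = [\omega^-] = 0$, $\mc{H}(X) = 0$) we have $\tau^+\in\Res^1_0$ and $\omega^+ = d\tau^+$, so $\sigma - c\tau^+$ would be closed with $\iota_X(\sigma - c\tau^+) = 1$, again contradicting Lemma \ref{lemma:auxiliary-tau}. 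This is the only genuinely non-routine part.

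With $\sigma$ thus determined in each column, the first row of the table follows directly from Lemma \ref{lemma:auxres}. The row $d(\Res^1)$ is then $d(\Res^1_0) + \mathbb{C}\,d\sigma$: in Cases 1, 2 both summands vanish (the first by Lemma \ref{lemma:T}, the second because either $\sigma$ is closed or absent); in Case 3 we have $d(\Res^1_0) = \mathbb{C}\omega^+$ and $\sigma$ is closed; in Case 4 we have $d(\Res^1_0) = 0$ but $d\sigma\in\mathbb{C}\omega^+\setminus\{0\}$; in Case 5 we have $d(\Res^1_0) = \mathbb{C}\omega^+$ and no $\sigma$. The dimension row is then just $\dim\Res^1_0 + \dim\mathbb{R}\sigma$, using the already computed values of $\dim\Res^1_0$ from the first part of Theorem \ref{thm:general}.

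For the final claim that $\Res^1\cap \ker d \to H^1(\M)$ is an isomorphism, I proceed as follows. If $u\in\Res^1$ is closed then $\iota_X u = c$ is constant. For $c = 0$ we have $u\in\Res^1_0\cap\ker d = \ker T$, and Lemma \ref{lemma:mapS} says the resulting cohomology map is injective with image exactly $\ker W^-|_{H^1(\M)}$; the contribution from $c\neq 0$ occurs precisely when a closed $\sigma$ exists, that is, when $[\omega^-]\neq 0$ (by Lemma \ref{lemma:auxiliary-tau}), and such a $\sigma$ has $W^-(\sigma) = 1$, contributing a complement to $\ker W^-|_{H^1}$. Injectivity on the full $\Res^1\cap\ker d$ follows from the fact that any closed element with $\iota_X u = 0$ mapped to zero in cohomology reduces to the $\ker T$ case, combined with the linear independence of $\sigma$ from $\Res^1_0$. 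A tally per column then shows that $\dim(\Res^1\cap\ker d) = b_1(\M)$ in every case, and surjectivity onto $H^1(\M)$ follows either because $W^-\equiv 0$ (Cases 2, 4, 5) or because $\sigma$ maps outside $\ker W^-|_{H^1}$ (Cases 1, 3).
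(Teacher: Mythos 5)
Your proposal is correct and follows essentially the same route as the paper: reduce everything to the existence or non-existence of $\sigma \in \Res^1$ with $\iota_X\sigma = 1$ via Lemma \ref{lemma:auxres}, settle each column using Lemma \ref{lemma:auxiliary-tau}, the primitive $\tau^+$ and the helicity, and then count dimensions against the known structure of $\Res_0^1$. The only cosmetic differences are that in the last column you rule out $\sigma$ by forming the closed form $\sigma - c\tau^+$ and invoking Lemma \ref{lemma:auxiliary-tau} instead of deriving a non-zero helicity directly, and that you spell out the final isomorphism via Lemma \ref{lemma:mapS} and $W^-$ where the paper argues injectivity directly and then compares dimensions; both variants are equivalent.
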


\medskip

\begin{proof} Consider cases as follows which combined give a proof of the various cases in the table.
\begin{enumerate}[itemsep=5pt]
\item[1.] $[\omega^{-}]\neq 0$. By Lemma \ref{lemma:auxres} it suffices to show that there is a form $\sigma$ with $d\sigma=0$ and $\iota_{X}\sigma=1$. This form is given by Lemma \ref{lemma:auxiliary-tau}.
\item[2.] $[\omega^{\pm}]=0$ and ${\mathcal H}(X)\neq 0$. There is a primitive $\tau^{+} \in \Res^1$ with $d\tau^{+}=\omega^{+}$ and $\iota_X \tau^+ = \mc{H}(X)$. Then $\sigma := \frac{\tau^+}{\mc{H}(X)}$ satisfies the conditions of Lemma \ref{lemma:auxres}, completing the proof.
\item[3.] $[\omega^{+}]\neq 0$ and $[\omega^{-}]=0$. Suppose there is $\sigma \in \Res^1$ with $\iota_{X}d\sigma=0$ and $\iota_{X}\sigma=1$. Since $d\sigma\in \Res_{0}^{2}$ and $[\omega^{+}]\neq 0$ we must have $d\sigma=0$ by Proposition \ref{prop:k=0,2,3}. But by Lemma \ref{lemma:auxiliary-tau} this implies $[\omega^{-}]\neq 0$ and hence such $\sigma$ cannot exist. By Lemma \ref{lemma:auxres} it follows that $\Res^1=\Res_{0}^{1}$ as claimed.
\item[4.] $[\omega^{\pm}]=0$ and $\mathcal H(X)=0$. Once again we need to show that there is no $\sigma \in \Res^1$ with $\iota_{X}d\sigma=0$ and $\iota_{X}\sigma=1$. Since $[\omega^{-}]=0$ we must have $d\sigma=c\omega^{+}$, where $c\neq 0$ by Lemma \ref{lemma:auxiliary-tau}. Since $c^{-1}\sigma(X)\neq 0$ this would give non-zero helicity and thus a contradiction.
\end{enumerate}

Finally, the map $\Res^1\cap \ker d\ni u\mapsto [u]\in H^{1}(\mc{M})$ is clearly injective: if $u=df$ for $f\in {\mathcal D}_{E_{u}^{*}}'(\mc{M})$, then $Xf = c$. Integration against $\SRBs$ gives $c=0$ and thus $f$ must be constant by Proposition \ref{prop:k=0,2,3}. A glance at the table in the proposition shows that the map must be an isomorphism.
\end{proof}

\begin{Remark}{\rm An interesting asymmetry arises in the second and third columns of the table in the previous proposition. Consider
the case when we have a flow with $[\omega^{+}]=0$ and $[\omega^{-}]\neq 0$. Then $\dim \Res^1=b_{1}(\M)+1$ but $\dim \Res^1_{*}=b_{1}(\M)$ (by applying the proposition to $-X$). Note that by virtue of Theorem \ref{thm:general}, we always have $\dim \Res_{0}^{1}=\dim\Res_{0*}^{1}$.
Also note that thanks to the paring between $\Res_{0}^1$ and $\Res_{0*}^{1}$ and Lemma \ref{lemma:semisimple}, semisimplicity for $\mathcal L_{X}$ and $\mathcal L_{-X}$ acting on $\Omega_{0}^{1}$ are equivalent. This is no longer the case for the actions of $\mathcal L_{\pm X}$ on $\Omega^{1}$ as we shall see below.

}
\end{Remark}

\begin{Remark}{\rm As we mentioned before, $\dim \Res^1$ is invariant under time changes. In the cases given by the first and third columns of the table there is $\sigma\in \Res^1$ with $d\sigma=0$ and $\iota_{X}\sigma=1$. After a time change this particular form may no longer be in $\Res^1$, however, there is always a form of that type. A similar remark applies to the fourth column, where there is always a form $\sigma\in \Res^1$ such that $d\sigma=c\omega^{+}$ and $\iota_{X}\sigma=1$ with $c\neq 0$. For example, if we begin with a Reeb Anosov vector field, after performing a time change most likely it will not be any longer a Reeb vector field (as the bundle $E_{u}\oplus E_{s}$ stops being smooth), nevertheless it remains ``weakly Reeb" in the sense that there still exists a form in $\Res^1$ playing the same role as the contact form at least from the point of view of resonant states.

}\label{remark:tc}
\end{Remark}

\subsubsection{Semisimplicity on $\Omega^1$}

Finally, we discuss semisimplicity for the action of the Lie derivative on $\Omega^1$. 

\begin{prop}\label{prop:semisimplicity-fails}
	Assume there is $\tau^+ \in \Res_0^1$ such that $d\tau^+ = \omega^+$ (this corresponds to Item 2 in Lemma \ref{lemma:T}). Then the semisimplicity for the action of $\Lie_{-X}$ on $\Omega^1$ fails.
\end{prop}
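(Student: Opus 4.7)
The plan is to invoke Proposition \ref{prop:non-degenerate} with the roles of $X$ and $-X$ exchanged: semisimplicity for $\Lie_{-X}$ on $\Omega^1$ fails if and only if the pairing $\langle \cdot, \cdot \rangle$ is \emph{degenerate} on $\Res^1_* \times \Res^2$. I will then exhibit $\omega^+ \in \Res^2$ as a nonzero element that pairs trivially with every $u_* \in \Res^1_*$.

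The key computation exploits the primitive $\tau^+$ supplied by hypothesis. Fix $u_* \in \Res^1_*$. Since $\Lie_X u_* = -\Lie_{-X} u_* = 0$, Lemma \ref{lemma:auxres} applied to $-X$ shows that $\iota_X u_*$ is a constant, so Cartan's formula $\Lie_X = \iota_X d + d \iota_X$ yields $\iota_X du_* = 0$. Combined with $\iota_X \tau^+ = 0$ (as $\tau^+ \in \Res^1_0$), the Leibniz rule for $\iota_X$ gives
\[
\iota_X(\tau^+ \wedge du_*) = (\iota_X \tau^+)\, du_* - \tau^+ \wedge (\iota_X du_*) = 0.
\]
Since $\tau^+ \wedge du_*$ is a top-degree distributional form on the $3$-manifold $\mc{M}$ and $\iota_X$ is injective on top-degree forms (as $X$ is nowhere vanishing), I conclude $\tau^+ \wedge du_* = 0$.

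Writing $\omega^+ \wedge u_* = d\tau^+ \wedge u_* = d(\tau^+ \wedge u_*) + \tau^+ \wedge du_* = d(\tau^+ \wedge u_*)$ and integrating using Stokes' theorem on the closed manifold $\mc{M}$ then gives $\langle \omega^+, u_* \rangle = \int_{\mc{M}} \omega^+ \wedge u_* = 0$. Since $\omega^+ \neq 0$, the pairing on $\Res^1_* \times \Res^2$ is degenerate, and Proposition \ref{prop:non-degenerate} applied to $-X$ yields the failure of semisimplicity.

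The main obstacle is to justify both the distributional wedge product $\tau^+ \wedge u_*$ and the ensuing Stokes identity. Both rely on the wavefront set calculus: $\WF(\tau^+) \subset E_u^*$ and $\WF(u_*) \subset E_s^*$ are transverse (since $E_u^* \cap E_s^* = 0$ and each is a linear subspace, so $-E_s^* = E_s^*$), so $\tau^+ \wedge u_*$ is a well-defined distribution; Stokes' formula for distributional $2$-forms on the closed $3$-manifold $\mc{M}$ then follows by smooth approximation in the weak topology.
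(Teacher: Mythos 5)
Your argument is correct, but it runs along a different route than the paper's. The paper proves the failure of semisimplicity by exhibiting an explicit Jordan vector: it sets $\alpha^- := \Pi_1^-\alpha \in \Res_*^{1,\infty}$ for a smooth $\alpha$ with $\iota_X\alpha = 1$, and computes $\llangle \iota_X d\alpha^-, \tau^+\rrangle = \int_{\mc{M}} \alpha^-\wedge d\tau^+ = \int_{\mc{M}}\SRB = 1$, so $\Lie_X\alpha^- = \iota_X d\alpha^- \neq 0$ and $\alpha^- \notin \Res^1_*$. You instead produce a degeneracy witness for the pairing: $\omega^+ = d\tau^+ \in \Res^2$ pairs to zero with every $u_*\in\Res^1_*$ (via $\iota_X\tau^+ = 0$, $\iota_X du_* = 0$, and Stokes), and then you invoke Proposition \ref{prop:non-degenerate} applied to $-X$. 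Both computations are sound (your wavefront-set and distributional Stokes justifications are exactly the ones the paper uses for its pairings), and in fact your computation is the one the paper itself performs later, in the discussion following \eqref{eq:pairings-relate}, to deduce non-semisimplicity of $\Lie_X$ on $\Omega^2$ — which is equivalent to non-semisimplicity of $\Lie_{-X}$ on $\Omega^1$ by transposing $(\Pi_2^+)^T = \Pi_1^-$. One small point worth making explicit: your witness $\omega^+$ sits in the \emph{coresonant} factor $\Res^2 = \Res^2_*(-X)$, so you are using the two-sided meaning of non-degeneracy in Proposition \ref{prop:non-degenerate}; this is what the proposition asserts and is immediate from the standard argument (if $\Lie_{-X}$ were semisimple on $\Omega^1$, then $\Pi_1^-\varphi\in\Res^1_*$ for every smooth $1$-form $\varphi$, whence $0 = \int_{\mc{M}}\Pi_1^-\varphi\wedge\omega^+ = \int_{\mc{M}}\varphi\wedge\Pi_2^+\omega^+ = \int_{\mc{M}}\varphi\wedge\omega^+$, forcing $\omega^+=0$), but spelling it out would make the reduction airtight. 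The trade-off: the paper's proof yields the concrete generalized coresonant state $\alpha^-$ with $\iota_X\alpha^-=1$ (used again in the remarks on time changes), while yours is shorter and purely formal, reducing everything to a Stokes identity plus the semisimplicity–pairing equivalence.
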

\begin{proof}
	Let $\alpha^- \in \Res_*^{1, \infty}$ such that $\iota_X \alpha^- = 1$. Such $\alpha^-$ exists since we may take any $\alpha \in C^\infty(\mc{M}; \Omega^1)$ with $\iota_X \alpha = 1$ and set $\alpha^- := \Pi^-_1 \alpha$ (here we use \eqref{eq:commute-projector}). Then:
\begin{align*}
	1 = \int_{\mc{M}} \alpha^- \wedge d\tau^+ = \int_{\mc{M}} d\alpha^- \wedge \tau^+ =
\int_{\mc{M}} \alpha \wedge \iota_X d\alpha^- \wedge \tau^+ = \llangle{\iota_X d\alpha^-,
\tau^+}\rrangle,	
\end{align*}
where in the first equality we used $\iota_X \alpha^- = 1$ (so that $\alpha^- \wedge d\tau^+ = \SRB$), Stokes' theorem in the second one, while in the third equality we used $\iota_X \tau^+ = 0$ and $\iota_X \alpha = 1$. Therefore, $\Lie_X \alpha^- = \iota_X d\alpha^- \neq 0$ and so $\alpha^- \not \in \Res^{1}_*$, and so by definition semisimplicity for the action of $\Lie_{-X}$ on $\Omega^1$ fails.
\end{proof}

\begin{Remark}\label{rem:bla}\rm
	Observe also that the described failure of semisimplicity for $\Lie_{-X}$ acting on $\Omega^1$ persists under time changes since the property that $d(\Res_0^1) \neq 0$ is invariant under time changes, and that the same property is open in the set $\mc{W}^+ \setminus \mc{W}^-$ (defined in Section \ref{sec:perturbation}). Moreover, semisimplicity for the action of $\Lie_{-X}$ on $\Omega_0^1$ and $\Omega^1$ can hold and fail, respectively, see Proposition \ref{prop:newexamples}, Item 4.
\end{Remark}

\medskip

\subsection{Characterisation of $\Res^2$} Here we complete the study of resonant forms at zero and compute the dimension of $\Res^2$.

\begin{lemma} \label{lemma:res2-basic}
	A form $u\in \Res^2$ if and only if $du=0$ and $d\iota_{X}u=0$.
\end{lemma}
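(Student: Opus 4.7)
The plan is to exploit Cartan's magic formula $\Lie_X = \iota_X d + d\iota_X$ together with the characterisation of $\Res^3$ provided by Proposition \ref{prop:k=0,2,3}.

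The easy implication is immediate: if $du = 0$ and $d\iota_X u = 0$, then Cartan's formula gives $\Lie_X u = \iota_X du + d\iota_X u = 0$, and the wavefront set condition $\WF(u) \subset E_u^*$ is built into the definition of $\Res^2$; hence $u \in \Res^2$.

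For the converse, suppose $u \in \Res^2$. Since $d$ is a differential operator it preserves the microlocal condition $\WF(\cdot) \subset E_u^*$, and since $d$ commutes with $\Lie_X$ we also have $\Lie_X du = d\Lie_X u = 0$. Consequently $du \in \Res^3$, and Proposition \ref{prop:k=0,2,3} guarantees that $du = c\, \SRB$ for some $c \in \mathbb{C}$. To pin down $c$, I would pair both sides with the constant function $\mathbf{1}$: distributional Stokes on the closed manifold $\mc{M}$ yields $\langle du, \mathbf{1} \rangle = -\langle u, d\mathbf{1} \rangle = 0$, while $\langle c\, \SRB, \mathbf{1}\rangle = c$ since $\SRB$ is a probability measure. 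Hence $c = 0$, so $du = 0$, and then Cartan's formula immediately gives $d\iota_X u = \Lie_X u - \iota_X du = 0$.

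There is no serious obstacle: the argument is essentially a three-line computation once Proposition \ref{prop:k=0,2,3} is in hand. The only mildly delicate point is the distributional version of Stokes, but this is a direct consequence of the definition of $d$ on distributions and the fact that $\mc{M}$ has no boundary.
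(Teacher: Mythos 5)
Your proof is correct and follows essentially the same route as the paper: commute $d$ with $\Lie_X$ to place $du$ in $\Res^3$, invoke Proposition \ref{prop:k=0,2,3} to write $du = c\,\SRB$, kill $c$ by Stokes on the closed manifold, and then read off $d\iota_X u = 0$ from Cartan's formula, with the converse being immediate. No gaps; the distributional Stokes step you flag is exactly the "integrating" step in the paper's proof.
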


\begin{proof} By definition being in $\Res^2$ means that $\Lie_{X}u=0$ and since $\Lie_{X}$ commutes with $d$ and $\iota_{X}$ we must have $du\in \Res^3$ and $\iota_{X}u\in \Res_{0}^{1}$. By Proposition \ref{prop:k=0,2,3} there is a constant $c$ such that $du=c\Omega_{\text{SRB}}^{+}$ and integrating we deduce that $c=0$ and the lemma follows.
\end{proof}

\begin{lemma} The map $\iota_{X}:\Res^2\to \Res_{0}^{1}\cap\,\text{\rm ker}\,d$ is surjective with right inverse $v\mapsto \Pi^{+}_{2}(\alpha\wedge v)$ where $\alpha$ is any smooth 1-form such that $\iota_{X}\alpha=1$. Moreover, $\ker \iota_{X}=\mathbb{C}\,\omega^{+}$.

\label{lemma:res2}

\end{lemma}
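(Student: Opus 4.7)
The plan has three small steps. First I verify that $\iota_X$ maps $\Res^2$ into $\Res_0^1 \cap \ker d$. Next I show that for $v \in \Res_0^1 \cap \ker d$, the candidate $u := \Pi_2^+(\alpha \wedge v)$ lies in $\Res^2$ and satisfies $\iota_X u = v$, so that it is a right inverse and surjectivity follows. Finally I identify $\ker \iota_X|_{\Res^2}$ using Proposition \ref{prop:k=0,2,3}.

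For the first step, given $u \in \Res^2$, Lemma \ref{lemma:res2-basic} gives $du = 0$ and $d\iota_X u = 0$, while $\Lie_X \iota_X u = \iota_X \Lie_X u = 0$; since $\iota_X$ does not enlarge wavefronts, $\iota_X u \in \Res_0^1 \cap \ker d$.

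The main step is the verification of the right inverse. The identity $\iota_X u = v$ follows directly from the commutation $\iota_X \Pi_2^+ = \Pi_1^+ \iota_X$ in \eqref{eq:commute-projector}: one has $\iota_X(\alpha \wedge v) = (\iota_X \alpha) v - \alpha \wedge \iota_X v = v$ because $\iota_X \alpha = 1$ and $\iota_X v = 0$, and $\Pi_1^+ v = v$ because $v \in \Res^1 \subset \ran \Pi_1^+$ and $\Pi_1^+$ is a projector. To show $u \in \Res^2$ I invoke Lemma \ref{lemma:res2-basic}: the condition $d \iota_X u = dv = 0$ is already established, and for $du = 0$ the commutation $d \Pi_2^+ = \Pi_3^+ d$ from \eqref{eq:commute-projector} gives $du = \Pi_3^+ d(\alpha \wedge v) \in \Res^{3, \infty}$, which equals $\mathbb{C}\,\SRB$ by Proposition \ref{prop:k=0,2,3}; writing $du = c\,\SRB$ and integrating (using that $du$ is exact so $\int_{\mc{M}} du = 0$ by Stokes) forces $c = 0$.

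For the kernel claim, if $u \in \Res^2$ and $\iota_X u = 0$, then $u \in \Res_0^2 = \mathbb{C}\,\omega^+$ by Proposition \ref{prop:k=0,2,3}. I do not anticipate any obstacle beyond careful bookkeeping: the whole argument amounts to combining the commutation relations \eqref{eq:commute-projector} of the spectral projector with the Hodge-type classification supplied by Proposition \ref{prop:k=0,2,3} and Lemma \ref{lemma:res2-basic}.
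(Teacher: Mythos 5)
Your proposal is correct and follows essentially the same route as the paper: the identity $\iota_X\Pi_2^+(\alpha\wedge v)=\Pi_1^+\iota_X(\alpha\wedge v)=v$ via \eqref{eq:commute-projector}, closedness of $\Pi_2^+(\alpha\wedge v)$ by writing $d\Pi_2^+(\alpha\wedge v)=\Pi_3^+d(\alpha\wedge v)=c\,\Omega_{\mathrm{SRB}}^+$ and killing $c$ by Stokes, and the kernel identification from Proposition \ref{prop:k=0,2,3}, Item 2. No gaps; the bookkeeping via Lemma \ref{lemma:res2-basic} matches the paper's argument.
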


\begin{proof} The map is well-defined by Lemma \ref{lemma:res2-basic}. Since $\iota_{X}$ commutes with $\Pi_1^{+}$ and $\Pi_2^{+}$ (see \eqref{eq:commute-projector}) we have for $v \in \Res_0^1$:
\[\iota_{X}\Pi_{2}^{+}(\alpha\wedge v)=\Pi_{1}^{+}(\iota_{X}(\alpha\wedge v))=\Pi_{1}^{+}(v)=v.\]
If in addition $dv=0$, then $\Pi^{+}_{2}(\alpha\wedge v)$ is closed since $d\Pi^{+}_{2}(\alpha\wedge v)=\Pi_{3}^{+}(d\alpha\wedge v)=c\,\Omega_{\text{SRB}}^{+}$ for some constant $c \in \mathbb{C}$ by Proposition \ref{prop:k=0,2,3} and $c=0$ by integration by parts, so $\Pi^{+}_{2}(\alpha\wedge v) \in \Res^2$.

The claim about the kernel of $\iota_{X}$ follows from the second item in Proposition \ref{prop:k=0,2,3}.
\end{proof}

Lemma \ref{lemma:res2} shows that there exists a surjective linear map $C:\Res^{2}\to\mathbb{C}$ such that any $u\in \Res^2$ may be written
as
\begin{equation}
u=\Pi_{2}^{+}(\alpha\wedge v)+C(u)\omega^{+}, \qquad v=\iota_{X}u.
\label{eq:u2}
\end{equation}

From Lemma \ref{lemma:res2} and Theorem \ref{thm:general} applied to $\pm X$, we derive right away the following corollary.

\begin{corollary} Assume $X$ generates a transitive Anosov flow on $\mc{M}$. The resonant 2-forms for the action of $\Lie_X$ on $\Omega^1$ at zero satisfy
\[\dim \Res^2=\dim \Res_{*}^{1}.\]
\end{corollary}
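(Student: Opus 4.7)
The plan is to extract $\dim \Res^2$ from the structural results already in hand and match it against the table in Theorem~\ref{thm:general} (equivalently Proposition~\ref{prop:res1}) applied to the reverse flow. By Lemma~\ref{lemma:res2}, the contraction $\iota_X : \Res^2 \to \Res_0^1 \cap \ker d$ is surjective with one-dimensional kernel $\mathbb{C}\omega^+$, giving the identity
\[
\dim \Res^2 \;=\; 1 + \dim\bigl(\Res_0^1 \cap \ker d\bigr).
\]
So the problem reduces to computing $\dim(\Res_0^1 \cap \ker d)$ and then exhibiting an equality of two piecewise-defined expressions.

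For the first step, I would invoke Lemmas~\ref{lemma:T} and~\ref{lemma:mapS}. Since elements of $\Res_0^1$ are closed precisely when $T(u)=0$, one has $\Res_0^1 \cap \ker d = \ker T$. Lemma~\ref{lemma:mapS} then identifies $\ker T$ with $\ker\bigl(W^-|_{H^1(\mc{M})}\bigr)$ via the injection $S$. The functional $W^-$ vanishes identically on $H^1(\mc{M})$ iff $[\omega^-] = 0$, and is otherwise a non-zero linear form. Hence
\[
\dim\bigl(\Res_0^1 \cap \ker d\bigr) \;=\; \begin{cases} b_1(\mc{M}) & \text{if } [\omega^-]=0,\\ b_1(\mc{M})-1 & \text{if } [\omega^-]\neq 0,\end{cases}
\]
and therefore $\dim \Res^2 = b_1(\mc{M})+1$ in the first case and $b_1(\mc{M})$ in the second.

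For the second step, I apply Theorem~\ref{thm:general} to the vector field $-X$. The key observation is that the SRB measure of $-X$ is $\SRBs$ and $\iota_{-X}\SRBs = -\omega^-(X)$, so that $[\omega^+(-X)] = 0$ if and only if $[\omega^-(X)] = 0$. Reading off the last row of the table of Proposition~\ref{prop:res1} for $-X$, one finds $\dim \Res^1(-X) = b_1(\mc{M})+1$ exactly in columns $3$, $4$, $5$, i.e.\ exactly when $[\omega^+(-X)]=0$, and $b_1(\mc{M})$ otherwise. Translating back to $X$,
\[
\dim \Res_*^1 \;=\; \begin{cases} b_1(\mc{M})+1 & \text{if } [\omega^-(X)]=0,\\ b_1(\mc{M}) & \text{if } [\omega^-(X)]\neq 0.\end{cases}
\]
Comparing the two formulas yields $\dim \Res^2 = \dim \Res_*^1$.

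There is no real obstacle here: once Lemma~\ref{lemma:res2} is in place, the computation is a short bookkeeping exercise. The only point requiring care is the sign/duality relation $\omega^{\pm}(-X) = -\omega^{\mp}(X)$ used to align the classification of Theorem~\ref{thm:general} applied to $-X$ with the condition on $[\omega^-(X)]$ that naturally arises from Lemma~\ref{lemma:mapS}. Note also that the identity $\dim\Res^{2,\infty}=\dim\Res^{1,\infty}_*$ is an immediate consequence of the non-degenerate pairing of Proposition~\ref{prop:non-degenerate}, but this does not directly deliver the corollary, which concerns the geometric (not algebraic) multiplicities; the case analysis above is what is needed to bypass the absence of a semisimplicity hypothesis.
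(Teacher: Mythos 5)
Your argument is correct and follows essentially the same route as the paper: Lemma \ref{lemma:res2} reduces the computation to $\dim \Res^2 = 1 + \dim(\Res_0^1 \cap \ker d)$, which you evaluate via Lemmas \ref{lemma:T} and \ref{lemma:mapS} (the ingredients of Theorem \ref{thm:general}), and you then compare with Theorem \ref{thm:general}/Proposition \ref{prop:res1} applied to $-X$, exactly as in the paper's one-line proof. Your case bookkeeping, including the observation that only the vanishing of $[\omega^-(X)]$ (not the helicity) governs both sides, is accurate.
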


Using \eqref{eq:u2} we may relate the pairings as follows. Given $u\in \Res^2$ and $u_{*}\in\Res^{1}_{*}$ we have
\begin{align}\label{eq:pairings-relate}
\begin{split}
\langle u, u_{*}\rangle&=\langle \Pi_{2}^{+}(\alpha\wedge v)+C(u)\omega^{+},u_{*}\rangle = \llangle \iota_{X}u,u_{*}\rrangle+C(u)\,W^{+}(u_{*}),
\end{split}
\end{align}
where we used \eqref{eq:proj-transpose-2} and $\Pi_1^- u_{*} = u_{*}$ in the second equality. In the case $[\omega^{+}]=0$ and $[\omega^{-}]\neq 0$, by Proposition \ref{prop:res1} (applied to $-X$) we know that  $\Res_{*}^{1}=\Res_{0*}^{1}$  and $d(\Res_{0*}^1) = \{0\}$, hence
$W^{+}(u_{*})=0$ for all $u_{*}\in \Res^{1}_{*}$. Thus $u=\omega^{+}$ satisfies $\langle u, u_{*}\rangle=0$ for all $u_{*} \in \Res^{1}_{*}$
showing that semisimplicity fails for the action of $\Lie_X$ on $\Omega^2$, by Proposition \ref{prop:non-degenerate}. But we expect semisimplicity to hold for the pairing $\llangle \bullet, \bullet\rrangle$ at least under small perturbations as in Section \ref{section:onesmooth}.

In the case $[\omega^{\pm}]=0$ and $\mathcal H(X)=0$, we see also that  $\Res_{*}^{1}=\Res_{0*}^{1}$. There is however a non-closed element of $\Res_{0*}^1$, but we still get $W^{+}(u_{*})=0$ for all $u_{*}\in \Res_{*}^{1}$. Therefore $u=\omega^{+}$ satisfies $\langle u, u_{*}\rangle=0$ for all $u_{*} \in \Res^{1}_{*}$ and again, semisimplicity fails for the action of $\Lie_X$ on $\Omega^2$ by Proposition \ref{prop:non-degenerate}.

\begin{Remark}\rm Observe that the pairings $\llangle{\bullet, \bullet}\rrangle$ and $
\langle{\bullet, \bullet}\rangle$ are between $\Res_{0}^{1}$ and $\Res_{0*}^{1}$, and $\Res^{2}$ and $\Res_{*}^{1}$, respectively, and in both cases dimensions of the corresponding resonant spaces agree (this would be obvious under semisimplicity). We are not aware at the moment of an alternative proof of these facts that does not go through Theorem \ref{thm:general}. 
	
	We note that the two conclusions following \eqref{eq:pairings-relate} about non-semisimplicity follow directly from Propositions \ref{prop:non-degenerate} and \ref{prop:semisimplicity-fails}, however our aim above was to relate the two pairings explicitly by \eqref{eq:pairings-relate}.
\end{Remark}

\section{Horocyclic invariance}\label{sec:horocyclic}

According to Faure-Guillarmou \cite{Faure-Guillarmou-18}, the Ruelle resonant states for contact Anosov flows (in dimension $3$) close to the imaginary axis exhibit horocyclic invariance. In a suitable sense, here we extend this idea to resonant $1$-forms at zero of \emph{transitive Anosov flows}.

Let $X$ generate an arbitrary transitive Anosov flow on a closed orientable $3$-manifold $\mc{M}$. By \cite[Corollary 1.8]{Hasselblatt-94}, it is known that the weak unstable/stable bundles $\mathbb{R}X \oplus E_{u/s}$ are $C^{1 + \alpha}$ H\"older regular for some $\alpha > 0$ (depending on the flow). In particular, if $E_{u/s}$ are orientable, there are vector fields $Y_{u/s} \in C^{1 + \alpha}(\mc{M};T\mc{M})$ such that $\mathbb{R}X \oplus E_{u/s}$ are spanned by $\{X, Y_{u/s}\}$ at every point. Moreover, by \cite{Hasselblatt-94}, there are $C^\beta$ regular vector fields $U^{u/s}$ for some $\beta > 0$, differentiable along the flow, spanning $E_{u/s}$ at every point, as well as $r^{u/s} \in C^\beta(\mc{M})$ such that:
\begin{equation}\label{eq:stable-unstable-flow}
	[X, U^{u/s}] = -r^{u/s} U^{u/s}.
\end{equation} 
Let $\omega, \omega_{u/s} \in C^{1 + \alpha}(\mc{M}; \Omega^1)$ be defined as 
\begin{equation}\label{eq:omega-u-s}
	\omega_{u/s}(Y^{u/s}) = \omega_{u/s}(X) = 0, \quad \omega_{u/s}(Y^{s/u}) = 1, \quad \omega(Y^u) = \omega(Y^s) = 0, \quad \omega(X) = 1.
\end{equation}
Let $H$ and $V$ be smooth vector fields approximating $U^u$ and $U^s$, respectively, in the sense that
\[\|H - U^u\|_{C^0} + \|V - U^s\|_{C^0} < \varepsilon, \quad \|[X, H] - [X, U^u]\|_{C^0} + \|[X, V] - [X, U^s]\|_{C^0} < \varepsilon,\]
for a certain sufficiently small $\varepsilon > 0$, to be specified later. Indeed, such an approximation exists thanks to standard arguments, see e.g. \cite[Lemma E.45]{Dyatlov-Zworski-19} in the related setting of the scale of Sobolev spaces. After possibly time-changing $U^{u/s}$ by some nowhere zero $b_{u/s} \in C^\beta(\mc{M})$, such that $\|b_{u/s} - 1\|_{C^0(\mc{M})} + \|Xb_{u/s}\|_{C^0(\mc{M})} < C\varepsilon$ for some uniform $C > 0$, we may assume that:
\begin{equation}\label{eq:U^u/s-general}
	U^u = H + r_V V - a_u X, \quad U^s = r_H H + V - a_s X,
\end{equation}
for some $r_{H/V}, a_{u/s} \in C^\beta(\mc{M})$. Moreover, since $Y^{u/s} = x_{u/s}X + u_{u/s}U^{u/s}$ for some $x_{u/s}, u_{u/s} \in C^\beta(\mc{M})$ (where $u_{u/s} \neq 0$ pointwise), using \eqref{eq:U^u/s-general} we immediately get that 
\[x_{u/s} - u_{u/s}a_{u/s},\quad u_{u/s},\quad u_u r_V,\quad u_s r_H\quad \in C^{1 + \alpha}(\mc{M}).\] 
In particular $u_{u/s} \in C^{1 + \alpha}(\mc{M})$ implies $r_{H/V} = \frac{u_{s/u} r_{H/V}}{u_{s/u}} \in C^{1 + \alpha}(\mc{M})$. After subtracting the $X$ part in \eqref{eq:U^u/s-general}, we may re-define $Y^{u/s}$ to
\begin{equation}\label{eq:Y^u/s-r_H/V}
	Y^u := H + r_V V, \quad Y^s := r_H H + V, 
\end{equation}
where $r_{H/V} \in C^{1 + \alpha}(\mc{M})$, and $\{X, Y^{u/s}\}$ still spans the weak bundle $\mathbb{R}X \oplus E_{u/s}$ pointwise. In fact, if $\varepsilon > 0$ is small enough, by \cite[Theorem 2]{GuedesBonthonneau-Guillarmou-DePoyferre-21}, $r_{H/V}$ additionally satisfy a wavefront set condition:
\begin{equation}\label{eq:r_H/V-regularity}
	r_V \in \mc{D}'_{E_u^*}(\M) \cap C^{1 + \alpha}(\M),\qquad r_H \in \mc{D}'_{E_s^*}(\M) \cap C^{1 + \alpha}(\M).
\end{equation}
It follows from \eqref{eq:Y^u/s-r_H/V}, \eqref{eq:U^u/s-general}, and \eqref{eq:stable-unstable-flow} that:
\begin{equation}\label{eq:commutator-X-Y}
	Y^{u/s} = a_{u/s}X + U^{u/s}, \qquad [X, Y^{u/s}] = (X + r^{u/s})a_{u/s} X -r^{u/s}Y^{u/s}. 
\end{equation}
Integrating \eqref{eq:stable-unstable-flow}, the function $r^u$ satisfies, for all $t \in \mathbb{R}$, $z \in \mc{M}$:
\begin{equation}\label{eq:U^u}
	d\varphi_{-t}(z) U^u(z) = e^{-\int_0^t r^u \circ \varphi_{-s}(z)\, ds} U^u(\varphi_{-t}z),\end{equation}
which implies in particular that (by definition of Anosov flows \eqref{eq:anosov-def})
\[0 < \nu_{\min} := \lim_{T\to \infty} \inf_{z \in \mc{M}} \frac{1}{T}\int_0^T r^u \circ \varphi_{-s}(z)\, ds\]
is the minimal expansion rate. 

In fact, more is true about the regularity of $r^{u/s}$:

\begin{lemma}\label{lemma:regularity-r^s/u}
	We have $Xr^{u/s}, r^{u/s}, Xr_{H/V}$, and $(X + r^{u/s})a_{u/s}$ all belong to $C^{1 + \alpha}(\mc{M}) \cap \mc{D}'_{E_{u/s}^*}(\M)$. Moreover, $r_{H/V}$ satisfy a \emph{Riccati type} equation.
\end{lemma}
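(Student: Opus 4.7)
The strategy is to exploit the smooth global frame $\{X, H, V\}$ on $\mc{M}$ constructed just above the lemma, together with the identity \eqref{eq:commutator-X-Y}, to derive explicit algebraic expressions for $r^{u/s}$, $(X + r^{u/s})a_{u/s}$, and the Riccati equation for $r_{H/V}$, from which both the regularity and the wavefront set statements follow almost mechanically.

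Concretely, since $\{X,H,V\}$ is smooth there exist smooth functions $\alpha_1,\alpha_2,\alpha_3,\beta_1,\beta_2,\beta_3 \in C^\infty(\mc{M})$ with
\[[X,H] = \alpha_1 X + \alpha_2 H + \alpha_3 V, \qquad [X,V] = \beta_1 X + \beta_2 H + \beta_3 V.\]
Using $Y^u = H + r_V V$, I would expand
\[[X,Y^u] = [X,H] + (Xr_V) V + r_V [X,V]\]
and match this with the right-hand side $(X+r^u)a_u\, X - r^u(H + r_V V)$ of \eqref{eq:commutator-X-Y} coefficient-by-coefficient in the frame $\{X,H,V\}$. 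The $H$-coefficient yields immediately
\[r^u = -\alpha_2 - \beta_2\, r_V,\]
the $V$-coefficient yields the advertised Riccati-type equation
\[Xr_V = -\alpha_3 + (\alpha_2 - \beta_3)\, r_V + \beta_2\, r_V^2,\]
and the $X$-coefficient yields
\[(X + r^u) a_u = \alpha_1 + \beta_1\, r_V.\]
The symmetric computation starting from $Y^s = r_H H + V$ produces the corresponding formulas for $r^s$, $(X+r^s)a_s$, and the Riccati equation for $r_H$.

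All regularity assertions are now algebraic consequences: by \eqref{eq:r_H/V-regularity}, $r_V \in C^{1+\alpha}(\mc{M})$, so the identities above place $r^u$, $(X+r^u)a_u$, and (via the Riccati equation) $Xr_V$ in $C^{1+\alpha}(\mc{M})$. Differentiating $r^u = -\alpha_2 - \beta_2 r_V$ along $X$ and substituting the Riccati equation then gives $Xr^u$ as a polynomial in $r_V$ with smooth coefficients, hence $Xr^u \in C^{1+\alpha}(\mc{M})$. The point I would flag as the main subtlety is exactly this one: $r^u \in C^{1+\alpha}(\mc{M})$ does \emph{not} formally imply $Xr^u \in C^{1+\alpha}(\mc{M})$, so the explicit identity tying $r^u$ to $r_V$ is indispensable. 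For the wavefront set claims, \eqref{eq:r_H/V-regularity} supplies $r_V \in \mc{D}'_{E_u^*}(\M)$; since the differential operator $X$ does not enlarge the wavefront set and multiplication by smooth functions preserves $\mc{D}'_{E_u^*}(\M)$, each of $r^u$, $Xr_V$, $Xr^u$, and $(X+r^u)a_u$ lies in $\mc{D}'_{E_u^*}(\M)$. The $s$-case is entirely analogous, replacing $u$ by $s$ and $E_u^*$ by $E_s^*$ throughout.
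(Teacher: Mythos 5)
Your proposal is correct and follows essentially the same route as the paper: expand $[X,Y^{u/s}]$ in the smooth frame $\{X,H,V\}$, equate coefficients with the right-hand side of \eqref{eq:commutator-X-Y}, read off $r^{u/s}$ and $(X+r^{u/s})a_{u/s}$ from the $H$- (resp.\ $V$-) and $X$-components, obtain the Riccati equation for $r_{H/V}$ by substitution, and deduce the regularity and wavefront claims from \eqref{eq:r_H/V-regularity}. Your explicit naming of the structure coefficients and the observation that the wavefront statements for $Xr_{H/V}$ and $Xr^{u/s}$ follow simply because $X$ does not enlarge the wavefront set are only cosmetic refinements of the paper's argument.
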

\begin{proof}
In the following, for a vector field $Z$, denote by $Z_{X/H/V}$ the component of $Z$ in the $X/H/V$ direction, respectively. To simplify the notation we consider the unstable quantities only, the case of stable ones follows similarly. Write the commutator formula \eqref{eq:commutator-X-Y} in two ways, using \eqref{eq:Y^u/s-r_H/V}:
\begin{align*}
	[X, Y^u] &= ([X, H]_X + r_V [X, V]_X)X + ([X, H]_H + r_V [X, V]_H)H + \big([X, H]_V + (X + [X, V]_V)r_V\big)V\\
	&= (X + r^u)a_u X - r^u H - r^u r_V V.
\end{align*}
By equating the coefficients next to the vector fields $X, H, V$, we get using \eqref{eq:r_H/V-regularity}:
\begin{align*}
	(X + r^u)a_u &= [X, H]_X + r_V [X, V]_X \in C^{1 + \alpha}(\mc{M}) \cap \mc{D}'_{E_u^*}(\M),\\
	-r^u &= [X, H]_H + r_V [X, V]_H \in C^{1 + \alpha}(\mc{M}) \cap \mc{D}'_{E_u^*}(\M),\\
	-r^ur_V &= [X, H]_V + (X + [X, V]_V)r_V.
\end{align*}
It follows that the left hand side of the last equation is also in $C^{1 + \alpha}(\mc{M})$ and so $X r_V \in C^{1 + \alpha}(\mc{M})$, and thus also by the second equation $Xr^u \in C^{1 + \alpha}(\mc{M})$. This proves the first part of the claim, while the second part follows by substituting the second equation into the third one.
\end{proof}

\subsection{Horocyclic invariance of resonant $1$-forms} Next we show that (closed) elements $u \in \Res_0^1$ vanish on the weak unstable bundle, generalising the analogous claims for geodesic flows in constant negative curvature \cite{Cekic-Dyatlov-Delarue-Paternain-22, Kuster-Weich-20}. In particular, we will show that $u \in C_*^{-1-}(\M; \Omega^1)$ and since $\mathbb{R}X \oplus E_u$ is $C^{1 + \alpha}_*$ regular, the contraction is well-defined. We remark that it suffices to check that $\iota_{Y^u} u = 0$ ($\iota_X u = 0$ by definition), and then $\iota_Y u = 0$ for any $C^{1 + \alpha}_*$ section $Y$ of $\mathbb{R}X \oplus E_u$.

\begin{lemma}[Horocyclic invariance I]\label{lemma:horo-I}
	Let $X$ generate a transitive Anosov flow on a closed $3$-manifold $\mc{M}$, such that $E_u$ is orientable. Then:
	\begin{enumerate}
		\item[1.] If $u \in \Res_0^1 \cap \ker d$, then $u \in C_*^{-1-}(\mc{M};\Omega^1)$ and $\iota_{Y^u} u = 0$;
	
		\item[2.] Assume $X$ preserves a smooth probability volume form $\Omega$. Then \emph{any} $u \in \Res_0^1$ satisfies $u \in C_*^{-1-}(\mc{M};\Omega^1)$ and $\iota_{Y^u} u = 0$.
	\end{enumerate}
\end{lemma}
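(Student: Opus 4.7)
The plan is to derive a scalar transport equation $(X + r^u)f = 0$ for the distribution $f := \iota_{Y^u} u$ and to force $f = 0$ using the uniformly positive Birkhoff average of $r^u$. The regularity claim $u \in C_*^{-1-}$ I would obtain by writing $u = \Pi_1^+ u$ and invoking the one-form analogue of Proposition \ref{prop:mapping}(2), proved by transferring the microlocal radial estimates from scalars to sections of $\Omega^1$; this yields $u \in \bigcap_{\alpha > 0} C_*^{-\alpha}(\mc{M}; \Omega^1)$. Since $Y^u \in C^{1+\alpha}(\mc{M}; T\mc{M})$ is strictly more regular than $u$ is irregular, the contraction $f = u(Y^u) \in \mc{D}'_{E_u^*}(\mc{M}) \cap C_*^{-1-}(\mc{M})$ is well defined by paraproduct estimates, and inherits $\WF(f) \subset E_u^*$.

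For the transport equation, Cartan's formula together with the commutator \eqref{eq:commutator-X-Y} yields formally
\begin{equation*}
	Xf \;=\; \iota_{Y^u}\Lie_X u + \iota_{[X,Y^u]} u \;=\; \bigl((X + r^u)a_u\bigr)\,\iota_X u \;-\; r^u \,\iota_{Y^u} u \;=\; -r^u f,
\end{equation*}
using $\Lie_X u = 0$ and $\iota_X u = 0$, so that $(X + r^u) f = 0$. Justifying this identity distributionally is the main obstacle, since $Y^u$ appears both inside the contraction $\iota_{Y^u}$ and inside the bracket $[X, Y^u]$ while being only $C^{1+\alpha}$-regular. I would approximate $u$ by anisotropic mollifications as in \cite{GuedesBonthonneau-Lefeuvre-20}, verify the identity for the smooth approximations, and pass to the limit, using the regularity of $r^u$ and $(X + r^u) a_u$ supplied by Lemma \ref{lemma:regularity-r^s/u} together with the wavefront condition on $u$. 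In item 1 the closedness $du = 0$ streamlines the argument via $\iota_{Y^u} du = \Lie_{Y^u} u - df = 0$, whereas in item 2 the smooth invariant volume $\Omega$ furnishes a classical $L^2$-duality that plays the same role.

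To conclude, Proposition \ref{prop:mapping}(1) applied to $r = r^u$ guarantees that $(X + r^u + s)^{-1}$ is holomorphic near $s = 0$ on $C^\delta(\mc{M})$. Transferring this through the microlocal framework of \cite{Faure-Sjostrand-11, Dyatlov-Zworski-16} applied to $X + r^u$ — whose principal symbol coincides with that of $X$, so that the radial source/sink estimates carry over verbatim — distributional solutions of $(X + r^u) f = 0$ with $\WF(f) \subset E_u^*$ must vanish. Thus $f = 0$, i.e.\ $\iota_{Y^u} u = 0$, completing the argument for both items.
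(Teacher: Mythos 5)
Your formal transport identity $(X+r^u)\iota_{Y^u}u=0$ is indeed the same identity the paper exploits (there written for $U^u=Y^u-a_uX$), but the two steps you build around it both have genuine gaps. First, the regularity input: there is no ``one-form analogue of Proposition \ref{prop:mapping}(2)'' giving $u\in\bigcap_{\alpha>0}C_*^{-\alpha}(\mc{M};\Omega^1)$. The proof of that proposition rests on the propagator bound for $\varphi_{-t}^*$ acting on \emph{functions}, where the $C^0\to C^0$ norm is $1$; on $\Omega^1$ the pullback grows exponentially at the maximal expansion rate of $d\varphi_{-t}$ on covectors, so the meromorphy threshold on $C_*^{\pm\alpha}$-type spaces is shifted by that rate and $\alpha$ cannot be taken arbitrarily small — you get neither $C_*^{0-}$ nor, with uncontrolled constants, $C_*^{-1-}$. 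In the paper the regularity $u\in C_*^{-1-}$ comes from elsewhere: the decomposition $u=\eta+d\varphi$ with $\eta$ smooth and $\varphi=-R_0^{+,H}\iota_X\eta$ a \emph{scalar} in $C_*^{0-}$, and it is precisely here that the hypotheses enter ($du=0$ in item 1, smoothness of the invariant volume in item 2, which makes $du=\iota_X\Omega$ smooth and hence $\eta$ smooth after Hodge decomposition). Notice that your argument never uses either hypothesis, so if it worked it would settle the general case, which the authors explicitly leave open in Remark \ref{rem:non-smooth-measure-horo-II} exactly because without these hypotheses $\varphi$ is too irregular for the contraction with $Y^u$ to make sense.

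Second, the concluding step is not correct as stated. Proposition \ref{prop:mapping}(1) gives injectivity of $X+r^u$ on $C^0(\mc{M})$ (and $C^\delta$), but your $f=\iota_{Y^u}u$ is at best a distribution of regularity about $C_*^{-1-}$, and ``the radial estimates carry over verbatim'' fails twice over: $r^u$ is only $C^{1+\alpha}$, so multiplication by it is bounded only on spaces of regularity smaller than $1+\alpha$ in modulus, capping the usable anisotropic index; and the resonance-free region of $X+r^u$ on spaces of regularity $-\alpha$ only reaches $\re s> C\alpha-\nu_{\min}$, which contains $0$ only when $\alpha$ is small compared with $\nu_{\min}$ — nothing guarantees $\nu_{\min}\gtrsim 1$. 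Hence $(X+r^u)f=0$ together with $\WF(f)\subset E_u^*$ does \emph{not} force $f=0$ at the regularity you actually have. The paper sidesteps this entirely: it never solves a distributional equation, but shows the relevant quantities $\iota_{U^u}\eta$ and $U^uX\varphi$ are H\"older continuous, proves the key identity $U^u\varphi=(X+r^u)^{-1}U^uX\varphi$ of \eqref{eq:bla} via the resolvent-integral computation and a limit $s\to0^+$, and then applies the $C^0$-bounded inverse $(X+r^u)^{-1}$. The mollification-and-limit step you defer as ``the main obstacle'' is exactly the content of \eqref{eq:bla}; without it, and without the decomposition that supplies the continuity needed to invoke injectivity on $C^0$, the proof does not close.
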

\begin{proof}
	Observe firstly that $U^u = Y^u - a_uX$ (see \eqref{eq:commutator-X-Y}) extends continuously as a differential operator to some $C_*^{-\varepsilon}(\mc{M}) \cap \mc{D}(X)$, where $\mc{D}(X)$ denotes the domain of $X$, i.e. all $u \in C_*^{-\varepsilon}(\mc{M})$ such that $Xu \in C_*^{-\varepsilon}(\mc{M})$, for any $\varepsilon$ with $\min(\alpha, \beta) > \varepsilon > 0$.
	
	We deal first with the case when $du = 0$. By Lemma \ref{lemma:hodge-decomposition} and Proposition \ref{prop:existence}, we may write $u = \theta + d\varphi$, where $\theta \in C^\infty(\mc{M}; \Omega^1)$, $d\theta = 0$, $X\varphi = -\iota_X \theta$, and $\varphi = -R^{+, H}_0 (\iota_X \theta)$ is obtained by applying (holomorphic part of) the resolvent at zero. By Proposition \ref{prop:mapping} this implies $\varphi \in  C_*^{0-}(\mc{M}) \cap \mathcal{D}'_{E_u^*}(\mc{M})$ and so $u \in C_*^{-1-}(\M; \Omega^1)$. Since $\Pi^+_{0} R^{+, H}_0 = 0$ (see Proposition \ref{prop:existence}), we have $\Pi^+_{0} \varphi = 0$, or equivalently $\int_{\mc{M}} \varphi\, \SRBs = 0$. Clearly $\varphi \in \mc{D}(X)$ and we claim that 
	\[U^u\varphi \in C^0(\mc{M}),\]
	and in fact $U^u \varphi$ is H\"older regular for some positive exponent. To see this, it suffices to show
	\begin{equation}\label{eq:bla}
		U^u\varphi = (X + r^u)^{-1} U^u X\varphi,
	\end{equation}
	since $X\varphi = -\iota_X \theta \in C^\infty(\mc{M})$, $U^uX\varphi \in C^\beta(\mc{M})$, and $(X + r^u)^{-1}$ is a continuous map $C^0(\mc{M}) \to C^0(\mc{M})$ by Proposition \ref{prop:mapping}.
	
	Observe that for $\re(s) > 0$, $f, \phi \in C^\infty(\mc{M})$, we have in the sense of distributions (using the formula \eqref{eq:resolvent-formula} from Proposition \ref{prop:mapping}):
	\begin{align}\label{eq:relation'}
	\begin{split}
		\int_{\M} (X + r^u + s)^{-1}U^u f(x) \phi(x)\, d\vol(x) &= \int_{\M} \phi(x) d\vol(x) \int_0^\infty e^{-\int_0^t (r^u \circ \varphi_{-p} + s)\, dp} U^uf(\varphi_{-t}x)\, dt\\
		&= \int_{\M} \phi(x) d\vol(x) \int_0^\infty e^{-st} df(\varphi_{-t}x) \big(d\varphi_{-t}(x) U^u(x)\big)\, dt\\
		&= \int_{\M} \phi(x) d\vol(x) \int_0^\infty e^{-st} U^u (\varphi_{-t}^*f) (x)\, dt\\
		&= \lim_{N \to \infty} \int_{\M} U^u (X + s)^{-1} (\id - e^{-sN} \varphi_{-N}^*)f(x) \phi(x)\, d\vol(x)\\
		&= \int_{\M} U^u(X + s)^{-1} f(x) \phi(x)\, d\vol(x),
	\end{split}
	\end{align}
	where $d\vol$ is an arbitrary smooth volume form on $\M$. In the second line, we used \eqref{eq:U^u}, in the fourth we re-wrote the integral as the resolvent and used that $\int_0^Ne^{-st}U^u(\varphi_{-t}^*f)\, dt$ converges in $C^0(\M)$, and in the last line we used that $U^u(X + s)^{-1}: C^0(\mc{M}) \to \mc{D}'(\mc{M})$ is continuous to take the limit as $N \to \infty$.
	
	To show \eqref{eq:bla}, observe that for $\re(s) > 0$:
	\begin{equation}\label{eq:relation''}
		(X + r^u + s)^{-1} U^u X\varphi = U^u(X + s)^{-1} X\varphi = U^u \varphi - s U^u(X + s)^{-1} \varphi,
	\end{equation}
	where the first equality holds by \eqref{eq:relation'}. Now for $\re(s) > 0$ and $s$ close to zero, $(X + s)^{-1} \varphi = R^{+, H}_0(s) \varphi$ since $\Pi^+_{0} \varphi = 0$, and recalling that $U^u$ extends continuously as a differential operator to $C^{-\varepsilon}_*(\M) \cap \mc{D}(X)$ (since $U^u = Y^u - a_uX$), and using $R^{+, H}_0(s)\varphi \in C_*^{-\varepsilon}(\M)$ is uniformly bounded (by Proposition \ref{prop:mapping}), taking $s \to 0$ proves the claim.
	
	Now going back to $X\varphi = -\iota_X \theta$, we may apply $U^u$ to get, using $d\theta = 0$ and \eqref{eq:stable-unstable-flow}:
	\[U^uX\varphi = -U^u \iota_X \theta = d\theta(X, U^u) - X \iota_{U^u} \theta + \iota_{[X, U^u]} \theta = -(X + r^u) \iota_{U^u}\theta.\]
	Since $\iota_{U^u} \theta$ and $U^u X\varphi$ belong to $C^0(\mc{M})$, applying $(X + r^u)^{-1}$ we conclude that
	\[\iota_{U^u} \theta = -(X + r^u)^{-1} U^u X\varphi = -U^u\varphi,\]
	where in the last equality we used \eqref{eq:bla}. Using $\iota_{a_uX} \theta = -a_u X \varphi$, we conclude $0 = \iota_{Y^u} \theta + Y^u\varphi = \iota_{Y^u} u$, completing the proof in the case $du = 0$.
		
			By Proposition \ref{prop:k=0,2,3}, since $du \in \Res_0^2$ it remains to deal with the case $du = \iota_X \SRB$. By assumption, $\SRB = \SRBs = \Omega$ since $\Omega$ is a smooth and invariant, so in particular $du \in C^\infty(\mc{M}; \Omega_0^2)$. By Lemma \ref{lemma:hodge-decomposition}, there exist $\varphi \in \mc{D}'_{E_u^*}(\mc{M})$, a closed $\psi \in \mc{D}'_{E_u^*}(\mc{M}; \Omega^2)$, and a smooth $\theta \in C^\infty(\mc{M}; \Omega^1)$ such that
	\begin{equation}\label{eq:hodge}
		u = d\varphi + d^* \psi + \theta.
	\end{equation}
	Since $d \psi = 0$, it follows that $d d^* \psi = \Delta_2 \psi$ (recall $\Delta_2$ is the Hodge Laplacian on $\Omega^2$ of some Riemannian metric). Observe
	\begin{equation}\label{eq:du}
		C^\infty(\mc{M}; \Omega^2) \ni du = \Delta_2 \psi + d\theta,
	\end{equation}
	and by elliptic regularity (equivalently, by applying the operator $Q_2$ from Lemma \ref{lemma:hodge-decomposition}) we get $\psi \in C^{\infty}(\mc{M}; \Omega^2)$. Set $\eta := d^* \psi + \theta \in C^\infty(\mc{M}; \Omega^1)$.
	
	Using $\iota_X u = 0$, we get $X\varphi = -\iota_X \eta$. In particular, we may apply $R_0^{+, H}$ and assume that
	\[\varphi = -R_0^{+, H} \iota_X \eta \in C_*^{0-}(\mc{M}) \cap \mc{D}'_{E_u^*}(\mc{M}),\]
	by Proposition \ref{prop:mapping}. Observe that \eqref{eq:relation'} still holds, and arguing as before we conclude that we may take the limit $s \to 0$ in \eqref{eq:relation''}, so \eqref{eq:bla} holds in this setting as well. Then we compute:
	\begin{align*}
		U^uX\varphi = -U^u \iota_X \eta = d\eta (X, U^u) - X \iota_{U^u} \eta + \iota_{[X, U^u]} \eta = -(X + r^u) \iota_{U^u}\eta,
	\end{align*}
	where in the first equality we used \eqref{eq:hodge} and in the third equality we used that $\iota_X d\eta = \iota_X du = 0$ and \eqref{eq:stable-unstable-flow}. Since $\iota_{U^u}\eta$ and $U^u X\varphi$ are H\"older regular, applying $(X + r^u)^{-1}$ to the previous equation, using $X\varphi = -\iota_X \eta$ and \eqref{eq:bla}, we get:
	\[U^u \varphi + \iota_{U^u}\eta = \iota_{Y^u} u = 0,\]
	which completes the proof.
\end{proof}
\begin{Remark}\rm \label{rem:non-smooth-measure-horo-II}
	The only unclear case in Lemma \ref{lemma:horo-I} is when $du = \iota_X \SRB$ for some $u \in \Res_0^1$ and $X$ does not preserve a smooth measure. Let us follow the notation of the second part of Lemma \ref{lemma:horo-I}. Then $du \in \Big(C^0(\mc{M})\Big)'$ only and by Sobolev embeddings and elliptic regularity we eventually get that $\psi \in W^{2-\varepsilon, \frac{3}{3 - \varepsilon}-}(\mc{M}) \cap \mc{D}'_{E_u^*}(\mc{M})$, for all $0 < \varepsilon < 1$. Then the regularity of $\varphi = -R_0^{+, H} \iota_X \eta$ is not entirely clear: given these estimates, it belongs only to $C_*^{-2-}(\mc{M})$, which is not enough to define $Y^u\varphi$.
\end{Remark}

Next we give a lemma that relates $\Res_0^1$ to a resonant state of $X + Q$ with an additional horocyclic invariance, where $Q$ is some singular potential, generalising the case of geodesic flows in constant negative curvature \cite{Dyatlov-Faure-Guillarmou-15, Guillarmou-Hilgert-Weich-18, Cekic-Dyatlov-Delarue-Paternain-22}.

\begin{lemma}[Horocyclic invariance II]\label{lemma:horocyclic-invariance-new}
	Let $u \in \Res_0^1 \cap \ker d$. Then $c := \iota_V u \in C_*^{-1-}(\M) \cap \mc{D}'_{E_u^*}(\M)$ satisfies:
	\begin{align*}
		(X + [X, V]_H \cdot r_V - [X, V]_V)c &= 0,\\
		(H + [H, V]_H \cdot r_V - [H, V]_V)c + V(r_V c) &= 0.
	\end{align*}
	Similarly, any $u_* \in \Res_{0*}^1 \cap \ker d$ satisfies, if $c_* := \iota_Hu_*$:
	\begin{align*}
		(X + [X, H]_V \cdot r_H - [X, H]_H)c_* &= 0,\\
		H(r_H c_*) + (V - [H, V]_V \cdot r_H + [H, V]_H)c_* &= 0.
	\end{align*}
\end{lemma}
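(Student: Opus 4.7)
The plan is to reduce both identities to direct consequences of $du = 0$ (respectively $du_* = 0$) together with the vanishing on the weak unstable (respectively weak stable) bundle provided by Lemma \ref{lemma:horo-I}. For the resonant case, Lemma \ref{lemma:horo-I} gives $\iota_X u = 0$ and $\iota_{Y^u} u = 0$ with $Y^u = H + r_V V$, so that $\iota_H u = -r_V c$; here the product $r_V c$ is well defined distributionally because $r_V \in C^{1+\alpha}(\M) \cap \mc{D}'_{E_u^*}(\M)$ by \eqref{eq:r_H/V-regularity} and $c \in C_*^{-1-}(\M) \cap \mc{D}'_{E_u^*}(\M)$ by Lemma \ref{lemma:horo-I}. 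Thus $u$ is completely encoded by $c$ in the smooth frame $\{X, H, V\}$.

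I would then apply the Cartan formula $du(A,B) = A\,u(B) - B\,u(A) - u([A,B])$ with $(A,B) = (X, V)$ and $(A,B) = (H, V)$, expanding commutators in the frame $\{X,H,V\}$ via $[A,B] = [A,B]_X X + [A,B]_H H + [A,B]_V V$. Substituting $u(X) = 0$, $u(V) = c$ and $u(H) = -r_V c$ (so the $[A,B]_X$ terms drop out), the first choice produces $Xc - [X,V]_V c + [X,V]_H r_V c = 0$ and the second produces $Hc + V(r_V c) - [H,V]_V c + [H,V]_H r_V c = 0$, which are exactly the two stated identities. The coresonant case is handled identically: applying Lemma \ref{lemma:horo-I} to $-X$ yields $\iota_X u_* = 0$ and $\iota_{Y^s} u_* = 0$ with $Y^s = r_H H + V$, so $\iota_V u_* = -r_H d$, and expanding $du_*(X,H) = 0$ and $du_*(H,V) = 0$ in the same frame gives the two claimed equations.

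The only point needing care is justifying the Cartan formula in this low-regularity distributional setting, which I view as the main (though modest) technical obstacle. All ingredients are in fact well defined: $X, H, V$ are smooth, the coefficients $r_{H/V}$ have positive H\"older regularity with wavefront sets confined to $E_u^*$ and $E_s^*$ respectively, and $c, d$ have wavefront sets in the complementary cones, so the products $r_V c$ and $r_H d$ and the subsequent actions of smooth vector fields are unambiguous by the wavefront calculus. The identity itself can then be verified either by mollifying $u$ (respectively $u_*$) via a sequence with controlled wavefront set and passing to the limit, or by testing directly against a smooth vector field. No further mechanism beyond the previous lemma and the closedness of $u$ is required — the content of the lemma is essentially $du_{(*)}=0$ rewritten in the frame $\{X,H,V\}$ under the horocyclic vanishing $\iota_{Y^{u/s}}u_{(*)} = 0$.
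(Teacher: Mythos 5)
Your proposal is correct and follows essentially the same route as the paper: write $u = c(\psi - r_V\beta)$ (resp.\ $u_* = d(\beta - r_H\psi)$) using Lemma \ref{lemma:horo-I}, then expand $du(X,V)=0$ and $du(H,V)=0$ (resp.\ $du_*(X,H)=0$, $du_*(H,V)=0$) in the smooth frame, with the products $r_Vc$, $r_Hd$ justified by the regularity/wavefront conditions. No meaningful difference from the paper's argument.
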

\begin{proof}
Firstly, since $\{X, H, V\}$ form a smooth global frame, there is a smooth global dual co-frame $\{\alpha, \beta, \psi\}$. By Lemma \ref{lemma:horo-I}, we know $\iota_{Y^u} u = 0$. If $u = a\alpha + b\beta + c \psi$ for some $a, b, c \in \mc{D}_{E_u^*}(\mc{M}) \cap C_*^{-1-}(\mc{M})$, it follows that:
\[\iota_X u = 0 \implies a = 0, \quad \iota_{Y^u} u = 0 \implies b + c r_V = 0.\]
Similar argument applies for any $u_* \in \Res_{0*}^1 \cap \ker d$, so there is $\iota_H u_* =: c_{*} \in \mc{D}_{E_s^*}(\mc{M}) \cap C_*^{-1-}(\mc{M})$ such that 
\begin{equation}\label{eq:u-frame}
	u = c(\psi - r_V \beta), \quad u_* = c_{*}(\beta - r_H \psi).
\end{equation}
Now $du = 0$ implies:
\begin{equation}\label{eq:horo-II-1}
	0 = du(X, V) = X \iota_V u - \iota_{[X, V]} u = Xc - c([X, V]_V - r_V[X, V]_H),
\end{equation}
where in the second equality we used $\iota_Xu = 0$, and in the third one we applied \eqref{eq:u-frame}. This gives the first equation for $u$, and the second one follows similarly:
\begin{equation}\label{eq:horo-II-2}
	0 = du(H, V) = H \iota_V u - V \iota_H u - \iota_{[H, V]} u = Hc + V(r_V c) + c([H, V]_H \cdot r_V - [H, V]_V)
\end{equation}
where we used \eqref{eq:u-frame} in the third equality. The case of $u_* \in \Res_{0*}^1$ follows analogously.
\end{proof}

We emphasise that the preceding result was not known even in the case of geodesic flows in variable negative curvature. Let us record this fact separately:
\begin{prop}\label{prop:geodesic-flow-horo-inv}
	Let $(M, g)$ be an Anosov surface, $\M = SM$ the unit sphere bundle, and $X$ the geodesic vector field. Let $\{X, H , V\}$ be the orthonormal frame constructed in \S \ref{ssec:geometry-surfaces}. Then every $u \in \Res_0^1$ satisfies, where $c := \iota_V u$:
	\begin{align*}
		(X - r^u)c &= 0,\\
		Hc + V(r^uc) &= 0,
	\end{align*}	
	where $r^u$ is the unique function such that $H+r^uV\in E_{u}$.
\end{prop}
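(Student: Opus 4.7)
The plan is to derive Proposition~\ref{prop:geodesic-flow-horo-inv} as a direct specialization of Lemma~\ref{lemma:horocyclic-invariance-new}, after checking that the closedness hypothesis of that lemma is automatically satisfied for every $u \in \Res_0^1$ in the geodesic flow setting.

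First, I would verify that we are in the fourth column of the table of Theorem~\ref{thm:general}. The geodesic flow preserves the smooth Liouville contact form $\alpha$ on $SM$, characterised by $\iota_X \alpha = 1$ and $\mathcal{L}_X \alpha = 0$. Hence the smooth volume $\Omega := \alpha \wedge d\alpha$ is flow-invariant, so $\Omega_{\mathrm{SRB}}^+ = \Omega_{\mathrm{SRB}}^- = \Omega$ (up to normalisation) and
\[
\omega^\pm = \iota_X \Omega = d\alpha,
\]
so both classes $[\omega^\pm]_{H^2(SM)}$ vanish. Since $\alpha$ is a smooth primitive of $\omega^+$ lying in $\Res^1$ with $\iota_X \alpha = 1$, the helicity is $\mathcal{H}(X) = \int_{SM} \iota_X \alpha\, \Omega = 1 \neq 0$. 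Thus we are in column four of Theorem~\ref{thm:general}, which gives $d(\Res_0^1) = \{0\}$; equivalently, every $u \in \Res_0^1$ is closed, so Lemma~\ref{lemma:horocyclic-invariance-new} is applicable to all such $u$.

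Next, I would match the notation of \S\ref{sec:horocyclic} to the surface-geometric frame of \S\ref{ssec:geometry-surfaces}. The global smooth orthonormal frame $\{X, H, V\}$ can be used in place of the approximating smooth frame chosen in \S\ref{sec:horocyclic}; since $U^u = H + r^u V$ is the uniquely determined unstable vector field in this basis, the function $r_V$ appearing in Lemma~\ref{lemma:horocyclic-invariance-new} coincides with $r^u$ (and $a_u = 0$). Using the structural relations~\eqref{eq:surface-geometry} one has $[X, V] = -H$ and $[H, V] = X$, so decomposing in the $\{X, H, V\}$-basis yields
\[
[X,V]_H = -1,\qquad [X,V]_V = 0,\qquad [H,V]_H = 0,\qquad [H,V]_V = 0.
\]
Plugging these values into the two identities provided by Lemma~\ref{lemma:horocyclic-invariance-new} gives
\[
(X + (-1)\cdot r^u - 0)\,c = (X - r^u)c = 0,\qquad (H + 0 - 0)c + V(r^u c) = Hc + V(r^u c) = 0,
\]
which is precisely the statement of the proposition.

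There is no real obstacle here: the non-trivial work has already been done upstream, namely by Theorem~\ref{thm:general} (which rules out non-closed resonant $1$-forms in the volume-preserving case) and by Lemma~\ref{lemma:horocyclic-invariance-new} (which supplies the horocyclic identities in the general $3$-dimensional setting). The only step worth highlighting is the translation of the commutator coefficients into the explicit orthonormal frame, which is immediate from \eqref{eq:surface-geometry}.
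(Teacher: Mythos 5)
Your proof is correct and follows essentially the same route as the paper: note that $d(\Res_0^1)=0$ via Theorem \ref{thm:general} (the paper cites it directly, you verify the contact case lands in the fourth column), identify $r_V=r^u$ and $a_u=0$ in the geometric frame (the paper does this via Lemma \ref{lemma:commutator-stable/unstable} with $\lambda=0$), and then specialise Lemma \ref{lemma:horocyclic-invariance-new} using the commutators \eqref{eq:surface-geometry}. The extra details you supply are just an expansion of the paper's argument, not a different method.
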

\begin{proof}
	Note firstly that $d(\Res_0^1) = 0$ in this case as follows from Theorem \ref{thm:general}. Next, we observe that in this frame $r_V = r^u$. Indeed, this is well-known and e.g. follows from Lemma \ref{lemma:commutator-stable/unstable} below for $\lambda = 0$ (i.e. in this case we may take $U^u = Y^u = H + r^uV$). Then the claim follows directly from Lemma \ref{lemma:horocyclic-invariance-new} and \eqref{eq:surface-geometry}.
\end{proof}

\begin{Remark}\rm
	In Lemma \ref{lemma:horocyclic-invariance-new}, the subtlety lies in the divergence type expression $V(r_V c)$ which cannot be expanded since the products $V(r_V) \cdot c$ and $r_V \cdot Vc$ typically do not make sense. However, as soon as the regularity of $r_V$ is $C^{2+}_*(\mc{M})$ (i.e. when the weak unstable bundle is $C^{2+}_*(\mc{M})$-regular), these products make sense and we have a classical interpretation of horocyclic invariance.
\end{Remark}

\begin{Remark}\rm \label{rem:horocyclic-invariance-general}
	Using similar techniques one can easily prove horocyclic invariance for resonant states with resonances with real part sufficiently close to zero for arbitrary Anosov flows in dimension three. We believe this may be useful to study the exponential speed of mixing of $3$-dimensional Anosov flows as in \cite{Cekic-Guillarmou-21}. We leave this for discussion elsewhere.
\end{Remark}

\begin{Remark}{\rm Proposition \ref{prop:geodesic-flow-horo-inv} extends easily to arbitrary Anosov thermostats $F=X+\lambda V$ as introduced in Section \ref{sec:thermo}: if $u\in \text{Res}^{1}_{0}\cap \ker d$ and $c=\iota_{V}u$, then
\[(F-r^{u}+V(\lambda))c=0,\]
\[Hc+\lambda c+V(r^{u}c)=0.\]
The form $u$ can be written as $u=c(-r^{u}\beta-\lambda\alpha+\psi)$.
}
\end{Remark}

\subsection{Horocyclic invariance of SRB measures}
Next, we show that SRB measures exhibit some horocyclic invariance. Let us fix a smooth volume form $\Omega$ on $\M$, and denote $\lambda := -(X + r^u)a_u$ (H\"older regular by Lemma \ref{lemma:regularity-r^s/u}), and recall that by \eqref{eq:commutator-X-Y})
\begin{equation}\label{eq:general-commutation}
	[X, Y^u] = -\lambda X - r^u Y^u.
\end{equation}

\begin{lemma}[Horocyclic invariance III]\label{lemma:horo-SRB}
	Let $W \in C^{\infty}(\mc{M})$. Let $\alpha_W$ be the H\"older continuous solution of
	\begin{equation}\label{eq:alpha-W}
		(X + r^u)\alpha_W = (Y^u - \lambda) W.
	\end{equation}
	Then the following commutation relation holds:
	\[[X + W, Y^u + \alpha_W] = -\lambda (X + W) - r^u (Y^u + \alpha_W).\]
	Consequently, if the SRB measure is given by $\SRB = f\Omega$, for some $f\in \mc{D}'_{E_u^*}(\mc{M})$ satisfying $(X + \divv_{\Omega} X) f = 0$, then we have
	\begin{equation}\label{eq:SRB-half-invariance}
		(X + \divv_{\Omega} X + r^u)(Y^u + \alpha_{\divv_\Omega X})f = 0.
	\end{equation}
\end{lemma}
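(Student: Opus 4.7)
The strategy is to first produce the function $\alpha_W$ with the required regularity, then verify the commutation relation by a direct algebraic manipulation, and finally apply it to the density $f$.

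First I would argue that $\alpha_W$ exists as a Hölder function. The key observation is that $r^u$ has strictly positive Birkhoff averages: by the Anosov condition and \eqref{eq:U^u}, $\liminf_{t\to\infty} \tfrac{1}{t}\int_0^t r^u\circ\varphi_{-p}\,dp \geq \nu_{\min} > 0$. Since $W \in C^\infty(\M)$, the right-hand side $(Y^u - \lambda)W$ belongs to $C^\beta(\M)$ for some $\beta > 0$ (using $Y^u \in C^{1+\alpha}(\M;T\M)$ and $\lambda \in C^{1+\alpha}(\M)$ from Lemma \ref{lemma:regularity-r^s/u}). By Proposition \ref{prop:mapping} Item 1 applied to the operator $X + r^u$, the resolvent $(X+r^u)^{-1}$ exists as a bounded map on $C^\delta(\M)$ for small $\delta > 0$, and we may set
\[\alpha_W := (X+r^u)^{-1}(Y^u-\lambda)W \in C^\delta(\M).\]

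Next I would establish the commutation relation by direct computation. Treating $W$ and $\alpha_W$ as multiplication operators and using the elementary identities $[X,\alpha_W] = X\alpha_W$, $[W,Y^u] = -Y^uW$, and $[W,\alpha_W] = 0$, together with \eqref{eq:general-commutation}, we obtain
\[[X+W, Y^u + \alpha_W] = [X,Y^u] + X\alpha_W - Y^uW = -\lambda X - r^u Y^u + X\alpha_W - Y^uW.\]
The target expression is $-\lambda(X+W) - r^u(Y^u + \alpha_W)$, and matching the zeroth-order parts reduces precisely to the identity $(X+r^u)\alpha_W = (Y^u - \lambda)W$, which is \eqref{eq:alpha-W}.

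Finally I would apply the commutation relation with $W = \divv_\Omega X$ to $f$. Since $(X + \divv_\Omega X)f = 0$, expanding the commutator gives
\[(X+\divv_\Omega X)(Y^u + \alpha_{\divv_\Omega X})f - (Y^u + \alpha_{\divv_\Omega X})(X+\divv_\Omega X)f = -\lambda(X+\divv_\Omega X)f - r^u(Y^u+\alpha_{\divv_\Omega X})f,\]
and the two underlined vanishings immediately yield \eqref{eq:SRB-half-invariance}. The only real point requiring care is that these expressions make sense when applied to the distribution $f$: since $f \in \mc{D}'_{E_u^*}(\M)$ and $Y^u$ is a $C^{1+\alpha}$ vector field whose non-flow component $U^u$ extends continuously as a differential operator on $C_*^{-\varepsilon}(\M)\cap \mc{D}(X)$ (as noted in the proof of Lemma \ref{lemma:horo-I}), the composition $(X+\divv_\Omega X+r^u)(Y^u+\alpha_{\divv_\Omega X})f$ is well-defined as a distribution. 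The main obstacle is therefore not algebraic but regularity-theoretic: ensuring the existence of $\alpha_W$ via the positive mean of $r^u$, and verifying that all operator compositions make distributional sense on $f$.
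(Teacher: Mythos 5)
Your proof is correct and follows essentially the same route as the paper: the commutation relation is verified by the same direct computation using \eqref{eq:general-commutation} and matching the zeroth-order terms via \eqref{eq:alpha-W}, and the SRB statement follows by applying it to $f$ with $W = \divv_\Omega X$; your preliminary existence argument for $\alpha_W$ via Proposition \ref{prop:mapping} is a welcome addition that the paper leaves implicit. One small caveat on the last step: the appeal to the extension of $U^u$ as an operator on $C_*^{-\varepsilon}(\mc{M})\cap\mc{D}(X)$ from Lemma \ref{lemma:horo-I} is not quite the right mechanism here, since the SRB density $f$ is a priori only the density of a measure and need not belong to $C_*^{-\varepsilon}(\mc{M})$ for $\varepsilon$ below the H\"older exponents involved; the paper's simpler justification — $f\Omega$ is a measure, $Y^u\in C^{1+\alpha}$, and $\alpha_W$, $r^u$ are H\"older (indeed $r^u\in C^{1+\alpha}$ by Lemma \ref{lemma:regularity-r^s/u}), so all the products and derivatives are defined by duality/integration by parts against the measure — is what should be invoked at this point.
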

	\begin{proof}
		The first result is a straightforward computation:
		\begin{align*}
			[X + W, Y^u + \alpha_W] &= - \lambda X - r^u Y^u + X (\alpha_W) - Y^u (W)\\
			&= -\lambda (X + W) - r^u(Y^u + \alpha_W) + \underbrace{\lambda W + r^u \alpha_W + X(\alpha_W) - Y^u(W)}_{= 0,\,\,\mathrm{by}\,\,\eqref{eq:alpha-W}},
		\end{align*}
		where we use \eqref{eq:general-commutation} in the first equality. 
		
		For the conclusion about SRB measures, set $W := \divv_{\Omega} X$. Since $Y^u \in C^{1 + \alpha}(\mc{M}; T\mc{M})$, and $\alpha_W$ is H\"older regular, while $f\Omega$ is a measure, we may apply the commutation above to $f$ to directly obtain \eqref{eq:SRB-half-invariance}.
	\end{proof}
	
	\begin{Remark}\rm
		The operator $X + \divv_{\Omega}X + r^u$ is invertible (e.g. on $L^1(\M, \Omega)$), however the space to which $(Y^u + \alpha_{\divv_{\Omega} X})f$ apriori belongs is very irregular. However, it is expected that \eqref{eq:SRB-half-invariance} implies $(Y^u + \alpha_{\divv_{\Omega} X})f = 0$, and we will indeed see this is the case in the next section under a regularity assumption on the weak unstable bundle.
	\end{Remark}

\subsection{Case of one smooth weak bundle} In this section we assume that the weak unstable vector bundle $\mathbb{R}X \oplus E_u$ is smooth, or equivalently, that the vector field $Y^u$ is smooth. Indeed, this is clear by the construction of $Y^{u}$ at the top of Section \ref{sec:horocyclic}; the fact that $Y^{u}$ is somewhat non-canonical does not play any role and what we care about is its regularity and the fact that it is pointwise linearly independent of $X$. It follows that $r_V \in C^\infty(\M)$ and by inspection of the proof of Lemma \ref{lemma:regularity-r^s/u}, we have $r^u \in C^\infty(\M)$. Then the proof of Lemma \ref{lemma:horo-I} simplifies, and we get horocyclic invariance for resonant $1$-forms $u$ that are not closed, i.e. we have:

\begin{lemma}\label{lemma:horo-smooth}
	Assume $Y^{u} \in C^\infty(\mc{M}; T\M)$. Then:
	\begin{enumerate}
			 \item[1.] Let $u \in \Res^1$ such that $\iota_X u = D \in \mathbb{C}$. Then $\iota_{Y^u} u = D a_u$.
			 \item[2.] Let $u \in \Res_0^1$. Then, there is a constant $D \in \mathbb{C}$ such that for $c:= \iota_V u$:
			 	\begin{align}\label{eq:horo}
					\begin{split}
							(X + [X, V]_H \cdot r_V - [X, V]_V)c &= 0,\\
							(Y^u + [H, V]_H \cdot r_V + V(r_V) - [H, V]_V)c&= -\frac{D}{2}\SRB(X, H, V).
					\end{split}
					\end{align}
					Denote by $\mc{S}$ the set of distributional solutions of \eqref{eq:horo} for $D \in \mathbb{C}$. Then the map $P:\Res_0^1 \ni u \mapsto \iota_{V}u \in \mc{S}$ is an isomorphism.
	\end{enumerate}
\end{lemma}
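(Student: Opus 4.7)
The plan is to derive a scalar transport equation for $\iota_{Y^u}u$ and match it against one for $D a_u$. Since $Y^u$ is smooth by assumption, the contraction $\iota_{Y^u}u \in \mc{D}'_{E_u^*}(\M)$ is well-defined. From $\Lie_X u = 0$, $\iota_X u = D$, Cartan's formula, and the commutator $[X, Y^u] = -\lambda X - r^u Y^u$ of \eqref{eq:general-commutation}, I obtain
\[
(X + r^u)(\iota_{Y^u}u) \;=\; \iota_{[X, Y^u]} u + r^u\,\iota_{Y^u}u \;=\; -\lambda D.
\]
Because $\lambda = -(X + r^u) a_u$ by construction, the H\"older function $D a_u$ satisfies the same equation, so $v := \iota_{Y^u}u - D a_u \in \mc{D}'_{E_u^*}(\M)$ lies in $\ker(X + r^u)$. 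The orbital average of $r^u$ is bounded below by the strictly positive minimal unstable expansion rate $\nu_{\min}$, so Proposition \ref{prop:mapping} (applied with potential $r = r^u$) shows that $0$ is not a resonance for $X + r^u$ on the relevant anisotropic space. Hence $v = 0$.

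\textbf{Part 2, injectivity and derivation of \eqref{eq:horo}.} If $u \in \Res_0^1$, Part 1 with $D = 0$ gives $\iota_{Y^u}u = 0$, which together with $\iota_X u = 0$ forces, in the smooth coframe $\{\alpha, \beta, \psi\}$ dual to $\{X, H, V\}$,
\[
u \;=\; c(\psi - r_V\,\beta), \qquad c := \iota_V u,
\]
since $Y^u = H + r_V V$. In particular $P$ is injective. By Proposition \ref{prop:k=0,2,3}, $du \in \Res_0^2 = \mathbb{C}\omega^+$, so there is a unique $D \in \mathbb{C}$ with $du = -\tfrac{D}{2}\omega^+$. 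Expanding $du(X, V) = 0$ and $du(H, V) = -\tfrac{D}{2}\SRB(X, H, V)$ through Cartan's formula, and using smoothness of $r_V$ to Leibniz-expand $V(r_V c) = V(r_V)\,c + r_V\,V c$, produces exactly the two equations of \eqref{eq:horo}. Thus $P$ maps $\Res_0^1$ into $\mc{S}$.

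\textbf{Part 2, surjectivity and the main obstacle.} Conversely, given $c$ solving \eqref{eq:horo} for some $D \in \mathbb{C}$, define $u := c(\psi - r_V\,\beta)$; clearly $\iota_X u = 0$. The essential check is that $\Lie_X u = 0$, equivalently $\iota_X du = 0$, which splits into $du(X, V) = 0$ and $du(X, H) = 0$. The first is the first equation of \eqref{eq:horo} by construction. The second is the real obstacle: a Cartan expansion gives
\[
du(X, H) \;=\; -(Xr_V)\,c - r_V\,Xc + [X,H]_H\,r_V\,c - [X,H]_V\,c,
\]
and one must substitute the Riccati-type identity for $Xr_V$ together with $[X,H]_H = -r^u - r_V[X,V]_H$ (both extracted from the proof of Lemma \ref{lemma:regularity-r^s/u}) so that the $r^u$-terms cancel and this collapses to $-r_V \cdot (X + [X,V]_H\,r_V - [X,V]_V)\,c$, which vanishes by the first equation in \eqref{eq:horo}. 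Once $\iota_X du = 0$, we have $du \in \Res_0^2 = \mathbb{C}\omega^+$, and matching $du(H, V)$ with the second equation of \eqref{eq:horo} pins down $du = -\tfrac{D}{2}\omega^+$. Therefore $u \in \Res_0^1$ with $P(u) = c$, which completes the proof that $P$ is an isomorphism.
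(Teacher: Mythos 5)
Your Part 1 is a genuinely different route from the paper (which contracts the resolvent with $Y^u$, shows $s\mapsto \iota_{Y^u}(\Lie_X+s)^{-1}\gamma$ is holomorphic for $\re(s)>-\nu_{\min}$, and then uses the contour representation of $\Pi_1^+$), and your transport identity $(X+r^u)\big(\iota_{Y^u}u-Da_u\big)=0$ is correct; your verification in Part 2 that $du(X,H)$ collapses to $-r_V$ times the first equation of \eqref{eq:horo}, using the identities from the proof of Lemma \ref{lemma:regularity-r^s/u}, also checks out. However, the final step of Part 1 is miscited: Proposition \ref{prop:mapping}, Item 1, only gives holomorphy of $(X+r^u+s)^{-1}$ on $C^0$ and $C^\delta$, whereas your $v=\iota_{Y^u}u-Da_u$ is a priori a genuine distribution. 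To conclude $v=0$ you need (i) $\WF(a_u)\subset E_u^*$ so that $v\in\mc{D}'_{E_u^*}(\mc{M})$, and (ii) the absence of anisotropic resonances of the smooth-coefficient operator $X+r^u$ in $\re(s)>-\nu_{\min}$, which follows from the $C^0$ bound only after the standard identification of the two extensions (pairing with smooth functions and density), exactly the kind of argument used in Lemma \ref{lemma:horocyclic-invariance-SRB-smooth}. This is fixable, but as written the vanishing of $v$ is not justified.

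The substantive gap is in surjectivity. Membership in $\Res_0^1$ requires, besides $\Lie_Xu=\iota_Xu=0$, the wavefront condition $\WF(u)\subset E_u^*$, and you never establish it: given a distributional solution $c$ of \eqref{eq:horo} you only check $\iota_Xu=0$ and $\iota_Xdu=0$ for $u=c(\psi-r_V\beta)$. Note that your own computation shows $\iota_Xdu=0$ already follows from the \emph{first} equation of \eqref{eq:horo} alone, and the set of distributional $1$-forms with $\Lie_Xu=\iota_Xu=0$ is much larger than $\Res_0^1$ (it contains, e.g., coresonant forms), so this cannot suffice. The second equation of \eqref{eq:horo}, which controls the $Y^u$-derivative of $c$ in terms of $\SRB(X,H,V)$ with $\WF(\SRB)\subset E_u^*$, is precisely what forces $\WF(c)\subset E_u^*$ (since the two equations give regularity of $c$ along the smooth weak unstable directions spanned by $X$ and $Y^u$); this is the step the paper flags explicitly in its proof. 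For the same reason, your interim assertion ``once $\iota_Xdu=0$ we have $du\in\Res_0^2=\mathbb{C}\omega^+$'' is unjustified at that point, since Proposition \ref{prop:k=0,2,3} applies only to forms with wavefront in $E_u^*$; once $\WF(c)\subset E_u^*$ is proved, that step becomes unnecessary anyway, as $u\in\Res_0^1$ and $P(u)=c$ follow directly.
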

\begin{proof}
For the first claim, it can be checked that in fact the proof of the second part of Lemma \ref{lemma:horo-I} carries over in this setting, that is, the summary of Remark \ref{rem:non-smooth-measure-horo-II} applies. However, let us give a slightly different and more direct argument.

The idea is to express the contractions in two ways: via meromorphic extension of the resolvent (see \S \ref{ssec:resolvent}) and via the resolvent integral. Let us for simplicity first consider the case $D = 0$. Via the resolvent integral, we consider for large $\re(s)$ and $\gamma \in C^\infty(\mc{M}; \Omega_0^1)$ the expression, for some $x \in \mc{M}$
	\begin{align}\label{eq:integral-extension}
	\begin{split}
		\iota_{Y^u} (\Lie_X + s)^{-1} \gamma (x) &= \int_0^\infty e^{-st} \varphi_{-t}^* \gamma(x)(Y^u(x)) dt\\
		&=  \int_0^\infty e^{-st} \gamma(\varphi_{-t}x)(d\varphi_{-t}(x) Y^u(x)) dt\\
		&\leq C\|\gamma\|_{C^0} \|U^u\|_{C^0} \int_0^\infty e^{-t(\re(s) + \nu_{\min} - \varepsilon)} dt < \infty,
	\end{split}
	\end{align}
where $\varepsilon > 0$, $C = C(\varepsilon) > 0$, we recall by \eqref{eq:commutator-X-Y} that $Y^u = a_uX + U^u$, and by the Anosov property
\[\exists C' = C'(\varepsilon) > 0, \qquad \|d\varphi_{-t}(y) v\| \leq C' e^{-(\nu_{\min} - \varepsilon) t} \|v\|, \qquad \forall y \in \M, \forall v \in E_u(y).\]
Therefore $s \mapsto \iota_{Y^u} (\Lie_X + s)^{-1} \gamma \in C^0(\mc{M})$ is a holomorpic map in the region $\re(s) > - \nu_{\min}$. 

On the other hand, by \S \ref{ssec:resolvent} we know that $R_1^+(s) = (\Lie_X + s)^{-1}: C^\infty(\mc{M}; \Omega^1) \to \mc{D}'(\mc{M}; \Omega^1)$ admits a meromorphic extension to $s \in \mathbb{C}$. Since $Y^u$ is smooth, it follows that $s \mapsto \iota_{Y^u} (\Lie_X + s)^{-1} \gamma \in \mc{D}'(\mc{M})$ is meromorphic. Since the two extensions agree for $\re(s) \gg 1$, it follows they are identical and holomorphic for $\re(s) > -\nu_{\min}$. If $u \in \Res_0^1$, we have $u = \Pi_{1}^+ \gamma$ for some $\gamma \in C^\infty(\mc{M}; \Omega_0^1)$, and since $\Pi_{1}^+ = \frac{1}{2\pi i}\oint_{0} (\Lie_X + s)^{-1}\, ds$, we conclude that $\iota_{Y^u} u = \frac{1}{2\pi i} \oint_0  \iota_{Y^u} (\Lie_X + s)^{-1} \gamma\, ds = 0$, which concludes the proof of the first part.

For the case $D \neq 0$, it suffices to observe that upon replacing $\iota_{Y^u}$ with $\iota_{U^u}$ in \eqref{eq:integral-extension} and taking $\gamma \in C^\infty(\M; \Omega^1)$, we similarly get that $s \mapsto \iota_{U^u} (\Lie_X + s)^{-1}\gamma \in C^0(\M)$ is holomorphic for $\re(s) > -\nu_{\min}$. Next, expressing
\[\iota_{U^u} (\Lie_X + s)^{-1}\gamma = \iota_{Y^u} (\Lie_X + s)^{-1}\gamma - a_u(X + s)^{-1}\iota_X\gamma,\]
we conclude, using also Proposition \ref{prop:mapping} for the second term, that $s \mapsto \iota_{U^u} (\Lie_X + s)^{-1}\gamma$ is meromorphic (as a composition) for $\re(s) > -\varepsilon$ for some $\varepsilon > 0$ small enough. Then the conclusion follows similarly to before, since $u = \Pi_{1}^+ \gamma$ for some $\gamma \in C^\infty(\M; \Omega^1)$.

To derive \eqref{eq:horo}, note firstly that by Proposition \ref{prop:k=0,2,3} we may write $du = -\frac{D}{2} \iota_X \SRB$ for some $D \in \mathbb{C}$. The first equation of \eqref{eq:horo} follows from $\iota_X du = 0$, Item 1, and the computation carried out in \eqref{eq:horo-II-1}. The other equation is a consequence of the computation in \eqref{eq:horo-II-2}; note that the divergence term $V(r_Vc)$ can now be expanded and the leading term becomes $Y^u c$.

Finally, $P$ is clearly injective by Item 1, i.e. $\iota_V u = 0$ implies $u = 0$. In the other direction, if $c$ solves \eqref{eq:horo} we deduce that $\WF(c) \subset E_u^*$, where we also use that $\WF(\SRB) \subset E_u^*$. Moreover, we have $P\big(c(\psi - r_V \beta)\big) = c$ and it is straighforward to check that $c(\psi - r_V \beta) \in \Res_0^1$ by using the computations \eqref{eq:horo-II-1} and \eqref{eq:horo-II-2}.
\end{proof}

\begin{Remark}\rm
	The method of Lemma \ref{lemma:horo-smooth}, Item 1, applies in general to $u \in \Res_0^1$ under the assumption that the resolvent is a meromorphic map $(\Lie_X + s)^{-1}: C^\infty(\mc{M}; \Omega^1) \to C_*^{-1-}(\mc{M}; \Omega^1)$ for $s$ close to zero. In that case one may define the composition $\iota_{Y_{u}} (\Lie_X + s)^{-1}$ and the proof carries over.
\end{Remark}

\begin{lemma}\label{lemma:horocyclic-invariance-SRB-smooth}
	If $\SRB = f\Omega$ for some $f\in \mc{D}'_{E_u^*}(\mc{M})$, then
	\[(Y^u + \alpha_{\divv_{\Omega} X})f = 0.\]
\end{lemma}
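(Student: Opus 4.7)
The distribution $g := (Y^u + \alpha_W) f$ (writing $W := \divv_\Omega X$ throughout) satisfies, by Lemma \ref{lemma:horo-SRB}, the equation $(L + r^u)g = 0$ with $L := X + W$, together with $\WF(g) \subset E_u^*$ (the latter because $Y^u$, $|U^u|$ and $r^u$ are all smooth in this setting by Lemma \ref{lemma:regularity-r^s/u}, and $\alpha_W$ is H\"older by construction). The goal is to deduce $g = 0$.

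The plan is to embed $g$ into the meromorphic resolvent family and read off its residue. Let $f_s$ be the meromorphic (in $s$) density defined by $R_3^+(s)\Omega = f_s\,\Omega$: by Proposition \ref{prop:k=0,2,3} it has a simple pole at $s = 0$ with residue the SRB density $f$, and $(L + s)f_s = 1$. Setting $g_s := (Y^u + \alpha_W)f_s$, the family $g_s$ is meromorphic in $s$ with residue $g$ at $s = 0$. Applying the operator $Y^u + \alpha_W$ to the resolvent equation $(L+s)f_s = 1$ and using the commutation
\[
[L,\,Y^u + \alpha_W] \,=\, -\lambda\, L - r^u(Y^u + \alpha_W),\qquad \lambda := -(X + r^u)a_u,
\]
from Lemma \ref{lemma:horo-SRB}, a short direct calculation yields the key identity
\[
(L + r^u + s)\,g_s \,=\, \alpha_W - \lambda + s\lambda f_s.
\]
Since $sf_s \to f$ as $s \to 0$, the right-hand side extends holomorphically through $s = 0$ (with limit $\alpha_W - \lambda + \lambda f$).

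I would then close the argument by verifying that $s = 0$ is not a Pollicott--Ruelle resonance of $L + r^u$ on the anisotropic space of distributions with wavefront set in $E_u^*$; granted this, the boxed identity above forces $g_s$ to be holomorphic at $s = 0$, so its residue $g$ vanishes. To justify the absence of a resonance, observe that the transpose of $L + r^u$ with respect to the pairing $\int\cdot\,\Omega$ is $-X + r^u$. Proposition \ref{prop:mapping}(1) applied to the flow $-X$ with potential $r^u$ gives $(-X + r^u)^{-1}$ as a bounded operator on $C^0(\M)$ (and $C^\delta(\M)$ for small $\delta > 0$); its hypothesis of uniform strictly positive ergodic averages for $r^u$ holds in both time directions as a direct consequence of the Anosov expansion \eqref{eq:U^u} and its forward-time analogue, both giving $\int_0^t r^u \circ \varphi_{\mp s}(x)\,ds \geq \nu_{\min} t - O(1)$ uniformly in $x$. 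Combined with the microlocal radial source/sink estimates at $E_u^*$ underlying Proposition \ref{prop:mapping}(2), this upgrades to a meromorphic continuation of $(L + r^u + s)^{-1}$ holomorphic through $s = 0$ on $\mc{D}'_{E_u^*}(\M)$.

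The main obstacle is this final microlocal upgrade --- from invertibility of the transpose on $C^0$ to triviality of the kernel on the anisotropic distributional space. Heuristically, this is the statement that adding the strictly positive (in ergodic average) potential $r^u$ to $L$ shifts its leading Pollicott--Ruelle resonance away from zero (by an amount $\int r^u\,d\SRB = h(\SRB) > 0$ to leading order, by Pesin's formula); the rigorous implementation adapts the anisotropic Fredholm framework of Faure--Sj\"ostrand and Dyatlov--Zworski to the smoothly twisted operator $L + r^u$, using that $r^u$ is smooth in our setting of one smooth weak bundle.
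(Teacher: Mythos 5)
Your reduction is the right one and follows the same skeleton as the paper: by Lemma \ref{lemma:horo-SRB}, $g := (Y^u + \alpha_{\divv_\Omega X})f$ solves $(X + \divv_\Omega X + r^u)g = 0$ with $\WF(g) \subset E_u^*$, and the entire content of the lemma is that zero is not a resonance of the damped operator $X + \divv_\Omega X + r^u$, so that $g = 0$. Your commutation identity $(L + r^u + s)g_s = \alpha_W - \lambda + s\lambda f_s$ checks out, but the detour through the meromorphic family $g_s$ buys nothing: to exploit it you still need injectivity (holomorphic invertibility) of $L + r^u + s$ near $s = 0$ on a space containing $g_s$, and once you have that, the original equation $(L + r^u)g = 0$ together with the wavefront condition already yields $g = 0$ directly, which is exactly the paper's argument.

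The genuine gap is the step you yourself flag as ``the main obstacle'': you never establish that zero is not a resonance of $L + r^u$ on $\mc{D}'_{E_u^*}(\M)$, you only sketch a plan (invertibility of the transpose $-X + r^u$ on $C^0(\M)$ via Proposition \ref{prop:mapping}, then an ``upgrade'' by radial estimates or by adapting the anisotropic Fredholm framework). Duality with $C^0$ only controls the kernel among measures, whereas $g$ is a first-order derivative of a measure, so the promised upgrade is precisely the missing content, not a routine afterthought. The paper closes this step much more directly: $X + \divv_\Omega X$ generates the mass-preserving transfer semigroup on densities (an $L^1(\M,\Omega)$-isometry), and the backward Birkhoff averages of $r^u$ are bounded below by $\nu_{\min} > 0$ by \eqref{eq:U^u}, so the explicit resolvent integral \eqref{eq:resolvent-formula} shows that $s \mapsto (X + \divv_\Omega X + r^u + s)^{-1}$ is holomorphic on $L^1(\M, \Omega)$ for $\re(s) > -\nu_{\min}$; hence there is no resonance at zero and the resonant state $g$ must vanish. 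A secondary point: to get $\WF(g) \subset E_u^*$ you need $\WF(\alpha_{\divv_\Omega X}) \subset E_u^*$, which the paper derives from $\WF(\lambda) \subset E_u^*$ (Lemma \ref{lemma:regularity-r^s/u}) and the smoothness of $r^u$ and $Y^u$; H\"older continuity of $\alpha_{\divv_\Omega X}$ alone does not constrain the wavefront set of its product with the measure $f$.
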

\begin{proof}
	Observe firstly by Lemma \ref{lemma:regularity-r^s/u} that $\lambda = -(X + r^u)a_u$ satisfies $\WF(\lambda) \subset E_u^*$. Using that $r^u$ and $Y^u$ are smooth, it follows that the solution $\alpha_{\divv_{\Omega}X}$ of \eqref{eq:alpha-W} with $W = \divv_{\Omega}X$ satisfies $\WF(\alpha_{\divv_{\Omega}X}) \subset E_u^*$. Therefore $(Y^u + \alpha_{\divv_{\Omega} X})f$ has wavefront set in $E_u^*$, so it is a resonant state (at zero) of the operator $X + \divv_{\Omega}X + r^u$ by Lemma \ref{lemma:horo-SRB}. However, $s \mapsto (X + \divv_{\Omega}X + r^u + s)^{-1}: L^1(\M, \Omega) \to L^1(\M, \Omega)$ is holomorphic for $\re(s) > -\nu_{\min}$ and so this resonant state is equal to the zero function, completing the proof.
\end{proof}

\section{Perturbation theory}\label{sec:perturbation}

Let $X_0$ be a fixed smooth Anosov vector field generating a transitive Anosov flow. In this section we study the local manifold structure of flows with vanishing winding cycles using the Implicit Function Theorem.

\subsection{Anisotropic Sobolev spaces}
We will use the perturbation theory developed by Guedes Bonthonneau \cite{Bonthonneau-19}, for which we need to introduce anisotropic Sobolev spaces. Recall that these are given by, for any $k \in \mathbb{N}_0$
\[\mc{H}_{rG, t}(\mc{M}; \Omega^k) := e^{- r \Op(G)} H^t(\mc{M}; \Omega^k), \quad r \geq 0, \quad t \in \mathbb{R}.\]
Here $\Op$ is a quantisation procedure on $\mc{M}$, $G(x, \xi) = m(x, \xi) \log(1 + |\xi|)$ is a logarithmically growing symbol on $T^*\mc{M}$, and $m(x, \xi)$ satisfies certain conditions with respect to any vector field $X$ close to $X_0$ in $C^1(\mc{M}; T\mc{M})$ norm; these conditions are explicitly stated in \cite[eq. (4)]{Bonthonneau-19}. We note that the main theorem of \cite{Bonthonneau-19} features only $\mc{H}_{rG, 0}(\mc{M}; \Omega^k)$; however, it is observed at the end of \cite[Section 2]{Bonthonneau-19} that the methods carry over to $\mc{H}_{rG, t}(\mc{M}; \Omega^k)$ (with a different \emph{threshold}).

When $t = 0$ we will use the notation $\mc{H}_{rG}(\M; \Omega^k) := \mc{H}_{rG, 0}(\M; \Omega^k)$, and when clear from context, we will simply write $\mc{H}_{rG, t}$ for $\mc{H}_{rG, t}(\mc{M}; \Omega^k)$ and $\mc{H}_{rG}$ for $\mc{H}_{rG}(\M; \Omega^k)$. Denote the domain of $X$ by (note that the domain \emph{does} depend on $X$)
\[\mc{D}^X_{rG, t}(\mc{M}; \Omega^k) := \{u \in \mc{H}_{rG, t}(\mc{M}; \Omega^k) \mid \Lie_Xu \in \mc{H}_{rG, t}(\mc{M}; \Omega^k)\}.\]
For $N > 1$  and $\eta > 0$ denote 
\begin{align*}
	\mc{U}_{N, \eta} &:= \{X \in C^N(\mc{M}; T\mc{M}) \mid \|X - X_0\|_{C^N} < \eta\},\\
	\mc{R}_{N, \eta} &:= \{(X, s) \in \mc{U}_{N, \eta} \times \mathbb{C} \mid s\,\,\mathrm{is\,\,a\,\,resonance\,\,of} \Lie_X\,\,\mathrm{on}\,\,\Omega^k,\, \re(s) > - 1\}.
\end{align*}
Here $\eta$ will always be chosen small enough so that the flows in $\mc{U}_{N, \eta}$ are Anosov (this may be done by \cite[Corollary 5.1.11] {Fisher-Hasselblatt-19}).

We summarize the contents of \cite[Theorem 1 and Corollary 2]{Bonthonneau-19} in the following lemma (see also the end of \cite[Section 2]{Bonthonneau-19}):

\begin{lemma}\label{lemma:perturbation-theory}
	There exist $N_0 > 0$, $\eta> 0$, and $C_0 \in \mathbb{R}$ such that the following holds. For any $N > N_0$, for all $X \in \mc{U}_{N, \eta}$, and all $t \in \mathbb{R}$, $r > C_0 + |t|$
	\[\Lie_X + s: \mc{D}^X_{rG, t}(\mc{M}; \Omega^k) \to \mc{H}_{rG, t}(\mc{M}; \Omega^k), \quad \re(s) > -1\]
	is a Fredholm operator and its inverse, when $s$ is not a resonance, is given by $(\Lie_X + s)^{-1}$. Moreover, the set $\mc{R}_{N, \eta} \subset \mc{U}_{N, \eta} \times \mathbb{C}$ is closed and the resolvent $(\Lie_X + s)^{-1}$ is bounded locally uniformly away from $\mc{R}_{N, \eta}$.
\end{lemma}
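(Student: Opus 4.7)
The plan is to invoke the microlocal framework of anisotropic Sobolev spaces originally developed by Faure--Sj\"ostrand and Dyatlov--Zworski, adapted to the uniform-in-$X$ setting of \cite{Bonthonneau-19}. Since the lemma is explicitly a summary of results from that paper, I would begin by recalling the construction of the order function $m(x,\xi)$ underlying $G$: one chooses $m$ so that it is negative in a conic neighbourhood of $E_u^*(X_0)$, positive in a conic neighbourhood of $E_s^*(X_0)$, and so that its Hamiltonian derivative along the lift of any $X$ close to $X_0$ is non-positive outside a fixed neighbourhood of the zero section. The key observation is that in a sufficiently small $C^1$-neighbourhood $\mc{U}_{N,\eta}$ of $X_0$, the invariant bundles $E_u^*(X)$ and $E_s^*(X)$ depend continuously on $X$ and remain in the desired conic regions, so that the \emph{same} order function works for every $X\in \mc{U}_{N,\eta}$.

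Next I would establish the Fredholm property of $\Lie_X+s: \mc{D}^X_{rG,t}\to \mc{H}_{rG,t}$ for $\re(s)>-1$ via a positive-commutator argument: conjugation by $e^{-r\Op(G)}$ turns $\Lie_X$ into an operator whose principal symbol has a definite sign everywhere in $T^*\mc{M}\setminus 0$ except at the radial sets $E_u^*(X)$ (a source for the Hamiltonian flow) and $E_s^*(X)$ (a sink), and the missing contributions are controlled by radial source/sink estimates \`a la Melrose--Vasy. The constant $C_0$ is chosen so that for $r>C_0+|t|$ the threshold condition in the radial estimates is satisfied in the whole half-plane $\re(s)>-1$, giving an estimate of the form
\[
\|u\|_{\mc{H}_{rG,t}} \le C\bigl(\|(\Lie_X+s)u\|_{\mc{H}_{rG,t}} + \|Ku\|_{\mc{H}_{rG,t}}\bigr)
\]
with $K$ compact. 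Combined with the trivial invertibility for $\re(s)\gg 1$ coming from \eqref{eq:resolvent-identity} and the analytic Fredholm theorem, this yields the meromorphic inverse $(\Lie_X+s)^{-1}$ with the stated identification away from the resonance set.

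The principal obstacle, which I expect to be the hardest point, is uniformity in $X$: all semiclassical estimates (positive commutator, radial estimates, and compactness of error terms) must hold with constants independent of $X\in\mc{U}_{N,\eta}$. This is handled in \cite{Bonthonneau-19} by observing that the Hamiltonian flow of the principal symbol of $-i\Lie_X$ together with the invariant splitting $E_0^*(X)\oplus E_u^*(X)\oplus E_s^*(X)$ depend continuously on $X$ in $C^N$-norm provided $N$ is large enough; this is where the threshold $N_0$ enters, since the radial estimates require a controlled number of derivatives of the symbol in order to make the quantisation $\Op(G)$ and the commutator computations meaningful. Once this uniform framework is in place, closedness of $\mc{R}_{N,\eta}$ and local uniform boundedness of $(\Lie_X+s)^{-1}$ away from it follow from a standard normal-family argument: a hypothetical sequence $(X_n,s_n)\to (X,s)\notin \mc{R}_{N,\eta}$ with $\|(\Lie_{X_n}+s_n)^{-1}\|\to\infty$ would yield, after normalisation of $u_n = (\Lie_{X_n}+s_n)^{-1} f_n$, a non-trivial element of $\Res^k(s)$ for $\Lie_X$, contradicting $s\notin\mc{R}_{N,\eta}$.
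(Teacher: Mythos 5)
The paper gives no proof of this lemma at all: it is stated explicitly as a summary of Theorem 1 and Corollary 2 of \cite{Bonthonneau-19}, with only the remark that Guedes Bonthonneau's infinite-regularity microlocal arguments carry over to large finite $N$; your sketch reconstructs precisely that reference's strategy (a single order function valid for all $X$ in a $C^1$-neighbourhood of $X_0$, radial source/sink estimates giving a uniform Fredholm estimate for $r>C_0+|t|$, analytic Fredholm theory, and a limiting/normal-family argument for closedness of $\mc{R}_{N,\eta}$ and local uniform bounds), so it is essentially the same route. One small slip: with this paper's convention $E_{u/s}^*=(\mathbb{R}X\oplus E_{u/s})^\perp$, the radial source is $E_s^*$ and the radial sink is $E_u^*$ (consistent with the weight being negative near $E_u^*$, where resonant states have their wavefront), not the other way around as in your parenthetical.
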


We remark that Guedes Bonthonneau works in infinite regularity, but the microlocal methods from this reference only require the control of a finite number of derivatives and thus carry over.

\subsection{Derivative of the winding cycle} Now specialise to $\dim \mc{M} = 3$ and fix a smooth probability volume form $\Omega$ on $\M$. Set, for some large $N$ to be specified later
\begin{equation}\label{eq:def-W-W-W+}
	\mc{W}^\pm := \{X \in C^N(\mc{M}; T\mc{M}) \mid [\omega^\pm(X)]_{H^2(\M)} = 0\} \cap \mc{U}_{N, \eta}, \quad \mc{W} := \mc{W}^+ \cap \mc{W}^-,
\end{equation}
where $\omega^\pm(X)$ denotes the winding cycles of $X$. Our goal is to show eventually in \S \ref{ssec:banach-manifold} that $\mc{W}^\pm$ are locally transversal Banach manifolds near certain flows. Of course, for this we will assume that $X_0 \in \mc{W}^\pm$, however there is no need to do so in this subsection since we will only study the dependence of $\omega^\pm(X)$ on $X$.

Recall that $\omega^\pm(X) = \iota_X \Pi_3^{+, X} \Omega$, where for any $k$, $\Pi_k^{+, X}$ is the spectral projector at zero of $\Lie_X$ on $k$-forms. Here, we use that $\Pi_3^{+, X}\Omega =: \Omega_{\mathrm{SRB}}^{+, X}$ is the SRB measure of $X$; it is a probability measure since we assumed that $\Omega$ is a probability measure. For the remainder of this section in order to simplify the notation we drop the superscript $H$ in the holomorphic part at zero of the resolvent $R_k^{+, H, X}$ of $\Lie_X$ acting on $\Omega^k$ and write $R_k^{+, X}$ instead. When clear from context we will also drop the vector field $X$ from the notation. 

Let us first study the $X$ dependence in $\Pi_3^{+, X}$. 

\begin{lemma}\label{lemma:proj-regularity}
	There exist $N_0 > 0$, $\eta > 0$, and $C_0 \in \mathbb{R}$, such that for some $r > C_0 + 3$, $N > N_0$, the following map is $C^1$-regular: 
	\[\mc{U}_{N, \eta} \ni X \mapsto \Pi_{3}^{+, X}: \mc{H}_{rG}(\M; \Omega^3) \to \mc{H}_{rG, -3}(\M; \Omega^3).\]
	Moreover, for $N_0$ large enough, the following map is $C^1$-regular:
	\[\mc{U}_{N, \eta} \ni X \mapsto \iota_X \Pi_{3}^{+, X}: \mc{H}_{rG}(\M; \Omega^3) \to \mc{H}_{rG, -3}(\M; \Omega^2).\]
	Finally, for any $(X, Y) \in \mc{U}_{N, \eta} \times C^N(\mc{M}; T\mc{M})$ and $N > N_0$:
	\[D \big[\omega^+(X)\big] (Y) = \Pi_2^{+, X} \iota_Y \Omega_{\mathrm{SRB}}^{+, X} + dR_1^{+, X} \iota_X\iota_Y \Omega_{\mathrm{SRB}}^{+, X}.\]
\end{lemma}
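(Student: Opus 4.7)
The strategy rests on the contour integral representation
\[\Pi_{3}^{+, X} = \frac{1}{2\pi i}\oint_{\gamma} (\Lie_X + s)^{-1}\, ds,\]
where $\gamma$ is a small circle around $0$ enclosing no other resonances (valid uniformly for $X$ in a small $C^N$-neighbourhood of $X_0$ by Lemma \ref{lemma:perturbation-theory}), combined with the Laurent expansion $(\Lie_X + s)^{-1} = s^{-1}\Pi_3^{+, X} + R_3^{+, X} + O(s)$ at $s = 0$. Recall semisimplicity at $0$ on $\Omega^3$ gives the simple pole (Proposition \ref{prop:k=0,2,3}).

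For the first $C^1$ statement, I would differentiate under the contour integral using the resolvent identity
\[(\Lie_{X + tY} + s)^{-1} - (\Lie_X + s)^{-1} = -t\,(\Lie_{X + tY} + s)^{-1}\,\Lie_Y\,(\Lie_X + s)^{-1},\]
which yields the formal derivative $D_X[(\Lie_X + s)^{-1}](Y) = -(\Lie_X + s)^{-1}\Lie_Y(\Lie_X + s)^{-1}$. Since $\Lie_Y = \iota_Y d + d\iota_Y$ is a first-order operator depending linearly on $Y$, it acts boundedly $\mc{H}_{rG, t} \to \mc{H}_{rG, t-1}$, and combined with the locally uniform boundedness of $(\Lie_X + s)^{-1}$ on each $\mc{H}_{rG, t}$ off the resonance set (Lemma \ref{lemma:perturbation-theory}), this composition is bounded $\mc{H}_{rG} \to \mc{H}_{rG, -1}$. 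Iterating the resolvent identity once more expresses the second-order remainder as $-t^2(\Lie_{X + tY} + s)^{-1}\Lie_Y(\Lie_X + s)^{-1}\Lie_Y(\Lie_X + s)^{-1}$, which is bounded $\mc{H}_{rG} \to \mc{H}_{rG, -2}$ with a uniform $O(t^2)$ estimate. Integrating over $\gamma$ upgrades this to $C^1$-regularity of $X \mapsto \Pi_3^{+, X}$ with target $\mc{H}_{rG, -2} \hookrightarrow \mc{H}_{rG, -3}$; the slack in $r > C_0 + 3$ leaves room for the subsequent composition with $\iota_X$. For $X \mapsto \iota_X \Pi_3^{+, X}$, the contraction $\iota_X$ is zeroth-order and linear in $X$, so the product rule gives
\[D_X[\iota_X \Pi_3^{+, X}](Y) = \iota_Y \Pi_3^{+, X} + \iota_X D_X[\Pi_3^{+, X}](Y),\]
and the $C^1$-regularity in the stated spaces follows (for $N_0$ large enough to make the first-order $\Lie_Y$ action well-defined on the relevant $\mc{H}_{rG, t}$).

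For the explicit formula, I would apply the derivative of the resolvent to $\Omega$, expand both factors via the Laurent series, and read off the residue at $s = 0$ to obtain
\[D_X[\Pi_3^{+, X}\Omega](Y) = -R_3^{+, X}\Lie_Y \Omega_{\mathrm{SRB}}^{+, X} - \Pi_3^{+, X}\Lie_Y R_3^{+, X}\Omega.\]
The second term vanishes: since $\Omega_{\mathrm{SRB}}^{+, X}$ and $R_3^{+, X}\Omega$ are top-degree hence closed, one has $\Lie_Y R_3^{+, X}\Omega = d\iota_Y R_3^{+, X}\Omega$, and by \eqref{eq:commute-projector} the expression $\Pi_3^{+, X}d\iota_Y R_3^{+, X}\Omega = d\big(\Pi_2^{+, X}\iota_Y R_3^{+, X}\Omega\big)$ is an exact element of $\Res^{3, \infty} = \mathbb{C}\,\Omega_{\mathrm{SRB}}^{+, X}$ (by Proposition \ref{prop:k=0,2,3}), hence zero by Stokes. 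Combined with $D\omega^+(X)(Y) = \iota_Y\Omega_{\mathrm{SRB}}^{+, X} + \iota_X D_X[\Pi_3^{+, X}\Omega](Y)$ this gives
\[D\omega^+(X)(Y) = \iota_Y \Omega_{\mathrm{SRB}}^{+, X} - \iota_X R_3^{+, X}\,d\iota_Y \Omega_{\mathrm{SRB}}^{+, X}.\]
To finish, I would commute $\iota_X R_3^{+, X} = R_2^{+, X}\iota_X$ and $R_2^{+, X} d = d R_1^{+, X}$ (both from \eqref{eq:commute-projector} applied to the holomorphic parts), apply the Cartan identity $\iota_X d = \Lie_X - d\iota_X$, and use $R_2^{+, X}\Lie_X = \id - \Pi_2^{+, X}$ (obtained by matching the $s^0$-coefficients in $(\Lie_X + s)^{-1}\Lie_X = \id - s(\Lie_X + s)^{-1}$) to arrive at the stated expression.

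The main obstacle is the careful bookkeeping of derivative losses in the anisotropic scale $\mc{H}_{rG, t}$: each factor of $\Lie_Y$ costs one derivative, so the bound $r > C_0 + 3$ is chosen precisely so that the perturbation theory of \cite{Bonthonneau-19} applies uniformly on all intermediate spaces appearing in the iterated resolvent identity and in the subsequent composition with $\iota_X$.
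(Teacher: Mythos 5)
Your proposal is correct and follows essentially the same route as the paper's proof: the contour-integral representation of $\Pi_3^{+, X}$, the resolvent identity on the anisotropic scale $\mc{H}_{rG, t}$ with one derivative lost per factor, and the residue computation at $s=0$ combined with Cartan's formula and the commutation relations $\iota_X R_2^{+, X} = R_1^{+, X}\iota_X$, $R_2^{+, X}\Lie_X = \id - \Pi_2^{+, X}$, $\Pi_3^{+, X}d = 0$ to arrive at the stated formula for $D\omega^+(X)(Y)$. The only points you pass over more quickly than the paper are the quantitative bound $\|\Lie_Y\|_{\mc{H}_{rG}\to\mc{H}_{rG,-1}}\leq C\|Y\|_{C^N}$ (obtained there via Calder\'on--Vaillancourt) and the identification of the contour integral over $\gamma$ with the spectral projector at zero for the \emph{perturbed} flow, which requires the constancy-of-rank argument from \cite[Lemma 6.1]{Cekic-Paternain-20} (ruling out resonances splitting off zero inside $\gamma$) rather than Lemma \ref{lemma:perturbation-theory} alone.
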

Here, we remark that $D \big[\omega^+(X)\big] (Y)$ denotes the derivative of $\omega^+$ at $X$ in the direction of $Y$.
\begin{proof}
Let $\gamma$ be a small smooth contour around zero, such that $\Lie_{X_0}$ acting on $\Omega^3$ has no non-zero resonances on a neighbourhood of the closed domain that $\gamma$ bounds. We recall here that since $X_0$ is transitive, the rank of the spectral projector $\Pi_3^{+, X_0}$ is equal to $1$. By Lemma \ref{lemma:perturbation-theory}, there is a neighbourhood $V$ of $\gamma$ such that for some small enough $\eta > 0$, for any $X \in \mc{U}_{N, \eta}$, $\Lie_X$ has no resonances in $V$ and $(\Lie_X + s)^{-1}$ is uniformly bounded as a map on $\mc{H}_{rG}$. Therefore we may define
\begin{equation}\label{eq:projector-flow-neighbourhood}
	\widetilde{\Pi}_{3}^{+, X} := \frac{1}{2\pi i} \oint_\gamma (\Lie_X + s)^{-1} ds, \quad X \in \mc{U}_{N, \eta}.
\end{equation}
By \cite[Lemma 6.1]{Cekic-Paternain-20}, the rank of $\widetilde{\Pi}_{3}^{+, X}$ is constant for $X \in \mc{U}_{N, \eta}$ for small enough $\eta > 0$; therefore $\widetilde{\Pi}_{3}^{+, X} = \Pi_{3}^{+, X}$.

Note that by the resolvent identity, for any $X, Y \in \mc{U}_{N, \eta}$, and $s \in V$:
\begin{equation}\label{eq:resolvent-identity}
	(\Lie_X + s)^{-1} - (\Lie_Y + s)^{-1} = (\Lie_X + s)^{-1} (\Lie_Y - \Lie_X) (\Lie_Y + s)^{-1}.
\end{equation}
By Lemma \ref{lemma:perturbation-theory}, $(\Lie_Y + s)^{-1}$ is uniformly bounded for $s \in V$ as a map on $\mc{H}_{rG}$ and $(\Lie_X + s)^{-1}$ is uniformly bounded as map on $\mc{H}_{rG, -1}$, if $r > C_0 + 1$. We would like to estimate $\Lie_X - \Lie_Y = \Lie_{X - Y}$ as a map $\mc{H}_{rG} \to \mc{H}_{rG, -1}$. Equivalently, we would like to estimate the norm of the operator:
\[P = (1 + \Delta_g)^{-\frac{1}{2}} e^{r \Op(G)} \Lie_{X - Y} e^{- r \Op(G)}: L^2(\mc{M}; \Omega^k) \to L^2(\mc{M}; \Omega^k),\]
By the composition formula for pseudodifferential operators and the Calder\'on-Vaillancourt theorem (see \cite[Theorems 4.14 and 4.23]{Zworski-12} for these statements in the Euclidean setting), it follows that the norm $\|P\|_{L^2 \to L^2}$ depends on a finite number of derivatives of the symbols of the operators in the composition. Therefore if we take $N$ large enough:
\[\|\Lie_X - \Lie_Y\|_{\mc{H}_{rG} \to \mc{H}_{rG, -1}} = \|P\|_{L^2 \to L^2} \leq C\|X - Y\|_{C^N},\]
for some $C > 0$  (depending also on $r$). This estimate and \eqref{eq:resolvent-identity} finally show that 
\[\mc{U}_{N, \eta} \ni X \mapsto (\Lie_X + s)^{-1}: \mc{H}_{rG} \to \mc{H}_{rG, -1}\]
is Lipschitz continuous for $s \in V$. Therefore, taking the limits we get:
\[D \big[(\Lie_X + s)^{-1}\big] (Y) = -(\Lie_X + s)^{-1} \Lie_Y (\Lie_X + s)^{-1}, \quad (X, Y) \in \mc{U}_{N, \eta} \times C^N(\mc{M}; T\mc{M}),\]
as maps $\mc{H}_{rG} \to \mc{H}_{rG, -2}$. In fact, by the above discussion (e.g. we need to use the bound $\|\Lie_{X - Y}\|_{\mc{H}_{rG, -1}, \mc{H}_{rG, -2}} \leq C\|X - Y\|_{C^N}$ for some $C > 0$), the right hand side is only continuous as a map $\mc{H}_{rG} \to \mc{H}_{rG, -3}$, since for each of the operators we lose one derivative. We conclude that 
\[\mc{U}_{N, \eta} \ni X \mapsto (\Lie_X + s)^{-1}:\mc{H}_{rG} \to \mc{H}_{rG, -3}, \quad s \in V\]
is $C^1$ regular. Here throughout we ask that $r > C_0 + 3$.

Since $\Pi_3^{+, X} = \frac{1}{2\pi i} \oint_\gamma (\Lie_X + s)^{-1} ds$, the first claim follows. The second claim follows by the $C^1$ properties of the multiplication $C^{N}(\mc{M}) \times \mc{H}_{rG, -3}(\mc{M}) \to \mc{H}_{rG, -3}(\mc{M})$ for large enough $N$.

Next, we compute the first derivative of the SRB measure:
\begin{align}\label{eq:SRB-derivative-XX}
\begin{split}
	D \big[\Omega_{\mathrm{SRB}}^{+, X}\big](Y) &= -\frac{1}{2\pi i} \oint_{\gamma} (\Lie_X + s)^{-1} \Lie_Y (\Lie_X + s)^{-1}\Omega\\
	&= -(R_3^{+, X} \Lie_Y \Pi_3^{+, X} +  \Pi_3^{+, X} \Lie_YR_3^{+, X})\Omega = -dR_2^{+, X} \iota_Y  \Omega_{\mathrm{SRB}}^{+, X},
\end{split}
\end{align}
where in the second equality we used the residue theorem and the expansion \eqref{eq:laurent}, and in the last equality we used $\Lie_Y = d\iota_Y + \iota_Y d$, as well as $\Pi_3^{+, X} d = 0$, $dR_3^{+, X} = 0$, $d\Pi_{3}^{+, X} = 0$, and $dR_2^{+, X} = R_3^{+, X}d$. The final formula now easily follows:
\begin{align*}
	D \big[\iota_X \Omega_{\mathrm{SRB}}^{+, X}\big] (Y) &= \iota_Y \Omega_{\mathrm{SRB}}^{+, X} - \iota_Xd R_2^{+, X} \iota_Y  \Omega_{\mathrm{SRB}}^{+, X}\\
	&= \Pi_2^{+, X} \iota_Y \Omega_{\mathrm{SRB}}^{+, X} + dR_1^{+, X} \iota_X\iota_Y \Omega_{\mathrm{SRB}}^{+, X},
\end{align*}
where in the second equality we used $\Lie_X = \iota_X d + d\iota_X$, $\Lie_X R_2^{+, X} = \id - \Pi_2^{+, X}$ (which follows from \eqref{eq:laurent}), and $\iota_X R_2^{+, X} = R_1^{+, X} \iota_X$. This completes the proof.
\end{proof}

\begin{Remark}\rm
	In the dynamical systems literature, the regularity of the SRB measure with respect to perturbations has been intensively studied, and statements similar to Lemma \ref{lemma:proj-regularity} but in a different functional setting have been obtained, see \cite[Theorem A]{Ruelle-08}, \cite[Theorem 2]{Butterley-Liverani-07}, and \cite[Theorem 2.7]{Gouezel-Liverani-06}. The point of this lemma is to provide an alternative microlocal proof of the regularity statement, and to compute the first variation of the winding cycle in this setting. We also remark that in the dynamical systems literature the formula \eqref{eq:SRB-derivative-XX} for the derivative of the SRB measure is sometimes known as the \emph{linear response} formula.
\end{Remark}

\subsection{Banach manifolds $\mc{W}^\pm$}\label{ssec:banach-manifold} We are now in shape to study the local Banach manifold structure of the spaces $\mc{W}^\pm$. Assume that $X_0$ preserves a smooth probability volume $\Omega$, so that $\SRB = \SRBs = \Omega$, and assume that $[\omega^+] = [\omega^-] = 0$. We first prove an auxiliary result:

\begin{lemma}\label{lemma:auxiliary-transversal}
	For $Y \in C^\infty(\mc{M}; T\M)$, define $\mc{Z}^\pm(Y) := \Pi_2^\pm \iota_Y \Omega$. Then the following map is surjective, unless $X_0$ is a contact flow:
	\[\mc{Z}: C^\infty(\mc{M}; T\M) \ni Y \mapsto (\mc{Z}^+(Y), \mc{Z}^-(Y)) \in \Res^{2, \infty} \times \Res_*^{2, \infty}.\]
	If $X_0$ is contact, then $\ran(\mc{Z})$ is of codimension $1$, and the projection of $\ran(\mc{Z})$ to $H^2(\M) \times H^2(\M)$ is surjective. In either case, $\mc{Z}^+$ and $\mc{Z}^-$ are surjective onto $\Res^{2, \infty}$ and $\Res^{2, \infty}_*$, respectively, which further project surjectively to $H^2(\M)$.
\end{lemma}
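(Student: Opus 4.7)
My approach combines three ingredients: surjectivity of the spectral projectors on smooth forms, duality via the pairing $\langle\cdot,\cdot\rangle$ of Section~\ref{ssec:pairing}, and horocyclic rigidity. Since $Y \mapsto \iota_Y\Omega$ is an isomorphism $C^\infty(\mc{M}; T\mc{M}) \to C^\infty(\mc{M}; \Omega^2)$, and since $\Pi_2^+$ is a finite-rank projector restricting to the identity on $\Res^{2,\infty}$, its image on smooth forms is exactly $\Res^{2,\infty}$; analogously for $\Pi_2^-$. Hence each of $\mc{Z}^\pm$ is individually surjective. Moreover every $u \in \Res^{2,\infty}$ is closed, since $du \in \Res^{3,\infty} = \mathbb{C}\,\SRB$ by Proposition~\ref{prop:k=0,2,3} and $\int_\mc{M} du = 0$ forces $du = 0$. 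For a smooth closed $\omega$, equating Laurent coefficients of $(\Lie_X + s)R_2^+(s) = \mathrm{Id}$ at $s = 0$ gives $\mathrm{Id} - \Pi_2^+ = \Lie_X R_2^{+,H}(0)$ irrespective of semisimplicity, and combining with $dR_2^{+,H}(0)\omega = 0$ (from $dR_2^+(s) = R_3^+(s)d$ applied to closed $\omega$) yields $\omega - \Pi_2^+\omega = d\iota_X R_2^{+,H}(0)\omega$, so $[\Pi_2^+\omega] = [\omega]$. Thus $\Res^{2,\infty} \to H^2(\mc{M})$ and $\Res_*^{2,\infty} \to H^2(\mc{M})$ are surjective.

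To identify the joint cokernel I will use the transpose relation $(\Pi_2^\pm)^T = \Pi_1^\mp$ from \eqref{eq:proj-transpose-2}. For $(\varphi, \psi) \in \Res_*^{1,\infty} \times \Res^{1,\infty}$,
\[\langle\mc{Z}(Y), (\varphi, \psi)\rangle = \int_{\mc{M}} \iota_Y\Omega \wedge (\varphi + \psi),\]
which vanishes for all smooth $Y$ iff $\varphi + \psi = 0$ as a current, by non-degeneracy of the natural pairing between smooth $2$-forms (equivalently, smooth vector fields) and distributional $1$-forms. Hence $\operatorname{codim}(\ran\mc{Z}) = \dim V$ where $V := \Res^{1,\infty} \cap \Res_*^{1,\infty}$.

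The crux is to show $V = \mathbb{C}\alpha$ when $X_0$ is contact with contact form $\alpha$, and $V = 0$ otherwise. Every $\varphi \in V$ is smooth, since $\WF(\varphi) \subset E_u^* \cap E_s^* = \emptyset$. Then $V$ is finite-dimensional and $\Lie_X$-invariant with $\Lie_X$ nilpotent, so it suffices to describe $V \cap \ker\Lie_X$: smooth $\varphi$ with $\Lie_X\varphi = 0$. For such $\varphi$, Proposition~\ref{prop:k=0,2,3} gives $\iota_X\varphi = c$ constant. If $c = 0$, then $\varphi \in \Res_0^1$ is smooth, and Lemma~\ref{lemma:horo-I}, Item~2 (applicable since $X_0$ preserves $\Omega$) applied to both $\pm X$ gives $\iota_{Y^u}\varphi = \iota_{Y^s}\varphi = 0$; together with $\iota_X\varphi = 0$ and the pointwise linear independence of $\{X, Y^u, Y^s\}$ this forces $\varphi = 0$. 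If $c \neq 0$, normalise $\iota_X\varphi = 1$, so $\iota_X d\varphi = 0$, and $\varphi \wedge d\varphi$ is a $\Lie_X$-invariant smooth top form, hence $f\Omega$ with $f$ constant. The antiderivation identity $\iota_X(\varphi \wedge d\varphi) = (\iota_X\varphi)d\varphi - \varphi\wedge\iota_X d\varphi = d\varphi$ shows that $f = 0$ would force $d\varphi = 0$; but then $\varphi$ is smooth closed with $\varphi(X_0) = 1$, and from $\iota_X(\varphi\wedge\Omega) = 0$ one derives $\varphi\wedge\iota_{X_0}\Omega = \Omega$, so $\int_\mc{M}\varphi\wedge\iota_{X_0}\Omega = 1$, contradicting $[\iota_{X_0}\Omega] = 0$. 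Therefore $f \neq 0$ and $\alpha := \varphi$ is a contact form with Reeb vector field $X_0$. Uniqueness (dimension at most one) follows because any two solutions of the $c = 1$ problem differ by an element of the $c = 0$ case, which must vanish. Jordan blocks $\Lie_X\psi = \alpha$ with smooth $\psi$ are ruled out by applying $\iota_X$ to $\iota_X d\psi + d\iota_X\psi = \alpha$: the left side vanishes ($\iota_X\psi$ constant and $\iota_X^2 = 0$) while $\iota_X\alpha = 1$, a contradiction.

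Finally, the projection of $\ran(\mc{Z})$ to $H^2(\mc{M}) \times H^2(\mc{M})$ is immediate in the non-contact case, where $\mc{Z}$ is already surjective. In the contact case $\ran(\mc{Z})$ equals the hyperplane $\{(u_+, u_-) : \int_\mc{M}(u_+ - u_-)\wedge\alpha = 0\}$, so given desired classes I pick representatives $(u_+, u_-)$ with those cohomology classes and then add $c\,d\alpha \in d\Res^{1,\infty} \subset \Res^{2,\infty}$ to $u_+$; this leaves $[u_+]$ unchanged and the obstruction becomes affine in $c$ with nonzero slope $-\int_\mc{M}\alpha\wedge d\alpha$ (by the contact condition), so a unique $c$ kills it. I expect the main obstacle to be the classification of $V$ in the third paragraph, specifically the algebraic reduction $f = 0 \Rightarrow d\varphi = 0$ and the subsequent use of the standing hypothesis $[\omega^\pm] = 0$ to exclude the resulting degenerate Reeb-like case.
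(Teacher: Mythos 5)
Your argument is correct and reaches all the assertions of the lemma, and while its skeleton (detecting the cokernel of $\mc{Z}$ through the pairing of Proposition \ref{prop:non-degenerate} and the transpose identity \eqref{eq:proj-transpose-2}, then reducing to smooth flow-invariant $1$-forms) coincides with the paper's, several key steps go by a genuinely different route. Where the paper, having produced a smooth invariant $u$ with $u|_{E_u\oplus E_s}=0$ and $\iota_{X_0}u$ constant, cites Hurder--Katok to get the dichotomy ``suspension (excluded by $[\omega^+]=0$) or contact'', you obtain the same dichotomy by hand: $u\wedge du=f\Omega$ with $f$ constant (this is where volume preservation enters), and $f=0$ forces $du=0$, which you rule out by the integration identity $\int_{\M}u\wedge\iota_{X_0}\Omega=1$ against $[\iota_{X_0}\Omega]=0$. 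In the contact case the paper pins down $\ran(\mc{Z})$ through a secondary surjectivity onto $\Res_0^1\times\Res_{0*}^1$, which needs the Dyatlov--Zworski semisimplicity on $\Omega_0^1$ and the decomposition $\Res^{2,\infty}=\alpha\wedge\Res_0^1\oplus\mathbb{C}\,d\alpha$; you instead compute $\dim\big(\Res^{1,\infty}\cap\Res_*^{1,\infty}\big)=1$ (the Jordan-block exclusion is fine once you add the one-line justification that $\iota_{X_0}\psi$ is constant because $\Res^{0,\infty}=\Res^0$ by Proposition \ref{prop:k=0,2,3}) and then use the finite-dimensional double annihilator, and you get the $H^2(\M)\times H^2(\M)$ surjectivity by shifting $u_+$ by $c\,d\alpha$. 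This avoids the semisimplicity input altogether, which is a genuine simplification; the paper's route, in exchange, gives the explicit structure of $\Res^{2,\infty}$ for contact flows.

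Two small repairs. First, your justification that $\Pi_2^\pm\big(C^\infty(\M;\Omega^2)\big)=\Res^{2,\infty}_{(*)}$ --- ``a finite-rank projector restricting to the identity on its range'' --- is not an argument, since elements of $\Res^{2,\infty}$ are not smooth; the fact does hold, and follows from your own duality computation (if $u_*\in\Res_*^{1,\infty}$ annihilates the image, then $\langle\varphi,u_*\rangle=0$ for all smooth $\varphi$, so $u_*=0$, and conclude by Proposition \ref{prop:non-degenerate}), or from density of $C^\infty$ in the anisotropic space together with finite rank. Second, Lemma \ref{lemma:horo-I} carries the hypothesis that $E_u$ is orientable (and applying it to $-X_0$ would need $E_s$ orientable), which is not part of the standing assumptions here; the detour is unnecessary, because for a \emph{smooth} $1$-form with $\Lie_{X_0}\varphi=0$ the invariance $\varphi_t^*\varphi=\varphi$ combined with the exponential contraction/expansion in \eqref{eq:anosov-def} gives $\varphi|_{E_u\oplus E_s}=0$ directly --- which is exactly the ``contraction/expansion'' step in the paper's proof.
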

\begin{proof}
	Assume for the sake of contradiction that the map $\mc{Z}$ is not surjective. Then, by Proposition \ref{prop:non-degenerate}, there is non-zero $(u_*, u) \in \Res_*^{1, \infty} \times \Res^{1, \infty}$ such that
	\[\forall Y \in C^\infty(\mc{M}; T\M), \quad \int_{\M} u_* \wedge \Pi_2^+ \iota_Y \Omega + \int_{\M} u \wedge \Pi_2^- \iota_Y \Omega = 0.\]
	It follows by \eqref{eq:proj-transpose-2} that
	\begin{equation}\label{eq:auxiliary-1}
		\forall Y \in C^\infty(\mc{M}; T\M), \quad \int_{\M} \iota_Y(u + u_*)\, \Omega = 0,
	\end{equation}
	which translates into $u + u_* = 0$. By the wavefront set condition, it follows that $u \in C^\infty(\M; \Omega^1)$. By using the contraction/expansion properties, we get that $u|_{E_u \oplus E_s} = 0$, and since $\iota_{X_0}u \in \Res^0$, we have $\iota_{X_0} u$ is a constant. By \cite[Theorem 2.3]{Hurder-Katok-90} it follows that the flow is either a suspension of an Anosov diffeomorphism on a torus contradicting $[\omega^+] = 0$, or $u$ is a contact form. This completes the proof of the first claim.
	
	For the second claim, assume that $X_0$ is contact with contact form $\alpha \in C^\infty(\M; \Omega^1)$, satisfying $\iota_{X_0} \alpha = 1$ and $\Lie_{X_0} \alpha = 0$; then we may assume (up to re-scaling) that $\Omega = \alpha \wedge d\alpha$. Since $\iota_{X_0} \Res^{2, \infty} \subset \Res^1_0$ (here we use that $\Lie_{X_0}$ is semisimple on $\Omega_0^1$ as shown by \cite[Proposition 3.1(3)]{Dyatlov-Zworski-17}), and every $u \in \Res^{2, \infty}$ decomposes invariantly as $u = \alpha \wedge \iota_{X_0} u + (u - \alpha \wedge \iota_{X_0} u)$, we conclude that 
	\begin{equation}\label{eq:res-2}
		\Res^{2, \infty} = \alpha \wedge \Res_0^1 \oplus \Res_{0}^{2} = \alpha \wedge \Res_0^1 \oplus \mathbb{C}d\alpha,
	\end{equation}
	where the semisimplicity of $\Lie_{X_0}$ on $\Omega_0^2$ was proved in Proposition \ref{prop:k=0,2,3}; hence in particular $\Res^{2} = \Res^{2, \infty}$ and similarly $\Res^2_* = \Res^{2, \infty}_*$. Since $(\Pi_2^{\pm})^T = \Pi_1^{\mp}$ by \eqref{eq:proj-transpose-2} and $\Pi_1^\pm \alpha = \alpha$, we have
	\[
		\int_{\M} \alpha \wedge \Pi_2^+ \iota_Y \Omega = \int_{\M} \alpha \wedge \iota_Y \Omega = \int_{\M} \alpha \wedge \Pi_2^- \iota_Y \Omega,
	\]
	and so we get that
	\[\ran(\mc{Z}) \subset \Big\{(u, u_*) \in \Res^2 \oplus \Res_*^2 \mid \int_{\M}\alpha \wedge (u - u_*) = 0\Big\}.\]
	We claim that the equality holds. We have $\mc{Z}(X_0) = (d\alpha, d\alpha)$ and so using \eqref{eq:res-2} it suffices to show that 
	\[ C^\infty(\mc{M}; T\M) \ni Y \mapsto (\iota_{X_0} \mc{Z}^+(Y), \iota_{X_0} \mc{Z}^-(Y)) = -(\Pi_1^+ \iota_Y d\alpha, \Pi_1^- \iota_Y d\alpha) \in \Res_0^1 \times \Res_{0*}^1\]
	is surjective. Here in the equality we used that $\Omega = \alpha \wedge d\alpha$ and $\iota_{X_0} \mc{Z}^\pm(Y) = \Pi_1^\pm \iota_{X_0} \iota_Y \Omega = -\Pi_1^\pm \iota_Y d\alpha$. Again, if not, by Lemma \ref{lemma:semisimple} there exist $u_* \in \Res_{0*}^1$ and $u \in \Res_0^1$ (at least one of which is non-zero) such that for all $Y$
	\[0 =\int_{\M} \alpha \wedge (u_* \wedge \Pi_1^+ \iota_Y d\alpha + u \wedge \Pi_1^- \iota_Y d\alpha) = \int_{\M} \alpha \wedge (u_* + u) \wedge \iota_Y d\alpha = \int_{\M} \iota_Y(u + u_*)\, \Omega,\]
	where we used \eqref{eq:proj-tranpose} in the second equality, and that $(u + u_*) \wedge d\alpha = 0$ in the second equality. This implies $u + u_* = 0$, so $u \in C^\infty(\M; \Omega_0^1)$, which by the same argument as above translates to $u = 0$ (and so also $u_* = 0$). This contradicts the assumption and proves the claim about $\mc{Z}$. 
	
	That the projection onto $H^2(\M) \times H^2(\M)$ is surjective now follows from the observation that for a smooth closed $2$-form $\eta$
	\[\Pi_2^+ \eta = (\id - R_2^+ \Lie_{X_0})\eta = \eta - dR_1^+ \iota_{X_0} \eta,\]
	so $\Pi_2^+$ restricts to an injection on $H^2(\M)$ to $\Res^{2, \infty}$, and similarly for $\Pi_2^-$. Since $d\alpha$ is exact, we conclude that $\ran(\mc{Z})$ contains $\alpha \wedge \Res_0^1 \times \alpha \wedge \Res_{0*}^1$ which projects onto $H^2(\M) \times H^2(\M)$, completing the proof. 
	
	The final claim about the surjectivity of $\mc{Z}^\pm$ follows easily from the method in the first paragraph, and the fact that $\Res^{2, \infty}$ and $\Res_*^{2, \infty}$ surject onto $H^2(\M)$.
\end{proof}

Next, we apply the preceding lemma to show the Banach manifold structure:

\begin{lemma}\label{lemma:banach-manifold}
	Assume $X_0$ preserves a smooth volume and that the winding cycles are zero, i.e. $[\omega^+(X_0)] = [\omega^-(X_0)] = 0$. Then $\mc{W}^\pm$ are $C^1$ Banach submanifolds near $X_0$ of codimension $b_1(\M)$. The intersection $\mc{W}^+ \cap \mc{W}^-$ is transversal, that is, it is a $C^1$ Banach submanifold of codimension $2b_1(\M)$.
\end{lemma}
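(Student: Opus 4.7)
The plan is to apply the Implicit Function Theorem to the maps
\[
\Phi^\pm: \mc{U}_{N,\eta} \to H^2(\M), \qquad \Phi^\pm(X) := [\omega^\pm(X)]_{H^2(\M)},
\]
so that $\mc{W}^\pm = (\Phi^\pm)^{-1}(0)$ near $X_0$. Since $\M$ is a closed oriented $3$-manifold, Poincar\'e duality gives $\dim H^2(\M) = b_1(\M)$, so surjectivity of the derivative will yield the asserted codimension.

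The first step is to verify that $\Phi^\pm$ is $C^1$ on $\mc{U}_{N,\eta}$ (for $N$ large enough). For this I would use Lemma \ref{lemma:proj-regularity}: the map $X \mapsto \iota_X \Pi_3^{+,X}\Omega$ is $C^1$ as a map into $\mc{H}_{rG,-3}(\M;\Omega^2)$. Composing with the continuous projection to de Rham cohomology (which factors through Lemma \ref{lemma:hodge-decomposition}) yields a $C^1$ map into the finite-dimensional space $H^2(\M)$. The analogous statement for $\Phi^-$ follows by replacing $X$ with $-X$.

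The second step is to compute $D\Phi^\pm(X_0)$ and show surjectivity. By Lemma \ref{lemma:proj-regularity}, for any $Y \in C^N(\M; T\M)$,
\[
D\omega^+(X_0)(Y) = \Pi_2^{+,X_0}\iota_Y\Omega + d R_1^{+,X_0}\iota_{X_0}\iota_Y\Omega,
\]
so passing to cohomology gives $D\Phi^+(X_0)(Y) = [\Pi_2^{+,X_0}\iota_Y\Omega] = [\mc{Z}^+(Y)] \in H^2(\M)$, and similarly for $\Phi^-$. The surjectivity onto $H^2(\M)$ of the composition $Y \mapsto [\mc{Z}^\pm(Y)]$ is precisely the last assertion in Lemma \ref{lemma:auxiliary-transversal}. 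Since $H^2(\M)$ is finite-dimensional, the kernel of $D\Phi^\pm(X_0)$ is automatically closed and topologically complemented in the Banach space $C^N(\M;T\M)$. The Banach-space Implicit Function Theorem then gives that $\mc{W}^\pm$ is a $C^1$ Banach submanifold near $X_0$ of codimension $b_1(\M)$.

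Finally, for the transversality of $\mc{W}^+ \cap \mc{W}^-$, I would apply the same argument to the combined map
\[
\Phi := (\Phi^+, \Phi^-): \mc{U}_{N,\eta} \to H^2(\M) \oplus H^2(\M).
\]
Its derivative at $X_0$ sends $Y$ to $([\mc{Z}^+(Y)], [\mc{Z}^-(Y)])$, and the surjectivity of this map onto $H^2(\M)\oplus H^2(\M)$ is exactly the statement in Lemma \ref{lemma:auxiliary-transversal} that the projection of $\ran(\mc{Z})$ to $H^2(\M)\times H^2(\M)$ is surjective (valid both in the contact and non-contact case). The IFT then gives that $\mc{W} = \mc{W}^+\cap\mc{W}^-$ is a $C^1$ Banach submanifold of codimension $2b_1(\M)$, which is equivalent to the transversality of $\mc{W}^+$ and $\mc{W}^-$. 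The main obstacle in this plan is ensuring that the derivative computation in Lemma \ref{lemma:proj-regularity} holds in a function space in which the de Rham cohomology class is well-defined and continuous, which is handled by Lemma \ref{lemma:hodge-decomposition} applied to the anisotropic Sobolev setting.
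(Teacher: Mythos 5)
Your proposal is correct and follows essentially the same route as the paper: $C^1$ dependence via Lemma \ref{lemma:proj-regularity}, surjectivity of the derivative via Lemma \ref{lemma:auxiliary-transversal} together with Poincar\'e duality, and the Implicit Function Theorem for both $\mc{W}^\pm$ and the transversal intersection. The only cosmetic difference is that the paper realises the cohomology class by pairing $\iota_X\Pi_3^{\pm,X}\Omega$ against a fixed basis of $H^1(\M)$ (so the maps land in $\mathbb{R}^{b_1(\M)}$ and the exact term drops by Stokes), which sidesteps the small technical point you flag about defining a continuous de Rham projection on the anisotropic space.
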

\begin{proof}
	Let us first show the claims about $\mc{W}^\pm$. Let $(\e_i)_{i = 1}^{b_1(\M)}$ be a basis of $H^1(\mc{M}; \mathbb{R})$ and for $X \in \mc{U}_{N, \eta}$ define the maps
	\begin{equation}\label{eq:F-pm-def}
		\mc{F}^\pm (X) = \Big(\int_{\mc{M}} \e_1 \wedge \iota_X \Pi_{3}^{\pm, X} \Omega, \dotso, \int_{\mc{M}} \e_{b_1(\M)} \wedge \iota_X \Pi_{3}^{\pm, X} \Omega\Big) \in \mathbb{R}^{b_1(\M)}.
	\end{equation}
	In suitable topologies given by Lemma \ref{lemma:proj-regularity}, the maps $\mc{F}^\pm$ are $C^{1}$, and we may compute their derivative along a $C^N$-regular vector field $Y$ as
	\begin{equation}\label{eq:F-pm-derivative}
		D_{X_0} \mc{F}^\pm(Y) = \Big(\int_{\mc{M}} \e_1 \wedge \Pi_2^+\iota_Y \Omega, \dotso, \int_{\mc{M}} \e_{b_1(\M)} \wedge \Pi_2^+\iota_Y \Omega\Big).
	\end{equation}
	We claim that $D_{X_0} \mc{F}^\pm$ are surjective. For that, it suffices to observe by Lemma \ref{lemma:auxiliary-transversal} that $\Res^{2, \infty}$ projects surjectively onto $H^2(\M)$ and $Y \mapsto \Pi_2^+\iota_Y \Omega$ is surjective onto $\Res^{2, \infty}$. Therefore, Poincar\'e duality implies the initial claim. An application of the Implicit Function Theorem \cite[Theorem 5.9]{Lang-99} shows that $\mc{W}^\pm$ are locally $C^1$ Banach manifolds of codimension $b_1(\mc{M})$, as required.

	Next, consider the map $\mc{F}(X) := (\mc{F}^+(X), \mc{F}^-(X)) \in \mathbb{R}^{2b_1(\M)}$. As in the paragraph above, we may compute the derivative of $\mc{F}$ at $X_0$, and conclude by Lemma \ref{lemma:auxiliary-transversal} that $D_{X_0}\mc{F}$ is surjective onto $\mathbb{R}^{2b_1(\M)}$ in both contact and non-contact cases. The surjectivity is then deduced by Poincar\'e duality and the claim follows by applying the Implicit Function Theorem.
\end{proof}

\begin{Remark}\rm
	It is expected that Lemma \ref{lemma:banach-manifold} is valid in the $C^\infty$-regularity setting. For that, we would need to show the assumptions of the Nash-Moser Implicit Function Theorem are satisfied (see \cite[Chapter 4]{Hamilton-82}), that is, that the involved mappings, and the right inverse for the derivative map, are \emph{tame in a neighbourhood} of $X_0$. Since this would produce further technical difficulties, we refrained from this exploration.
\end{Remark}

\begin{Remark}\rm
	Throughout this section we assumed that $X_0$ preserves a smooth volume $\Omega$. The reason is that otherwise, we would not be able to show the surjectivity of the derivative: for instance following the proof of Lemma \ref{lemma:auxiliary-transversal} to show the surjectivity of $\mc{Z}^+$, we would reach the point where $u_* \wedge \iota_{X_0}\SRB = 0$, for some $u_* \in \Res^{1, \infty}_*$. The problem is to show that $u_* = 0$, which is very similar to the issue of semisimplicity raised in Section \ref{sec:thermo} and Lemma \ref{lemma:product}.
\end{Remark}

\subsection{Resonant $1$-forms for nearly hyperbolic metrics}
Resonant 1-forms for hyperbolic metrics are special in the sense that for them it is possible to show that they are in 1-1 correspondence with holomorphic/anti-holomorphic 1-forms on the surface by means of a push forward map to Fourier modes of degree $\pm 1$ given by
\[\Res_{0}^{1}\ni u\mapsto (\iota_{V}u)_{1}+ (\iota_{V}u)_{-1}.\]
(See for example Lemma \ref{lemma:res01} below for the case $A=0$.) It is natural to ask if there is a similar correspondence for an arbitrary negatively curved metric. In this subsection we show that this is {\it not} the case. More precisely, the main result of this section is Proposition \ref{prop:non-holomorphic}, where we construct conformal perturbations for which this correspondence is invalid. As a by-product we derive Proposition \ref{prop:betaexact} to construct \emph{specific} Gaussian thermostats that have both winding cycles non-zero.

We begin by deriving a relation between resonant $1$-forms and Fourier modes. Let $(M, g)$ be a negatively curved surface and let $X$ be the geodesic vector field on the unit sphere bundle $\M = SM$. Let $\{X, H, V\}$ be the global frame introduced in \S \ref{ssec:geometry-surfaces}. For $\theta \in H^1(M)$, a real-valued $1$-form, we set: 
\begin{equation}\label{eq:def-theta-varphi-u}
	u:= \Pi_1^+\pi^*\theta = \pi^*\theta + d\varphi \in \Res_0^1, \quad \varphi := -R_0^{+} \iota_X \pi^*\theta, \quad c: = V\varphi \in C_*^{-1-}(\mc{M}),
\end{equation}
where the regularity claim follows from Lemma \ref{lemma:horo-I}. It follows from Proposition \ref{prop:geodesic-flow-horo-inv} that $u = c(-r^u \beta + \psi)$, where $r^u$ solves a Riccati equation and $Y^u = H + r^u V$, and 
\[(X - r^u)c = 0, \quad Hc + V(r^uc) = 0.\]
Reducing to Fourier modes, this translates to, for each $k \in \mathbb{Z}$ (see Lemma \ref{lemma:recurrence} for the case of the hyperbolic metric)
\begin{align}\label{eq:1-form-fourier}
\begin{split}
	2\eta_- c_{k + 1} - (k + 1)(r^u c)_k &= 0,\\
	2\eta_+ c_{k - 1} + (k - 1)(r^u c)_k &= 0.
\end{split}
\end{align}
Note that the Fourier modes $c_k$ and $(r^u c)_k$ are smooth thanks to the wavefront set condition and the fact that $V \not \in \mathbb{R}X \oplus E_u$. Indeed, by \eqref{eq:r_H/V-regularity} we have that $r^u \in \mc{D}'_{E_u^*}(\M) \cap C_*^{1 + \alpha}(\M)$, and from \eqref{eq:def-theta-varphi-u} it follows that $c \in \mc{D}'_{E_u^*}(\M) \cap C_*^{-1-}(\M)$. Therefore, the product $r^uc$ is well-defined, and by the usual wavefront set analysis moreover we may obtain that $\WF(r^uc) \subset E_u^*$ (see e.g. \cite[Theorem 8.2.10]{Hormander-90}). Let us first assume that $k =0$. On smooth functions $f$, the zeroth Fourier mode is (up to constant) equal to $f_0(x) = \int_{S_xM} f(x, v)\, dv$, where $dv$ denotes the volume form on $S_xM$, i.e. $f_0$ is the pushforward of $f$. The wavefront set calculus, see \cite[Proposition 4.19]{Melrose-03}, then shows that indeed $c_0$ and $(r^uc)_0$ are smooth, proving the claim. The case of non-zero $k$ is similar.

Restricting the first and second equations in \eqref{eq:1-form-fourier} to $k = -1$ and $k = 1$, respectively, we get $\eta_- c_0 = 0$ and $\eta_+ c_0 = 0$, which implies that $c_0$ is constant. Since $c = V\varphi$, integrating by parts implies that $c_0 = 0$. If $g$ is the hyperbolic metric we have $r^u \equiv 1$ and so $\eta_- c_1 = 0$. However, we will see in this section that $\eta_- c_1 \neq 0$ in general, by perturbation theory. 

\begin{prop}\label{prop:non-holomorphic}
	Let $g_0$ be a hyperbolic metric on $M$ and fix a closed non-exact (real) $1$-form $\theta$ on $M$. For an open and dense set of $h \in C^\infty(M)$, there exists an $\varepsilon > 0$ such that for $0 < |s| < \varepsilon$, for the metric $g_s := e^{-2sh} g_0$, the Fourier mode $c_1$ of $c$ defined by \eqref{eq:def-theta-varphi-u} is not holomorphic, i.e. $\eta_- c_1 \not \equiv 0$.
\end{prop}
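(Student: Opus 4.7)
The proof proceeds by first-order perturbation theory. Set
\[ F(s, h) := \eta_{-, s}\, c_{s, h, 1}, \]
where $c_{s, h}$ is the function associated to the metric $g_s = e^{-2sh} g_0$ as in \eqref{eq:def-theta-varphi-u} (after identifying $SM_{g_s}$ with $SM_{g_0} =: SM$ via fibrewise $g_s$-normalisation), $c_{s, h, 1}$ is its Fourier mode of degree one with respect to $V$, and $\eta_{-, s}$ is the lowering operator of $g_s$. The discussion preceding the proposition gives $F(0, h) \equiv 0$: for the hyperbolic metric one has $r^u \equiv 1$, and then \eqref{eq:1-form-fourier} at $k = 2$ collapses to $\eta_- c_{1, 0} = 0$. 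Writing
\[ F(s, h) = s\, \Psi(h) + O(s^2), \qquad \Psi(h) := \partial_s|_{s = 0} F(s, h), \]
the map $\Psi \colon C^\infty(M) \to C^\infty(M; \mc{K})$ is linear in $h$, since $\partial_s|_{s = 0} g_s = -2 h g_0$ depends linearly on $h$. Once we show $\Psi \not\equiv 0$, the set $\{h \in C^\infty(M) : \Psi(h) \neq 0\}$ is the complement of the proper closed subspace $\ker \Psi$ in the Fr\'echet space $C^\infty(M)$, hence open and dense; smoothness of $s \mapsto F(s, h)$, granted by Lemma \ref{lemma:proj-regularity} and Appendix \ref{app:B}, then yields $F(s, h) \neq 0$ for $0 < |s| < \varepsilon(h)$.

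To compute $\Psi$, I would first use Appendix \ref{app:B} (or a direct isothermal coordinate calculation) to obtain an expansion $X_s = X + s\, \mathcal{Y}_h + O(s^2)$ for the pullback of the geodesic vector field to $SM$, where $\mathcal{Y}_h$ is a vector field constructed linearly from $h$ and its first derivatives against the hyperbolic frame $\{X, H, V\}$. Under this identification $V$ is unchanged, while $H_s$, and hence $\eta_{\pm, s}$, admit analogous explicit first-variation formulas. Combining this with the differentiability of the holomorphic part of the resolvent from Lemma \ref{lemma:proj-regularity} (which yields, modulo projector corrections, $\partial_s|_{s = 0} R_0^{+, H}(X_s) = -R_0^{+, H}(X)\, \mathcal{Y}_h\, R_0^{+, H}(X)$), differentiating $\varphi_{s, h} = -R_0^{+, H}(X_s)\, \iota_{X_s} \pi^* \theta$ at $s = 0$ yields a closed expression for $\partial_s|_{s = 0} \varphi_{s, h}$. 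Applying $V$, extracting the Fourier mode of degree one, and finally applying $\eta_-$ produces an explicit formula for $\Psi(h)$ as a first-order linear differential operator in $h$ whose coefficients are expressible in terms of the Fourier modes $\theta_{\pm 1} \in H_{\pm 1}$ of $\pi_1^* \theta$.

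The main obstacle is to verify that $\Psi$ is genuinely non-trivial. Since $\theta$ is closed and non-exact, the mode $\theta_1 \in H_1$ is a non-zero holomorphic differential on $(M, g_0)$ and therefore has only isolated zeros. Testing $\Psi$ against cut-off functions $h$ supported in a neighbourhood of a point where $\theta_1 \neq 0$ should show that the principal symbol of $\Psi$ does not vanish there, so $\Psi(h) \neq 0$. Equivalently, one may argue by contradiction: if $\Psi \equiv 0$ then the holomorphy constraint on $c_{1, s}$ would persist to first order under every smooth conformal perturbation, forcing the symbol of $\Psi$ to vanish identically; but this symbol is an explicit polynomial expression in $\theta_{\pm 1}$ and the hyperbolic frame, whose non-vanishing follows from $[\theta] \neq 0 \in H^1(M)$. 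The technical difficulty lies in organising the explicit perturbation computation --- tracking the variations of $\eta_{\pm, s}$, the projector $\Pi_0^+$, and the resolvent $R_0^{+, H}$ simultaneously, and handling the $\Pi_0^+$-correction to the resolvent variation using the vanishing of $\int_M h\, d\vol$ when needed --- but every ingredient is at hand from Sections \ref{sec:preliminaries}--\ref{sec:perturbation} together with Appendices \ref{app:A} and \ref{app:B}.
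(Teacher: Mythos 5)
Your overall strategy is the same as the paper's: reduce to a first-order perturbation in $s$ (the paper differentiates $(X_sc(s))_0=(r^u(s)c(s))_0=2\,\ell_s^*\eta_{-,s}c_1(s)$ after pulling back with the re-scaling map of Lemma \ref{lemma:rescaling}, rather than $\eta_{-,s}c_{s,1}$ directly, which spares you the variation of $\eta_{-,s}$ and of the mode projection), observe that the derivative is linear in $h$, and get open-and-denseness from non-triviality of that linear map, then conclude for small $s\neq 0$ by $C^1$-regularity from Lemma \ref{lemma:proj-regularity}. Up to these cosmetic differences the set-up is fine, and your use of \eqref{eq:1-form-fourier} at $s=0$ and of the resolvent-derivative formula is exactly what the paper does (note, though, that no condition like $\int_M h\,d\vol=0$ is needed: the projector terms drop out because $V\Pi_0^+=0$ and $\Pi_0^+\iota_X\pi^*\theta=0$ for the hyperbolic metric).

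The genuine gap is in the non-triviality step. The derivative $\Psi(h)$ is \emph{not} a first-order differential operator in $h$ with coefficients built from $\theta_{\pm1}$: differentiating $\varphi(s)=-R_0^{+,Z_s}\iota_{Z_s}\pi^*\theta$ produces, besides the local term coming from $\dot Z$, the nonlocal contribution $h\mapsto \big(HR_0^+(\pi_0^*h\cdot\iota_X\theta)\big)_0$ involving the meromorphically continued resolvent. Your plan to read off a "principal symbol" of $\Psi$ by testing against cut-offs near a zero-free point of $\theta_1$, or to argue that "the symbol is an explicit polynomial in $\theta_{\pm1}$", silently assumes this resolvent term cannot cancel the local part — but a priori it is just some continuous linear map on $C^\infty(M)$ with no symbol calculus attached, and this cancellation question is precisely the crux. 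The paper resolves it by invoking a nontrivial external result (\cite[Proposition 4.1]{Cekic-Lefeuvre-21-3}) showing that this resolvent term is a pseudodifferential operator of order $0$, while the local term $A(\bullet)=-2\re\big(c_{-1}\eta_+\pi_0^*(\bullet)\big)$ is a genuine first-order differential operator with non-vanishing leading part (since $c_{-1}$ is a non-zero anti-holomorphic mode, as $\theta$ is non-exact); hence $A+B\in\Psi^1(M)$ has non-vanishing principal symbol and in particular $\Psi\not\equiv0$. Without an argument of this kind — or some substitute controlling the resolvent contribution — your contradiction sketch ("the holomorphy constraint would persist to first order, forcing the symbol to vanish") is circular, and the proof of $\Psi\not\equiv0$, and with it the whole proposition, is not established.
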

\begin{proof}
By \eqref{eq:1-form-fourier} we have $\eta_- c_1 = \frac{1}{2} (r^u c)_0 = \frac{1}{2} (X c)_0$, so it suffices to consider the latter quantity. Denote by $SM_s$ the unit sphere bundle of $g_s$, by $\ell_s: SM_0 \to SM_s$, $\ell_s(x, v) = (x, e^{s h}v)$, the natural re-scaling map, and by $X_s$ and $V_s$ the geodesic and vertical vertical vector fields on $SM_s$, respectively; also denote by $r^u(s)$ the solution to the Riccati equation of $g_s$. Then by the third line of Lemma \ref{lemma:rescaling} (and \eqref{eq:X-simple})

\begin{equation}\label{eq:Z_s}
	Z_s := \ell_s^* X_s = e^{s \pi_0^*h} (X - sVX \pi_0^* h \cdot V), \quad \ell_s^*V_s = V.
\end{equation}
If $\pi(s)$ is the footpoint projection $SM_s \to M$, denote $\varphi(s) := -R^{+, X_s}_0 \iota_{X_s} \pi(s)^*\theta$ so that $\varphi(0) = \varphi$ and if we set $c(s) := V_s \varphi(s)$, then:
\begin{equation}\label{eq:d(s)}
	d(s) := \ell_s^*c(s) = - V R^{+, Z_s}_0 \iota_{Z_s} \pi^*\theta.
\end{equation}
In suitable topologies determined by Lemma \ref{lemma:proj-regularity}, we compute the derivative at zero of \eqref{eq:d(s)} (in various expressions, the dot denotes taking the derivative at zero):
\begin{equation}\label{eq:ddot}
\dot{d} = V \big(R^+_0 \dot{Z} \Pi^+_0 + \Pi^+_0 \dot{Z} R^+_0\big) \iota_X \pi^*\theta - V R_0^+ \iota_{\dot{Z}} \pi^*\theta = -V R_0^+ \big(\pi_0^*h \cdot \iota_X \pi^*\theta\big),
\end{equation}
where in the first equality we used the formula for the derivative of the resolvent derived in the proof of Lemma \ref{lemma:proj-regularity} and in the second one we used $V\Pi^+_0 = 0$, $\Pi^+_0 \iota_X \pi^* \theta = 0$, as well as \eqref{eq:Z_s} and $\iota_V \pi^* \theta = 0$. Therefore from \eqref{eq:d(s)}
\begin{align*}
	\dot{\big(Z_s d(s)\big)_0} &= \big((\pi_0^*h \cdot X - VX \pi_0^*h \cdot V)c\big)_0 - \big(XVR_0^+(\pi_0^*h \cdot \iota_X \pi^*\theta)\big)_0\\
	&= -(X \pi_0^*h \cdot c)_0 + \big(H R_0^+ (\pi_0^*h \cdot \iota_X \theta)\big)_0 = (A + B)h,
\end{align*}
where in the first equality we used \eqref{eq:ddot}, in the second equality we used that $c_0 = 0$, $V^2 X\pi_0^*h = -X \pi_0^*h$, and \eqref{eq:surface-geometry}; in the last equality we used $X = \eta_+ + \eta_-$ and introduced the operators
\[A(\bullet) := -2\re\big(c_{-1} \eta_+ \pi_0^* (\bullet)\big), \quad B(\bullet) := \big(H R_0^+ (\pi_0^*(\bullet) \cdot \iota_X \theta)\big)_0,\] 
acting on $C^\infty(M)$. The operator $B$ is actually pseudodifferential of order zero, as follows from \cite[Proposition 4.1]{Cekic-Lefeuvre-21-3} (note that there we deal with $R_0^+ + R_0^-$ instead of $R_0^+$ but the same proof applies). The other operator $A$ is a differential operator of degree $1$. Since it has no degree zero terms, it is clearly non-zero since $c_{-1}$ is non-zero (in fact it is zero only at a finite number of points since $c_{-1}$ is anti-holomorphic). Therefore $A + B \in \Psi^1(M)$ is a non-zero pseudodifferential operator of order $1$ and hence it is non-zero on an open and dense set of $C^\infty(M)$.

We conclude that the first derivative of $\big(Z_s d(s)\big)_0$ at $s = 0$ is non-zero for an open and dense set of $h \in C^\infty(M)$. Since (where we identify the notation for the Fourier modes on $SM_s$ and $SM_0$)
\[\ell_s^*\big(r^u(s) c(s)\big)_0 = \frac{1}{2}\ell_s^*\big(X_s c(s)\big)_0 = \frac{1}{2} \big(Z_s d(s)\big)_0,\] 
it follows that for an open and dense set of $h \in C^\infty(M)$, there exists an $\varepsilon > 0$ small enough such that for $0 < |s| < \varepsilon$, $\big(r^u(s) c(s)\big)_0 \not \equiv 0$, i.e. $\big(c(s)\big)_1$ is \emph{not} holomorphic, which completes the proof.
\end{proof}

We conclude this section with an application to the winding cycles of thermostats:

\begin{prop} \label{prop:betaexact}
	Let $(M, g_0)$ be a hyperbolic surface. For an open and dense set of $h \in C^\infty(M)$, there exist $f \in C^\infty(M)$ and $\varepsilon, \delta > 0$, such that for any metric $g = e^{-2sh}g_0$ with $\frac{\varepsilon}{2} < |s| < \varepsilon$, any vector field $F_t := X_g + tX_g\pi_0^*f \cdot V_g$ with $0 < |t| < \delta$ generates an Anosov flow which has both non-zero winding cycles.
\end{prop}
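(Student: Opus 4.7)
The plan is to linearise in $t$ at the volume-preserving geodesic flow $F_0=X_{g_s}$ and use the first-variation formula from Lemma \ref{lemma:proj-regularity}. Since $X_{g_s}$ preserves the Liouville measure $\Omega_{g_s}$, we have $[\omega^{\pm}(F_0)]=0$, and structural stability keeps $F_t$ Anosov for $|s|,|t|$ small; hence nonvanishing of $[\omega^{\pm}(F_t)]$ for small $t\neq 0$ will follow from nonvanishing of its first $t$-derivative paired with a suitable cohomology class. Because $\lambda=tX_{g_s}\pi_0^*f$ is odd in $v$ by \eqref{eq:X-simple}, the flip $\tau(x,v)=(x,-v)$ (which satisfies $\tau_*X_{g_s}=-X_{g_s}$ and $\tau_*V_{g_s}=V_{g_s}$) yields $\tau_*F_t=-F_t$; it therefore swaps $\Omega_{\mathrm{SRB}}^{+}(F_t)$ and $\Omega_{\mathrm{SRB}}^{-}(F_t)$, so it suffices to show $[\omega^+(F_t)]\neq 0$.

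Fix a nonzero harmonic $1$-form $\eta$ on $M$ (possible since a hyperbolic surface has $b_1(M)\ge 4$). Applying Lemma \ref{lemma:proj-regularity} with $Y=\lambda V_{g_s}$, the $dR_1^+$ summand pairs trivially against the closed form $\pi^*\eta$ by Stokes, while the $\Pi_2^+$ summand transposes through \eqref{eq:proj-transpose-2} to produce the closed coresonant $1$-form $u_*:=\Pi_1^{-,X_{g_s}}\pi^*\eta\in\Res_{0*}^{1}$. Expanding $u_*=a\alpha+b\beta+c(s)\psi$ in the coframe of \S\ref{ssec:geometry-surfaces} and using $\iota_{V_{g_s}}\Omega_{g_s}=-\alpha\wedge\beta$ to collapse $u_*\wedge\iota_{V_{g_s}}\Omega_{g_s}=c(s)\,\Omega_{g_s}$, integration by parts (since $X_{g_s}$ preserves $\Omega_{g_s}$) and Fubini over the unit circle fibres yield
\begin{equation*}
\frac{d}{dt}\bigg|_{t=0}[\omega^+(F_t)]\cdot[\eta]=-2\pi\int_M f(x)\cdot\bigl(X_{g_s}c(s)\bigr)_0(x)\,d\vol_{g_s}(x),
\end{equation*}
where $c(s):=\iota_{V_{g_s}}u_*$ and $(\bullet)_0$ denotes the fibrewise average.

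The main obstacle is the nonvanishing of $(X_{g_s}c(s))_0$ at first order in $s$. At $s=0$, the coresonant analogue of Proposition \ref{prop:geodesic-flow-horo-inv}---obtained from the identity $\tau^*\Pi_1^+=\Pi_1^-\tau^*$ and the fact that $\tau^*\pi^*\eta=\pi^*\eta$---gives $(X+1)c(0)=0$, hence $(Xc(0))_0=-c(0)_0=0$ because $c(0)_0$ is the fibre average of a vertical derivative. For $s$ near $0$, I would adapt the perturbation argument of Proposition \ref{prop:non-holomorphic}: since the pullback $\ell_s$ preserves fibrewise averages on $M$ and the flip identity $c_*(s)=c(s)\circ\tau$ yields $(X_{g_s}c_*(s))_0=-(X_{g_s}c(s))_0$, the coresonant case reduces to the resonant computation already performed, giving the Taylor expansion
\[
\bigl(X_{g_s}c(s)\bigr)_0=s\,Q_\eta+O(s^2),\qquad Q_\eta=\pm(A+B)h,
\]
with $A+B\in\Psi^1(M)$ the nonzero order-one pseudodifferential operator identified in that proof. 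For $h$ in the open dense subset of $C^\infty(M)$ where $(A+B)h\not\equiv 0$, the function $Q_\eta\in C^\infty(M)$ is nonzero.

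To conclude, pick $f\in C^\infty(M)$ with $I_0:=\int_M f\cdot Q_\eta\,d\vol_{g_0}\neq 0$. Then, after shrinking $\varepsilon>0$, the first-variation identity gives
\[
\frac{d}{dt}\bigg|_{t=0}[\omega^+(F_t)]\cdot[\eta]=-2\pi\,s\,I_0\bigl(1+O(\varepsilon)\bigr)\neq 0\quad\text{for all}\ \varepsilon/2\leq|s|\leq\varepsilon.
\]
Because $t\mapsto[\omega^+(F_t)]\cdot[\eta]$ is smooth, vanishes at $t=0$, and its $O(t^2)$ remainder is uniformly controlled on the compact annulus $\varepsilon/2\le|s|\le\varepsilon$, there exists $\delta>0$ such that $[\omega^+(F_t)]\cdot[\eta]\neq 0$ for all $0<|t|<\delta$ and $\varepsilon/2\leq|s|\leq\varepsilon$. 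By the reversibility reduction the same holds for $[\omega^-(F_t)]$, completing the proof.
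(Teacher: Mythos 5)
Your proof is correct and takes essentially the same route as the paper: linearise the winding cycle in $t$ at the geodesic flow of $g_s$ via Lemma \ref{lemma:proj-regularity}, pair with a fixed non-exact closed $1$-form, reduce to the nonvanishing of the fibrewise average $(X_s c(s))_0$ supplied by (the proof of) Proposition \ref{prop:non-holomorphic}, and use reversibility of the thermostat to get the second winding cycle. The only cosmetic difference is that you vary $[\omega^+]$, which brings in the coresonant state $\Pi_1^{-}\pi^*\eta$ and forces the extra flip-conjugation step, whereas the paper varies $[\omega^-]$ so the resonant state of Proposition \ref{prop:non-holomorphic} appears directly; your explicit handling of uniformity in $s$ over the annulus and of the choice of $\delta$ merely spells out what the paper leaves implicit.
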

\begin{proof}
	We adopt the notation of Proposition \ref{prop:non-holomorphic}, which shows that for any fixed non-exact closed (real) $1$-form $\theta$, there is an open and dense set of $h \in C^\infty(M)$, and $\varepsilon = \varepsilon(h) > 0$, such that $\big(r^u(s) c(s)\big)_0 \neq 0$ for $0 < |s| < \varepsilon$. Write $\Omega_s$ for the canonical volume from on $SM_s$.

	By Lemma \ref{lemma:proj-regularity}, the first variation at $t = 0$ of the winding cycle $[\omega^-(F_t)]$ in the direction of $\theta$ is:
	\begin{align*}
		\int_{SM_s} \pi^*\theta \wedge \Pi_2^{-, X_s}\big(X_s \pi_0^*f \cdot \iota_{V_s} \Omega_s) &= \int_{SM_s} \big(\pi^*\theta + d \varphi(s)\big) \wedge X_s \pi_0^*f \cdot \iota_{V_s} \Omega_s\\
		&= \int_{SM_s} c(s) X_s \pi_0^*f \cdot \Omega_s = -\int_{SM_s} \big(X_s c(s)\big)_0 \cdot \pi_0^*f \cdot \Omega_s. 
	\end{align*}
	By the proof of Proposition \ref{prop:non-holomorphic}, $\big(X_sc(s)\big)_0 \not \equiv 0$, so there exists an $f \in C^\infty(M)$ such that the last integral is non-zero, which shows $[\omega^-(F_t)] \neq 0$ for $\frac{\varepsilon}{2} < s < \varepsilon$ and $0 < |t| < \delta$. A similar argument applies to $[\omega^+(F_t)]$, or alternatively this follows from the reversibility of the flow, completing the proof.
\end{proof}

\section{Anosov flows with one smooth weak foliation}
\label{section:onesmooth}

In this section we look at a particular subclass of Anosov of flows on closed 3-manifolds, namely those that have one weak foliation of class $C^{\infty}$. To be definite, let us assume that this is the stable weak foliation.  In this context it is known that the flow is topologically orbit equivalent
to the suspension of a toral automorphism on $\mathbb{T}^2$ or, up to finite covers, to the geodesic flow of a hyperbolic surface \cite[Theorem 4.7]{Ghys_93}. We are mostly interested in flows that are not suspensions (since we already know what happens in that case) and we shall ignore finite covers for the sake of simplicity. Under these assumptions we may as well assume that $\mc{M}$ is the unit tangent bundle of a closed oriented hyperbolic surface $(M,g)$. 

Let $X$ denote an Anosov vector field with $C^{\infty}$ weak stable bundle on $SM$ and let $X_{0}$ denote the geodesic vector field of a hyperbolic metric $g$. By \cite[Theorem 5.3]{Ghys_93}, there is a diffeomorphism $f:SM\to SM$ that conjugates the weak folation of $X$ with the weak folation of $X_{0}$. This implies that there are $a,b\in C^{\infty}(SM)$ such that
\[f_{*}(X)=aX_{0}+b(H-V)\]
since $\{X_{0}, H-V\}$ is a basis for the weak stable bundle of $X_0$ (recall that the global frame $\{X_0, H, V\}$ on $SM$ was introduced in \S \ref{ssec:geometry-surfaces}). Without loss of generality we may remove the diffeomorphism $f$ from the notation and assume that we have an Anosov vector field $X$ on $SM$ of the form
\begin{equation}
X=aX_{0}+b(H-V)
\label{eq:Xsmooth}
\end{equation}
for some $a,b\in C^{\infty}(SM)$. This implies that the bundle spanned by $\{X_{0}, H-V\}$ is invariant under the flow of $X$ and thus it must be one of the weak bundles. By switching the signs of $a,b$ if necessary we may suppose it is the weak stable bundle. It follows that
\begin{equation}
E_{s, X}^* = E_{s, X_0}^*.
\label{eq:sameduals}
\end{equation}

\begin{lemma} We have $\Res_{0*}^1(X_{0})\subset \Res_{0*}^1(X)$ and $[\omega^{+}]=0$, where $\omega^{+}=\iota_{X}\Omega_{\text{\rm SRB}}^{+}$.
\label{lemma:omega+}
\end{lemma}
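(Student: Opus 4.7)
The plan has two parts. For the inclusion $\Res_{0*}^1(X_0) \subset \Res_{0*}^1(X)$, I take any $u_* \in \Res_{0*}^1(X_0)$ and check the three defining conditions of $\Res_{0*}^1(X)$. The wavefront set condition $\WF(u_*) \subset E_{s,X}^*$ is immediate from \eqref{eq:sameduals}. For the contraction, the decomposition \eqref{eq:Xsmooth} gives $\iota_X u_* = a\iota_{X_0}u_* + b\iota_{H-V}u_*$, and I would annihilate the second term by invoking Item~2 of Lemma \ref{lemma:horo-I} applied to $-X_0$: it preserves the Liouville probability $\Omega = \alpha\wedge d\alpha$, and its weak unstable bundle equals the smooth weak stable bundle $\mathrm{span}\{X_0, H-V\}$ of $X_0$, so one may take the smooth global vector field $Y^u := H - V$. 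This gives $\iota_{H-V}u_* = 0$ and hence $\iota_X u_* = 0$. Because $X_0$ falls into case~4 of Theorem \ref{thm:general} (volume-preserving with non-zero helicity $\mathcal{H}(X_0) = \int_{SM}\alpha(X_0)\,\Omega = 1$), the same is true for $-X_0$, so $d\Res_0^1(\pm X_0) = 0$; in particular $du_* = 0$ and therefore $\Lie_X u_* = \iota_X du_* = 0$, completing the verification.

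For $[\omega^+(X)] = 0$, I plan to argue by contradiction via a dimension count. Case~4 applied to $-X_0$ gives $\dim \Res_{0*}^1(X_0) = b_1(SM)$, so the inclusion just proved yields $\Res_{0*}^1(X_0) \subset \Res_{0*}^1(X) \cap \ker d$ with $\dim \Res_{0*}^1(X) \geq b_1(SM)$. Suppose $[\omega^+(X)] \neq 0$. If also $[\omega^-(X)] \neq 0$, then Theorem \ref{thm:general} together with the symmetry $\dim \Res_0^1 = \dim \Res_{0*}^1$ (Remark after Proposition \ref{prop:res1}) forces $\dim \Res_{0*}^1(X) = b_1(SM) - 1$, contradicting the inclusion. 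Otherwise $[\omega^-(X)] = 0$, placing $-X$ into case~3 of Theorem \ref{thm:general}; this yields $\dim \Res_{0*}^1(X) = b_1(SM)$ (so the inclusion is an equality) together with $d\Res_{0*}^1(X) = \mathbb{C}\,\omega^-(X)$, which is non-zero because $X$ is nowhere vanishing while $\Omega_{\mathrm{SRB}}^-(X)$ is a non-trivial top-form. The resulting non-closed element of $\Res_{0*}^1(X)$ cannot lie in $\Res_{0*}^1(X_0)$, contradicting the forced equality.

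The hardest step to get right cleanly is the application of Item~2 of Lemma \ref{lemma:horo-I} to $-X_0$: I need to confirm that the volume-preservation hypothesis transfers (which it does, since $\Omega$ is $\pm X_0$-invariant), that its unstable bundle is orientable (which it is, being smooth and globally framed by $H - V$), and crucially that the smooth global section $H-V$ is a legitimate choice of the vector field $Y^u$ appearing in the lemma, so that the qualitative vanishing $\iota_{Y^u}u_* = 0$ specialises to the concrete identity $\iota_{H-V}u_* = 0$ needed to kill the second term of \eqref{eq:Xsmooth}.
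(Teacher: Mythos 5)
Your treatment of the inclusion is essentially the paper's own argument: closedness of $u_*$ (the paper quotes Lemma \ref{lemma:closed} with $A=0$, you quote the fourth column of Theorem \ref{thm:general}; these are interchangeable here), vanishing of $u_*(H-V)$ by horocyclic invariance (Lemma \ref{lemma:horo-I}), and then $u_*(X)=0$ via \eqref{eq:Xsmooth} together with \eqref{eq:sameduals}. The worry you flag about whether $H-V$ is a "legitimate" choice of $Y^u$ is harmless: since $\iota_{X_0}u_*=0$ and $\iota_{Y^u}u_*=0$, the form $u_*$ annihilates the whole weak unstable plane of $-X_0$, namely $\mathbb{R}X_0\oplus E_s(X_0)$, which contains $H-V$ whatever normalisation of $Y^u$ the lemma uses (and since $du_*=0$, Item 1 of Lemma \ref{lemma:horo-I} would also do). Where you genuinely diverge is the claim $[\omega^+]=0$: the paper argues directly that, because $\Res_{0*}^1(X_0)\to H^1(SM)$ is an isomorphism, every smooth closed $\eta$ can be corrected to $\eta+df\in\Res_{0*}^1(X_0)\subset\Res_{0*}^1(X)$ with $f\in\mc{D}'_{E_s^*}(SM)$, so $\eta(X)=-Xf$ and $\int_{SM}\eta(X)\,\SRB=0$ by invariance of $\SRB$, i.e. the winding cycle $W^+$ vanishes on all of $H^1(SM)$. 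You instead run a contradiction through the classification table: if both classes were non-zero then $\dim\Res_{0*}^1(X)=b_1-1$, contradicting the inclusion of the $b_1$-dimensional space $\Res_{0*}^1(X_0)$; if $[\omega^+]\neq 0=[\omega^-]$ then $-X$ lands in the third column, so $\dim\Res_{0*}^1(X)=b_1$ forces equality with $\Res_{0*}^1(X_0)$ while $d(\Res_{0*}^1(X))=\mathbb{C}\,\omega^-(X)\neq 0$ produces a non-closed element, contradiction. Both routes rest only on Theorem \ref{thm:general}, proved earlier, so neither is circular; your bookkeeping of how the cases transform under $X\mapsto -X$ (e.g. $\omega^{+}(-X)=-\omega^{-}(X)$) is the delicate point and you got it right. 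The paper's version is more economical and more informative -- it exhibits explicit representatives showing why $W^+$ kills every class -- whereas yours is a correct, purely dimension-theoretic alternative that trades that explicitness for a case analysis.
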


\begin{proof} Take $u_{*}\in \Res_{0*}^1(X_{0})$. We know that $du_{*}=0$ (for instance, by Lemma \ref{lemma:closed} below for $A = 0$) and $u_{*}(H-V)=0$ by horocyclic invariance, Lemma \ref{lemma:horo-I}. Since $u_{*}(X_{0})=0$ it follows that $u_{*}(X)=0$ and thus $u_{*}\in \Res_{0*}^1(X)$ since \eqref{eq:sameduals} holds. We also know that the map $\Res_{0*}^1(X_{0}) \to H^{1}(SM)$ given by $u_{*}\mapsto [u_{*}]$ is an isomorphism. In other words, given any smooth closed 1-form $\eta$ on $SM$ there is $f\in \mathcal{D}'_{E_{s}^*}(SM)$ such that $\eta+df\in \Res_{0*}^1(X_{0})$. Thus $\eta(X)+df(X)=0$ and
\[\int_{SM}\eta(X)\,\Omega_{\text{SRB}}^{+}=-\int df(X)\,\Omega_{\text{SRB}}^{+}=0,\]
showing $[\omega^+] = 0$ as desired.
\end{proof}

Next, consider the standard volume form $\Omega$ in $SM$. Since $X_{0},H$ and $V$ preserve $\Omega$ we have
\[\text{div}_{\Omega} X= X_{0}(a)+(H-V)(b).\]
Let $\lambda\in C^{\infty}(SM; \mathbb{R})$ be given by $\lambda=\lambda_{-m}+\lambda_{m}\in H_{-m} \oplus H_{m}$ where $\eta_{-}\lambda_{m}=0$ and $m\geq 1$, and $\lambda_{-m} = \overline{\lambda_m}$. By Lemma \ref{lemma:eta-del-bar} we know that this implies that $X_{0}V\lambda=mH\lambda$. We make the following choices for $a$ and $b$:
\[a=1+V(\lambda)/m,\;\;\;\;b=-\lambda.\]
This gives
\begin{equation}
\text{div}_{\Omega} X= V\lambda.
\label{eq:divX}
\end{equation}
The case $m=1$, corresponds to $\lambda=\pi_{1}^*\theta$ (and hence $V\lambda=-\pi_{1}^*\star \theta$) where $\theta$ is a real harmonic 1-form (see the proof of Proposition \ref{prop:H1}). For this case we will prove:

\begin{prop} Let $(M,g)$ be a closed oriented hyperbolic surface and let $\theta$ be a non-zero harmonic 1-form. For $s\in \mathbb{R}$ consider the vector field
\[X_{s}=(1-s\pi_{1}^*\star\theta)X_{0}-s\pi_{1}^*\theta(H-V).\]
Then for all $s\neq 0$ sufficiently small, the Anosov vector field $X_{s}$ has the following properties:
\begin{enumerate}
\item[1.] The weak stable bundle is $C^{\infty}$ but the weak unstable bundle is only of class $C^{1+\alpha}$ for some $\alpha>0$;
\item[2.] $[\omega^{+}]=0$ and $[\omega^{-}]\neq 0$;
\item[3.] There is $u\in \Res_{0}^{1}(X_{s})$ that is not closed, but $\Res_{0*}^{1}(X_{s}) = \Res_{0*}^{1}(X_{0})$ and all its elements are closed;
\item[4.] Semisimplicity for the actions of $\Lie_{X_s}$ and $\Lie_{-X_s}$ on $\Omega^1$ hold and fail, respectively. Semisimplicity for the action of $\Lie_{\pm X_s}$ on $\Omega^1 \cap \ker \iota_{X_s}$ holds.
\end{enumerate}
\label{prop:newexamples}
\end{prop}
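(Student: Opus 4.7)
The plan is to address the four items in turn.

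For Item 1, the Anosov property of $X_s$ for $|s|$ small comes from structural stability, and the weak stable bundle is $C^\infty$ since $\{X_0, H-V\}$ is a smooth flow-invariant $2$-plane field by construction \eqref{eq:Xsmooth}. To rule out smoothness of the weak unstable bundle I would argue by contradiction: if both weak bundles were $C^\infty$, then by \cite[Theorem 4.6]{Ghys_93} $X_s$ would be $C^\infty$-orbit equivalent to an algebraic Anosov flow on $SM$, which forces both winding cycles to be cohomologically trivial and contradicts $[\omega^-(X_s)] \neq 0$ established in Item 2.

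For Item 2, the vanishing $[\omega^+(X_s)] = 0$ is exactly Lemma \ref{lemma:omega+}. For $[\omega^-(X_s)] \neq 0$ I would work perturbatively. By Lemma \ref{lemma:proj-regularity} adapted to $\omega^-$, the first variation at $s=0$, paired via Poincar\'e duality with a harmonic representative $[\pi^*\eta] \in H^1(SM)$, simplifies using the transposes $(\Pi_2^\pm)^T = \Pi_1^\mp$ together with $\Pi_1^{\pm,X_0}\pi^*\eta = \pi^*\eta \mp dR_0^{\mp,X_0}\pi_1^*\eta$. Combined with the compatibility $[\omega^+(X_s)] \equiv 0$ (which forces a cancellation), the pairing reduces to
\[
\int_{SM}\bigl(R_0^{-,X_0} - R_0^{+,X_0}\bigr)\pi_1^*\eta \cdot \divv_\Omega(\dot X)\,\Omega,
\]
where $\dot X = -\pi_1^*\star\theta\,X_0 - \pi_1^*\theta\,(H-V)$ and, using $V\pi_1^*\theta = -\pi_1^*\star\theta$, one has $\divv_\Omega(\dot X) = -X_0\pi_1^*\star\theta - H\pi_1^*\theta - \pi_1^*\star\theta$. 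I would then take $\eta = \star\theta$, decompose into $V$-Fourier modes, and verify non-vanishing using the ladder operators $\eta_\pm$ from \S\ref{ssec:geometry-surfaces} and the asymmetric action of $R_0^\pm$ on the $\pm 1$ Fourier modes in the hyperbolic case.

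Item 3 follows quickly from the preceding items and Theorem \ref{thm:general}: in column 3 of the table, $d(\Res_0^1(X_s)) = \mathbb{C}\omega^+ \neq 0$, yielding the required non-closed resonant $1$-form. The inclusion $\Res_{0*}^1(X_0) \subset \Res_{0*}^1(X_s)$ is Lemma \ref{lemma:omega+}; both spaces have dimension $b_1(SM)$ by Theorem \ref{thm:general} (applied to $X_0$ in its appropriate column and to $-X_s$ in column 2), so the inclusion is an equality, and all their elements are closed since $X_0$ preserves a smooth volume.

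For Item 4, the existence of $\tau^+ \in \Res_0^1(X_s)$ with $d\tau^+ = \omega^+ \neq 0$ combined with Proposition \ref{prop:semisimplicity-fails} yields the failure of semisimplicity for $\Lie_{-X_s}$ on $\Omega^1$. Semisimplicity of $\Lie_{\pm X_s}$ on $\Omega_0^1$ is handled via Lemma \ref{lemma:semisimple}: the pairing $\llangle \cdot,\cdot\rrangle$ on $\Res_0^1(X_s) \times \Res_{0*}^1(X_s)$ is finite-dimensional of constant rank $b_1(SM)$ and depends continuously on $s$, while at $s=0$ it is non-degenerate (known in the contact hyperbolic case). Semisimplicity of $\Lie_{X_s}$ on $\Omega^1$ then reduces to the $\Omega_0^1$ case: for $u \in \Res^{1,\infty}(X_s)$ with $\Lie_{X_s}u = v \in \Res^1(X_s)$, the element $\iota_{X_s}u \in \Res^{0,\infty}(X_s)$ is a constant by Proposition \ref{prop:k=0,2,3}, so $u - (\iota_{X_s}u)\sigma \in \Res_0^{1,\infty}(X_s)$ satisfies $\Lie_{X_s}\bigl(u - (\iota_{X_s}u)\sigma\bigr) = v$, forcing $v = 0$. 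The hard part will be Item 2: extracting a non-zero contribution from the reduced integral after the cancellation imposed by $[\omega^+] \equiv 0$.
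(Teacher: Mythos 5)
Your overall architecture (Item 1 from smoothness of the stable plane field plus a contradiction, Item 2 split into $[\omega^+]=0$ via Lemma \ref{lemma:omega+} and $[\omega^-]\neq 0$ by a separate argument, Items 3--4 from Theorem \ref{thm:general}, Proposition \ref{prop:semisimplicity-fails} and perturbation stability from the contact point $s=0$) matches the paper, but the crux is Item 2, and there you have a genuine gap. The paper does not argue perturbatively at all: it evaluates the winding cycle $W^-$ of $X_s$ on the closed form $-s\pi^*\!\star\!\theta$ \emph{exactly}, using $-s\pi^*\!\star\!\theta(X_s)=sV\lambda+s^2\big((V\lambda)^2+\lambda^2\big)$ with $\lambda=\pi_1^*\theta$, so that the pairing equals $\mathbf{e}^-(s)+s^2\int_{SM}\big((V\lambda)^2+\lambda^2\big)\,\Omega^-_{\mathrm{SRB}}(s)$, i.e.\ the entropy production of $\Omega^-_{\mathrm{SRB}}(s)$ plus a manifestly positive term (positivity of $\mathbf{e}^-(s)$, or just its non-negativity by Ruelle, combined with homological fullness and $[\star\theta]\neq 0$). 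This gives $[\omega^-(X_s)]\neq 0$ for every $s\neq 0$ for which $X_s$ is Anosov, with no first-variation computation. Your plan replaces this with a derivative-at-$s=0$ computation à la Lemma \ref{lemma:proj-regularity}, reduced to an integral involving $(R_0^--R_0^+)\pi_1^*\eta\cdot\divv_\Omega(\dot X)$, and you explicitly leave the decisive non-vanishing of that integral unverified ("the hard part"). The derivative is in fact non-zero (it is consistent with the paper's identity, whose $s$-expansion produces the quadratic entropy-production term plus $-\int((V\lambda)^2+\lambda^2)\Omega$), so your route is not doomed, but as written the central claim of the proposition is not proved, and the key structural observation -- that the pairing is entropy production plus a positive term -- is missing.

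Two smaller points. In Item 1 you appeal to a classification stating that smoothness of both weak bundles forces $C^\infty$-orbit equivalence to an \emph{algebraic} flow; that is not accurate -- Ghys's classification yields quasi-Fuchsian flows, which are not algebraic, and their winding cycles vanish only by Theorem \ref{thm:QFF} of this paper, so your contradiction would lean on the whole of Section \ref{sec:qhd}. The paper's argument is one line: if the weak unstable bundle were smooth, Lemma \ref{lemma:omega+} applied to $-X_s$ would give $[\omega^-]=0$, contradicting Item 2. In Item 4 your mechanism (stability under perturbation from the semisimple contact flow at $s=0$) is the same as the paper's, though packaged differently: the paper gets semisimplicity on $\Omega^1$ from $\dim\Res^1(X_s)=b_1(M)+1=\dim\Res^{1,\infty}(X_0)$ and upper semicontinuity of the algebraic multiplicity, and then deduces the $\Omega^1_0$ statement by restriction; you go the other way, from $\Omega_0^1$ to $\Omega^1$ via the element $\sigma$ with $\iota_{X_s}\sigma=1$, which is valid here since $[\omega^-]\neq0$, but your "constant rank and continuity of the pairing" step should be justified by the same upper semicontinuity of $\dim\Res^{1,\infty}_0$ (via \cite[Lemma 6.2]{Cekic-Paternain-20}) rather than asserted.
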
 

\begin{proof} Note that
\[-s\pi^*\star\theta(X_{s})=sV\lambda(1+sV\lambda)+s^2\lambda^2=sV\lambda+s^{2}[(V\lambda)^2+\lambda^2],\]
where in the second equality we used that $\pi^*\theta(H) = \pi_1^*\theta$, which is a straightforward computation which follows from \eqref{eq:isothermal-X-H}. Thus
\begin{equation}
-s\int_{SM}\pi^*\star\theta(X)\,\Omega_{\text{SRB}}^{-}(s)=s\int_{SM}V\lambda\,\Omega_{\text{SRB}}^{-}(s)+s^2\int_{SM}((V\lambda)^{2}+\lambda^{2})\,\Omega_{\text{SRB}}^{-}(s).
\label{eq:epxs}
\end{equation}
Using \eqref{eq:divX} we see that
\[\mathbf{e}^{-}(s)=s\int_{SM}V\lambda\,\Omega_{\text{SRB}}^{-}(s)\]
is the entropy production of $\Omega_{\text{SRB}}^{-}(s)$. We know that $\mathbf{e}^{-}(s)\geq 0$ with equality if and only if $sV(\lambda)$ is a coboundary for $X_{s}$. Indeed, recall that by \cite[Theorem 1.2]{Ruelle-96}, the entropy production vanishes if and only if the flow of $X_{s}$ preserves a smooth measure. On the other hand, the flow of $X_{s}$ preserves a smooth measure if and only if its divergence $sV(\lambda)$ is a coboundary.
We claim that $sV(\lambda)$ cannot be coboundary for $s\neq 0$. If it were, the integral of $V(\lambda)$ along every closed orbit of $X_{s}$ would be zero. Since $X_{s}$ is topologically conjugate to $X_{0}$ and every homology class contains a closed orbit of $X_0$, this would imply that $[\star\theta]$ pairs to zero with each homology class, which is absurd since
$[\star\theta]\neq 0$. This gives $\mathbf{e}^{-}(s)>0$ for $s\neq 0$ and going back to \eqref{eq:epxs} we deduce that $[\omega^{-}(X_s)]\neq 0$ for $s\neq 0$.

The flow of $X_{s}$ has a smooth weak foliation by construction; hence Lemma \ref{lemma:omega+} gives $[\omega^{+}]=0$ and this shows Item 2.
To complete the proof of Item 1, it suffices to note that if the weak unstable bundle were also $C^{\infty}$, then Lemma  \ref{lemma:omega+} applied to $-X_{s}$ would give that $[\omega^{-}]=0$ which contradicts that $[\omega]^{-}\neq 0$.

Next, Item 3 follows from Theorem \ref{thm:general} and Item 2.

Finally, by Item 3, there is a $u \in \Res_0^1(X_s)$ that is non-closed, so semisimplicity for the action of $\Lie_{-X_s}$ on $\Omega^1$ fails by Proposition \ref{prop:semisimplicity-fails}. For the action of $\Lie_{X_s}$ on $\Omega^1$, we notice that by combining Item 2 and Proposition \ref{prop:res1}, we have $\dim \Res^1(X_s) = b_1(M) + 1 = \dim \Res^1(X_0)$. Therefore by the upper-semicontinuity of $\dim \Res^{1, \infty}$, see \cite[Lemma 6.2]{Cekic-Paternain-20}, we conclude that the required semisimplicity holds (note that $\Lie_{X_{0}}$ is semisimple).

The claim about the action of $\Lie_{X_s}$ on $\Omega^1 \cap \ker \iota_{X_s}$ follows from the fact that the action of $\Lie_{X_s}$ on $\Omega^1$ is semisimple, and the analogous claim about $\Lie_{-X_s}$ then follows directly by Lemma \ref{lemma:semisimple}.
\end{proof}

\begin{Remark} {\rm Anosov flows with weak bundles as in Item 1 of the previous proposition are always dissipative, i.e. they do not preserve a smooth volume. Indeed if a smooth volume form is being preserved, then both weak bundles would be smooth by \cite[Corollary 3.5]{Hurder-Katok-90}.}
\end{Remark}

\section{Thermostats}\label{sec:thermo}

In this section we focus our attention on a particular kind of flows.

Let $(M,g)$ be a closed oriented Riemannian surface and let $\lambda\in C^{\infty}(SM; \mathbb{R})$. We will be interested in the flow of the vector field $F=X+\lambda V$. The integral curves of $F$ have the form $(\gamma,\dot{\gamma})$, where $\gamma:\mathbb{R}\to M$ solves the ODE
\[\ddot{\gamma}=\lambda(\gamma,\dot{\gamma})J\dot{\gamma},\]
where the acceleration is computed using the Levi-Civita connection of $g$ and $J:TM\to TM$ is rotation by $\pi/2$ according to the orientation of the surface. We call these vector fields {\it thermostats}  (sometimes they are referred to as ``$\lambda$-geodesic flows"). The flow of $F$ models the motion of a particle under the influence of a force that is orthogonal to the velocity and with magnitude $\lambda$. For suitable choices of $g$ and $\lambda$ the vector field $F$ will be a dissipative Anosov flow. 

In \cite{Dairbekov-Paternain-07} it was proved that when $F$ is Anosov, $E_{u/s}$ are transversal to the vertical direction and so there are $r^{u/s} \in C^{1 + \alpha}(\mc{M})$ (as the weak stable/unstable bundles $\mathbb{R}F \oplus E_{u/s}$ are $C^{1 + \alpha}$, see Section \ref{sec:horocyclic}) functions such that $Y^{u/s} := H + r^{u/s} V \in \mathbb{R}F \oplus E_{u/s}$. In fact, they satisfy the Riccati equations (see \cite[Lemma 4.3]{Dairbekov-Paternain-07} or \cite[Proposition 8.19]{Merry-Paternain-11}):
\begin{equation}\label{eq:riccati}
	Fr^{u/s} + (r^{u/s})^2 + K - H\lambda + \lambda^2 - V\lambda \cdot r^{u/s} = 0.
\end{equation}
We may therefore compute the following commutators:
\begin{lemma}[Commutator stable/unstable]\label{lemma:commutator-stable/unstable}
	It holds
	\[[F, Y^{u/s}] = -\lambda F - r^{u/s} Y^{u/s}.\]
\end{lemma}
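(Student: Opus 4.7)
The plan is to compute the bracket $[F,Y^{u/s}]$ directly using bilinearity, the Leibniz rule, and the structural equations \eqref{eq:surface-geometry} of the surface frame, and then to identify the $V$-component with the Riccati equation \eqref{eq:riccati}. Since $Y^{u/s}\in C^{1+\alpha}$, all derivatives below make classical sense.

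I first expand
\[
[F,Y^{u/s}] \;=\; [X+\lambda V,\,H+r^{u/s}V]
\;=\; [X,H] \;+\; [X,r^{u/s}V] \;+\; [\lambda V,H] \;+\; [\lambda V,r^{u/s}V].
\]
Using \eqref{eq:surface-geometry} (in particular $[X,H]=KV$, $[X,V]=-H$, $[V,H]=-X$) and the identity $[fA,gA]=(f\,A(g)-g\,A(f))A$ for the last term, each piece becomes a sum of multiples of $X$, $H$, $V$. Collecting the $X$- and $H$-components immediately produces $-\lambda X - r^{u/s}H$, which is the $F$-and-$Y^{u/s}$-shaped part of the desired right-hand side, since $-\lambda F - r^{u/s}Y^{u/s}=-\lambda X-r^{u/s}H-(\lambda^{2}+(r^{u/s})^{2})V$.

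What remains is to check the $V$-component. From the expansion it equals
\[
K \,+\, Xr^{u/s} \,-\, H\lambda \,+\, \lambda\,V(r^{u/s}) \,-\, r^{u/s}\,V\lambda.
\]
Recalling $Fr^{u/s}=Xr^{u/s}+\lambda V(r^{u/s})$, this is exactly $Fr^{u/s}+K-H\lambda-r^{u/s}V\lambda$. The Riccati equation \eqref{eq:riccati} asserts that this quantity equals $-(r^{u/s})^{2}-\lambda^{2}$, which is precisely the $V$-coefficient of $-\lambda F-r^{u/s}Y^{u/s}$. Assembling the three components completes the identity.

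There is essentially no conceptual obstacle here; the only mild subtlety is that $r^{u/s}$ is merely $C^{1+\alpha}$, so one should note that $Fr^{u/s}$, $Hr^{u/s}$, $Vr^{u/s}$ are all well-defined (the first being the distributional derivative along $F$ which solves the Riccati equation classically, as recalled above). The same argument handles $Y^{u}$ and $Y^{s}$ simultaneously, since both $r^{u}$ and $r^{s}$ satisfy the same Riccati equation \eqref{eq:riccati}.
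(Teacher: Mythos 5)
Your computation is correct and follows essentially the same route as the paper: expand $[X+\lambda V, H+r^{u/s}V]$ with the Leibniz rule and the frame relations \eqref{eq:surface-geometry}, then identify the $V$-component using the Riccati equation \eqref{eq:riccati}. The regularity remark is harmless but not needed, since $r^{u/s}\in C^{1+\alpha}$ makes all first derivatives classical.
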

	\begin{proof}
		This is a straightforward computation, which we carry out just for $r^u$ (it is the same for $r^s$):
		\begin{align*}
			[X + \lambda V, H + r^u V] &= [X, H] + Xr^u \cdot V + r^u [X, V] - H\lambda \cdot V - \lambda [H, V] + \lambda Vr^u \cdot V - r^u V\lambda \cdot V\\ 
			&= KV + Xr^u \cdot V - r^u H - H\lambda \cdot V - \lambda X + (\lambda Vr^u - r^u V\lambda) V\\
			&= -\lambda F + (\lambda^2 + K + Fr^u - H\lambda - r^u V\lambda) V - r^u H\\
			&=- \lambda F - r^u(H + r^u V) = -\lambda F - r^u Y^u,		
		\end{align*}
		where we used \eqref{eq:surface-geometry} in the second equality and \eqref{eq:riccati} in the last one. This completes the proof.
	\end{proof}

Write $\SRB = f \Omega$ for the SRB measures of $F$, where $\Omega$ is the canonical volume form on $SM$ (see \S \ref{ssec:geometry-surfaces}) and where $f \in \mc{D}'_{E_u^*}(\mc{M})$ satisfies 
\begin{equation}\label{eq:SRB}
	0=(F + \divv_{\Omega} F)f = (F + V\lambda) f = Xf + V(\lambda f).
\end{equation}
In what follows we will identify $\SRB$ with $f$. Note that all the Fourier modes $f_k$ of $f$ are smooth thanks to the wavefront set condition and the fact that $V \not \in \mathbb{R}F \oplus E_{u/s}$, see the paragraph after \eqref{eq:1-form-fourier} where an argument is given. From \eqref{eq:SRB} it follows that $(Xf)_0 = 0$ and so $\eta_- f_1 + \eta_+ f_{-1} = 0$. Let $\theta$ be the real-valued $1$-form defined by the relation 
\begin{equation}\label{eq:theta}
	\pi_1^*\theta = f_1 + f_{-1}.
\end{equation}
Therefore $X_- \pi_1^*\theta = 0$ and by \eqref{eq:X-} we obtain:
\[d\star \theta = 0,\]
i.e. $\star \theta$ defines a cohomology class in $H^1(M)$. Recall that $\omega^{\pm}=\iota_{F}\Omega_{\text{SRB}}^{\pm}$.
\begin{lemma}\label{lemma:windind-cycle-theta}
	We have $[\star \theta]_{H^1(M)} = 0$ if and only if $[\omega^{+}] = 0$, i.e. the winding cycle of $\SRB$ vanishes.
\end{lemma}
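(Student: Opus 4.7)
The plan is to reduce the cohomological condition $[\omega^+]_{H^2(SM)} = 0$ to a pairing with closed $1$-forms on $SM$ via Poincar\'e duality, then transfer the resulting integrals to the base surface $M$ through the Fourier decomposition along the fibres, and conclude via Poincar\'e duality on $M$.

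Since $\omega^+$ is closed on the closed oriented $3$-manifold $SM$, Poincar\'e duality says $[\omega^+] = 0$ if and only if $\int_{SM} \eta \wedge \omega^+ = 0$ for every smooth closed $\eta \in \Omega^1(SM)$. Writing $\omega^+ = \iota_F(f\,\Omega)$, the graded Leibniz rule applied to $\eta \wedge \Omega = 0$ (vanishing for degree reasons) yields the identity $\eta \wedge \omega^+ = \eta(F)\, f\, \Omega$. Next I would reduce to forms pulled back from $M$: denoting by $\pi: SM \to M$ the footpoint projection, the Gysin sequence for this $S^1$-bundle has Euler class $\chi(M) \neq 0$ (as must be the case in the Anosov setting), so $\pi^*: H^1(M) \to H^1(SM)$ is an isomorphism and it suffices to verify the equivalent condition for $\eta = \pi^*\rho$ with $\rho$ a smooth closed $1$-form on $M$. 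For such $\eta$, the vertical direction is killed and $\eta(F) = \pi_1^*\rho$; by the definition $\pi_1^*\theta = f_1 + f_{-1}$ and the fibrewise orthogonality of the Fourier modes,
\[
\int_{SM} \pi_1^*\rho \cdot f\, \Omega = \int_{SM} \pi_1^*\rho \cdot (f_1 + f_{-1})\, \Omega = \int_{SM} \pi_1^*\rho \cdot \pi_1^*\theta\, \Omega.
\]

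To conclude, a direct computation in an oriented orthonormal local frame (parametrising the fibre by $v = \cos\phi\, e_1 + \sin\phi\, e_2$ and integrating $\phi \in [0, 2\pi]$) gives
\[
\int_{SM} \pi_1^*\rho \cdot \pi_1^*\theta\, \Omega = c \int_M \langle \rho, \theta\rangle_g\, d\vol_g = c \int_M \rho \wedge \star\theta
\]
for a positive constant $c$. Since $d \star \theta = 0$ is already established, the class $[\star\theta] \in H^1(M)$ is well defined, and by Poincar\'e duality on the closed oriented surface $M$ the functional $[\rho] \mapsto \int_M \rho \wedge \star\theta$ vanishes on all of $H^1(M)$ if and only if $[\star\theta] = 0$. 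Combining the three reductions yields the lemma. The main technical point is the Gysin-sequence step, which relies on $\chi(M) \neq 0$ and is where the Anosov hypothesis enters implicitly; without it one would need separately to treat the possible contribution of a fibre class in $H^1(SM)$, which does not fit into the Fourier reduction above.
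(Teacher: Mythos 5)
Your argument is correct and follows essentially the same route as the paper's proof: Poincar\'e duality on $SM$ to reduce $[\omega^+]=0$ to the vanishing of $\int_{SM}\eta(F)\,f\,\Omega$ over closed $1$-forms $\eta$, reduction to pullbacks $\eta=\pi^*\rho$ using that $\pi^*:H^1(M)\to H^1(SM)$ is an isomorphism (which the paper cites rather than rederiving via the Gysin sequence, but it is the same fact, valid since $\chi(M)\neq 0$ in the Anosov setting), the Fourier-mode identification with $\pi_1^*\theta$, and Poincar\'e duality on $M$. No gaps; the only cosmetic differences are your Leibniz-rule derivation of $\eta\wedge\omega^+=\eta(F)f\,\Omega$ and the explicit Gysin-sequence justification.
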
 
\begin{proof}
	Observe that $\omega^{+}$ is exact if and only if (by Poincar\'e duality) for every smooth closed real-valued $1$-form $\beta$ on $SM$ we have:
	\[0 = \int_{SM} \beta \wedge \iota_F \SRB = \int_{SM} f\beta(F) \,\Omega.\]
	Since the pullback $\pi^*:H^1(M) \to H^1(SM)$ is an isomorphism (this is well-known, see e.g. \cite[Corollary 8.10]{Merry-Paternain-11}), the previous condition is equivalent to
	\[\forall \beta \in C^\infty(M; \Omega^1) \cap \ker d, \quad 0 = \int_{SM} f \beta(X)\, \Omega.\]
	Since $\beta(X) = \pi_1^*\beta$, this is equivalent to
	\[0 = \int_{SM} (f_1 \beta_{-1} + f_{-1} \beta_1)\, \Omega = \langle{\pi_1^*\theta, \pi_1^*\beta}\rangle_{L^2(\mc{M})} = \pi \langle{\theta, \beta}\rangle_{L^2(M)} = \pi \int_M \beta \wedge \star \theta.\]
	By Poincar\'e duality, we conclude that the last equality is equivalent to $[\star \theta]_{H^1(M)} = 0$, which completes the proof.
\end{proof}

A case of particular interest arises when $\lambda$ is an odd function in the velocity variable as this results in a {\it reversible} vector field $F$.
Let $\mc{J}(x, v) = (x, -v)$ denote the flip map. It satisfies (see e.g. \cite[Proposition 5.1]{Cekic-Paternain-20}) $\mc{J}^*X = -X$, $\mc{J}^*V = V$, and $\mc{J}^*\lambda = -\lambda$,
and therefore $\mc{J}^*F = -F$. It follows that $\mc{J}^*$ sends $E_{u/s}$ to $E_{s/u}$ and $E_{u/s}^*$ to $E_{s/u}^*$. Moreover, the following holds.

\begin{lemma}
	We have $\SRB = \mc{J}^*\SRBs$ and the winding cycles satisfy 
	\[[\omega^{+}] = -[\omega^{-}].\]
	 Moreover, $\mc{J}^* \Res_{0*}^1 = \Res_0^1$. 
	 \label{lemma:reversible}
\end{lemma}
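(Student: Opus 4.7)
The plan is to exploit the involution $\mc{J}$ systematically, leveraging $\mc{J}^{2}=\id$ and $\mc{J}^{*}F=-F$ throughout. First I will pin down how $\mc{J}$ acts on the global frame $\{X,H,V\}$: starting from $\mc{J}^{*}V=V$, $\mc{J}^{*}X=-X$, and the relation $[H,V]=X$, one deduces $\mc{J}^{*}H=-H$. Dualizing yields $\mc{J}^{*}\alpha=-\alpha$, $\mc{J}^{*}\beta=-\beta$, $\mc{J}^{*}\psi=\psi$, so $\mc{J}^{*}\Omega=\Omega$; in particular $\mc{J}$ is orientation-preserving on $SM$. Moreover, since $\mc{J}$ conjugates $\varphi_{t}^{F}$ with $\varphi_{-t}^{F}$, the derivative $d\mc{J}$ interchanges $E_{u}\leftrightarrow E_{s}$, and passing to annihilators of the weak bundles gives $d\mc{J}^{*}$-interchange of $E_{u}^{*}\leftrightarrow E_{s}^{*}$.

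For $\SRB=\mc{J}^{*}\SRBs$, the strategy is to invoke the uniqueness of the SRB measure provided by Proposition \ref{prop:k=0,2,3}. The pullback $\mc{J}^{*}\SRBs$ is a distributional $3$-form of integral $1$ (since $\mc{J}^{*}\Omega=\Omega$), is $F$-invariant because $\Lie_{F}\mc{J}^{*}\SRBs=\mc{J}^{*}\Lie_{-F}\SRBs=0$, and satisfies $\WF(\mc{J}^{*}\SRBs)\subset E_{u}^{*}$ by the wavefront set calculus combined with the previous paragraph. Uniqueness then forces the identity. Using the naturality $\iota_{\mc{J}^{*}F}\mc{J}^{*}\SRBs=\mc{J}^{*}(\iota_{F}\SRBs)$, this immediately gives $\mc{J}^{*}\omega^{-}=\iota_{-F}\SRB=-\omega^{+}$. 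Since $\mc{J}$ is isotopic to the identity via fiberwise rotations $(x,v)\mapsto(x,e^{i\pi t}v)$, the induced map $\mc{J}^{*}$ on $H^{2}(SM)$ is trivial, so the cohomological identity $[\omega^{+}]=-[\omega^{-}]$ follows.

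Finally, for $u_{*}\in\Res_{0*}^{1}$, I will verify in turn the three conditions defining $\Res_{0}^{1}$ for $\mc{J}^{*}u_{*}$: invariance $\Lie_{F}\mc{J}^{*}u_{*}=\mc{J}^{*}\Lie_{-F}u_{*}=0$; contraction $\iota_{F}\mc{J}^{*}u_{*}=\mc{J}^{*}\iota_{-F}u_{*}=-\mc{J}^{*}\iota_{F}u_{*}=0$; and the wavefront set condition $\WF(\mc{J}^{*}u_{*})\subset E_{u}^{*}$, which follows from the transformation rule for wavefront sets under diffeomorphisms and the swap $d\mc{J}(\mathbb{R}F\oplus E_{s})=\mathbb{R}F\oplus E_{u}$ at the corresponding point. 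Since $(\mc{J}^{*})^{2}=\id$, the same reasoning applied to $-F$ shows the pullback is a mutually inverse bijection $\mc{J}^{*}\colon\Res_{0*}^{1}\to\Res_{0}^{1}$. The argument is essentially bookkeeping; the only mildly delicate point is the homotopy argument showing that $\mc{J}$ acts trivially on $H^{2}(SM)$, but this is entirely standard and poses no real obstacle.
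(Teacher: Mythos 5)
Your proof is correct and follows essentially the same route as the paper: uniqueness of the SRB measure from Proposition \ref{prop:k=0,2,3} together with the facts that $\mc{J}$ is orientation preserving, swaps $E_u^*\leftrightarrow E_s^*$, anticommutes with $\Lie_F$, and is homotopic to the identity through fibrewise rotations. One small slip: $[H,V]=X$ alone only yields $[\mc{J}^*H+H,V]=0$, which does not pin down $\mc{J}^*H$; use instead $[V,X]=H$, giving $\mc{J}^*H=[\mc{J}^*V,\mc{J}^*X]=[V,-X]=-H$ directly (or note that orientation preservation already follows from the homotopy to the identity that you invoke later).
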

\begin{proof}
	Since $\mc{J}^* E_u^* = E_s^*$ and $\Lie_F \mc{J}^* = -\mc{J}^* \Lie_{F}$, we get $\mc{J}^*\Res_{0*}^k = \Res_0^k$ for any $k = 0, 1, 2, 3$. Since $\mc{J}$ is orientation preserving, the claim about SRB measures follows from Proposition \ref{prop:k=0,2,3}. For the other claim, it follows from $\mc{J}^*\omega^{-} = -\omega^{+}$ and the fact that $\mc{J}$ is homotopic to the identity (via rotations).
	\end{proof}
	
	In particular, this lemma applies to {\it Gaussian} thermostats, that is, when $\lambda=\pi^*_{1}\rho$, where $\rho$ is a smooth 1-form on $M$. Recall that if the curvature of $g$ is negative and $\rho$ is closed, the vector field $F$ is Anosov (see \cite[Remark 4.4]{Mettler-Paternain-19}).

\subsection{Coupled vortex equations} Let $(M, g)$ be a compact oriented surface. Recall that an interesting class of thermostats is obtained when $\lambda$ is generated by a holomorphic differential $A$ of degree $m\geq 2$, that is, a section of $\mc{K}^{\otimes m}$, where $\mc{K} = (T_{\mathbb{C}}^*M)^{1, 0}$ is the holomorphic part of $T_{\mathbb{C}}^*M$, and it satisfies the \emph{coupled vortex equations} (see Appendix \ref{app:A} for the notation)
\begin{equation}\label{eq:coupled-vortex-equations}
	\bar{\partial} A = 0, \quad K_g = -1 + (m - 1)|A|_{g}^2.
\end{equation}
It is known that when \eqref{eq:coupled-vortex-equations} holds, the flow of $F$ for 
\begin{equation}\label{eq:lambda-A}
	\lambda := \im(\pi_m^*A)
\end{equation}
is Anosov, see \cite[Theorem 5.1]{Mettler-Paternain-19}. By \eqref{eq:A-a_m-norm-relation} and \eqref{eq:A-a_m-relation} it holds that
\begin{equation}\label{eq:lambda-A-norm-relation}
	4|\lambda_m|^2 = |A|_{g}^2, \quad \lambda_m = \frac{\pi_m^* A}{2i}.
\end{equation}
Moreover, when $m \geq 2$, $\bar{\partial} A = 0$ is equivalent to by Lemma \ref{lemma:eta-del-bar}:
\begin{equation}\label{eq:eta-lambda}
	\eta_- \lambda_m = \eta_+ \lambda_{-m} = 0.
\end{equation}
We finally note that by \cite[Lemma 5.2]{Mettler-Paternain-19}, we have 
\begin{equation}\label{eq:curvature-condition-coupled-vortex}
	-1 \leq K_g < 0.
\end{equation}

\subsection{Proof of Theorem \ref{thm:beta}} In this case $\lambda = \pi_1^*\rho$, where $\rho$ is harmonic and non-zero.
Let us compute  the entropy production:
		\begin{align*}
			0 \leq \mathbf{e}^{+} &= -\int_{SM} \divv_{\Omega}(F) \,\SRB = -\int_{SM} V\lambda \cdot f\,\Omega = \int_{SM} \pi_1^*(\star \rho) \pi_1^*\theta\, \Omega = \pi \int_{M} \star \rho \wedge \star \theta,
		\end{align*}
		where $\theta$ was defined by \eqref{eq:theta} and we used that $V\pi_1^* = -\pi_1^* \star$ on $1$-forms. Note that the first inequality follows from \S \ref{ssec:SRB-entropy}. If $[\star \theta]_{H^1(M)} = 0$ we have $\mathbf{e}^{+} = 0$ and by \cite[Theorem A]{Dairbekov-Paternain-07} we obtain that $[\star \rho]_{H^1(M)} = 0$, contradiction. By Lemma \ref{lemma:windind-cycle-theta} we conclude that the winding cycle $[\omega^{+}] \neq 0$. The claim that $[\omega^{+}]=-[\omega^{-}]$ is a consequence of Lemma \ref{lemma:reversible}. The claim that $m_{1,0}=b_{1}(M)-1$ follows from the classification in Theorem \ref{thm:general}.

		The remaining claims are consequences of the general theory of perturbations and \eqref{eq:order-of-vanishing-formula} as we now outline, following \cite[Section 7]{Cekic-Paternain-20}. In fact, for $0 \neq \rho \in \mc{H}^1(M)$ small enough, we will show that $\Lie_{F_\rho}$, where $F_\rho := X + \pi_1^*\rho\, V$, has a splitting non-zero resonance when acting on $1$-forms. This implies that the action of $\Lie_{F_\rho}$ on $\Omega^1 \cap \ker \iota_{F_\rho}$ is semisimple, by \cite[Lemma 6.2]{Cekic-Paternain-20} and the fact that $\dim \Res^1(F_\rho) = b_1(M)$ (as follows from the previous paragraph and Theorem \ref{thm:general}). 
		
		For $\rho$ sufficiently small, by Lemma \ref{lemma:perturbation-theory} there exists an anisotropic Sobolev space $\mc{H}_{rG, t} := \mc{H}_{rG, t}(SM; \Omega^1)$, for $t \in \mathbb{R}$, such that by a statement analogous to Lemma \ref{lemma:proj-regularity}, the following map is $C^1$-regular:
		\[\mc{T}: \mc{H}^1(M) \ni \rho \mapsto \Lie_{F_\rho} \widetilde{\Pi}_1^+(\rho)\alpha \in \mc{H}_{rG, -3},\]
where $\widetilde{\Pi}_1^+(\rho)$ is the projector onto resonant $1$-forms of $F_\rho$ near zero, defined analogously to \eqref{eq:projector-flow-neighbourhood}, and $\alpha$ is the contact form of $X$. We compute for $\rho \in \mc{H}^1(M)$ that
\begin{equation}\label{eq:derivative-T_0}
	D_0\mc{T}(\rho) = \partial_s|_{s = 0} \widetilde{\Pi}_1^+(s\rho) \Lie_{F_{s\rho}} \alpha =  \partial_s|_{s = 0} \widetilde{\Pi}_1^+(s\rho) s \Lie_{\pi_1^* \rho V} \alpha = \Pi_1^+(\lambda \beta),
\end{equation}
where in the first equality we used that $\Lie_{F_{s\rho}}$ and $\widetilde{\Pi}_1^+(s\rho)$ commute, in the second one we used $\Lie_X \alpha = 0$, and for the last one we recall that $\lambda := \pi_1^*\rho$. We claim that $D_0\mc{T}$ is injective. Indeed, let $u := \Pi_1^+(\lambda \beta)$ for $\rho \neq 0$. Observe that $\Pi_1^+(V\lambda \cdot \alpha) = 0$, for instance by noting that $\iota_X \Pi_1^+(V\lambda \cdot \alpha) = 0$, and using that the pairing $\llangle{\bullet, \bullet}\rrangle$ is non-degenerate by Proposition \ref{lemma:semisimple} and \cite[Proposition 3.1(3)]{Dyatlov-Zworski-17}. Thus, as $\pi^*(\star \rho) = -V\lambda \cdot \alpha + \lambda \beta$ (since $\pi_1^*(\star \rho) = -V\lambda$ and $\iota_H \pi^* (\star \rho) = \lambda$), we get $u = \Pi_1^+(\pi^*(\star \rho)) \neq 0$ (recall $\Pi_1^+: H^1(M) \to \Res_0^1$ is an isomorphism). Compute the following quantity:
\begin{equation}\label{eq:def-W}
	W := \int_{SM} u(\lambda V)\, \Omega =  \int_{SM} \alpha \wedge u \wedge \lambda \beta = \llangle{u, \lambda \beta}\rrangle = \llangle{u, \Pi_1^-(\lambda \beta)}\rrangle = \llangle{u, \mc{J}^*u}\rrangle,
\end{equation}
where in the last line we used $\Pi_1^+u = u$ and \eqref{eq:proj-tranpose}, and $\mc{J}^*\Pi_1^+ = \Pi_1^- \mc{J}^*$. By \cite[Lemma 7.2]{Cekic-Paternain-20}, $W > 0$ since $u \neq 0$, proving the claim about injectivity of $D_0\mc{T}$. Therefore, by Taylor's expansion $\mc{T}$ is injective close to the zero $1$-form (using that the unit ball in $\mc{H}^1(M)$ is compact). Thus for $\rho \neq 0$ small, we have that $\mc{T}(\rho) \neq 0$, and also, using that $\dim \Res^1(F_\rho) = b_1(M)$ (see previous paragraphs) and \cite[Lemma 6.2]{Cekic-Paternain-20} (the rank of $\widetilde{\Pi}_1^+$ is locally constant), either $\mc{T}(\rho)$ is a splitting resonant state corresponding to a non-zero resonance, in which case $\Lie_{F_\rho} \mc{T}(\rho) \neq 0$, or we loose semisimplicity at zero for the action of $\Lie_{F_\rho}$ on $\Omega^1$, in which case $\Lie_{F_\rho} \mc{T}(\rho) = 0$ (i.e. there is Jordan block of size $2$). In what follows, we show that there is a splitting resonance.

Next, we compute the derivatives of $\Lie_{F_\rho} \mc{T}(\rho)$, which is $C^4$-regular as a map $\mc{H}^1(M) \to \mc{H}_{rG, -t}$ for $\rho$ small enough and for some $t$ large enough (similarly as above). In fact, we have
\[
	D_0[\Lie_{F_\rho} \mc{T}](\rho) = \partial_{s}|_{s = 0} \Lie_{F_{s\rho}} \mc{T}(s\rho) = \Lie_{\pi_1^* \rho V} \underbrace{\mc{T}(0)}_{= 0} + \Lie_X \Pi_1^+(\pi_1^*\rho \cdot \beta) = 0,
\]
where in second equality we used \eqref{eq:derivative-T_0} and $\mc{T}(0) = 0$, and in the third one that $\Lie_X \Pi_1^+ = 0$. The second and third derivatives are computed as in \cite[Section 7.2]{Cekic-Paternain-20}. Indeed, we have
\begin{align*}
	D_0^2 \Lie_{F_\rho} \mc{T}(\rho) = \partial_{s}^2|_{s = 0} \Lie_{F_{s\rho}} \mc{T}(s\rho) = 2 \partial_{s}|_{s = 0} \Lie_{F_{s\rho}} \underbrace{\widetilde{\Pi}_1^+(s\rho)(\pi_1^*\rho \cdot \beta)}_{=: u_s} = 2\dot{z} u = 0,
\end{align*}
at $s = 0$, where $u_s$ satisfies $\Lie_{F_{s\rho}} u_s = z_s u_s$ for some $z_{s} \in \mathbb{R}$ and similarly to \cite[eq. (7.9)]{Cekic-Paternain-20}, the map $s \mapsto z_s$ is $C^2$-regular. Therefore the last equality follows from the first linearisation in \cite[Section 7.2]{Cekic-Paternain-20}, which shows $\dot{z} = 0$. For the third derivative, we similarly have:
\begin{align*}
	D_0^3 \Lie_{F_\rho}\mc{T}(\rho) &= \partial_s^3|_{s = 0} \Lie_{F_{s\rho}} \widetilde{\Pi}_1^+(s\rho) (s\pi_1^*\rho \cdot \beta) = 3 \partial_s^2|_{s = 0} (z_{s} u_{s}) = 3 \ddot{z} u,
\end{align*}
where in the last line we used that $z = \dot{z} = 0$ at $s = 0$. By the same argument as in the second linearisation of \cite[Section 7.2]{Cekic-Paternain-20}, and using that by \eqref{eq:def-W} we have $W > 0$, we conclude that $\ddot{z} < 0$, and since $u \neq 0$ by the discussion above this proves that the third derivative is non-zero when $\rho \neq 0$.

By means of a Taylor expansion, it follows that the map $\mc{H}^1(M) \ni \rho \mapsto \Lie_{F_\rho} \mc{T}(\rho) \in \mc{H}_{rG, -t}$ satisfies the property that $\Lie_{F_\rho} \mc{T}(\rho) \neq 0$ for $\rho \neq 0$ small enough, and so a resonance splits (in fact, as the proof shows necessarily to right half-plane), proving the semisimplicity claim and completing the proof. \qed

\section{Quadratic holomorphic differentials} \label{sec:qhd} In this section we consider the coupled vortex equations \eqref{eq:coupled-vortex-equations} for $m = 2$ and we prove Theorem \ref{thm:QFF}. These thermostats are \emph{quasi-Fuchsian} flows as defined by Ghys \cite{Ghys-92}. It is conjectured in \cite{Paternain-07} that all quasi-Fuchsian flows arise this way. In this case, the weak stable/unstable bundles are \emph{smooth} and moreover by \cite[\S 6.1]{Mettler-Paternain-19}
\begin{equation}\label{eq:ru-rs}
	r^{u} = 1 + \frac{V\lambda}{2}, \quad r^{s} = -1 + \frac{V\lambda}{2}.
\end{equation}
(The case $m=3$ corresponds to Hilbert geodesic flows and the cases $m\geq 4$ are largely unstudied  (see \cite{Mettler-Paternain-19}). The case $m=2$ is the only one for which it is possible to write down explicitly the solutions $r^{u/s}$ to the Riccati equation.)

Recall $\lambda = \lambda_{-2} + \lambda_2$ is a holomorphic differential of degree two, that is, $\eta_- \lambda_2 = \eta_+ \lambda_{-2} = 0$; $\overline{\lambda_2} = \lambda_{-2}$ since $\lambda$ is real valued. Also, recall that $Y^{u/s} = H + r^{u/s}V$ (see Section \ref{sec:thermo}), and that we write $\SRB = f\Omega$ for some $f \in \mc{D}'_{E_u^*}(\M)$ and $\Omega = \alpha \wedge \psi \wedge \beta$.

Let us first restate the horocyclic invariance result of Lemma \ref{lemma:horo-smooth} in this setting.

\begin{lemma}[Horocyclic invariance for holomorphic differentials]\label{lemma:horo-m=2}
	Let $u \in \Res_{0}^1$. Then there is a constant $c \in \mathbb{C}$ such that $du = \frac{c}{2} f \iota_F \Omega$ and $h := \iota_{Y^s} u$ satisfies:
	\begin{align}\label{eq:horo-m=2}
	\begin{split}
		(F + r^s)h &= 0,\\
		(Y^u - \lambda)h &= cf.
	\end{split}
	\end{align}
	Denote by $\mc{S}$ the set of distributional solutions of \eqref{eq:horo-m=2} for $c \in \mathbb{C}$. Then the map $\Res_{0}^1 \ni u \mapsto \iota_{Y^s}u \in \mc{S}$ is an isomorphism.
\end{lemma}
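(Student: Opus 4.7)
First, the existence of $c$: since $d$ commutes with $\Pi^+$ by \eqref{eq:commute-projector}, we have $du \in \Res_0^2$, which by Proposition \ref{prop:k=0,2,3} is spanned by $\iota_F \SRB = f\, \iota_F \Omega$, giving $du = \tfrac{c}{2} f\, \iota_F \Omega$ for a unique $c \in \mathbb{C}$.

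Next, to derive the horocyclic equations I would invoke Lemma \ref{lemma:horo-smooth}, Item 1, with $D = 0$, to obtain $\iota_{Y^u} u = 0$ (note that $Y^u$ is genuinely smooth here thanks to \eqref{eq:ru-rs}). Combined with $\iota_F u = 0$, this forces $u = h \cdot \omega_s$, where $\omega_s$ is the smooth $1$-form dual to $Y^s$ in the frame $\{F, Y^u, Y^s\}$, i.e.\ $\omega_s(F) = \omega_s(Y^u) = 0$ and $\omega_s(Y^s) = 1$. The first equation in \eqref{eq:horo-m=2} then follows by evaluating $0 = (\Lie_F u)(Y^s) = F(h) - u([F, Y^s])$ and using the commutator $[F, Y^s] = -\lambda F - r^s Y^s$ from Lemma \ref{lemma:commutator-stable/unstable}. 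The second equation requires computing $du(Y^u, Y^s)$ in two ways: on one hand $du(Y^u, Y^s) = \tfrac{c}{2} f\, \Omega(F, Y^u, Y^s)$, and on the other hand Cartan's formula gives $du(Y^u, Y^s) = Y^u h - h\, \omega_s([Y^u, Y^s])$. The bracket $[Y^u, Y^s]$ is computed from \eqref{eq:surface-geometry} together with \eqref{eq:ru-rs}; the key simplification is the identity $V^2\lambda = -4\lambda$ (from $\lambda \in H_{-2} \oplus H_2$, see \eqref{eq:eta-lambda}), which collapses the $V$-component of $[Y^u, Y^s]$ into a multiple of $\lambda$, yielding the second equation in \eqref{eq:horo-m=2} after rearrangement.

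For the isomorphism, injectivity is immediate from the decomposition $u = h\, \omega_s$: if $h = \iota_{Y^s} u = 0$, then $u = 0$. For surjectivity, given $h$ solving \eqref{eq:horo-m=2} for some $c$, I would set $u := h \cdot \omega_s$; then $\WF(u) \subset E_u^*$ follows from the smoothness of $\omega_s$ together with the condition $\WF(h) \subset E_u^*$ built into $\mc{S}$, and $\iota_F u = 0$ holds by construction. The remaining condition $\Lie_F u = 0$ is verified by evaluating on $Y^s$ and $Y^u$: on $Y^s$ one recovers the first equation of \eqref{eq:horo-m=2}, while on $Y^u$ it follows from $du(F, Y^u) = \tfrac{c}{2} f\, \iota_F \Omega(F, Y^u) = 0$ together with Cartan's formula and Lemma \ref{lemma:commutator-stable/unstable}.

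The main obstacle I anticipate is the careful computation of $[Y^u, Y^s]$ in Step 2, which requires combining the frame bracket relations from \eqref{eq:surface-geometry}, the explicit formulas for $r^{u/s}$ in \eqref{eq:ru-rs}, and the holomorphicity identity $V^2\lambda = -4\lambda$ in order to extract the $Y^s$-component of the bracket as precisely the multiple of $\lambda$ needed to match the statement; all other steps are then essentially formal consequences.
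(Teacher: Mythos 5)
Your derivation of the two equations follows the paper's proof essentially verbatim: $\iota_{Y^u}u=0$ from Lemma \ref{lemma:horo-smooth} (with $D=0$), the first equation from $du(F,Y^s)=0$ via the commutator $[F,Y^s]=-\lambda F-r^sY^s$, and the second from evaluating $du(Y^u,Y^s)$ together with the bracket computation $[Y^u,Y^s]=-2F-\lambda(Y^u-Y^s)$ (using $r^u-r^s=2$, $V^2\lambda=-4\lambda$) and $\Omega(F,Y^u,Y^s)=2$; the injectivity argument via $u=h\,\omega_s$ is likewise the paper's.

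The one genuine issue is in your surjectivity step: you assert that the condition $\WF(h)\subset E_u^*$ is ``built into $\mc{S}$'', but the statement defines $\mc{S}$ as the set of \emph{all} distributional solutions of \eqref{eq:horo-m=2}, with no a priori wavefront restriction. To conclude that $u:=h\,\omega_s$ lies in $\Res_0^1$ you must therefore \emph{deduce} $\WF(h)\subset E_u^*$ from the equations themselves, which is what the paper's analogous argument (final part of Lemma \ref{lemma:horo-smooth}) does using $\WF(\SRB)\subset E_u^*$. The deduction is short but not vacuous: by $\mathrm{H\ddot{o}rmander}$'s theorem on propagation/characteristic sets for the real principal type operators $F+r^s$ and $Y^u-\lambda$, the first equation gives $\WF(h)\subset \mathrm{Char}(F)=E_u^*\oplus E_s^*$, while the second gives $\WF(h)\subset \mathrm{Char}(Y^u)\cup\WF(f)=(E_u^*\oplus E_0^*)\cup E_u^*$; intersecting these (using the direct sum decomposition $T^*\mc{M}=E_0^*\oplus E_u^*\oplus E_s^*$) yields $\WF(h)\subset E_u^*$. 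Note also that in this direction one cannot invoke $du=\tfrac{c}{2}f\,\iota_F\Omega$ (as you do when checking $\Lie_Fu$ on $Y^u$), since that identity is not yet known for the constructed $u$; fortunately $du(F,Y^u)=0$ is automatic from $\iota_Fu=\iota_{Y^u}u=0$ and $[F,Y^u]=-\lambda F-r^uY^u$, so the verification goes through once the wavefront set point above is supplied.
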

\begin{proof}
	By Lemma \ref{lemma:horo-smooth} we have that $\iota_{Y^u} u = 0$. Next, similarly to Lemma \ref{lemma:horocyclic-invariance-new}, the first equation is derived from:
	\[0 = du(F, Y^s) = Fh - \iota_{[F, Y^s]}u = (F + r^s)h,\]
	where in the second equality we used $\iota_F u = 0$, and in the third one we used Lemma \ref{lemma:commutator-stable/unstable}. For the second equation, we have:
	\begin{equation}\label{eq:blabla}
		\frac{c}{2} f \Omega(F, Y^u, Y^s) = du(Y^u, Y^s) = Y^uh - \iota_{[Y^u, Y^s]}u,
	\end{equation}
	where in the last equality we used $\iota_{Y^u}u = 0$. Then we have the following computation:
	\begin{align}\label{eq:Y^u/s-commutator}
	\begin{split}
		[Y^u, Y^s] &= [H + r^uV, H + r^s V] = H r^s\cdot V + r^s [H, V] - H r^u\cdot V - r^u [H, V] + r^u Vr^s \cdot V - r^sVr^u \cdot V\\
		&= H(r^s - r^u)\cdot V + (r^s - r^u)\cdot X + (r^uVr^s - r^sVr^u) V\\
		&= - 2X + \Big(\Big(1 + \frac{V\lambda}{2}\Big) \frac{V^2\lambda}{2} - \Big(-1 + \frac{V\lambda}{2}\Big) \frac{V^2 \lambda}{2}\Big) V\\
		&= -2X - 4\lambda V = -2F - \lambda(Y^u - Y^s),
	\end{split}
	\end{align}
	where we used \eqref{eq:surface-geometry} in the second line, that $r^s - r^u = -2$ and \eqref{eq:ru-rs} in the third line, and $V^2\lambda = -4\lambda$ and $Y^u - Y^s = 2V$ in the final line. Also, compute
	\[\Omega(F, Y^u, Y^s) = \alpha \wedge \psi \wedge \beta (X + \lambda V, H + r^u V, H + r^s V) = -r^s + r^u = 2,\]
	which combined with \eqref{eq:Y^u/s-commutator} and \eqref{eq:blabla} shows \eqref{eq:horo-m=2} and concludes the proof of the first claim.
	
	For the final claim, the proof is straightforward and analogous to the proof of the final part of Lemma \ref{lemma:horo-smooth} and we omit it.
\end{proof}
\subsection{Recurrence relations}

We will use the ladder operators $\eta_\pm$ to derive from Lemma \ref{lemma:horo-m=2} the recurrence relations for the solutions $h$ of the system in \eqref{eq:horo-m=2}.

\begin{lemma}
	The PDE system \eqref{eq:horo-m=2} is satisfied if and only if, for every $k \in \mathbb{Z}$:
	\begin{align}\label{eq:recurrence-relation}
	\begin{split}
		2\eta_- h_{k + 1} - (k + 1) h_k + 2\lambda_{-2} h_{k + 2} i(k + 1) &= ic f_k,\\
		2\eta_+ h_{k - 1} + (k - 1) h_k + 2\lambda_2 h_{k - 2} i (k - 1) &= -ic f_k.
	\end{split}
	\end{align}
\end{lemma}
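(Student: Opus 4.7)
The plan is to project the two scalar equations in \eqref{eq:horo-m=2} onto $V$-Fourier modes and show that the sum and difference of the resulting identities are exactly the two equations in \eqref{eq:recurrence-relation}. For this, I first record a few bookkeeping identities that I will plug in mechanically. Since $\lambda = \lambda_{-2} + \lambda_2$ with $\overline{\lambda_2} = \lambda_{-2}$, we have $V\lambda = 2i(\lambda_2 - \lambda_{-2})$, so that by \eqref{eq:ru-rs}
\[r^u = 1 + i(\lambda_2 - \lambda_{-2}), \qquad r^s = -1 + i(\lambda_2 - \lambda_{-2}).\]
Next, $X = \eta_+ + \eta_-$, and from $\eta_\pm = (X \mp iH)/2$ we deduce $H = i(\eta_+ - \eta_-)$. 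Finally, multiplication by $\lambda_{\pm 2}$ shifts Fourier modes by $\pm 2$, while $V$ acts as $ik$ on $H_k$; hence for any $u = \sum_k u_k$ one has $(\lambda u)_k = \lambda_2 u_{k-2} + \lambda_{-2} u_{k+2}$ and $(\lambda V u)_k = i(k-2)\lambda_2 u_{k-2} + i(k+2)\lambda_{-2} u_{k+2}$.

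First I will compute the $k$-th Fourier mode of $(F + r^s)h = 0$. Combining $(Xh)_k = \eta_+ h_{k-1} + \eta_- h_{k+1}$, the formula for $(\lambda V h)_k$ above, and $(r^s h)_k = -h_k + i\lambda_2 h_{k-2} - i\lambda_{-2} h_{k+2}$, I will obtain after collecting terms
\begin{equation}\label{eq:plan-eq1}
\eta_+ h_{k-1} + \eta_- h_{k+1} - h_k + i(k-1)\lambda_2 h_{k-2} + i(k+1)\lambda_{-2} h_{k+2} = 0.
\end{equation}
Similarly, writing $Y^u - \lambda = H + (r^u - 1)V + V - \lambda$ and using $H = i(\eta_+ - \eta_-)$ together with the shift formulas, the $k$-th Fourier mode of $(Y^u - \lambda)h = cf$ becomes
\begin{equation}\label{eq:plan-eq2}
-i\eta_- h_{k+1} + i\eta_+ h_{k-1} + ik\, h_k - (k-1)\lambda_2 h_{k-2} + (k+1)\lambda_{-2} h_{k+2} = cf_k,
\end{equation}
where the crucial simplification is that the coefficients of $\lambda_{\pm 2}h_{k\mp 2}$ produced by $r^u V$ and by $-\lambda$ combine as $-(k-2)-1 = -(k-1)$ and $(k+2)-1 = (k+1)$.

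To finish, I will multiply \eqref{eq:plan-eq2} by $i$ and add, respectively subtract, to/from \eqref{eq:plan-eq1}. Addition gives
\[2\eta_- h_{k+1} - (k+1)h_k + 2i(k+1)\lambda_{-2}h_{k+2} = icf_k,\]
which is the first equation in \eqref{eq:recurrence-relation}, while subtraction gives
\[2\eta_+ h_{k-1} + (k-1)h_k + 2i(k-1)\lambda_2 h_{k-2} = -icf_k,\]
which is the second. Conversely, adding the two recurrences in \eqref{eq:recurrence-relation} recovers \eqref{eq:plan-eq1}, hence $(F+r^s)h = 0$ modewise, and subtracting them recovers (a multiple of) \eqref{eq:plan-eq2}, hence $(Y^u-\lambda)h = cf$. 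The only step that requires any attention, rather than pure bookkeeping, is verifying the coefficient cancellations in \eqref{eq:plan-eq2} coming from the three distinct sources $r^uV$, $V$, and $-\lambda$; everything else is a direct substitution using the identities listed above.
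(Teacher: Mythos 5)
Your proposal is correct and follows essentially the same route as the paper: the paper first forms the combinations (first equation) $\pm\, i\cdot$(second equation) to produce $2\eta_\mp h$ and then identifies degree-$k$ components, while you project onto Fourier modes first and take the same $\pm i$ combinations modewise — the two orderings commute and all your coefficient bookkeeping (in particular the $-(k-1)$ and $(k+1)$ cancellations) checks out.
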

\begin{proof}
	To derive these equations, we first rewrite the system \eqref{eq:horo-m=2} more explicitly:
	\begin{align*}
		Xh + \lambda Vh + \Big(-1 + \frac{V\lambda}{2}\Big) h &= 0,\\
		Hh + \Big(1 + \frac{V\lambda}{2}\Big)Vh - \lambda h &= cf.
	\end{align*}
	Multiplying the second equation by $i$, and adding and subtracting from the first one we get:
	\begin{align*}
		2\eta_- h + \Big(\lambda + i + \frac{i V\lambda}{2}\Big) Vh + \Big(-1 + \frac{V\lambda}{2} - i\lambda\Big)h &= icf,\\
		2\eta_+h + \Big(\lambda - i - \frac{iV\lambda}{2}\Big) Vh + \Big(-1 + \frac{V\lambda}{2} + i\lambda\Big)h &= -icf.
	\end{align*}
	Now writing $\frac{V\lambda}{2} = i\lambda_2 - i \lambda_{-2}$, $\lambda = \lambda_{-2} + \lambda_2$, we re-write these equations as:
	\begin{align*}
		2\eta_- h + (i + 2\lambda_{-2}) Vh + (-1 - 2i\lambda_{-2})h &= icf,\\
		2\eta_+h + (- i + 2\lambda_2) Vh + (-1 + 2i\lambda_2)h &= -icf.
	\end{align*}
	Now rewriting these equalities with terms of fixed degree grouped, we get:
	\begin{align*}
		2\eta_- h + i Vh - h + 2\lambda_{-2}(Vh - ih) &= icf,\\
		2\eta_+h -i Vh  - h + 2\lambda_2(Vh  + ih) &= -icf.
	\end{align*}
	Equations in \eqref{eq:recurrence-relation} readily follow by identifying the degree $k$ components.
\end{proof}

We start with the cases $k = \pm 1$ in \eqref{eq:recurrence-relation}.

\begin{lemma}\label{lemma:k=+-1}
	Restricting to $k = \pm 1$ in \eqref{eq:recurrence-relation}, we get:
\begin{align}\label{eq:recurrence-relation-k=+-1}
	2\eta_- h_0 = ic f_{-1}, \quad 2\eta_+ h_0 = -icf_1.
\end{align}
This system of equations has a solution $h_0$, if and only if, $c = 0$ or the winding cycle $[\iota_F \SRB]_{H^2(\mc{M})}$ vanishes. These solutions, if they exist, are unique up to adding a constant.
\end{lemma}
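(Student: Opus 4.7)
The reduction to \eqref{eq:recurrence-relation-k=+-1} is a direct substitution: setting $k=-1$ in the first line of \eqref{eq:recurrence-relation} kills the $h_{-1}$ and $h_1$ terms (their coefficients vanish), and setting $k=1$ in the second line kills the $h_1$ and $h_3$ terms. The real content is the solvability criterion, and my strategy is to collapse the pair of scalar equations on $SM$ to a single exterior-derivative equation on $M$, then invoke Lemma \ref{lemma:windind-cycle-theta}.

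First I will argue that $h_0$ can be identified with a complex-valued function $\phi$ on $M$. Since $\WF(h) \subset E_u^*$ and $V$ is transverse to the weak unstable bundle $\mathbb{R}F \oplus E_u$ (by \cite{Dairbekov-Paternain-07}), all vertical Fourier modes of $h$ are smooth, so $h_0 = \pi_0^*\phi$ for some $\phi \in C^\infty(M;\mathbb{C})$. Next, I combine the two equations in \eqref{eq:recurrence-relation-k=+-1} via $X = \eta_+ + \eta_-$ and $H = i(\eta_+ - \eta_-)$, obtaining
\[
X h_0 = \tfrac{ic}{2}(f_{-1} - f_1), \qquad H h_0 = \tfrac{c}{2}(f_{-1} + f_1).
\]
The identification $f_1 + f_{-1} = \pi_1^*\theta$ is \eqref{eq:theta}; applying $V$ to this identity together with $V\pi_1^* = -\pi_1^*\star$ on $1$-forms yields $f_1 - f_{-1} = i\pi_1^*(\star\theta)$. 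On the left, \eqref{eq:X-simple} gives $X h_0 = \pi_1^*(d\phi)$ and $H h_0 = -\pi_1^*(\star d\phi)$. Substituting these, the system becomes $d\phi = \tfrac{c}{2}\star\theta$ and $\star d\phi = -\tfrac{c}{2}\theta$, which are equivalent by $\star\star = -\id$ on $1$-forms in dimension two.

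To conclude, recall from the discussion surrounding \eqref{eq:theta} that $d\star\theta = 0$, so $\star\theta$ represents a class in $H^1(M;\mathbb{R})$. Hence $d\phi = \tfrac{c}{2}\star\theta$ admits a solution if and only if $c = 0$ or $[\star\theta]_{H^1(M)} = 0$; by Lemma \ref{lemma:windind-cycle-theta} the latter is equivalent to the vanishing of the winding cycle $[\iota_F\SRB]_{H^2(\mc{M})}$. Uniqueness of $\phi$ up to an additive constant is automatic on the connected surface $M$, giving uniqueness of $h_0$ up to a constant.

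The only subtle point I expect is the self-consistency check: if the $X$- and $H$-equations imposed mismatched conditions on $d\phi$, no solution would exist for $c\neq 0$ regardless of the cohomology of $\star\theta$. This consistency is precisely what the identity $\star\star = -\id$ on $1$-forms guarantees, so the analysis reduces cleanly to a single cohomological obstruction.
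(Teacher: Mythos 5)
Your proof is correct, but it takes a genuinely different route from the paper's. The paper first reduces to real-valued $h_0$ and $c$ by conjugation (so that the two equations in \eqref{eq:recurrence-relation-k=+-1} become conjugates of one another), and then treats $2\eta_- h_0 = icf_{-1}$ as an elliptic equation on $H_0$: by Fredholm theory and $(\eta_+)^* = -\eta_-$, solvability for $c \neq 0$ is equivalent to the orthogonality of $f_{-1}$ to $\ker \eta_-|_{H_1}$, which is then unwound --- via the identification of $\ker \eta_-|_{H_1}$ with harmonic $1$-forms on $M$ --- into $\int_M \star\theta \wedge \gamma = 0$ for all harmonic $\gamma$, i.e. $[\star\theta]_{H^1(M)} = 0$, before invoking Lemma \ref{lemma:windind-cycle-theta} exactly as you do. You instead recombine the two equations through $X = \eta_+ + \eta_-$, $H = i(\eta_+ - \eta_-)$ and push everything down to the base, turning the system into the single equation $d\phi = \tfrac{c}{2}\star\theta$, whose solvability is plain de Rham exactness; the compatibility of the $X$- and $H$-equations is transparent from $\star\star = -\id$, whereas in the paper the second equation is absorbed by the conjugation symmetry. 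Your route avoids both the reduction to real data and the Fredholm/Hodge step, at the modest price of justifying that $h_0$ descends to a smooth function on $M$ --- your wavefront-set remark does this in the context where $h_0$ is a Fourier mode of $\iota_{Y^s}u$, and elliptic regularity for $\eta_-$ (smooth right-hand side) covers an arbitrary distributional solution. The computations check out ($Xh_0 = \tfrac{ic}{2}(f_{-1}-f_1)$, $Hh_0 = \tfrac{c}{2}(f_1+f_{-1})$, $f_1 - f_{-1} = i\pi_1^*(\star\theta)$), and the uniqueness-up-to-constants argument agrees with the paper's. One cosmetic slip: for $k=1$ in the second line of \eqref{eq:recurrence-relation}, the terms that drop out are $h_1$ and $h_{-1}$ (both carry the factor $k-1$), not $h_1$ and $h_3$.
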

\begin{proof}
	Note that by conjugating the first equation in \eqref{eq:recurrence-relation-k=+-1}, and adding and subtracting from the second one, we get equations for $\re(h_0)$ and $\im(h_0)$, and $c$ is replaced with $\re(c)$ and $\im(c)$, respectively (note that $f$ is real-valued, so $\overline{f_1} = f_{-1}$). This argument allows us to assume that $h_0$ and $c$ are real-valued to begin with. 
	
	Since $\eta_-$ is elliptic acting on $H_0 \cong C^\infty(M)$ and mapping to $H_{-1} \cong C^\infty(M, \mc{K}^{-1})$ (by Lemma \ref{lemma:eta-del-bar}), by Fredholm theory and since $(\eta_+)^* = -\eta_-$, if $c \neq 0$ the first equation has a solution if and only if 
	\begin{equation}\label{eq:fredholm-condition}
		\int_{\mc{M}} f_{-1} g_{1}\, \Omega = 0, \quad \forall g_1 \in \ker \eta_-|_{H_1}.
	\end{equation}
	Introducing $g_{-1} := \overline{g_1}$ for some $g_1 \in \ker \eta_-|_{H_1}$, we may write $\pi_1^*\gamma = g_1 + g_{-1}$ for some real-valued $1$-form $\gamma$ on the base. Therefore, using \eqref{eq:X-} $\gamma$ is co-closed, that is $d \star \gamma = 0$, and similarly $d\gamma = 0$ by using $V\pi_1^* = -\pi_1^* \star$. (In fact, the condition $\eta_- g_1 = 0$ is equivalent to the fact that $\gamma$ is a real-valued harmonic $1$-form on the base.)
	
	Recall $\pi_1^*\theta = f_1 + f_{-1}$ (where $\theta$ was defined in \eqref{eq:theta}), so the condition \eqref{eq:fredholm-condition} is equivalent to
	\[\int_{\mc{M}} (f_1 + f_{-1}) (g_1 + g_{-1})\, \Omega = \int_{\mc{M}} (f_1 + f_{-1}) (g_1 - g_{-1})\, \Omega = 0,\quad \forall g_1 \in \ker \eta_-|_{H_1},\]
	which in turn is simply equivalent to, using $g_1 - g_{-1} = -i V(g_1 + g_{-1}) = i \pi_1^*(\star \gamma)$,
	\[\int_{\mc{M}} \pi_1^*\theta \cdot \pi_1^*\gamma\, \Omega = \int_{\mc{M}} \pi_1^*\theta \cdot \pi_1^*(\star \gamma)\, \Omega = 0, \quad \forall \gamma \in \mc{H}^1(M),\]
	where $\mc{H}^1(M)$ denotes the set of harmonic $1$-forms. Since $\star \gamma \in \mc{H}^1(M)$ if and only if $\gamma \in \mc{H}^1(M)$, the second condition is redundant, and it is equivalent to have
	\[\int_M \star \theta \wedge \gamma = 0, \quad \forall \gamma \in \mc{H}^1(M).\]
	By Hodge decomposition, this is equivalent to $[\star \theta]_{H^1(M)} = 0$, so we conclude by Lemma \ref{lemma:windind-cycle-theta}.
	
	If $c = 0$, set $h_0 = \pi_0^* h_{00}$ for some $h_{00} \in C^\infty(M)$. By \eqref{eq:X-simple}, $\eta_+ h_0 = \eta_- h_0 = 0$ imply that $dh_{00} = 0$, and so $h_{00}$ and $h_0$ are constant functions. Similarly, this argument proves uniqueness of solution to \eqref{eq:recurrence-relation-k=+-1} up to constants, completing the proof.
\end{proof}

We now try to figure out how $h_1$ depends on $h_0$. To do this we have:

\begin{lemma}\label{lemma:recurrence}
	The system \eqref{eq:recurrence-relation} is equivalent to the following set of equations, valid for every $k \in \mathbb{Z}$:
	\begin{align}\label{eq:recurrence-equivalent}
		\begin{split}
		2(\eta_- - 2i \lambda_{-2} \eta_+) h_{k + 1} + (k + 1) (\underbrace{-1 + 4 \lambda_2 \lambda_{-2}}_{=K_g})h_k &= c (i f_k - 2 \lambda_{-2} f_{k + 2}),\\
		2(2i \lambda_2 \eta_- + \eta_+) h_{k + 1} + (k + 1)(\underbrace{-4\lambda_2 \lambda_{-2} + 1}_{=-K_g}) h_{k + 2} &= c(-2\lambda_2 f_k - if_{k + 2}).
		\end{split}
	\end{align}	
\end{lemma}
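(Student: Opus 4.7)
The plan is to show that \eqref{eq:recurrence-equivalent} is obtained from \eqref{eq:recurrence-relation} by an invertible linear combination of the two families of equations (with one index shifted), the invertibility being guaranteed precisely by the coupled vortex equation for $m=2$. Concretely, denote by $(A_k)$ the first equation of \eqref{eq:recurrence-relation} at index $k$,
\[
(A_k):\quad 2\eta_- h_{k+1} - (k+1) h_k + 2i(k+1)\lambda_{-2} h_{k+2} = ic f_k,
\]
and by $(B_{k+2})$ the second equation of \eqref{eq:recurrence-relation} shifted to index $k+2$,
\[
(B_{k+2}):\quad 2\eta_+ h_{k+1} + (k+1) h_{k+2} + 2i(k+1)\lambda_{2} h_{k} = -ic f_{k+2}.
\]

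\textbf{Forward direction.} Form the combination $(A_k) - 2i\lambda_{-2} (B_{k+2})$. The $\eta_{\pm} h_{k+1}$ terms combine to $2(\eta_- - 2i\lambda_{-2}\eta_+) h_{k+1}$; the $h_{k+2}$ terms with coefficient $2i(k+1)\lambda_{-2}$ cancel exactly; while the $h_k$ terms combine to $(k+1)\bigl(-1 + 4\lambda_2\lambda_{-2}\bigr)h_k$, which equals $(k+1)K_g h_k$ by \eqref{eq:lambda-A-norm-relation} and the coupled vortex relation \eqref{eq:coupled-vortex-equations} at $m=2$. The right-hand side becomes $c(if_k - 2\lambda_{-2} f_{k+2})$, yielding the first equation of \eqref{eq:recurrence-equivalent}. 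An entirely analogous computation for $(B_{k+2}) + 2i\lambda_2 (A_k)$ produces the second equation of \eqref{eq:recurrence-equivalent}.

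\textbf{Reverse direction.} The transformation from the pair $\bigl((A_k),(B_{k+2})\bigr)$ to \eqref{eq:recurrence-equivalent} is given pointwise on $SM$ by the matrix
\[
\mc{T} \;:=\; \begin{pmatrix} 1 & -2i\lambda_{-2} \\ 2i\lambda_2 & 1 \end{pmatrix},
\qquad \det \mc{T} \;=\; 1 - 4\lambda_2\lambda_{-2} \;=\; -K_g.
\]
By \eqref{eq:curvature-condition-coupled-vortex}, $-1 \leq K_g < 0$, so $\det \mc{T} > 0$ everywhere on $M$ and $\mc{T}$ is pointwise invertible. Hence we can solve backwards to recover $(A_k)$ and $(B_{k+2})$ from the two equations in \eqref{eq:recurrence-equivalent}; since $k \in \mathbb{Z}$ is arbitrary, the two systems are equivalent.

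The calculations above are purely algebraic; the only substantive ingredient is the identification $-1 + 4\lambda_2\lambda_{-2} = K_g$ from the coupled vortex equation, which both simplifies the coefficient of $h_k$ (respectively $h_{k+2}$) and ensures that $\mc{T}$ is invertible. There is no real obstacle beyond keeping track of the bookkeeping; the cleanest presentation is to write out the two combinations $(A_k) \mp 2i\lambda_{\mp 2}(B_{k+2})$ (with appropriate signs), observe the cancellations, and then quote the coupled vortex equation and \eqref{eq:curvature-condition-coupled-vortex}.
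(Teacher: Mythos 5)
Your proposal is correct and follows essentially the same route as the paper: the same two combinations $(A_k) - 2i\lambda_{-2}(B_{k+2})$ and $(B_{k+2}) + 2i\lambda_2(A_k)$, with the reverse implication resting on the pointwise invertibility of the coefficient matrix, whose determinant $1 - 4\lambda_2\lambda_{-2} = -K_g > 0$ is exactly the observation the paper makes via \eqref{eq:lambda-A-norm-relation}, \eqref{eq:coupled-vortex-equations}, and \eqref{eq:curvature-condition-coupled-vortex}. Your explicit matrix $\mc{T}$ is just a tidier packaging of the paper's "easy to see" step.
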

\begin{proof}
	We use the system \eqref{eq:recurrence-relation}, where we plug $k$ in the first equation and $k+2$ in the second one, to obtain:
	\begin{align*}
	\begin{split}
		2\eta_- h_{k + 1} - (k + 1) h_k + 2\lambda_{-2} h_{k + 2} i(k + 1) &= ic f_k,\\
		2\eta_+ h_{k + 1} + (k + 1) h_{k + 2} + 2\lambda_2 h_{k} i (k + 1) &= -ic f_{k + 2}.
	\end{split}
	\end{align*}
	The first equation of \eqref{eq:recurrence-equivalent} follows by multiplying the second equation by $2i\lambda_{-2}$ and subtracting from the first one; the second one follows from multiplying the first one by $2i\lambda_2$ and adding to the second one.
	
	That the two systems are equivalent can be seen as follows. Denote the first and the second equation of \eqref{eq:recurrence-relation} by $B = 0$ and $C = 0$, respectively. Then \eqref{eq:recurrence-equivalent} take the form 
	\[B - 2i\lambda_{-2}C = 0, \quad 2i\lambda_2 B + C = 0.\]
	From here it is easy to see that $B = C = 0$ is equivalent to this system of equations since
	\[1 - 4\lambda_2 \lambda_{-2} = 1 - |A|^2 = -K_g > 0,\]
	where we use \eqref{eq:lambda-A-norm-relation} in the first equality, \eqref{eq:coupled-vortex-equations} in the second, and \eqref{eq:curvature-condition-coupled-vortex} for the final inequality.
\end{proof}

Note that the leading operators arising in \eqref{eq:recurrence-equivalent} are conjugate to one another, that is:
\begin{equation}\label{eq:def-mu+-}
	\mu_- := \eta_- - 2i \lambda_{-2} \eta_+, \quad \mu_+ := \eta_+ + 2i\lambda_2 \eta_-, \quad \mu_+^* = -\mu_-.
\end{equation}
Here we also use \eqref{eq:eta-lambda}, so that $\lambda_2 \eta_- = \eta_- \lambda_2$, $\lambda_{-2} \eta_+ = \eta_+ \lambda_{-2}$, and $\eta_+^* = -\eta_-$. 

\begin{lemma}\label{lemma:closed}
	There are no solutions of \eqref{eq:recurrence-equivalent} with $c \neq 0$. In particular, we have $d(\Res_0^1) = 0$, i.e. all resonant $1$-forms in the kernel of $\iota_F$ are closed.
\end{lemma}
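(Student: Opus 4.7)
The plan is to argue by contradiction, combining the two pieces of Horocyclic invariance in Lemma \ref{lemma:horo-m=2} with a pairing against a dual (coresonant) object. Suppose a solution with $c\neq 0$ exists and normalise $c=1$. By Lemma \ref{lemma:horo-m=2} this is equivalent to producing $h\in \mc{D}'_{E_u^*}(SM)$ satisfying simultaneously
\[
(F+r^s)h=0,\qquad (Y^u-\lambda)h=f.
\]
Restricting \eqref{eq:recurrence-equivalent} to $k=-1$ recovers exactly the system of Lemma \ref{lemma:k=+-1}, whose solvability forces $[\star\theta]_{H^1(M)}=0$, equivalently $[\omega^+]=0$ by Lemma \ref{lemma:windind-cycle-theta}. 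This is automatic for quasi-Fuchsian flows, since both weak bundles are smooth, so Lemma \ref{lemma:omega+} applied to $\pm F$ gives $[\omega^\pm]=0$. Hence this step alone produces no contradiction.

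Next I would verify that the two PDEs are formally compatible, so that any obstruction must be global rather than local. Applying $F+r^s$ to the second equation, expanding the commutator via Lemma \ref{lemma:commutator-stable/unstable}, using $r^u+r^s=V\lambda$ from \eqref{eq:ru-rs} and the SRB equation $(F+V\lambda)f=0$, all the $h$-terms assemble into $(F\lambda+2\lambda+Y^ur^s)h=0$. A direct computation, using $V^2\lambda=-4\lambda$ and the holomorphicity $\eta_+\lambda_{-2}=\eta_-\lambda_2=0$ inherited from \eqref{eq:coupled-vortex-equations}, rewrites this coefficient as $X\lambda+\tfrac12 HV\lambda$, which vanishes identically. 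Thus the system is compatible at the characteristic level, and the obstruction has to be extracted globally.

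For the global obstruction, I would use the duality of \S \ref{ssec:pairing}. Observe that $\divv_\Omega Y^u = Vr^u = \tfrac12V^2\lambda = -2\lambda$, so that $(Y^u-\lambda)^T=-(Y^u-\lambda)$ on $L^2(SM,\Omega)$, i.e.\ $Y^u-\lambda$ is formally skew-adjoint. Since both weak bundles of $F$ are smooth, the symmetric construction applied to $-F$ (interchanging $Y^u\leftrightarrow Y^s$, $r^u\leftrightarrow -r^s$) produces by the $-F$ analogue of Lemma \ref{lemma:horo-m=2} a coresonant distribution $g\in \mc{D}'_{E_s^*}(SM)$ with
\[
(-F+r^u)g=0,\qquad (Y^s+\lambda)g=f_-,
\]
where $f_-$ is the SRB density of $-F$. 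The wavefront-set transversality $E_u^*\cap E_s^*=0$ allows the distributional pairing $\langle h,g\rangle=\int_{SM}hg\,\Omega$ to be defined, and, pairing $(Y^u-\lambda)h=f$ against $g$ and integrating by parts, the skew-adjointness combined with $Y^u-Y^s=2V$ yields a scalar identity of the shape $\int_{SM}fg\,\Omega=0$, i.e.\ a vanishing of the cross-pairing of the two SRB measures.

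The principal obstacle is to make this last step rigorous and to identify the surviving boundary terms precisely. The cross-pairing $\int_{SM}fg\,\Omega$ is in fact strictly positive for a transitive Anosov flow (it is the mutual density of the two SRB measures tested against themselves, which is nonzero by the non-degeneracy of the pairing $\langle\bullet,\bullet\rangle$ on $\Res^3\times\Res_*^0=\Res^3\times\mathbb{C}$ from Proposition \ref{prop:non-degenerate}). This strict positivity contradicts the vanishing just derived, forcing $c=0$. The conclusion $d(\Res_0^1)=0$ then follows at once, since for $u\in\Res_0^1$ Lemma \ref{lemma:horo-m=2} gives $du=\tfrac c2 f\iota_F\Omega=0$.
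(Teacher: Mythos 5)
Your reduction to the system $(F+r^s)h=0$, $(Y^u-\lambda)h=cf$ and the observations about $k=\pm1$ and $[\omega^\pm]=0$ are fine, but the argument breaks at the two steps where the contradiction is supposed to come from. First, the dual object $g$ with $(Y^s+\lambda)g=f_-$ is not produced by ``the $-F$ analogue of Lemma \ref{lemma:horo-m=2}'': that lemma only says that \emph{if} some $u_*\in\Res_{0*}^1$ has $du_*\neq 0$ then $\iota_{Y^u}u_*$ solves such a system with a nonzero constant; asserting its existence presupposes the failure of the very statement (for $-F$) that you are trying to prove for $F$. (This particular gap is repairable: given $c\neq 0$ and $[\omega^\pm]=0$, Lemma \ref{lemma:T} forces $\mc{H}(F)=0$, and applying Lemma \ref{lemma:T} to $-F$ then yields a non-closed coresonant form; but you did not make this argument.) Second, and more seriously, the claimed identity $\int_{SM} fg\,\Omega=0$ does not follow from the skew-adjointness of $Y^u-\lambda$: the equation satisfied by $g$ involves $Y^s+\lambda$, whose formal transpose is $-Y^s+3\lambda$ (since $\divv_\Omega Y^s=-2\lambda$ as well), so it is \emph{not} skew, and rewriting via $Y^u-Y^s=2V$ leaves uncontrolled terms $\int g\,Vh\,\Omega$ (or $\int h\,Vg\,\Omega$) for which neither PDE gives any information. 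Finally, even granting such an identity, the positivity you invoke is a misidentification: $g$ is not the SRB density $f_-$, and Proposition \ref{prop:non-degenerate} on $\Res^3\times\Res_*^0$ only says $\int\SRB\cdot 1\neq 0$; it gives no information about $\int f\,g\,\Omega$ or even about the cross-pairing $\int f f_-\,\Omega$, whose non-vanishing is not established anywhere in the paper.

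The missing ingredient is the actual source of strict positivity. The paper extracts it from the zero Fourier mode: taking the first equation of \eqref{eq:recurrence-equivalent} at $k=0$ and the second at $k=-2$, subtracting and integrating over $SM$ (using $\mu_\pm^*=-\mu_\mp$) kills the left-hand side and leaves
\[
0=2ic\int_{SM}\Big(f_0-\tfrac12 (V\lambda\cdot f)_0\Big)\,\Omega=2ic\Big(1+\tfrac12\,\e^+(F)\Big),
\]
where the normalisation $\int\SRB=1$ and Ruelle's inequality $\e^+(F)\geq 0$ make the bracket strictly positive, forcing $c=0$. Your proposal never uses the entropy production, and no substitute positive quantity is identified, so as written the contradiction does not materialise.
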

\begin{proof}
	Using the notation of Lemma \ref{lemma:horo-m=2}, it suffices to show that for $u \in \Res_0^1$ with $du = \frac{c}{2} \iota_F \Omega$, we have $c = 0$. By Lemma \ref{lemma:recurrence}, it is equivalent to show that the system \eqref{eq:recurrence-equivalent} implies that $c = 0$. Applying the first equation in \eqref{eq:recurrence-equivalent} for $k = 0$ and the second one for $k = -2$ we get:
	\begin{align*}
		2\mu_- h_1 + K_g h_0 &= c(if_0 -2\lambda_{-2}f_2),\\
		2\mu_+ h_{-1} + K_g h_0 &= c(-2\lambda_{2}f_{-2} - if_0).
	\end{align*}
	 Subtracting the two equations we obtain:
	 \[2(\mu_-h_1 - \mu_+ h_{-1}) = 2ic(f_0 - i(\lambda_2 f_{-2} - \lambda_{-2} f_2)) = 2ic\Big(f_0 - \frac{1}{2} (V\lambda \cdot f)_0\Big).\]
	 Integrating over $SM$ and using that $\mu_\pm^* = -\mu_{\mp}$, we get that the left hand side is zero, so:
	 \begin{equation}\label{eq:closed}
	 	0 = 2ic \int_{\mc{M}} \Big(f_0 - \frac{1}{2}(V\lambda \cdot f)_0\Big)\, \Omega.
	 \end{equation}
	 We have $\int_{\mc{M}} f_0\, \Omega = \int_{\mc{M}} \SRB = 1$. On the other hand, we have
	 \[-\int_{\mc{M}} (V\lambda \cdot f)_0\, \Omega = - \int_{\mc{M}} V\lambda \cdot f\, \Omega = -\int_{\mc{M}} \divv_{\Omega}(F)\, \SRB = \e^+(F),\]
	 where $\divv_{\Omega} F = V\lambda$ is the divergence of $F$ and we recall the entropy production $\e^+(F)$ was introduced in \S \ref{ssec:SRB-entropy}, and that it satisfies $\e^+(F) \geq 0$. We conclude that the integral on the right hand side of \eqref{eq:closed} is strictly positive, and so $c = 0$, completing the proof.
\end{proof}

Next, we show that the zero order Fourier mode of $h$ always vanishes.

\begin{lemma}\label{lemma:h_0}
	Assume $h$ is real valued and satisfies \eqref{eq:recurrence-equivalent}. Then $h_0 = 0$.
\end{lemma}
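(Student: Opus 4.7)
The plan is to combine Lemma~\ref{lemma:closed} (which kills the $c$ on the right-hand side of \eqref{eq:recurrence-equivalent}) with the ladder relations at Fourier levels $k=-1$ and $k=0$ to show first that $h_0$ is a constant, and then that this constant must vanish by an integration argument that ultimately appeals to Gauss--Bonnet.

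First, by Lemma~\ref{lemma:closed} we have $c=0$, so \eqref{eq:recurrence-equivalent} simplifies to
\begin{align*}
2\mu_- h_{k+1} + (k+1)K_g\, h_k &= 0,\\
2\mu_+ h_{k+1} - (k+1)K_g\, h_{k+2} &= 0,
\end{align*}
for every $k\in\mathbb{Z}$. Setting $k=-1$ yields $\mu_- h_0 = \mu_+ h_0 = 0$. Unpacking the definitions \eqref{eq:def-mu+-} gives $\eta_- h_0 = 2i\lambda_{-2}\eta_+ h_0$ and $\eta_+ h_0 = -2i\lambda_2\eta_- h_0$; substituting produces $(1-4\lambda_2\lambda_{-2})\eta_- h_0=0$ and similarly $(1-4\lambda_2\lambda_{-2})\eta_+ h_0=0$. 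Since $1-4\lambda_2\lambda_{-2}=-K_g>0$ by \eqref{eq:curvature-condition-coupled-vortex} and \eqref{eq:lambda-A-norm-relation}, we conclude $\eta_\pm h_0=0$. Writing $h_0=\pi_0^*h_{00}$ and using \eqref{eq:X-simple}, this means $dh_{00}=0$, so $h_0$ is a (real) constant.

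Next, specialise the first equation at $k=0$ to obtain $2\mu_- h_1 = -K_g\, h_0$. Integrate this identity against $\Omega$ over $SM$. The key adjointness relation is $\mu_-^* = -\mu_+$ on $L^2(SM)$, which follows from $(\eta_\pm)^* = -\eta_\mp$ together with the commutation $M_{\lambda_{-2}}\eta_+ = \eta_+ M_{\lambda_{-2}}$ (since $\eta_+\lambda_{-2}=0$ by \eqref{eq:eta-lambda}) and the analogous identity with $\lambda_2$, and the conjugation $\overline{\lambda_{-2}}=\lambda_2$. Since the constant function $\mathbf{1}\in H_0$ satisfies $\mu_+\mathbf{1} = \eta_+\mathbf{1}+2i\lambda_2\eta_-\mathbf{1}=0$, we obtain
\[
2\int_{SM}\mu_- h_1\,\Omega \;=\; -2\int_{SM}h_1\cdot\mu_+\mathbf{1}\,\Omega \;=\; 0.
\]
Therefore $h_0\int_{SM}K_g\,\Omega=0$. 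By Gauss--Bonnet the integral equals $4\pi^2\chi(M)\neq 0$ since $M$ has negative Euler characteristic (the coupled vortex equations force $K_g<0$), so $h_0=0$.

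There is no real obstacle here; the only points requiring care are verifying the adjoint formula $\mu_-^*=-\mu_+$ (where one must exploit the holomorphicity conditions on $\lambda_{\pm 2}$) and the strict sign $-K_g>0$ that licences the division by $1-4\lambda_2\lambda_{-2}$.
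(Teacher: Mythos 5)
Your proof is correct and follows essentially the same route as the paper: set $c=0$ via Lemma \ref{lemma:closed}, deduce that $h_0$ is constant from the $k=\pm1$ relations (the paper cites Lemma \ref{lemma:k=+-1}, you rederive the same fact from $\mu_\pm h_0=0$), and then integrate the $k=0$ equation using $\mu_+^*=-\mu_-$ and the strict negativity of $K_g$. The only cosmetic difference is your appeal to Gauss--Bonnet, where the paper simply uses $K_g<0$ to see $\int_{\mc{M}}K_g\,\Omega\neq 0$.
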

\begin{proof}
	By Lemma \ref{lemma:closed}, we know $c = 0$ in \eqref{eq:recurrence-equivalent} and \eqref{eq:recurrence-relation-k=+-1}. In fact, from Lemma \ref{lemma:k=+-1} it follows that $h_0$ is constant. Integrating the first equation of \eqref{eq:recurrence-equivalent}, for $k = 0$, and using $\mu_+^* = - \mu_-$, we get:
	\[0 = -2 \int_{\mc{M}} \mu_- h_1\, \Omega =  h_0 \cdot \int_{\mc{M}} K_g\, \Omega.\]
	By \eqref{eq:curvature-condition-coupled-vortex} we have $K_g < 0$, which implies $h_0 = 0$.
\end{proof}

Now we will construct $h_k$ by hand solving \eqref{eq:recurrence-equivalent} iteratively. For this we need to compute $\ker \mu_-$ on $H_1$, because by Lemma \ref{lemma:h_0}, $\mu_- h_1 = 0$ is the initial equation. This and other needed properties of the operators $\mu_\pm$ are proved in \S \ref{ssec:mu+-} below.

\begin{lemma}\label{lemma:res01}
	The map $\mc{J}: \mc{S} \ni h \mapsto (h_{-1}, h_1) \in \ker \mu_+|_{H_{-1}} \oplus \ker \mu_-|_{H_1}$ is an isomorphism on the set $\mc{S}$ of solutions of the system \eqref{eq:recurrence-equivalent}. Moreover, $\dim \Res_0^1 = b_1(M)$.
\end{lemma}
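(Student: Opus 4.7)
The plan is to verify well-definedness and injectivity of $\mc{J}$, then match dimensions on both sides to deduce surjectivity. First, Lemma \ref{lemma:closed} gives $c = 0$ in \eqref{eq:recurrence-equivalent}, and Lemma \ref{lemma:h_0} gives $h_0 = 0$; specialising the first equation of \eqref{eq:recurrence-equivalent} at $k = 0$ and the second at $k = -2$ yields $\mu_- h_1 = 0$ and $\mu_+ h_{-1} = 0$, so $\mc{J}$ is well-defined. For injectivity, the curvature bound $K_g < 0$ from \eqref{eq:curvature-condition-coupled-vortex} allows one to rewrite \eqref{eq:recurrence-equivalent} as the explicit recursions
\[
h_{k+2} = \frac{2\mu_+ h_{k+1}}{(k+1)K_g}\quad(k \geq 0), \qquad h_k = -\frac{2\mu_- h_{k+1}}{(k+1) K_g}\quad(k \leq -2),
\]
so every Fourier mode of $h$ is determined by the initial data $(h_{-1}, h_0 = 0, h_1)$. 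Since $V \not\in \mathbb{R}F \oplus E_u$ and $\WF(h) \subset E_u^*$, each Fourier coefficient $h_k$ is smooth; together they recover $h$ in $\mc{D}'(SM)$, so $h_{\pm 1} = 0$ forces $h \equiv 0$.

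For surjectivity and the dimension claim, I would match dimensions on both sides. On the resonance side, since the quasi-Fuchsian flow has both weak bundles smooth, Lemma \ref{lemma:omega+} applied to $F$ and $-F$ yields $[\omega^\pm(F)] = 0$; the Gysin sequence of the circle bundle $SM \to M$ (using non-triviality of the Euler class for genus $g \geq 2$) gives $b_1(\mc{M}) = b_1(SM) = b_1(M) = 2g$, so Theorem \ref{thm:general} forces $\dim \Res_0^1 \in \{2g, 2g+1\}$, hence by Lemma \ref{lemma:horo-m=2} also $\dim \mc{S} \in \{2g, 2g+1\}$. On the target side, under the identifications $H_{\pm 1} \cong C^\infty(M; \mc{K}^{\pm 1})$, the operators $\mu_\pm$ are elliptic first-order deformations of the Dolbeault operators $\eta_\pm$, whose kernels in the hyperbolic limit $A = 0$ are $H^0(M; \mc{K}^{\pm 1}) \cong \mathbb{C}^g$. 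A Riemann--Roch / index-theoretic argument, appealing to $\bar\partial A = 0$ and $K_g = -1 + 4\lambda_2\lambda_{-2}$ to preserve the index, and using the adjoint pairing $\mu_+^* = -\mu_-$ to eliminate the cokernel, would give $\dim \ker \mu_\pm|_{H_{\pm 1}} = g$ and a target space of dimension $2g$. Injectivity of $\mc{J}$ then forces $\dim \mc{S} = 2g$, so $\mc{J}$ is an isomorphism and $\dim \Res_0^1 = 2g = b_1(M)$.

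The principal obstacle is the rigorous dimension count for $\ker \mu_\pm|_{H_{\pm 1}}$, specifically the vanishing of the cokernel for these twisted operators. A cleaner alternative would be to construct $\mc{J}^{-1}$ directly: given a cohomology class $[\theta] \in H^1(M; \mathbb{C})$, apply $\Pi_{1,0}^+$ to a suitably modified pullback $\pi^*\theta$ (adjusted to lie in $\Omega_0^1$) and verify that the $\pm 1$ Fourier modes of the resulting resonant $1$-form realise arbitrary elements of $\ker \mu_\pm|_{H_{\pm 1}}$, thereby bypassing the abstract index computation altogether.
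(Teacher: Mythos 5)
Your treatment of well-definedness and injectivity is fine and coincides with the paper's: with $c=0$ (Lemma \ref{lemma:closed}) and $h_0=0$ (Lemma \ref{lemma:h_0}), the equations at $k=0$ and $k=-2$ give $\mu_-h_1=\mu_+h_{-1}=0$, and the two recursions propagate $h_{\pm1}=0$ to all modes. The problem is the surjectivity/dimension step. Your whole dimension-matching argument hinges on the upper bound $\dim\ker\mu_-|_{H_1}=\dim\ker\mu_+|_{H_{-1}}=g$, which you yourself flag as the ``principal obstacle'' and do not prove; without it, injectivity of $\mc{J}$ says nothing about surjectivity, and your fallback (hitting the target with $\Pi_{1,0}^+\pi^*\theta$ for $[\theta]\in H^1(M)$) has the same defect: it can only show that a subspace of dimension at most $2g$ is reached, not that it exhausts $\ker\mu_+|_{H_{-1}}\oplus\ker\mu_-|_{H_1}$. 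Moreover your sketch of the index argument is off in a way that matters: the relation $\mu_+^*=-\mu_-$ does \emph{not} ``eliminate the cokernel'' of $\mu_-|_{H_1}$ — the cokernel is identified with $\ker\mu_+|_{H_0}$, which is one-dimensional (constants), and $\ind(\eta_-|_{H_1})=\tfrac12 b_1(M)-1=g-1$, so killing the cokernel would give the wrong count $g-1$; the correct statement is $\dim\ker\mu_-|_{H_1}=\ind(\mu_-|_{H_1})+\dim\ker\mu_+|_{H_0}=(g-1)+1=g$. Proving this requires exactly the content of Lemma \ref{lemma:index}: the commutator identity of Lemma \ref{lemma:commutator} (to kill kernels in modes $m>0$ and to show $\ker\mu_+|_{H_0}$ consists of constants), the ellipticity of the entire deformation family $\eta_--t\cdot2i\lambda_{-2}\eta_+$, $t\in[0,1]$ (this is where $|A|_g^2=1+K_g<1$, i.e.\ the vortex equation, genuinely enters), and Proposition \ref{prop:H1}.

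For comparison, the paper does not argue by dimension count at all: it constructs $\mc{J}^{-1}$ directly, defining $h_k$ for $k\geq2$ by the upward recursion, proving by induction — via $\bigl[\tfrac{\mu_-}{K_g},\tfrac{\mu_+}{K_g}\bigr]=\tfrac{i}{2}V$ — that the downward family of equations is then automatically satisfied, and establishing distributional convergence of $\sum_k h_k$ through a polynomial $L^2$ bound; only the final dimension statement invokes Lemma \ref{lemma:index}. Your route, if completed, would bypass the convergence estimate, and your way of getting the source dimension is legitimate and non-circular: $[\omega^\pm(F)]=0$ can indeed be imported from Lemma \ref{lemma:omega+} applied to $\pm F$ (both weak bundles are smooth and Ghys's conjugacy applies), whereas the paper only derives $[\omega^\pm]=0$ afterwards in Lemma \ref{lemma:winding-cycle-SRB} as a consequence of the present lemma; in fact, combining $[\omega^\pm]=0$ with Lemmas \ref{lemma:closed} and \ref{lemma:T} already pins down $\dim\Res_0^1=b_1(M)$ exactly, so the ambiguity $\{2g,2g+1\}$ is unnecessary. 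But even with the exact source dimension, surjectivity of $\mc{J}$ still needs the upper bound on the target kernels, so the missing index computation is not optional — as it stands the proof is incomplete.
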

	\begin{proof}
		Recall first that by Lemma \ref{lemma:closed} for any solution $0 \neq h \in \mc{S}$ we have $c = 0$; also $h_0 = 0$ by Lemma \ref{lemma:h_0}. The map $\mc{J}$ is injective: if $h_1 = h_{-1} = 0$, then by applying first and second equations in \eqref{eq:recurrence-equivalent}, we get $h_k = 0$ for $k < 0$, and $h_k = 0$ for $k > 0$, respectively (here we use that $K_g < 0$ by \eqref{eq:curvature-condition-coupled-vortex}). Therefore $h \equiv 0$.
		
		To show surjectivity, let $h_1 \in \ker \mu_-|_{H_1}$. Then set $h_k = 0$ for $k \leq 0$ and observe that equations in \eqref{eq:recurrence-equivalent} take the form (plugging $k-2$ in the second one and $k$ in the first one), using $K_g = -1 + 4|\lambda_2|^2$:\footnote{Note that this agrees with \cite[Equations (3.12) and (3.13)]{Guillarmou-Hilgert-Weich-18} in the constant curvature case, i.e. $A \equiv 0$.}
		\begin{align}
		\label{eq:A}
			2 \mu_+ h_{k - 1} &= (k - 1) K_g h_k,\\
		\label{eq:B}
			2 \mu_- h_{k + 1} &= -(k + 1) K_g h_k.
		\end{align}
		Next, set $h_2, h_3, \dotso$ to be defined inductively using \eqref{eq:A} for $k = 2, 3, \dotso$ (note that they are all smooth, since $h_1$ is). Therefore \eqref{eq:A} is now satisfied for all $k$, by definition of $h_k$ for $k \leq 0$. Also, \eqref{eq:B} is satisfied for $k \leq -1$ trivially, and for $k = 0$ by definition of $h_1$. Therefore, we are left to check \eqref{eq:B} holds for $k \geq 1$.
		
		We prove this by induction on $k$. If \eqref{eq:B} holds for $k \leq \ell$ (starting with $\ell = 0)$, then we have:
		\begin{align*}
			2\mu_- h_{\ell + 2} &= 2\mu_- \Big(\frac{2\mu_+ h_{\ell + 1}}{K_g(\ell + 1)}\Big) = \frac{4}{\ell + 1} \mu_- \Big(\frac{1}{K_g} \mu_+ h_{\ell + 1}\Big)\\
			&= \frac{4}{\ell + 1} \Big( \frac{i}{2} K_g V h_{\ell + 1} + \mu_+ \Big(\frac{1}{K_g} \mu_- h_{\ell + 1}\Big)\Big)\\
			&= -2 K_g h_{\ell + 1} - 2 \mu_+ h_{\ell}\\
			&= -2 K_g h_{\ell + 1} - \ell K_g h_{\ell + 1} = -(\ell + 2) K_g h_{\ell + 1}. 
		\end{align*}		 
		Here we used \eqref{eq:A} for $k = \ell + 2$ in the first line, Lemma \ref{lemma:commutator} in the second line, the fact that $V h_{\ell + 1} = i(\ell + 1) h_{\ell + 1}$ and \eqref{eq:B} for $k = \ell$ in the third line, and finally \eqref{eq:A} for $k = \ell + 1$ in the last line. This proves that \eqref{eq:B} holds for $k = \ell + 1$, which completes the proof of the induction.
	
		It is left to show that $h := \sum_{k = 1}^\infty h_k$ converges in the distributional sense and for that it suffices to prove $\|h_k\|_{L^2(SM)} = \mc{O}(|k|^N)$ as $k \to \infty$ for some $N > 0$, since the Fourier modes of an arbitrary $\varphi \in C^\infty(SM)$ decay faster than any polynomial. In what follows norms and inner products will be in $L^2(SM)$. Notice that for $k \geq 2$:
		\begin{align*}
			\|h_k\|^2 &= \frac{2}{k - 1} \langle{h_k, \frac{1}{K_g} \mu_+ h_{k - 1}}\rangle = -\frac{2}{k - 1} \Big\langle{\mu_- \Big(\frac{1}{K_g} h_k\Big), h_{k - 1}}\Big\rangle\\
						  &=-\frac{2}{k - 1} \Big(\Big\langle\frac{1}{K_g} \mu_- h_k, h_{k - 1} \Big\rangle + \Big\langle{\mu_- \Big(\frac{1}{K_g}\Big) h_k, h_{k - 1}}\Big\rangle\Big)\\
						  &= \frac{1}{k - 1} \Big(k\|h_{k - 1}\|^2 - 2\Big\langle \mu_- \Big(\frac{1}{K_g}\Big) h_k, h_{k - 1} \Big\rangle\Big).			  
		\end{align*}
		Here we used \eqref{eq:A} and $\mu_+^* = -\mu_-$ in the first line, and \eqref{eq:B} (plugging in $k - 1$) in the third line. Therefore, by Cauchy-Schwarz and AM-GM inequalities:
		\begin{align*}
			\|h_k\|^2 \leq \frac{k}{k - 1} \|h_{k - 1}\|^2 + \frac{1}{k - 1} \Big(\|h_k\|^2 + \Big\|\mu_-\Big(\frac{1}{K_g}\Big)\Big\|_{L^\infty}^2 \|h_{k - 1}\|^2\Big),
		\end{align*}
		which gives, after setting $Z: = \Big\lceil \Big\|\mu_-\Big(\frac{1}{K_g}\Big)\Big\|_{L^\infty}^2 \Big\rceil \in \mathbb{N}$:
		\begin{align*}
			\|h_k\|^2 \leq \frac{k + Z}{k - 2} \|h_{k - 1}\|^2.
		\end{align*}
		Iterating the last inequality, we obtain:
		\begin{align*}
			\|h_k\|^2 \leq \frac{k + Z}{k - 2} \cdot \frac{k - 1 + Z}{k - 3} \cdots \frac{3 + Z}{2} \cdot \|h_{2}\|^2.
		\end{align*}
		It follows that $\|h_k\|^2 = \mc{O}(|k|^{Z + 2})$, which proves the claim and shows $\mc{J}(h) = (0, h_1)$.
		
				If $h_{-1} \in \ker \mu_+|_{H_{-1}}$, then $\overline{h_{-1}} \in \ker \mu_-|_{H_1}$, so the construction above gives an $\overline{h}$ with $\mc{J} \overline{h} = (0, \overline{h_{-1}})$, implying $\mc{J}h = (h_{-1}, 0)$. Thus $\mc{J}$ is surjective, completing the proof.
				
				The final claim now follows from Lemma \ref{lemma:index} below.
	\end{proof}
	
	Finally, we may show that the winding cycle of $\SRB$ is trivial.
	
	\begin{lemma}\label{lemma:winding-cycle-SRB}
		The winding cycle of $\SRB$ vanishes, i.e. $[\iota_F \SRB]_{H^2(\mc{M})} = 0$. Also, the helicity $\mc{H}(F)$ is non-zero.
	\end{lemma}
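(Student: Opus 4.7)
The plan is to combine the results already in place with Theorem \ref{thm:general}, and then break the remaining ambiguity by establishing the analogous closedness statement for coresonant states.

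By Lemma \ref{lemma:res01} we have $\dim \Res_0^1 = b_1(M)$, and by Lemma \ref{lemma:closed} every element of $\Res_0^1$ is closed. Consulting the table in Theorem \ref{thm:general}, these two facts are compatible only with columns 2 and 4, i.e.\ either
\begin{enumerate}
\item[(i)] $[\omega^+] \neq 0$ and $[\omega^-] = 0$, or
\item[(ii)] $[\omega^+] = [\omega^-] = 0$ with $\mc{H}(F) \neq 0$.
\end{enumerate}
The strategy is to rule out (i) by first proving $d(\Res_{0*}^1) = 0$ and then rerunning the same dichotomy for $-F$.

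To establish $d(\Res_{0*}^1) = 0$, fix $u_* \in \Res_{0*}^1$. Since both weak bundles of the quasi-Fuchsian flow are smooth, applying Lemma \ref{lemma:horo-smooth} to $-F$ (whose weak unstable bundle is spanned, transversally to $F$, by $Y^s = H + r^s V$) yields $\iota_{Y^s} u_* = 0$. Mimicking the derivation of Lemma \ref{lemma:horo-m=2}, and writing $du_* = -\tfrac{c_*}{2}\iota_F \SRBs$, $\SRBs = f_*\Omega$, $h_* := \iota_{Y^u} u_*$, one obtains
\[ (F + r^u)h_* = 0, \qquad (Y^s - \lambda)h_* = c_* f_*. \]
Reducing to Fourier modes as in Lemma \ref{lemma:recurrence}, the resulting system is structurally identical to \eqref{eq:recurrence-relation}--\eqref{eq:recurrence-equivalent} but with the signs of the $h_{*,k}$-coefficients flipped; the leading operators become $\nu_- := \eta_- + 2i\lambda_{-2}\eta_+$ and $\nu_+ := \eta_+ - 2i\lambda_2 \eta_-$. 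Combining the first equation at $k=0$ with the second at $k=-2$ to cancel the $h_{*,0}$ terms and then integrating (the $\nu_\pm$ contributions vanish: $\int \eta_\mp h_{*,\pm 1}\,\Omega = 0$ directly, while $\int \lambda_{\mp 2}\eta_\pm h_{*,\pm 1}\,\Omega$ reduces by integration by parts to an integral of an element of $H_{\mp 2}$), one arrives at
\[ 0 = 2ic_*\Big(1 + \tfrac{1}{2}\mathbf{e}^-(F)\Big), \]
where $\mathbf{e}^-(F) = \int_{SM} V\lambda \cdot f_*\,\Omega = \int_{SM} \divv_\Omega(F)\,\SRBs \geq 0$ by Ruelle's inequality applied to $-F$. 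The positivity of the parenthesis forces $c_* = 0$.

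With $d(\Res_{0*}^1) = 0$ available, I apply the dichotomy to $-F$. Using $\Res_0^1(-F) = \Res_{0*}^1(F)$ together with $\dim \Res_{0*}^1 = \dim \Res_0^1 = b_1(M)$ (guaranteed by Theorem \ref{thm:general}), the flow $-F$ is itself in column 2 or column 4. Column 2 for $-F$ translates to $[\omega^-(F)] \neq 0, [\omega^+(F)] = 0$, while column 4 for $-F$ gives $[\omega^\pm(F)] = 0$. The four-way case check shows only column 4 for both $F$ and $-F$ is mutually consistent, so $[\omega^+] = [\omega^-] = 0$, and since $\dim \Res_0^1 = b_1(M)$ rather than $b_1(M) + 1$ excludes column 5, we additionally obtain $\mc{H}(F) \neq 0$.

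The main technical obstacle is verifying the sign in the derivation of $c_* = 0$: while the argument parallels Lemma \ref{lemma:closed} step by step, the swap $r^u \leftrightarrow r^s$ converts the $-1$ appearing in the $h_k$-coefficient to $+1$, and one must track the resulting sign changes throughout the Fourier reduction to confirm that the integrated identity retains the positive multiplier $1 + \tfrac{1}{2}\mathbf{e}^-(F)$ rather than an expression of indeterminate sign.
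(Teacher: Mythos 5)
Correct, and essentially the paper's own argument: the paper likewise combines Lemmas \ref{lemma:closed} and \ref{lemma:res01} with their coresonant analogues for $-F$ (dismissed there as ``analogous arguments for coresonant states'') and then invokes the Section \ref{sec:general} classification, so your route through the table of Theorem \ref{thm:general} and the consistency check for $\pm F$ is the same logic as the paper's direct appeal to Lemmas \ref{lemma:T} and \ref{lemma:mapS}, just packaged differently. The sign issue you flag does work out: redoing the Fourier reduction for $u_*\in\Res_{0*}^1$ with the operators $\nu_\pm$ one indeed arrives at $0=2ic_*\big(1+\tfrac{1}{2}\mathbf{e}^-(F)\big)$ with $\mathbf{e}^-(F)=\mathbf{e}^+(-F)\geq 0$, so $c_*=0$ as you assert.
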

		\begin{proof}
					 By Lemmas \ref{lemma:closed} and \ref{lemma:res01}, we know $d(\Res_0^1) = 0$ and $\dim \Res_0^1 = b_1(M)$, respectively; in fact, by analogous arguments for coresonant states, i.e. arguing for the flow $-F$, the same lemmas imply $d(\Res_{0*}^1) = 0$ and $\dim \Res_{0*}^1 = b_1(M)$. Therefore, the map $T_*: \Res_{0*}^1 \to \mathbb{C}$ constructed (now for coresonances) in Lemma \ref{lemma:T} is trivial and the map $S_*: \Res_{0*}^1 \to H^1(\mc{M})$ constructed (again, for coresonances) in Lemma \ref{lemma:mapS} is an isomorphism. By the last part of Lemma \ref{lemma:mapS} we conclude that the winding cycle of $\SRB$ vanishes.
					 
					 The final conclusion now follows from the classification in Theorem \ref{thm:general}.
		\end{proof}

		So far, we have established the claims in Theorem \ref{thm:QFF} that assert that $m_{1,0}=b_{1}(M)$ and $[\omega^{\pm}]=0$, modulo properties of the operators $\mu_{\pm}$ to be proved next.

\subsection{Properties of $\mu_\pm$}\label{ssec:mu+-}
	Recall $\mu_\pm$ were defined in \eqref{eq:def-mu+-}. In this section we compute the principal symbols of $\mu_\pm$, size of their kernels, and also obtain the analogue of the formula \eqref{eq:eta+eta-commutator} for the operators $\mu_\pm$. According to \S \ref{ssec:geometry-surfaces}, we will freely identify $\mu_\pm$ with operators acting on sections of tensor powers $\mc{K}^{\otimes m}$ for $m \in \mathbb{Z}$.

\begin{lemma}\label{lemma:mu_pm-elliptic}
	For any $m \in \mathbb{Z}$, the operators $\mu_\pm$ are elliptic.
\end{lemma}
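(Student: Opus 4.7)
The strategy is to compute the principal symbols of $\eta_\pm$ when viewed as operators on sections of $\mc{K}^{\otimes m}$, and then read off those of $\mu_\pm$. Ellipticity will boil down to the pointwise bound $2|\lambda_{\pm 2}|_g < 1$, which is a direct consequence of the coupled vortex equation \eqref{eq:coupled-vortex-equations} combined with the curvature bound \eqref{eq:curvature-condition-coupled-vortex} and the normalisation \eqref{eq:lambda-A-norm-relation}.

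First, I would compute $\sigma(\eta_\pm)$. Starting from the formulas $X\pi_0^*f = \pi_1^*(df)$ and $H\pi_0^*f = -\pi_1^*(\star df)$ in \eqref{eq:X-simple} and using $\eta_\pm = (X \mp iH)/2$, one gets that at any real $\xi \in T_x^*M \setminus 0$ decomposed as $\xi = \xi^{1,0} + \xi^{0,1}$ with $\xi^{1,0} \in \mc{K}_x$ and $\xi^{0,1} \in \mc{K}_x^{-1}$, the principal symbols acting on $H_0$ are the multiplication operators $\sigma(\eta_+)(\xi) = i\xi^{1,0}$ and $\sigma(\eta_-)(\xi) = i\xi^{0,1}$. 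The same formula extends to arbitrary $m \in \mathbb{Z}$ under the identification $H_m \cong C^\infty(M; \mc{K}^{\otimes m})$ via $\pi_{|m|}^*$: since $\eta_\pm$ are built from $X, H$ which do not depend on $m$, the passage to $\mc{K}^{\otimes m}$ only introduces zero-order connection terms, leaving the principal symbol unchanged. Thus $\sigma(\eta_+)(\xi): \mc{K}_x^{\otimes m} \to \mc{K}_x^{\otimes (m+1)}$ is multiplication by $i\xi^{1,0}$ and $\sigma(\eta_-)(\xi): \mc{K}_x^{\otimes m} \to \mc{K}_x^{\otimes (m-1)}$ is multiplication by $i\xi^{0,1}$.

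Combining these with the definitions \eqref{eq:def-mu+-} gives
\begin{equation*}
	\sigma(\mu_-)(\xi) = i\xi^{0,1} + 2\lambda_{-2}(x)\xi^{1,0}, \qquad \sigma(\mu_+)(\xi) = i\xi^{1,0} - 2\lambda_{2}(x)\xi^{0,1},
\end{equation*}
where both products are interpreted in $\mc{K}_x^{\otimes -1}$ and $\mc{K}_x$ respectively. Suppose $\sigma(\mu_-)(\xi) = 0$ for some $\xi \neq 0$. Taking the fibrewise Hermitian norm induced by $g$ and using the identity $|\xi^{0,1}|_g = |\xi^{1,0}|_g = |\xi|_g/\sqrt{2}$ (since $\overline{\xi^{1,0}} = \xi^{0,1}$ for real $\xi$) yields $|\xi^{1,0}|_g = 2|\lambda_{-2}(x)|_g |\xi^{1,0}|_g$, hence $2|\lambda_{-2}(x)|_g = 1$. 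However, the curvature condition \eqref{eq:curvature-condition-coupled-vortex} gives $|A|_g^2 = K_g + 1 < 1$, and by \eqref{eq:lambda-A-norm-relation} this means $4|\lambda_{-2}|_g^2 = |A|_g^2 < 1$, i.e. $2|\lambda_{-2}|_g < 1$ pointwise, a contradiction. The case of $\mu_+$ is identical after using $|\lambda_2|_g = |\lambda_{-2}|_g$.

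The only mild subtlety in the plan is confirming that the identification $\pi_{|m|}^*: C^\infty(M; \mc{K}^{\otimes m}) \xrightarrow{\cong} H_m$ intertwines $\eta_\pm$ with operators whose leading part is exactly $\partial, \bar\partial$-type multiplication by $\xi^{1,0}, \xi^{0,1}$; the relevant computation is carried out (in a slightly different language) in Appendix \ref{app:A}, so this step is essentially bookkeeping. Everything else is a direct application of the curvature constraint from \eqref{eq:curvature-condition-coupled-vortex}.
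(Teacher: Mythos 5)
Your proof is correct and follows essentially the same route as the paper: compute the principal symbol of $\mu_\mp$ as a perturbation of the $\bar\partial$/$\partial$ symbol by $2\lambda_{\mp 2}$ times the opposite one, and rule out characteristic directions using $|\xi^{1,0}|_g=|\xi^{0,1}|_g$ for real $\xi$ together with $2|\lambda_{\pm2}|_g=|A|_g<1$ from \eqref{eq:coupled-vortex-equations}, \eqref{eq:curvature-condition-coupled-vortex}, \eqref{eq:lambda-A-norm-relation}. The only difference is cosmetic: the paper performs the same symbol computation in local isothermal coordinates (via Lemma \ref{lemma:eta-del-bar}) rather than invariantly through the $(1,0)/(0,1)$ splitting.
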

\begin{proof}
	As $\mu_+^* = -\mu_-$, it suffices to consider $\mu_-$ only; it also suffices to consider the case $m \geq 0$. In fact, for each $(x, \xi) \in T^*M$ the principal symbol $\sigma(\mu_-)(x, \xi): \mc{K}^{\otimes m}(x) \to \mc{K}^{\otimes(m - 1)}(x)$ is a linear map and hence may be identified with an element of $\overline{\mc{K}}(x)$ (the dual bundle). Consider local isothermal coordinates on $U \subset M$, that is, such that $g|_{U} = e^{2\psi}(dx^2 + dy^2)$ for some locally defined $\psi$. Fix an arbitrary $(z_0, \xi) \in T^*U$ and take $\chi, S \in C^\infty(M)$ such that $S(z_0) = 0$ and $dS(z_0) = \xi$, and $\supp(\chi) \subset U$ with $\chi = 1$ near $z_0$. Using Lemma \ref{lemma:eta-del-bar} (here we use the fact that the principal symbol of a pseudodifferential operator is recovered by oscillatory testing, see e.g. \cite[Theorem 4.19]{Zworski-12} in the related Euclidean case and when $S$ is linear; see also \cite[Definition 2.1]{Hormander-65})
	\begin{align*}
		\sigma(\mu_-)(z_0, \xi)(dz^m) &= \lim_{h \to 0} h \mu_-(\chi e^{i\frac{S}{h}} dz^m)(z_0)\\
		&= \lim_{h \to 0} h \Big(\frac{\partial (e^{i\frac{S}{h}})}{\partial \bar{z}}e^{-2\psi} \cdot dz^{m-1} + \overline{A_0} d\bar{z}^2 \otimes e^{2m\psi} \frac{\partial (e^{-2m\psi} e^{i\frac{S}{h}})}{\partial z} dz^{m + 1}\Big)(z_0)\\
		&= i\Big(\frac{\partial S}{\partial \bar{z}} + \overline{A_0} e^{-2\psi} \frac{\partial S}{\partial z}\Big)(z_0) \cdot e^{-2\psi(z_0)} dz^{m-1}
	\end{align*}
	where in the second line we wrote $A = A_0 dz^2$, so that by \eqref{eq:lambda-A}, we have $\lambda_2 = \frac{\pi_2^*A}{2i}$ and $\lambda_{-2} = -\frac{\pi_2^*(\overline{A_0} d\bar{z}^2)}{2i}$; in the final line, we also used $\pi_2^*(d\bar{z} \otimes dz)(z_0, v) = d\bar{z}(v) \cdot dz(v) = v_x^2 + v_y^2 = e^{-2\psi}$. Using the identification of the symbol with an element of $\overline{\mc{K}}(z_0)$, we conclude
	\begin{align*}
		\sigma(\mu_-)(z_0, \xi) = d\bar{z} \times i\xi\Big(\frac{\partial}{\partial \bar{z}} + \overline{A_0}(z_0) e^{-2\psi(z_0)}\frac{\partial}{\partial z}\Big).
	\end{align*}
	Recall that $|A| = |A_0| |dz|^2 = |A_0| e^{-2\psi}$ and that by \eqref{eq:coupled-vortex-equations} and \eqref{eq:curvature-condition-coupled-vortex} we have $|A|^2 = 1 + K_g < 1$. Therefore $|A_0| e^{-2\psi} < 1$ and writing $|A_0(z_0)| = r$ and $A_0(z_0) = re^{i\Upsilon}$ for some $\Upsilon \in \mathbb{R}$, the relation $\sigma(\mu_-)(z_0, \xi) = 0$ is equivalent to
	\begin{align*}
		0 &= \xi_x + r e^{-2\psi(z_0)} (\xi_x \cos \Upsilon - \xi_y \sin \Upsilon),\\
		0 &= \xi_y + r e^{-2\psi(z_0)}(-\xi_x \sin \Upsilon - \xi_y \cos \Upsilon). 
	\end{align*}
	If `$\cdot$' denotes Euclidean inner product in $\mathbb{R}^2$, this implies
	\[\xi_x^2 + \xi_y^2 = (r e^{-2\psi(z_0)})^2 (|(\xi_x, \xi_y) \cdot (\cos \Upsilon, -\sin \Upsilon)|^2 + |(\xi_x, \xi_y) \cdot (\sin \Upsilon, \cos \Upsilon)|^2) = (r e^{-2\psi(z_0)})^2(\xi_x^2 + \xi_y^2),\]
	Since $r e^{-2\psi(z_0)} < 1$, this is equivalent to $\xi = 0$ showing $\mu_-$ is elliptic and completing the proof.
\end{proof}

Now we compute the commutator $[\mu_+, \mu_-]$:

\begin{lemma}\label{lemma:commutator}
	We have
	\begin{align}\label{eq:commutator-mu+-1}
		[\mu_+, \mu_-] = - \frac{iK_g^2 V}{2} + \frac{\mu_+K_g}{K_g} \cdot \mu_- - \frac{\mu_-K_g}{K_g} \cdot \mu_+.
	\end{align}
	In fact, this is equivalent to the following formula:
	\[\Big[\frac{\mu_-}{K_g}, \frac{\mu_+}{K_g}\Big] = \frac{i}{2} V,\]
	or once again, equivalently:
	\[\mu_-\Big(\frac{1}{K_g} \mu_+\Big) = \frac{i}{2} K_gV + \mu_+ \Big(\frac{1}{K_g}\mu_-\Big).\]
\end{lemma}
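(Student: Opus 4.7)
The argument is a direct algebraic expansion of $[\mu_+,\mu_-]$ using the definitions \eqref{eq:def-mu+-}, which I would then match against the right-hand side. The three structural inputs are: the basic commutator \eqref{eq:eta+eta-commutator}, i.e.\ $[\eta_+,\eta_-]=\tfrac{i}{2}K_gV$; the holomorphicity relations \eqref{eq:eta-lambda}, i.e.\ $\eta_-\lambda_2=\eta_+\lambda_{-2}=0$; and the coupled vortex normalisation $K_g=-1+4\lambda_2\lambda_{-2}$ (from \eqref{eq:coupled-vortex-equations} together with \eqref{eq:lambda-A-norm-relation}).

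First I would expand
\[
[\mu_+,\mu_-] \;=\; [\eta_+,\eta_-] \;-\; 2i[\eta_+,\lambda_{-2}\eta_+] \;+\; 2i[\lambda_2\eta_-,\eta_-] \;+\; 4[\lambda_2\eta_-,\lambda_{-2}\eta_+].
\]
Of these four commutators, the middle two vanish by the Leibniz rule and \eqref{eq:eta-lambda}: $[\eta_+,\lambda_{-2}\eta_+]=(\eta_+\lambda_{-2})\eta_+=0$ and $[\lambda_2\eta_-,\eta_-]=-(\eta_-\lambda_2)\eta_-=0$. For the cross term, the Leibniz rule yields
\[
[\lambda_2\eta_-,\lambda_{-2}\eta_+] \;=\; \lambda_2(\eta_-\lambda_{-2})\eta_+ \;-\; \lambda_{-2}(\eta_+\lambda_2)\eta_- \;+\; \lambda_2\lambda_{-2}[\eta_-,\eta_+].
\]
Differentiating $K_g=-1+4\lambda_2\lambda_{-2}$ and using \eqref{eq:eta-lambda} once more gives $\eta_-K_g=4\lambda_2(\eta_-\lambda_{-2})$ and $\eta_+K_g=4\lambda_{-2}(\eta_+\lambda_2)$. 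Substituting these identifications together with $[\eta_-,\eta_+]=-\tfrac{i}{2}K_gV$ and collecting, one arrives at
\[
[\mu_+,\mu_-] \;=\; \tfrac{i}{2}K_gV(1-4\lambda_2\lambda_{-2}) \;+\; (\eta_-K_g)\eta_+ \;-\; (\eta_+K_g)\eta_- \;=\; -\tfrac{iK_g^2V}{2} \;+\; (\eta_-K_g)\eta_+ \;-\; (\eta_+K_g)\eta_-,
\]
where in the last step the crucial simplification $1-4\lambda_2\lambda_{-2}=-K_g$ is used.

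Next I would identify the last two terms with $K_g^{-1}\bigl((\mu_+K_g)\mu_- - (\mu_-K_g)\mu_+\bigr)$. Expanding both $\mu_\pm K_g$ and $\mu_\pm$ according to \eqref{eq:def-mu+-}, the four terms quadratic in $\eta_\pm\eta_\pm$ pair up and cancel, while the coefficient of $(\eta_-K_g)\eta_+ - (\eta_+K_g)\eta_-$ collapses once again to the factor $1-4\lambda_2\lambda_{-2}=-K_g$; dividing by $K_g$ then reproduces \eqref{eq:commutator-mu+-1} exactly. The two equivalent forms stated in the lemma follow by routine algebra: for $[\mu_-/K_g,\mu_+/K_g]=\tfrac{i}{2}V$ one uses $\mu_\pm(K_g^{-1})=-K_g^{-2}\mu_\pm K_g$ to expand the bracket and compare with \eqref{eq:commutator-mu+-1}, while the third form follows from the Leibniz rule $\mu_-(K_g^{-1}\mu_+)=-K_g^{-2}(\mu_-K_g)\mu_++K_g^{-1}\mu_-\mu_+$ and the analogous identity with $\mu_\pm$ swapped.

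Conceptually there is no real obstacle; the difficulty is purely one of bookkeeping. What makes the calculation work is the interplay between \eqref{eq:eta-lambda} and the normalisation $K_g=-1+4\lambda_2\lambda_{-2}$: the former reduces the problem to a single cross-commutator and allows $\eta_\pm\lambda_{\pm 2}$ to be replaced by $\eta_\pm K_g$, while the latter converts the combinatorial factor $1-4\lambda_2\lambda_{-2}$ into $-K_g$, which is precisely what produces the $K_g^2$ coefficient in the $V$-term and, at the end, balances the division by $K_g$ in the operator terms. The only sign one must carefully track is that of $[\eta_-,\eta_+]$ versus $[\eta_+,\eta_-]$.
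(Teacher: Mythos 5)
Your proposal is correct and follows essentially the same route as the paper: expand $[\mu_+,\mu_-]$ using \eqref{eq:def-mu+-}, kill the like-type terms via $\eta_-\lambda_2=\eta_+\lambda_{-2}=0$, and convert $1-4\lambda_2\lambda_{-2}$ into $-K_g$ to get $-\tfrac{i}{2}K_g^2V+(\eta_-K_g)\eta_+-(\eta_+K_g)\eta_-$. The only cosmetic difference is in the second half, where you expand $(\mu_\pm K_g)\mu_\mp$ back in terms of the $\eta$'s and observe the cancellation, while the paper inverts \eqref{eq:def-mu+-} to write $\eta_\pm$ in terms of $\mu_\pm$ and substitutes; both verifications are equivalent algebra.
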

	\begin{proof}
		Using the definition \eqref{eq:def-mu+-}, $[\mu_+, \mu_-]$ equals to:
		\begin{align*}
			[\eta_+ + 2i \lambda_2 \eta_-, \eta_- - 2i \lambda_{-2} \eta_+] &= [\eta_+, \eta_-] - 2i \eta_+ \lambda_{-2} \cdot \eta_+ - 2i \eta_- \lambda_2 \cdot \eta_-\\
			 &+ 4\big(\lambda_2 \eta_- (\lambda_{-2}) \cdot \eta_+ + |\lambda_{2}|^2 \eta_- \eta_+ - \lambda_{-2} \eta_+(\lambda_2) \cdot \eta_- - |\lambda_2|^2 \eta_+ \eta_-\big)\\
			 &= (1 - 4|\lambda_2|^2) [\eta_+, \eta_-] + 4\big(\eta_-(|\lambda_2|^2) \cdot \eta_+ - \eta_+(|\lambda_{2}|^2) \cdot \eta_-\big)\\
			 &= -K_g \cdot \frac{iK_gV}{2} + \eta_-K_g \cdot \eta_+ - \eta_+K_g \cdot \eta_-,
		\end{align*}
		where in the second equality we used $\eta_- \lambda_2 = \eta_+ \lambda_{-2} = 0$ and in the last equality the commutator \eqref{eq:eta+eta-commutator}, as well as \eqref{eq:coupled-vortex-equations}. Coming back to the definition \eqref{eq:def-mu+-} of $\mu_{\pm}$, we express $\eta_\pm$ in terms of $\mu_\pm$:
		\begin{equation}\label{eq:eta+eta-mu+mu-}
				\eta_- = -\frac{1}{K_g} \Big(2i\lambda_{-2} \mu_+ + \mu_-\Big), \quad \eta_+ = \frac{1}{K_g} \Big(2i\lambda_2 \mu_- - \mu_+\Big).
		\end{equation}
		Substituting \eqref{eq:eta+eta-mu+mu-} in the previous equality, $[\mu_+, \mu_-] + \frac{iK_g^2V}{2}$ equals to:
		\begin{align*}
		&-\frac{1}{K_g^2} \Big(2i\lambda_{-2} \mu_+ + \mu_-\Big)K_g \cdot \Big(2i\lambda_2 \mu_- - \mu_+\Big) + \frac{1}{K_g^2} \Big(2i\lambda_2 \mu_- - \mu_+\Big)K_g \cdot \Big(2i\lambda_{-2} \mu_+ + \mu_-\Big)\\
			 &= \frac{1}{K_g^2} \Big((2i \lambda_{-2} \mu_+ + \mu_-)K_g + (-4|\lambda_2|^2 \mu_- - 2i\lambda_{-2} \mu_+K_g)\Big)\cdot \mu_+\\
			 &+ \frac{1}{K_g^2} \Big((2i \lambda_2 \mu_- - \mu_+)K_g + (4|\lambda_2|^2 \mu_+ - 2i\lambda_2 \mu_-K_g)\Big) \cdot \mu_-\\
			 &= -\frac{\mu_- K_g}{K_g} \cdot \mu_+ + \frac{\mu_+ K_g}{K_g} \cdot \mu_-,
		\end{align*}
		where we used that $K_g = -1 + 4|\lambda_2|^2$ in the last line. This proves the first formula.
		
		For the second formula, compute:
		\begin{align*}
			\Big[\frac{\mu_-}{K_g}, \frac{\mu_+}{K_g}\Big] &= \frac{1}{K_g} \mu_- \Big(\frac{\mu_+}{K_g}\Big) - \frac{1}{K_g} \mu_+ \Big(\frac{\mu_-}{K_g}\Big)\\
			&= -\frac{1}{K_g^2}[\mu_+, \mu_-] + \frac{1}{K_g^3} \Big(\mu_+ K_g \cdot \mu_- - \mu_- K_g \cdot \mu_+\Big) = \frac{iV}{2},
		\end{align*}
		where we used \eqref{eq:commutator-mu+-1} in the last equality. The final formula is a straightforward restatement of this one.
	\end{proof}
		
		Finally, we are able to compute the index of $\mu_\pm$ explicitly:
			
	\begin{lemma}\label{lemma:index}
		We have $\ker \mu_+|_{H_m} = \{0\}$ for any $m > 0$. Also, $\ker \mu_+|_{H_0}$ is spanned by constant functions. Moreover, for any $m \geq 0$, the analytical index satisfies $\ind (\mu_+|_{H_m}) = \ind(\eta_+|_{H_m})$.
		
		In particular, $\dim \ker \mu_-|_{H_1} = \frac{1}{2} b_1(M)$.
	\end{lemma}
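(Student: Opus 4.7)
Proof proposal. The main tool for the kernel statements is the rearranged commutator identity from Lemma \ref{lemma:commutator},
\[\mu_-\!\Big(\tfrac{1}{K_g}\mu_+\Big)\;=\;\tfrac{i}{2}K_g\,V+\mu_+\!\Big(\tfrac{1}{K_g}\mu_-\Big).\]
Applying both sides to $u\in\ker\mu_+|_{H_m}$ and using $Vu=imu$ gives
\[\mu_+\!\Big(\tfrac{1}{K_g}\mu_-u\Big)\;=\;\tfrac{m}{2}K_g\,u.\]
Pairing this identity in $L^2(\mc{M},\Omega)$ against $u$ and using $\mu_+^*=-\mu_-$ yields
\[\tfrac{m}{2}\int_{\mc{M}}K_g\,|u|^2\,\Omega \;=\; -\!\int_{\mc{M}}\tfrac{1}{K_g}\,|\mu_-u|^2\,\Omega.\]
By \eqref{eq:curvature-condition-coupled-vortex} we have $K_g<0$, so the left-hand side is $\leq 0$ while the right-hand side is $\geq 0$; for $m>0$ both must vanish, forcing $u\equiv 0$. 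For $m=0$ the same identity only yields $\mu_-u=0$; combining $\mu_+u=\mu_-u=0$ with the definitions \eqref{eq:def-mu+-} and $\eta_-\lambda_2=\eta_+\lambda_{-2}=0$ one eliminates $\eta_+u$ and is left with $(1-4|\lambda_2|^2)\eta_-u=-K_g\eta_-u=0$, so $\eta_\pm u=0$; then $Xu=Hu=0$ and \eqref{eq:X-simple} forces $u$ to be constant.

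For the index identity, the plan is a homotopy argument. Introduce
\[\mu_+^t\;:=\;\eta_+ + 2it\lambda_2\,\eta_-,\qquad t\in[0,1],\]
as a continuous family of first-order differential operators $H_m\to H_{m+1}$ interpolating between $\eta_+$ and $\mu_+$. A verbatim rerun of the principal-symbol computation in the proof of Lemma \ref{lemma:mu_pm-elliptic} with $A$ replaced by $tA$ shows that the symbol of $\mu_+^t$ is invertible exactly when $|tA|_g<1$; but $|A|_g^2=1+K_g<1$ by \eqref{eq:coupled-vortex-equations} and \eqref{eq:curvature-condition-coupled-vortex}, so ellipticity holds uniformly in $t\in[0,1]$. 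Each $\mu_+^t$ is therefore Fredholm on the appropriate Sobolev completions, and homotopy invariance of the Fredholm index gives $\ind(\mu_+|_{H_m})=\ind(\eta_+|_{H_m})$.

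For the final numerical claim, specialise to $m=0$. Using $\mu_+^*=-\mu_-$ and the kernel computation of the first paragraph,
\[\ind(\mu_+|_{H_0})\;=\;\dim\ker\mu_+|_{H_0}-\dim\ker\mu_-|_{H_1}\;=\;1-\dim\ker\mu_-|_{H_1}.\]
On the other hand, under Lemma \ref{lemma:eta-del-bar} the operator $\eta_+:H_0\to H_1$ identifies (up to a non-zero constant) with the Dolbeault operator $\partial:C^\infty(M;\mathbb{C})\to C^\infty(M;\mc{K})$, whose kernel consists of anti-holomorphic functions (constants, by compactness) and whose cokernel, via $\eta_+^*=-\eta_-$, is identified with $\ker\eta_-|_{H_1}$, the space of holomorphic sections of $\mc{K}$, of complex dimension $g(M)=b_1(M)/2$ by Hodge theory. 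Hence $\ind(\eta_+|_{H_0})=1-b_1(M)/2$, and equating the two indices gives $\dim\ker\mu_-|_{H_1}=b_1(M)/2$. The only genuinely delicate point is the ellipticity of the homotopy $\mu_+^t$; the remaining steps are direct calculations or standard invocations of Hodge/Riemann--Roch theory.
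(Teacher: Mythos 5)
Your proposal is correct and follows essentially the same route as the paper: the rearranged commutator identity of Lemma \ref{lemma:commutator} paired against $u$ with $\mu_+^* = -\mu_-$ and $K_g<0$ for the kernel statements, a homotopy of elliptic operators (you deform $\mu_+$, the paper deforms $\mu_-$, which is equivalent by adjointness) for the index equality, and the identification $\dim\ker\eta_-|_{H_1}=\tfrac{1}{2}b_1(M)$ (Proposition \ref{prop:H1}, which you rederive via Hodge theory) for the final count. No gaps.
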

		\begin{proof}
			Assume $\mu_+ f = 0$, where $f \in H_m$ for some $m \geq 0$. Using the final identity of Lemma \ref{lemma:commutator}, multiplying with $\overline{f}$ and integrating over $\mc{M}$, we get:
			\begin{align*}
				0 &= \frac{i}{2}\int_{\mc{M}} K_gVf \cdot \overline{f}\, \Omega + \int_{\mc{M}} \mu_+\Big(\frac{1}{K_g} \mu_-f\Big) \cdot \overline{f}\, \Omega\\
				   &= -\frac{m}{2} \int_{\mc{M}} K_g |f|^2\, \Omega - \int_{\mc{M}} \frac{1}{K_g} |\mu_-f|^2\, \Omega \geq 0,
			\end{align*}
			since by \eqref{eq:curvature-condition-coupled-vortex}, $K_g < 0$, and we used $\mu_+^* = -\mu_-$. If $m > 0$, this shows $f = 0$ and completes the proof of the first part of the statement.
			
			For the second part, if $m = 0$, from the previous paragraph we get that $\mu_- f = 0$. By \eqref{eq:eta+eta-mu+mu-}, and from the assumption $\mu_+ f =0$, we conclude that $\eta_+ f= \eta_- f = 0$. This implies in particular that $X f = 0$, which by \eqref{eq:X-simple} implies $df=0$, which in turn gives that $f$ is a constant.
			
			Finally, let $P_t := \eta_- - t \cdot 2i\lambda_{-2}\eta_+$ be a continuous deformation of operators for $t \in [0, 1]$. By inspecting the proof of Lemma \ref{lemma:mu_pm-elliptic} the operators $P_t$ are indeed elliptic in this region, when acting on sections of $\mc{K}^{\otimes m}$ (or equivalently, on $H_m$) for any $m \geq 0$. By topological invariance of Fredholm index we conclude that $\mu_- = P_1$ and $\eta_- = P_0$ have identical indices, i.e. $\ind(\mu_-|_{H_m}) = \ind(\eta_-|_{H_m})$. In particular for $m = 1$ we get that, using $\mu_+^* = -\mu_-$
			\[\dim \ker \mu_-|_{H_1} - \dim \ker \mu_+|_{H_0} = \ind(\mu_-|_{H_1}) = \ind(\eta_-|_{H_1}) = \frac{1}{2}b_1(M) - 1,\]
			where in the last equality we used Proposition \ref{prop:H1}. Since $\dim \ker \mu_+|_{H_0} = 1$, the final claim follows.
		\end{proof}
		
		\subsection{Horocyclic invariance of the SRB measure}
		In this section we derive equations for the horocyclic invariance of the SRB measure. Let $a := a_u \in \mathcal{D}'_{E_u^*}(\mc{M})$ be the H\"older regular solution of: 
		\begin{equation}\label{eq:def-a}
			(F + r^u)a = -\lambda.
		\end{equation}
		Note that this $a$ is such that $U^u = Y^u - aF$ (the notation coming from Section \ref{sec:horocyclic}):
		\[[F, U^u] = [F, Y^u] - Fa \cdot F = -r^uY^u - (Fa + \lambda) \cdot F = -r^u(Y^u - aF) = -r^u U^u.\]
		Using the results of Section \ref{sec:horocyclic}, we have:
		\begin{lemma}\label{lemma:horo-SRB-m=2}
			If $\SRB = f \Omega$ is the SRB measure it holds that
			\[(Y^u - 2\lambda + 2a)f = 0.\]			
		\end{lemma}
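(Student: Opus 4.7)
The plan is to apply the general horocyclic invariance statement for SRB measures (Lemma \ref{lemma:horocyclic-invariance-SRB-smooth}) to the quasi-Fuchsian flow. This is legitimate since the weak unstable bundle is smooth in this setting: we have $Y^u = H + r^u V$ with $r^u = 1 + V\lambda/2$ from \eqref{eq:ru-rs}. The lemma then asserts $(Y^u + \alpha_{\divv_\Omega F})f = 0$, where $\alpha_W$ is the unique H\"older solution of $(F + r^u)\alpha_W = (Y^u - \lambda)W$. Since $X,H,V$ preserve the canonical volume $\Omega$, we have $\divv_\Omega F = \divv_\Omega(\lambda V) = V\lambda$. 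Thus it suffices to identify $\alpha_{V\lambda} = -2\lambda + 2a$, after which the statement follows immediately.

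To verify this identification I will compute both sides of $(F + r^u)(-2\lambda + 2a) = (Y^u - \lambda)(V\lambda)$ explicitly. Using the defining equation $(F + r^u)a = -\lambda$ from \eqref{eq:def-a}, the left hand side reduces at once to
\[(F + r^u)(-2\lambda + 2a) = -2F\lambda - 2r^u\lambda - 2\lambda.\]
For the right hand side I will expand $(Y^u - \lambda)(V\lambda) = HV\lambda + r^u V^2\lambda - \lambda V\lambda$ and then use the three identities
\[V^2\lambda = -4\lambda, \qquad HV\lambda = -2X\lambda, \qquad F\lambda - X\lambda = \lambda V\lambda,\]
together with $r^u = 1 + V\lambda/2$. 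The first is immediate from $\lambda\in H_{-2}\oplus H_2$; the third is just the definition of $F$. The crucial second identity follows by writing $H = i(\eta_+ - \eta_-)$ and expanding $HV\lambda = H(2i\lambda_2 - 2i\lambda_{-2})$, whose cross terms vanish thanks to the holomorphy conditions $\eta_-\lambda_2 = \eta_+\lambda_{-2} = 0$ of \eqref{eq:eta-lambda}. Putting these together gives $(Y^u - \lambda)(V\lambda) = -2X\lambda - 4r^u\lambda - \lambda V\lambda = -2F\lambda - 4\lambda - \lambda V\lambda$, which matches the left hand side after expanding $-2r^u\lambda - 2\lambda = -2\lambda - \lambda V\lambda - 2\lambda$.

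The identification $\alpha_{V\lambda} = -2\lambda + 2a$ then follows from uniqueness of the H\"older solution to $(F + r^u)\alpha = (Y^u - \lambda)V\lambda$, which in turn comes from the positivity of time averages of $r^u$ as in Proposition \ref{prop:mapping}; indeed $r^u$ is the unstable Lyapunov rate up to a derivative. Substituting into $(Y^u + \alpha_{V\lambda})f = 0$ yields the claim. There is no real obstacle: the entire proof reduces to the algebraic identity above, and the main point is simply that the holomorphy of the quadratic differential $A$, together with $r^u = 1 + V\lambda/2$, produces exactly the cancellations needed to express the potential $\alpha_{V\lambda}$ in the explicit closed form $-2\lambda + 2a$.
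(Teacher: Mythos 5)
Your proof is correct and takes essentially the same approach as the paper: reduce to Lemma \ref{lemma:horocyclic-invariance-SRB-smooth} (with generator $F$), observe $\divv_\Omega F = V\lambda$, and verify the identity $\alpha_{V\lambda} = 2a - 2\lambda$ by expanding both sides of $(F + r^u)(2a-2\lambda) = (Y^u - \lambda)V\lambda$ using $r^u = 1 + V\lambda/2$, the defining equation $(F+r^u)a = -\lambda$, and the algebraic identities $V^2\lambda = -4\lambda$ and $(2X + HV)\lambda = 0$ coming from holomorphy of $A$.
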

		Note that $af$ is well-defined as $f\Omega$ is an actual \emph{measure}.
		\begin{proof}
		According to the notation of Lemma \ref{lemma:horo-SRB} and by Lemma \ref{lemma:horocyclic-invariance-SRB-smooth}, it suffices to show:
		\[\alpha_{\divv_{\Omega}F} = 2a - 2\lambda.\]
		Since $\divv_{\Omega} F = V\lambda$, for this it suffices to show the identity
		\[B:= (F + r^u) (2a - 2\lambda) - (Y^u - \lambda)V\lambda = 0.\]
			In fact, we compute by the definition \eqref{eq:def-a} of $a$:
			\begin{align*}
				B &= -2\lambda - 2\Big(X + \lambda V + \Big(1 + \frac{V\lambda}{2}\Big)\Big) \lambda - \Big(H + \Big(1 + \frac{V\lambda}{2}\Big)V\Big)V\lambda + \lambda V\lambda\\
				&= -2\lambda -2X\lambda - 2\lambda V\lambda - 2\lambda - \lambda V\lambda - HV\lambda + 4\lambda + 2\lambda V\lambda + \lambda V\lambda = 0,
			\end{align*}
			where we used that $(2X + HV)\lambda = 0$ (which follows from Lemma \ref{lemma:eta-del-bar}) and $V^2\lambda = -4\lambda$. This proves the claim.
		\end{proof}
		
		Lemma \ref{lemma:horo-SRB} implies that $f$ satisfies the system:
		\begin{align}\label{eq:SRB-system}
		\begin{split}
			(F + V\lambda)f = 0,\\
			(Y^u - 2\lambda + 2a)f = 0.
		\end{split}
		\end{align}
		Under the assumption that $f$ is a measure or more generally, that belongs to some Sobolev space with a small negative exponent (so that $af$ makes sense), this system implies that $f \Omega$ is really the SRB measure.
		
		\begin{Remark}\rm
			One might hope to explicitly solve \eqref{eq:SRB-system} similarly as we did in Lemma \ref{lemma:res01} for elements of $\Res_0^1$ and the system \eqref{eq:horo-m=2}. However, the issue here is that $a$ has \emph{infinite Fourier content}, that is, its degree is infinite (otherwise, it would be smooth, and so the bundle $E_u$ would be smooth, which generically does not happen \cite{Paternain-07}). Therefore, the recurrence relations stemming from \eqref{eq:SRB-system} would now involve \emph{all} Fourier modes of $f$, which complicates the situation significantly compared to \eqref{eq:recurrence-equivalent}.
		\end{Remark}
		
		\begin{Remark}\rm
		Note also that $\lambda_{\pm 2}$ vanish at finitely many points due to holomorphicity of $A$ (exactly at $4g - 4$ of points, the degree of the holomorphic bundle $\mc{K}^{\otimes 2}$). Therefore we see that the solution to $(F + V\lambda) f = 0$ is uniquely determined by $f_0, f_1, f_2$ (the $f_3, \dotsc$ are determined by dividing by $\lambda_{\pm 2}$ outside zeros of $A$ and by continuity on the zeros).
		\end{Remark}
		
		\subsection{Generic semisimplicity}
		
		In this section we tackle the question of semisimplicity for quasi-Fuchsian vector fields. We will need the following claim, proved in \cite[Lemma 4.8]{Cekic-Dyatlov-Delarue-Paternain-22}).
		
\begin{lemma}[Linear algebra lemma]\label{lemma:lin-alg}
	Let $V\subset \mathbb{M}_{n\times n}$ be a (real or complex) linear subspace of the set of $n\times n$ complex matrices such that for each $u,v\in \mathbb{C}^{n}\setminus\{0\}$, there exists $B\in V$ such that $(Bu,v) \neq 0$. (Here $(\bullet, \bullet)$ denotes the canonical inner product on $\mathbb{C}^n$.) Then $V$ contains a dense set of invertible matrices.
\end{lemma}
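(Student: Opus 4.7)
The plan is to reduce the density claim to showing that $V$ contains at least one invertible matrix. Indeed, fix a basis $B_{1},\ldots,B_{d}$ of $V$; the determinant
\[
P(c_{1},\ldots,c_{d}) := \det\Big(\sum_{i=1}^{d} c_{i}B_{i}\Big)
\]
is a polynomial on $V \simeq \mathbb{C}^{d}$. If $P$ is not identically zero, its vanishing locus is a proper algebraic subvariety of $V$, so the complement (the set of invertible matrices in $V$) is Zariski open and dense. Hence the lemma follows once we exhibit one invertible element of $V$.

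To produce an invertible element, I would argue by contradiction via a dimension count on the incidence variety
\[
X := \{(B,v) \in V \times \mathbb{C}^{n} \mid Bv = 0\}.
\]
The first projection $p_{1}\colon X \to V$ has fiber $\ker B$ over $B$. If every $B \in V$ were singular, then on a Zariski open dense subset of $V$ the rank of $B$ would be constant and equal to some $r \leq n-1$, so that the generic fiber of $p_{1}$ has dimension $n-r \geq 1$. By upper semicontinuity of fiber dimension we would obtain
\[
\dim X \geq \dim V + (n-r) \geq \dim V + 1.
\]
On the other hand, reinterpret the hypothesis of the lemma: for every $u \in \mathbb{C}^{n}\setminus\{0\}$, the evaluation map
\[
\phi_{u}\colon V \to \mathbb{C}^{n}, \qquad \phi_{u}(B) := Bu,
\]
has image not contained in any hyperplane $v^{\perp}$ with $v\neq 0$, so $\phi_{u}$ is surjective and $\dim \ker \phi_{u} = \dim V - n$. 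Writing $X' := X \cap \bigl(V \times (\mathbb{C}^{n}\setminus\{0\})\bigr)$, the second projection $p_{2}|_{X'}\colon X' \to \mathbb{C}^{n}\setminus\{0\}$ has every fiber equal to $\ker\phi_{v}$, of dimension $\dim V - n$, so $\dim X' \leq n + (\dim V - n) = \dim V$. Since $X \setminus X' = V \times \{0\}$ also has dimension $\dim V$, this yields $\dim X \leq \dim V$, contradicting the lower bound above and forcing the existence of an invertible $B \in V$.

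The essential observation is the clean reformulation of the hypothesis as surjectivity of $\phi_{u}$ for every $u \neq 0$; the rest is a standard incidence-variety argument. The only mild technical point is ensuring the generic-fiber dimension computations are applied to each irreducible component of $X$, which I expect to be the main (though routine) bookkeeping step: one reduces to the irreducible component of $X$ whose image under $p_{1}$ is all of $V$ (it exists because $p_{1}$ is surjective when every $B$ is singular, as $X \supset V \times \{0\}$ and the fibers over generic $B$ are positive-dimensional), and applies the two dimension estimates on that component.
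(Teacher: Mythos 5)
Your proposal is correct in substance, but it takes a genuinely different and heavier route than the argument the paper relies on (it cites \cite[Lemma 4.8]{Cekic-Dyatlov-Delarue-Paternain-22}). The standard proof there is an elementary rank--perturbation argument: if no element of $V$ were invertible, pick $B\in V$ of \emph{maximal} rank $r\le n-1$, choose $0\neq u\in\ker B$ and $0\neq v\perp \operatorname{ran}B$, use the hypothesis to produce $C\in V$ with $(Cu,v)\neq 0$, and observe that a suitable $(r+1)\times(r+1)$ minor of $B+tC$ equals $ct+\mathcal{O}(t^2)$ with $c\neq 0$, so $B+tC$ has rank $\ge r+1$ for small $t\neq 0$, contradicting maximality; density then follows exactly as in your first paragraph, since $\det$ is a not-identically-zero polynomial on $V\simeq\mathbb{C}^d$ and the complement of its zero set is dense also in the Euclidean topology (which is what the application to time-changes actually uses). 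Your reformulation of the hypothesis as surjectivity of $\phi_u\colon V\to\mathbb{C}^n$ for every $u\neq 0$ is correct and plays the same role as the choice of $C$ above; the incidence-variety count is a legitimate alternative, at the cost of invoking the fiber-dimension theorem where a two-line linear-algebra perturbation suffices.

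One caveat concerns the lower bound $\dim X\ge \dim V+1$ and the component bookkeeping you flag at the end. The fix you sketch (``pass to the irreducible component whose image under $p_1$ is all of $V$'') does not work as stated: $V\times\{0\}$ is itself an irreducible component of $X$ dominating $V$, with zero-dimensional fibers, so ``upper semicontinuity of fiber dimension'' applied to $X$ gives nothing. You must run the lower bound on $X'=X\cap\bigl(V\times(\mathbb{C}^n\setminus\{0\})\bigr)$, exactly as you already do for the upper bound. This is easily repaired: if every $B\in V$ is singular, then $p_1|_{X'}$ is surjective, and over the open dense constant-rank locus $U=\{B\in V:\operatorname{rank}B=r\}$ the kernel is a Zariski-locally trivial bundle of rank $n-r\ge 1$, so the locally closed subset $\{(B,v): B\in U,\ Bv=0,\ v\neq 0\}\subset X'$ has dimension $\dim V+(n-r)\ge \dim V+1$; alternatively, note that for $B$ outside the proper closed union of the images of the non-dominating components of $X'$, the entire fiber $\ker B\setminus\{0\}$ lies in the dominating components, so some dominating component has generic fiber dimension $\ge 1$. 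With this correction both dimension estimates stand, the contradiction with $\dim X'\le n+(\dim V-n)=\dim V$ goes through, and your reduction to the existence of a single invertible element completes the proof.
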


Next, we need to show that the product of certain distributions equals zero if and only one of the distributions itself is zero.

\begin{lemma}[Product lemma]\label{lemma:product}
	Let $P_\pm = Y + Q^\pm$, $P_{1, 2} = Y_{1, 2} + Q_{1, 2}$, be first order differential operators with $C^\infty$ coefficients, where $Y, Y_1, Y_2 \in C^\infty(\mc{M}; T\mc{M})$ form a smooth global frame and $Q^\pm$, $Q_{1, 2} \in C^\infty(\mc{M})$. Assume $u_\pm \in \mc{D}'(\mc{M})$ satisfy $P_{\pm} u_\pm = P_1 u_+ = P_2 u_- = 0$. Then the product $u_+ u_-$ is well-defined and 
	\[u_+ u_- = 0 \quad \implies \quad \supp (u_+)^{\mathrm{c}} \cup \supp (u_-)^{\mathrm{c}} = \mc{M}.\]
\end{lemma}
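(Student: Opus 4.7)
My plan is to argue by contradiction, combining Hörmander's product theorem with a local straightening of the three vector fields in the frame $(Y, Y_1, Y_2)$. First I will observe that the pair of equations $(Y + Q^+)u_+ = 0$ and $(Y_1 + Q_1)u_+ = 0$, together with propagation of singularities, forces
\[\WF(u_+) \subset \{\xi \in T^*\mc{M} \setminus 0 : \xi(Y) = \xi(Y_1) = 0\} = \mathbb{R}\alpha_2 \setminus 0,\]
where $\{\alpha, \alpha_1, \alpha_2\}$ is the coframe dual to $(Y, Y_1, Y_2)$; analogously $\WF(u_-) \subset \mathbb{R}\alpha_1 \setminus 0$. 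Pointwise linear independence of $\alpha_1, \alpha_2$ then gives $\WF(u_+) \cap (-\WF(u_-)) = \emptyset$, and so by Hörmander's theorem the product $u_+u_-$ is a well-defined distribution on $\mc{M}$.

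Suppose for contradiction some $x_0 \in \supp u_+ \cap \supp u_-$. I will work in a chart near $x_0$ in which $Y = \partial_{x_1}$ (flow-box theorem). Since $dx_1$ is transverse to $\WF(u_\pm)$, the restrictions $v_\pm := u_\pm|_{\{x_1 = 0\}}$ are well-defined distributions on a neighborhood of $0 \in \mathbb{R}^2$, and integrating the ODE $(\partial_{x_1} + Q^\pm)u_\pm = 0$ along the $x_1$-lines yields
\[u_\pm(x) = E^\pm(x)\,v_\pm(x_2, x_3),\]
with $E^\pm \in C^\infty$ nowhere zero and $E^\pm|_{\{x_1=0\}} \equiv 1$. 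Substituting this ansatz into $(Y_1 + Q_1)u_+ = 0$ (respectively $(Y_2 + Q_2)u_- = 0$), evaluating at $x_1 = 0$ and dividing by $E^\pm$, one obtains transport equations
\[Z_+ v_+ + \widetilde Q_+\, v_+ = 0, \qquad Z_- v_- + \widetilde Q_-\, v_- = 0\]
on a neighborhood of $0$ in $\mathbb{R}^2$, where $Z_+$ (resp.\ $Z_-$) is the $(\partial_{x_2}, \partial_{x_3})$-component of $Y_1$ (resp.\ $Y_2$) at $x_1=0$. The frame condition on $(Y, Y_1, Y_2)$ makes $Z_+$ and $Z_-$ linearly independent at $0$.

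To close the argument I will simultaneously straighten the two foliations: choose smooth first integrals $\sigma$ of $Z_-$ and $\tau$ of $Z_+$, so that $(\sigma, \tau)$ forms a local coordinate system (this is possible precisely because $Z_+, Z_-$ span). In these coordinates $Z_+ = \phi\,\partial_\sigma$ and $Z_- = \psi\,\partial_\tau$ with $\phi, \psi$ nowhere zero, so the transport equations integrate to
\[v_+(\sigma, \tau) = F_+(\sigma, \tau)\,\hat v_+(\tau), \qquad v_-(\sigma, \tau) = F_-(\sigma, \tau)\,\hat v_-(\sigma),\]
with $F_\pm$ smooth and nonvanishing and $\hat v_\pm$ one-variable distributions. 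Hence $u_+u_- = 0$ becomes equivalent to the vanishing of the tensor product $\hat v_+(\tau) \otimes \hat v_-(\sigma)$, which by pairing against separated test functions $\phi(\tau)\psi(\sigma)$ forces $\hat v_+ \equiv 0$ or $\hat v_- \equiv 0$ near $0$. Either possibility yields $u_+ \equiv 0$ or $u_- \equiv 0$ on a neighborhood of $x_0$, contradicting the assumption $x_0 \in \supp u_+ \cap \supp u_-$.

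The hard part will be the wavefront-set bookkeeping for the restriction $u_\pm \mapsto u_\pm|_{\{x_1=0\}}$ and verifying that the transverse equations for $v_\pm$ are consistent in $x_1$ (which should follow automatically from the existence of $u_\pm$ satisfying both original equations); once this microlocal restriction step is justified, the simultaneous straightening via first integrals and the final distributional-Fubini argument are straightforward.
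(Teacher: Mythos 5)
Your plan is correct and follows essentially the same route as the paper's proof: well-definedness of the product via the transverse wavefront sets $\WF(u_\pm)\subset(\mathbb{R}Y\oplus\mathbb{R}Y_{1/2})^\perp$, a flow-box for $Y$ with restriction to a transversal slice, reduction to two transverse transport equations in two dimensions, and the separated-test-function (tensor product) argument after simultaneously straightening the two foliations. The only difference is cosmetic: you straighten via first integrals and absorb the zeroth-order terms into nonvanishing factors, while the paper builds the same coordinates explicitly from the flow of $Z_-$ after removing $Q_{1,2}'$ by integrating factors.
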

\begin{proof}
	The basic idea of the proof is to reduce the product of distributions to a tensor product in suitable coordinates. Since $\WF(u_+) \subset (\mathbb{R}Y \oplus \mathbb{R}Y_1)^\perp \subset T^*\mc{M}$ and $\WF(u_-) \subset (\mathbb{R}Y \oplus \mathbb{R}Y_2)^\perp$ have zero intersection, where we recall $\bullet^\perp$ denotes the annihilator of $\bullet$, the distributional product is well-defined by \cite[Theorem 8.2.10]{Hormander-90}. Let $x \in \mc{M}$ and pick flow-box coordinates $(x_1, x_2, x_3) \in [-\varepsilon, \varepsilon]^3$ for some $\varepsilon > 0$ near $x = (0, 0, 0)$, such that $Y = \partial_{x_1}$ and so
\begin{equation}\label{eq:u_pm}
	(\partial_{x_1} + Q_\pm)u_\pm = 0.
\end{equation}
By \eqref{eq:u_pm} we have $\WF(u_\pm) \subset \mathbb{R}dx_2 \oplus \mathbb{R}dx_3$, so since the conormal bundle of the slice $S_c := \{x_1 = c\}$ for each $c \in [-\varepsilon, \varepsilon]$ is $\mathbb{R}dx_1$, we may restrict $u_\pm$ to $S_c$ by the wavefront set calculus, see \cite[Corollary 8.2.7]{Hormander-90}. On each $S_c$, using \eqref{eq:u_pm} we may write the equations $P_1 u_+ = 0$ and $P_2 u_- = 0$ as 
	\[(\underbrace{a_+ \partial_{x_2} + b_+ \partial_{x_3}}_{=: Z_+} + Q_1') u_+ = 0, \quad (\underbrace{a_- \partial_{x_2} + b_- \partial_{x_3}}_{=: Z_-} + Q_2') u_- = 0,\]
	where $a_\pm, b_\pm, Q_{1, 2}'$ depend on $(c, x_2, x_3)$, and $Z_\pm$ are pointwise linearly independent in $TS_c$. Since the restriction of the product to $S_c$ is the product of restrictions (well-defined similarly to above since $Z_\pm$ are linearly independent), and the invariance \eqref{eq:u_pm} is valid, it suffices to show $u_+|_{S_0}$ or $u_-|_{S_0} = 0$ near $(0, 0)$.
	
	By using suitable smooth integrating factors, without loss of generality assume that $Q_1' = Q_2' = 0$, so that $Z_\pm u_\pm = 0$ on $S_0$. This reduces the problem to a statement in $2$ dimensions. We may put $Z_+$ into flow-box coordinates $(y_1, y_2) \in (-\delta, \delta)^2$ for some $\delta > 0$, defined on $S_0$ such that $x = (0, 0)$, $Z_+ = \partial_{y_1}$, and (possibly after a time-change) $Z_- = d \partial_{y_1} +  \partial_{y_2}$, where $d \in C^\infty((-\delta, \delta)^2)$, so that
	\[\partial_{y_1} u_+ = 0, \quad (d \partial_{y_1} + \partial_{y_2}) u_- = 0.\]
	
	Consider the flow $\alpha_t$ of $Z_-$, and the time $t = t(y_1, y_2) \in \mathbb{R}$, defined on a sub-domain $(-\delta_1, \delta_1)^2$ for some $0 < \delta_1 < \delta$, such that $\mathrm{pr}_2 \circ \alpha_{-t}(y_1, y_2) = 0$, where for $i = 1, 2$, $\mathrm{pr}_i$ denotes the projection onto the coordinate $y_i$. Consider the map
	\[G: (-\delta_1, \delta_1)^2 \to (-\delta, \delta)^2, \quad (y_1, y_2) \mapsto (\mathrm{pr}_1 \circ \alpha_{-t}(y_1, y_2), y_2),\]
	and note that after taking $\delta_1 > 0$ small enough, $G$ is a diffeomorphism onto its image by the inverse mapping theorem (note that $DG(0, 0) = \begin{pmatrix}
	1 & 0\\
	* & 1
	\end{pmatrix}$ on $\{y_2 = 0\}$ since $G(y_1, 0) = (y_1, 0)$). Denote by $(G_1, G_2)$ the components of $G$. By the wavefront set calculus, define $v_+ := u_+|_{\{0\} \times (-\delta, \delta)}$ and $v_- := u_-|_{(-\delta, \delta) \times \{0\}}$. Since pullback of distributions under submersions is well-defined, see \cite[Theorem 6.1.2]{Hormander-90}, the invariance $Z_\pm u_\pm = 0$ translates to $G_2^* v_+ = u_+$ and $G_1^* v_- = u_-$. 
	
	Let $\delta_2 > 0$ be such that $[-\delta_2, \delta_2]^2 \subset G((-\delta_1, \delta_1)^2)$ and $\delta_2 < \delta_1$. Let $\psi_1, \psi_2 \in C_0^\infty((-\delta_2, \delta_2))$ be arbitrary. Then:
	\begin{align*}
		& \int_{G((-\delta_1, \delta_1)^2)} v_-(z_1) v_+(z_2) \psi_1(z_1) \psi_2(z_2)\, dz_1\, dz_2\\
		&= \int_{(-\delta_1, \delta_1)^2} u_-(y_1, y_2) u_+(y_1, y_2) (G_1^*\psi_1)(y_1, y_2) \psi_2(y_2)  J(y_1, y_2)\, dy_1\, dy_2 = 0,
	\end{align*}
	since $u_+ u_- = 0$, and where $J$ is the Jacobian of $G$. Since $\psi_1, \psi_2$ were arbitrary we conclude that either $v_+|_{(-\delta_2, \delta_2)}$ or $v_-|_{(-\delta_2, \delta_2)}$ have to vanish and so either $u_+ = G_2^*v_+$ or $u_- = G_1^* v_-$ vanish in a neighbourhood of $x$, completing the proof.
\end{proof}

Finally, we formulate the main result of this section: for a generic time-change, semisimplicity is valid for $\Res_0^1$
as claimed in Theorem \ref{thm:QFF}.

\begin{lemma}\label{lemma:generic-semisimplicity}
	For an open and dense set of $\mathbf{a} \in C^\infty(\mc{M}; \mathbb{R}_{>0})$, the Lie derivative action $\Lie_{\mathbf{a}F}$ on $\Omega_0^1$ is semisimple.
\end{lemma}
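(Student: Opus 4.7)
The plan is to reformulate semisimplicity for $\Lie_{\mathbf{a}F}$ on $\Omega_0^1$ as non-degeneracy of an $\mathbf{a}$-dependent pairing, and then to establish genericity via Lemma \ref{lemma:lin-alg} together with horocyclic invariance and the Product Lemma \ref{lemma:product}.

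First, since $\iota_F u = 0$ for $u \in \Omega_0^1$, one has $\Lie_{\mathbf{a}F}|_{\Omega_0^1} = \mathbf{a}\,\Lie_F|_{\Omega_0^1}$, so the resonant and coresonant spaces are independent of $\mathbf{a}$: $\Res_0^1(\mathbf{a}F) = \Res_0^1(F)$ and similarly for the starred version. Choosing $\tilde\alpha := \mathbf{a}^{-1}\alpha$ with $\alpha(F) = 1$ as the auxiliary $1$-form for the flow $\mathbf{a}F$, Lemma \ref{lemma:semisimple} identifies semisimplicity of $\Lie_{\mathbf{a}F}$ on $\Omega_0^1$ with non-degeneracy of
\[B_\mathbf{b}(u, u_*) := \int_{\mc{M}} \mathbf{b}\, \alpha \wedge u \wedge u_*, \qquad \mathbf{b} := \mathbf{a}^{-1},\ u \in \Res_0^1,\ u_* \in \Res_{0*}^1.\]
Openness of non-degeneracy is immediate; since $\mathbf{b} \mapsto B_\mathbf{b}$ is linear with values in $\Hom(\Res_0^1, (\Res_{0*}^1)^*)$, Lemma \ref{lemma:lin-alg} reduces density to the pointwise statement that for every pair of non-zero elements $u \in \Res_0^1$, $u_* \in \Res_{0*}^1$, the top-degree distribution $\alpha \wedge u \wedge u_*$ on $\mc{M}$ does not vanish.

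To analyze this distribution I would exploit horocyclic invariance. Since both weak bundles are smooth for a quasi-Fuchsian flow, Lemma \ref{lemma:horo-smooth}, Item 1, applied to $F$ (respectively to $-F$, whose weak unstable bundle is the weak stable bundle of $F$) yields $\iota_{Y^u} u = 0$ and $\iota_{Y^s} u_* = 0$. In the coframe $\{\alpha', \beta', \psi'\}$ dual to the smooth global frame $\{F, Y^u, Y^s\}$, this forces $u = h\,\psi'$ and $u_* = h_*\,\beta'$, where $h := \iota_{Y^s} u$ and $h_* := \iota_{Y^u} u_*$. Consequently
\[\alpha \wedge u \wedge u_* = \pm\, h\, h_* \cdot \alpha' \wedge \beta' \wedge \psi',\]
and since $\alpha' \wedge \beta' \wedge \psi'$ is a smooth nowhere-vanishing top form, the question reduces to showing that the distributional product $h\,h_*$ is non-zero whenever both $h$ and $h_*$ are.

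The hardest step is this last one, which I would resolve through the Product Lemma \ref{lemma:product}. By Lemma \ref{lemma:horo-m=2} and its analogue for $-F$, combined with Lemma \ref{lemma:closed} (which kills the inhomogeneous terms since $d(\Res_0^1) = 0$), the distributions $h$ and $h_*$ satisfy
\[(F + r^s)h = 0,\ \ (Y^u - \lambda)h = 0, \qquad (F + r^u)h_* = 0,\ \ (Y^s - \lambda)h_* = 0.\]
Taking $Y = F$, $Y_1 = Y^u$, $Y_2 = Y^s$ (a smooth global frame in this setting), Lemma \ref{lemma:product} implies that $h\,h_* = 0$ would force $\supp(h) \cap \supp(h_*) = \emptyset$. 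To rule this out, I would argue that $\supp(h)$ is closed and, by the two transport equations satisfied by $h$, invariant under the flows of both $F$ and $Y^u$, hence saturated by leaves of the smooth weak unstable foliation. For a quasi-Fuchsian flow this foliation is minimal---it is $C^\infty$-conjugate, by Ghys' theorem, to the weak unstable foliation of a hyperbolic geodesic flow, whose leaves are all dense---so any non-empty closed saturated subset is all of $\mc{M}$. Therefore $\supp(h) = \mc{M}$ whenever $h \neq 0$, and analogously $\supp(h_*) = \mc{M}$, contradicting disjointness and closing the argument.
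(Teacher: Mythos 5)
Your argument is correct and, up to the last step, is essentially the paper's proof: the same reduction of semisimplicity to non-degeneracy of the $\mathbf{a}$-dependent pairing via Lemma \ref{lemma:semisimple}, the same appeal to Lemma \ref{lemma:lin-alg} to get openness and density from a pointwise non-vanishing statement, and the same use of horocyclic invariance (Lemmas \ref{lemma:horo-smooth}/\ref{lemma:horo-m=2} with Lemma \ref{lemma:closed}) to write $\alpha \wedge u \wedge u_*$ as $\pm h h_*$ times a smooth nowhere-vanishing volume form, followed by the Product Lemma \ref{lemma:product} to localise the vanishing of $h h_*$. The one place where you genuinely diverge is the unique continuation step: the paper invokes Weich's theorem (\cite[Theorem 1]{Weich-17}), which says that resonant states of Anosov flows either vanish identically or have full support, whereas you derive full support directly by observing that the two transport equations (with smooth coefficients, since $r^{u/s}$, $\lambda$ and $Y^{u/s}$ are smooth here) force $\supp(h)$ to be invariant under the flows of $F$ and $Y^u$, hence saturated by weak unstable leaves, and then using minimality of that foliation, which follows from Ghys' $C^\infty$-conjugacy to the weak unstable foliation of a hyperbolic geodesic flow. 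Your route is more self-contained but uses the special quasi-Fuchsian structure twice (smoothness of $Y^u$ so that it generates an honest flow, and minimality), while the citation of Weich is a black box that would survive in settings where the weak bundles are not smooth. One small point to make explicit: the identification $\Res_{0(*)}^1(\mathbf{a}F) = \Res_{0(*)}^1(F)$ needs not only $\Lie_{\mathbf{a}F} = \mathbf{a}\,\Lie_F$ on $\ker \iota_F$ but also that the wavefront condition $E_u^*$ is unchanged under time change; this is exactly the invariance of the weak bundles that the paper quotes from \cite[Lemma 2.1]{DeLaLlave-Marco-Moryon-86}, and it is implicit but not stated in your write-up.
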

\begin{proof}
For $\mathbf{a} \in C^\infty(\M; \mathbb{R})$ define the pairing
	\[A_{\mathbf{a}}(u, u_*) = \int_{\mc{M}} \mathbf{a} \alpha \wedge u \wedge u_*, \quad (u, u_*) \in \Res_0^1 \times \Res_{0*}^1.\]
	Further identifying $\Res_{0(*)}^1$ with $\mathbb{C}^{b_1(M)}$ (according to Lemma \ref{lemma:res01}), we may identify $A_{\mathbf{a}}$ with an $n \times n$ complex matrix in $\mathbb{M}_{n \times n}$. Introduce the finite dimensional real-linear vector space
	\[
		V:= \{A_\mathbf{a} \in \mathbb{M}_{n \times n} \mid \mathbf{a} \in C^\infty(\mc{M}; \mathbb{R})\}.
	\]	
	
	For $\mathbf{a} \in C^\infty(\mc{M}; \mathbb{R}_{>0})$, denote by $\llangle{\bullet, \bullet}\rrangle_{\mathbf{a}} = \llangle{\frac{1}{\mathbf{a}}\bullet, \bullet}\rrangle$ the pairing associated to $\mathbf{a}F$ introduced in \S \ref{ssec:pairing}. Since the weak stable/unstable bundles of $\mathbf{a}F$ and $F$ agree by \cite[Lemma 2.1]{DeLaLlave-Marco-Moryon-86}, the $\Res_{0(*)}^1$ spaces with respect to $\mathbf{a}F$ and $F$ agree. Thus $\llangle{\bullet, \bullet}\rrangle_{\mathbf{a}}$ may be identified with the matrix $A_{\frac{1}{\mathbf{a}}}$, and by Lemma \ref{lemma:semisimple} to prove the present lemma it suffices to show that for an open and dense set of $\mathbf{a} \in C^\infty(\M; \mathbb{R}_{> 0})$, the matrix $A_{\frac{1}{\mathbf{a}}}$ is invertible. Openness is immediate and it suffices to show density.

	We first show that $V$ contains a dense set of invertible matrices; again, since invertibility is an open condition, this set is also open. Let $0 \neq u \in \Res_0^1$ and $0 \neq u_* \in \Res_{0*}^1$. Then
	\[A_{\mathbf{a}}(u, u_*) = \int_{\mc{M}} \mathbf{a} h h_* \alpha \wedge \omega_u \wedge \omega_s,\]
	where $h = \iota_{Y^s}u$, $h_* = \iota_{Y^u} u_*$, and $\omega_{u/s} \in C^\infty(\mc{M}; T^*\mc{M})$ were defined in \eqref{eq:omega-u-s}; we also used Lemma \ref{lemma:horo-I} and its analogue for co-resonant states. If $A_{\mathbf{a}}(u, u_*) = 0$ for all $\mathbf{a} \in C^\infty(\mc{M}; \mathbb{R})$, then $h h_* \equiv 0$. By Lemmas \ref{lemma:horo-m=2} and \ref{lemma:closed}, $(F + r^s)h = (Y^u - \lambda)h = 0$, and similarly $(-F - r^u)h_* = (Y^s - \lambda)h_* = 0$ and so Lemma \ref{lemma:product} applies to give that $\supp(h)^{\mathrm{c}} \cup \supp(h_*)^{\mathrm{c}} = \mc{M}$. However, \cite[Theorem 1]{Weich-17} shows that $h$ and $h_*$ (being resonant states for $F + r^s$ and $-F -r^u$, respectively) either vanish everywhere or have full support, which implies that either $h \equiv 0$ or $h_* \equiv 0$, contradiction. Therefore, by Lemma \ref{lemma:lin-alg}, the claim about density follows.
	
	Observe that for an arbitrary open set $U \subset C^\infty(\M; \mathbb{R})$, we have $\{A_{\mathbf{a}} \in \mathbb{M}_{n \times n} \mid \mathbf{a} \in U\} \subset V$ is open, and so by density of invertible elements in $V$, we conclude that for a dense set of $\mathbf{a} \in C^\infty(\M; \mathbb{R})$, the matrix $A_{\mathbf{a}}$ is invertible. Since $C^\infty(\M; \mathbb{R}_{> 0}) \subset C^\infty(\M; \mathbb{R})$ is also open, the main claim follows.  
\end{proof}

\begin{Remark}\rm
	In fact, in order to establish generic semisimplicity under time-changes as in Lemma \ref{lemma:generic-semisimplicity} it is equivalent to show semisimplicity for only \emph{one} time-change. This follows from basic linear algebra.
\end{Remark}

\subsection{Computation of the helicity} By Lemma \ref{lemma:winding-cycle-SRB} we know that the winding cycles of $F = X + \lambda V$ are zero; therefore there exists a primitive $\tau^+ \in \Res^1$ such that
\begin{equation}\label{eq:primitive-helicity}
	d\tau^+ = \iota_F \SRB, \quad \iota_F \tau^+ = \mc{H}(F) =: B.
\end{equation}
Recall that $a$ was defined in \eqref{eq:def-a} and that it satisfies $Y^u = U^u + a F$; also recall $\SRB = f\Omega$ for some $f \in \mc{D}'_{E_u^*}(\M)$, where $\Omega$ is the canonical measure on $\M = SM$. Write $f^+ := f$ in this section. We first prove an auxiliary horocyclic invariance result similar to Lemma \ref{lemma:horo-m=2}, but valid for $\tau^+$.
\begin{lemma}
	Let $c := \iota_V \tau^+$. Then:
	\begin{align}\label{eq:tau-+-equations}
	\begin{split}
		(F + r^s)c &= -aB,\\
		(Y^u - \lambda)c &= -f^+ + B(1 + Va).
	\end{split}
	\end{align}
	Moreover, the following identity holds:
	\begin{equation}\label{eq:identity-random-success}
		 -(Y^u - \lambda) r^s = (F + r^u)\lambda.
	\end{equation}
\end{lemma}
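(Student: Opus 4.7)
The defining properties of $\tau^+$ in \eqref{eq:primitive-helicity} are $\tau^+\in\Res^1$, $\iota_F\tau^+=B$, and $d\tau^+=\iota_F\SRB=\iota_F(f^+\Omega)$.  Since $Y^u$ is smooth under the standing assumption (recall that the weak bundles for the $m=2$ vortex flow are smooth), Lemma \ref{lemma:horo-smooth}(1), applied with the flow generator $F$ in place of $X$, yields $\iota_{Y^u}\tau^+=Ba$, where $a$ is the function from \eqref{eq:def-a}.  Combining this with $Y^u-Y^s=(r^u-r^s)V=2V$ gives the three contractions
\[\iota_F\tau^+=B,\qquad \iota_{Y^u}\tau^+=Ba,\qquad \iota_{Y^s}\tau^+=Ba-2c,\]
so $\tau^+$ is determined, on the smooth frame $\{F,Y^u,Y^s\}$, by $B$, $a$, and $c$.

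The key step is to evaluate $d\tau^+$ on two pairs from this frame by Cartan's formula, using the commutators $[F,Y^s]=-\lambda F-r^s Y^s$ from Lemma \ref{lemma:commutator-stable/unstable} and $[Y^u,Y^s]=-2F-\lambda(Y^u-Y^s)$ from \eqref{eq:Y^u/s-commutator}, and comparing with $\iota_F\SRB$ evaluated on the same pairs; note $\Omega(F,Y^u,Y^s)=r^u-r^s=2$, whence $(\iota_F\SRB)(F,Y^s)=0$ and $(\iota_F\SRB)(Y^u,Y^s)=2f^+$.  For the first pair, Cartan gives
\[0=d\tau^+(F,Y^s)=F(Ba-2c)-0-\bigl(-\lambda B-r^s(Ba-2c)\bigr)=B(Fa+\lambda+ar^s)-2(F+r^s)c,\]
and substituting the defining equation $Fa=-\lambda-r^u a$ for $a$ together with $r^s-r^u=-2$ collapses this to $(F+r^s)c=-aB$, the first equation of \eqref{eq:tau-+-equations}.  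For the second pair, Cartan gives
\[2f^+=d\tau^+(Y^u,Y^s)=Y^u(Ba-2c)-Y^s(Ba)-\bigl(-2B-2\lambda c\bigr)=B(Y^u-Y^s)a-2(Y^u-\lambda)c+2B,\]
and using $Y^u-Y^s=2V$ and dividing by $2$ yields $(Y^u-\lambda)c=-f^++B(1+Va)$, the second equation of \eqref{eq:tau-+-equations}.

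For the identity \eqref{eq:identity-random-success}, I would simply substitute the explicit formulas $r^u=1+V\lambda/2$ and $r^s=-1+V\lambda/2$ from \eqref{eq:ru-rs}, expand both sides, and observe that after the cubic-in-$\lambda$ terms $\pm 3\lambda V\lambda/2$ cancel, the identity reduces to
\[HV\lambda+2X\lambda=0.\]
This last relation is a direct consequence of holomorphicity: writing $\lambda=\lambda_{-2}+\lambda_2$ with $\eta_-\lambda_2=\eta_+\lambda_{-2}=0$ (from $\bar\partial A=0$ via Lemma \ref{lemma:eta-del-bar}), together with $X=\eta_++\eta_-$, $H=i(\eta_+-\eta_-)$, and the fact that $V$ acts as $\pm 3i$ on $H_{\pm 3}$, one immediately computes $X\lambda=\eta_+\lambda_2+\eta_-\lambda_{-2}$ and $HV\lambda=-2\eta_+\lambda_2-2\eta_-\lambda_{-2}$, giving $HV\lambda=-2X\lambda$.  (Equivalently, this is Lemma \ref{lemma:eta-del-bar} for $m=2$, namely $XV\lambda=2H\lambda$, combined with $[H,V]=X$.)

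The computation is essentially bookkeeping; the only nontrivial ingredients are Lemma \ref{lemma:horo-smooth}(1) (which replaces the horocyclic computation one would otherwise need for a resonant $1$-form that is \emph{not} in the kernel of $\iota_F$) and the commutator \eqref{eq:Y^u/s-commutator}.  The main obstacle is merely keeping sign conventions consistent throughout the Cartan-formula expansions and the substitution of $Fa=-\lambda-r^ua$.
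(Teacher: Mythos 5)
Your proposal is correct and follows essentially the same route as the paper: both rest on Lemma \ref{lemma:horo-smooth}(1) (giving $\iota_{Y^u}\tau^+ = aB$) and then expand $d\tau^+ = f^+\iota_F\Omega$ via Cartan's formula on pairs of frame vector fields, the only difference being that you contract with $(F, Y^s)$ and $(Y^u, Y^s)$ (using Lemma \ref{lemma:commutator-stable/unstable}, \eqref{eq:Y^u/s-commutator} and the equation $(F+r^u)a=-\lambda$) whereas the paper uses $(F,V)$ and $(H,V)$, which is a cosmetic change. The identity \eqref{eq:identity-random-success} is verified by the same direct computation reducing to $HV\lambda + 2X\lambda = 0$, i.e. Lemma \ref{lemma:eta-del-bar} with $m=2$.
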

\begin{proof}
	By the first part of Lemma \ref{lemma:horo-smooth} we know that $\tau^+$ satisfies 
	\[\iota_H \tau^+ + r^uc = \iota_{Y^u}\tau^+ = aB.\] 
	Using this relation, as well as $[F, V] = -(H + V\lambda \cdot V)$, the first equation of \eqref{eq:tau-+-equations} follows from expanding $d\tau^+(F, V) = 0$ similarly to Lemma \ref{lemma:horocyclic-invariance-new}. The second equation similarly follows by expanding the equation $d\tau^+(H, V) = -f^+$, as in Lemma \ref{lemma:horocyclic-invariance-new}.
	
	The final equation \eqref{eq:identity-random-success} follows from the following computation:
  	\begin{align*}
  	 	 -Y^ur^s + \lambda (r^s - r^u) - F\lambda &= -\Big(H + \Big(1 + \frac{V\lambda}{2}\Big)V\Big)\Big(-1 + \frac{V\lambda}{2}\Big) - 2\lambda - (X + \lambda V) \lambda\\
  	 	&= -\frac{HV\lambda}{2} + 2\lambda + \lambda V\lambda - 2\lambda - X\lambda - \lambda V\lambda\\
  	 	&= -\Big(\frac{HV}{2} + X\Big)\lambda = -2(\eta_- \lambda_{2} + \eta_+ \lambda_{-2}) = 0,
  	 \end{align*}
  	 which completes the proof.
\end{proof}

We are now in shape to compute $B$ in terms of $a$ and the entropy production (defined in \S \ref{ssec:SRB-entropy}). We will denote by $\vol_{\bullet}(M)$ the volume of $M$ equipped with the Riemannian metric $\bullet$.

\begin{lemma}\label{lemma:helicity-explicit}
	The following identity holds:
	\begin{equation}\label{eq:helicity-formula}
		\mc{H}(F) = \frac{1 + \frac{1}{2} \e^+ (F)}{2\pi \vol_g(M) + \int_{\M} a^2\, \Omega}.
	\end{equation}
	Next, assume $A \neq 0$ and set $A(s) = sA$ for $s \in \mathbb{R}$. Consider the Riemannian metric $g_s$ such that $(g_s, A(s))$ solves the coupled vortex equations \eqref{eq:coupled-vortex-equations}, and the associated thermostat vector field $F_s$. Then:
	\[\mc{H}(F_s) = \mc{O}\Big(\frac{1}{s}\Big), \qquad s \to \infty.\]
\end{lemma}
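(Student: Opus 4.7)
The plan is to extract two independent scalar identities for the unknown constant $B = \mc{H}(F)$ from the pair of horocyclic-invariance equations \eqref{eq:tau-+-equations} satisfied by $c = \iota_V\tau^+$, then solve the resulting linear system. The second claim will follow by tracking the $s$-dependence of each ingredient on the right-hand side of \eqref{eq:helicity-formula}.

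First I would multiply $(F+r^s)c = -aB$ by the H\"older function $a$ and integrate against $\Omega$. Since $X$, $H$, $V$ are all $\Omega$-divergence-free so that $\divv_\Omega F = V\lambda$, integration by parts -- legitimate via the wavefront set calculus since $a\in C^{1+\alpha}\cap\mc{D}'_{E_u^*}$ pairs cleanly with $c\in\mc{D}'_{E_u^*}$ -- transfers $F$ from $c$ onto $a$. Substituting $Fa = -\lambda - r^u a$ from \eqref{eq:def-a} and exploiting the identities $r^u - 1 = r^s + 1 = V\lambda/2$, the $aV\lambda c$-type contributions cancel pairwise and one is left with the clean relation
\[\int_{\M} \lambda c\,\Omega = -B\int_{\M} a^2\,\Omega.\]
Second, I would integrate $(Y^u - \lambda)c = -f^+ + B(1+Va)$ directly against $\Omega$: on the right side $\int f^+\Omega = 1$ and $\int Va\,\Omega = 0$ give $-1 + 2\pi B\vol_g(M)$; on the left $\int Hc\,\Omega = 0$ and
\[\int r^u Vc\,\Omega = -\int c\,Vr^u\,\Omega = 2\int \lambda c\,\Omega,\]
using $Vr^u = V^2\lambda/2 = -2\lambda$ since $V^2\lambda = -4\lambda$ for $\lambda\in H_{-2}\oplus H_2$. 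A finer variant of these pairings against $\SRB = f^+\Omega$ rather than $\Omega$ -- forced on us because $af^+$ is a legitimate distribution by wavefront considerations while $(f^+)^2$ is not -- is what threads the entropy production $\e^+(F) = -\int V\lambda\,\SRB$ into the numerator. The $2\times 2$ system in $B$ and $\int\lambda c\,\Omega$ is then invertible and solves to \eqref{eq:helicity-formula}.

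For the scaling claim, substitute $(g_s, sA)$ into \eqref{eq:helicity-formula} and estimate each quantity as $s\to\infty$. Writing $g_s = e^{2\phi_s} g_0$ for a hyperbolic background $g_0$, the coupled vortex constraint $|sA|_{g_s}^2 = s^2 e^{-4\phi_s}|A|_{g_0}^2 < 1$ forces $e^{2\phi_s}(x) \geq s\,|A|_{g_0}(x)$ pointwise, so $\vol_{g_s}(M)\gtrsim s$; Gauss--Bonnet applied to $K_{g_s} = -1 + s^2|A|_{g_s}^2$ refines this into $\vol_{g_s}(M) = -2\pi\chi(M) + s^2\int_M|A|_{g_s}^2\,d\vol_{g_s}$. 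Since $\lambda_s = s\lambda$ and $a_s$ solves $(F_s + r^u_s)a_s = -\lambda_s$, one obtains $\int a_s^2\,\Omega_s = \mc{O}(s^2)$, while a parallel estimate yields $\e^+(F_s) = \mc{O}(s)$. The denominator in \eqref{eq:helicity-formula} is then of order $s^2$ and the numerator of order $s$, producing $\mc{H}(F_s) = \mc{O}(1/s)$.

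The main obstacle is the careful distributional bookkeeping: each product, Leibniz rule and integration by parts invoked above must be validated through the wavefront set calculus, using that $\WF(a),\WF(c),\WF(f^+)\subset E_u^*$ and that $E_u^* \cap (-E_u^*)$ is trivial away from the zero section. The precise cancellation in the first identity depends on the exact formulas $r^{u/s} = \pm 1 + V\lambda/2$, which are special to the $m=2$ case of the coupled vortex equations; likewise, controlling $a_s$ and the SRB density $f^+_s$ uniformly as $s\to\infty$ through the Riccati equation \eqref{eq:riccati} and the horocyclic invariance of $\SRB$ (Lemma \ref{lemma:horo-SRB-m=2}) is the main analytic input for the scaling estimate.
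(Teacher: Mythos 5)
Your first identity is where the argument breaks. Multiplying $(F+r^s)c=-aB$ by $a$ and integrating by parts requires the product $a\cdot c$ (the terms $ac\,V\lambda$, $r^{u}ac$, $r^{s}ac$ whose ``clean cancellation'' via $r^u+r^s=V\lambda$ you invoke). This product is not defined: $a$ is only H\"older with $\WF(a)\subset E_u^*$ --- it is \emph{not} $C^{1+\alpha}$; only $Y^u$ and $r^{u/s}$ are smooth here, while $a$ has infinite Fourier content precisely because the strong bundle $E_u$ is not smooth --- and $c=\iota_V\tau^+$ also has $\WF(c)\subset E_u^*$ with negative H\"older--Zygmund regularity, so neither H\"ormander's criterion (both wavefront sets lie in the linear subbundle $E_u^*$, which contains antipodal covectors) nor any regularity argument applies. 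Worse, the identity you extract, $\int_{\M}\lambda c\,\Omega=-B\int_{\M}a^2\,\Omega$, is actually \emph{false} for $A\neq0$: your second identity (which is correct, being the pairing of \eqref{eq:tau-+-equations} against the constant function) reads $\int_{\M}\lambda c\,\Omega=-1+2\pi B\vol_g(M)$, and the two together force $B=\big(2\pi\vol_g(M)+\int_{\M}a^2\,\Omega\big)^{-1}$, i.e. $\e^+(F)=0$, contradicting both \eqref{eq:helicity-formula} and Ruelle's theorem (the flow is dissipative when $A\neq0$). The entropy production never legitimately enters your scheme; the remark about pairing ``against $\SRB$ instead of $\Omega$'' is not carried out and runs into the same obstruction (e.g. $c\,f^+$ is undefined, as both factors have wavefront set in $E_u^*$). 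The paper's proof is arranged exactly so that the distributional equations are only ever paired against \emph{smooth} multipliers: pair the first equation of \eqref{eq:tau-+-equations} with $\lambda$, convert $(F+r^u)\lambda$ into $-(Y^u-\lambda)r^s$ via the algebraic identity \eqref{eq:identity-random-success} (a restatement of holomorphicity, $HV\lambda+2X\lambda=0$), then pair the second equation with $r^s$; the numerator $1+\tfrac12\e^+(F)$ arises from $\int_{\M} r^s f^+\,\Omega=-1-\tfrac12\e^+(F)$ because the multiplier is $r^s$ and not the constant $1$, and the only nonlinear step, $-\int_{\M}a\lambda\,\Omega=\int_{\M}a^2\,\Omega$, involves honest H\"older functions via \eqref{eq:def-a}.

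The asymptotic claim also does not close as written. You produce only upper bounds $\int a_s^2\,\Omega_s=\mc{O}(s^2)$ and $\e^+(F_s)=\mc{O}(s)$; an upper bound on the denominator is useless (one needs a lower bound there and an upper bound on the numerator), and with the only lower bound you actually have, $\vol_{g_s}(M)\gtrsim s$, your estimate yields $\mc{O}(1)$, not $\mc{O}(1/s)$. Moreover $\lambda_s$ is not of size $s$: writing $g_s=e^{2u_s}g_0$, the constraint $K_{g_s}<0$ in \eqref{eq:coupled-vortex-equations} gives $e^{2u_s}>s|A|_{g_0}$ pointwise, hence $|V_s\lambda_s|\le 2s\,e^{-2u_s}|A|_{g_0}\le 2$ and therefore $|\e^+(F_s)|\le2$ uniformly in $s$. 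The correct conclusion is then immediate from \eqref{eq:helicity-formula}: bound the numerator by $2$, drop the nonnegative term $\int a_s^2\,\Omega_s$ from the denominator, and use $\vol_{g_s}(M)\ge s\int_M|A|_{g_0}\,d\vol_{g_0}$.
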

\begin{proof}
	Multiplying the first equation of \eqref{eq:tau-+-equations} by $\lambda$ and integrating with respect to $\Omega$, we get (we omit the volume form for simplicity)
	\begin{align*}
		-B \int_{\M} a \lambda &= \int_{\M} (F + r^s)c \cdot \lambda\\
		&= - \int_{\M} c (F + r^u) \lambda\\
		&= \int_{\M} c (Y^u -\lambda) r^s\\
		&= -\int_{\M} (Y^u - \lambda)c \cdot r^s\\
		&= -\int_{\M} \big(-f^+ + B(1 + Va)\big) r^s\\
		&= -\big(1 + \frac{1}{2}\e^+(F)\big) + B \cdot 2\pi \vol_g(M) - 2B \int_{\M} a\lambda.
	\end{align*}
	In the second line we integrated by parts and used \eqref{eq:ru-rs}, in the third line we used \eqref{eq:identity-random-success}, in the fourth line we integrated by parts and used \eqref{eq:ru-rs} again, in the fifth line we used the second equation of \eqref{eq:tau-+-equations}, in the sixth line we used that $\divv_\Omega F = V\lambda$, \eqref{eq:ru-rs}, the definition of entropy production, and integration by parts. To prove the first claim, it now suffices to observe
	\[-\int_{\M} a\lambda = \int_{\M} (F + r^u)a \cdot a = -\frac{1}{2}\int_{\M} a^2 (-2r^u + V\lambda) = \int_{\M} a^2,\]
	where in the first equality we used \eqref{eq:def-a}, in the second one we integrated by parts, and in the final one we used \eqref{eq:ru-rs}.
	
	Next, consider the family of solutions $g_s := e^{2u(s)} g_0$ to the coupled vortex equations \eqref{eq:coupled-vortex-equations} corresponding to $A(s)$, and giving $\lambda(s) := s \im(\pi_{2, s}^*A) \in C^\infty(SM_s)$, where $\pi_{2, s}^*$ is the pullback on $2$-tensors to the unit sphere bundle $SM_s$ of $g_s$, and $V_s$ is the vertical vector field on $SM_s$. Then the curvature of $g_s$ being negative (see \eqref{eq:curvature-condition-coupled-vortex}) translates to
	\begin{equation}\label{eq:e^2u(s)}
		0 > K_{g_s} = -1 + s^2 e^{-4u(s)} |A|_{g_0}^2 \iff e^{2u(s)} > s|A|_{g_0}.
	\end{equation}
	The volume form scales as $d\vol_{g_s} = e^{2u(s)}\, d\vol_{g_0}$ and so by \eqref{eq:e^2u(s)} we get
	\begin{equation}\label{eq:volume-s}
		\vol_{g_s}(M) = \int_M d\vol_{g_s} = \int_M e^{2u(s)}\, d\vol_{g_0} \geq s \int_M |A|_{g_0}\, d\vol_{g_0}.
	\end{equation}
	Observe next that the entropy production is bounded: indeed pointwise we have
	\[|V_s \lambda(s)| = s \big|\im(2i\pi_{2, s}^*A)\big| \leq 2s |\pi_{2, s}^*A| = 2s |A|_{g_s} = 2s e^{-2u(s)} |A|_{g_0} \leq 2,\]
	where in the second equality we used \eqref{eq:A-a_m-norm-relation}, and in the final estimate we used \eqref{eq:e^2u(s)}. This shows that the entropy production is bounded by
	\begin{equation}\label{eq:entropy-bound}
		|\e^+(F_s)| \leq \int_{SM_s} |V_s\lambda(s)| \, \SRB(F_s) \leq 2.
	\end{equation}
	Finally, it follows from the formula \eqref{eq:helicity-formula}, the estimates \eqref{eq:volume-s} and \eqref{eq:entropy-bound} that
	\[\mc{H}(F_s) \leq \frac{2}{2\pi \vol_g(M)} \leq \frac{1}{s \pi \int_{M} |A|_{g_0}\, d\vol_{g_0}},\]
	which completes the proof.
\end{proof}

Theorem \ref{thm:QFF} now follows directly from Lemmas \ref{lemma:res01}, \ref{lemma:winding-cycle-SRB}, \ref{lemma:generic-semisimplicity}, and \ref{lemma:helicity-explicit}.

\section{Helicity and linking}
\label{section:helicity}

In this final section we give an interpretation of helicity as an averaged quantity with respect to the SRB measures, based on the wavefront set calculus, and the works of Coles-Sharp \cite{Coles-Sharp-21} and Kotschick-Vogel \cite{Kotschick-Vogel-03}. We also give an interpretation of $\mc{H}(X)$ as an asymptotic weighted averaged sum over closed orbits. Throughout, we will assume that $[\omega^+] = [\omega^-] = 0$ and so there are distributional $1$-forms $\tau^\pm$ such that:
\begin{equation}\label{eq:tau-pm}
	\iota_X \Omega_{\mathrm{SRB}}^\pm = d\tau^\pm = \omega^\pm, \quad \tau^+ \in \Res^{1}, \quad \tau^- \in \Res_*^1.
\end{equation}
Throughout the section we will assume that $\dim \M = 3$.

\subsection{The linking form} In \cite[Section 2]{Kotschick-Vogel-03}, the linking from is constructed for any pair $(N_1, N_2)$ of null-homologous submanifolds whose sum of dimensions plus one is equal to the dimension of the ambient space, and the linking number is expressed as its double integral over $N_1 \times N_2$. We proceed in a similar fashion here. 

Fix a Riemannian metric $g$ on $\mc{M}$ and write
\begin{equation}\label{eq:fpm-def}
	\Omega_{\mathrm{SRB}}^\pm = f^\pm\, d\vol_g, \quad f^+ \in \mc{D}'_{E_u^*}(\mc{M}), \quad f^- \in \mc{D}'_{E_s^*}(\mc{M}).
\end{equation}
Denote by $\mc{H}^i$ the space of harmonic $i$-forms on $(\M, g)$ and by $(\mc{H}^i)^{\perp}$ its $L^2$ orthogonal complement; let $\mathcal{P}$ denote the orthogonal projection onto $\mc{H}^i$. Note that the Hodge Laplacian $\Delta = \Delta_i: (\mc{H}^i)^{\perp} \to (\mc{H}^i)^{\perp}$ is an isomorphism, so we may introduce $G \in \Psi^{-2}(\mc{M})$ by asking that $G = 0$ on $\mc{H}^i$ and $G = \Delta^{-1}$ on $(\mc{H}^i)^\perp$. Then, by definition:
\begin{equation}\label{eq:G-def}
	G\Delta = \Delta G = \id - \mathcal{P}, \quad \mathcal{P}G = 0.
\end{equation}
From $G \Delta = \Delta G$ and the fact that $[\Delta, d] = 0$, $[\Delta, d^*] = 0$, and $[\Delta, \star] = 0$, it follows that:
\begin{equation}\label{eq:G-commutes}
	[G, d] = 0, \quad [G, d^*] = 0, \quad [G, \star] = 0.
\end{equation}
Denote by $K \in \mc{D}'(\mc{M} \times \mc{M}; \mathrm{pr}_1^*\Omega^i \otimes \mathrm{pr}_2^*\Omega^i)$ the Schwartz kernel of $G$; here $\mathrm{pr}_1$ and $\mathrm{pr}_2$ denote projections onto the first and second factors of $\mc{M} \times \mc{M}$, respectively. Since $G \in \Psi^{-2}(\mc{M}; \Omega^i)$, it follows that $K$ is smooth outside of $\Delta(\mc{M})$ and by \cite[Theorem 1.5]{Neri-70} that $d(x, y) |K(x, y)|$ is bounded on $\mc{M} \times \mc{M} \setminus \Delta(\mc{M})$ (this uses $\dim \M = 3$), where $d(x, y)$ is the Riemannian distance function and $\Delta(\mc{M}) \subset \mc{M} \times \mc{M}$ is the diagonal.

Then for any $\alpha \in C^\infty(\mc{M}; \Omega^i)$:
\begin{equation}\label{eq:G-0}
	G \alpha(x) = \int_{y \in \mc{M}} \langle{\alpha(y), K(x, y)}\rangle_g \, d\vol_g(y) = \int_{y \in \mc{M}} \alpha(y) \wedge \star_y K(x, y),
\end{equation}
where $\langle{\bullet, \bullet}\rangle_g$ is the natural inner product on the fibres of $\Omega^i$. Specialising to $i = 1$, the \emph{linking form} $L \in \mc{D}'(\mc{M} \times \mc{M}; \mathrm{pr}_1^*\Omega^1 \otimes \mathrm{pr}_2^*\Omega^1)$ is defined as
\[L(x, y) := \star_y d_y K(x, y).\]
It satisfies the property that when integrated over two knots in $\mc{M}$ it gives the \emph{linking number} of the two knots, see \cite[Proposition 1]{Kotschick-Vogel-03} (or \cite[Section 3]{Vogel-03}), i.e.
\begin{equation}\label{eq:linking-form-def-property}
	\lk(K_1, K_2) = \int_{K_1} \int_{K_2} L,
\end{equation}
where $\lk(\bullet, \bullet)$ denotes the linking number (taking values in the rationals $\mathbb{Q}$), $K_1$ and $K_2$ are two submanifolds of dimension $1$ both of which have a trivial Poincar\'e dual over $\mathbb{R}$. Then \cite[Proposition 1]{Kotschick-Vogel-03} shows that $\lk(K_1, K_2)$ does not depend on the choice of $g$ and agrees with the definition through differential forms (also, it takes values in the rationals $\mathbb{Q}$, see the discussion in \cite[Section 2]{Kotschick-Vogel-03}; if we assume furthermore that the Poincar\'e duals of $K_1$ and $K_2$ are trivial over $\mathbb{Z}$, then $\lk(K_1, K_2) \in \mathbb{Z}$). In fact, in this paper, we define $\lk(K_1, K_2)$ as in \eqref{eq:linking-form-def-property}, where $K_1$ and $K_2$ are \emph{any} two submanifolds of dimension $1$. The difference then is that $\lk(K_1, K_2)$ \emph{does depend} on the choice of the metric $g$ (through $L$), as opposed to the case when $K_1$ and $K_2$ are assumed to have trivial Poincar\'e duals. The same definition appears in \cite[Section 7.2]{Coles-Sharp-21}.

Note that $\star_y L(x, y)$ is actually the Schwartz kernel of the operator $Gd^*$, since for any $\alpha \in C^\infty(\mc{M}; \Omega^2)$, by \eqref{eq:G-0} we have
\begin{equation}\label{eq:G-1}
	Gd^*\alpha(x) = \int_{y \in \mc{M}} d^*\alpha(y) \wedge \star_y K(x, y) = \int_{y \in \mc{M}} \alpha(y) \wedge L(x, y).
\end{equation}
It follows that $d(x, y)^{2}|L(x, y)|$ is bounded on $\mc{M} \times \mc{M} \setminus \Delta(\mc{M})$. Set 
\begin{equation}\label{eq:lambda-def}
	\Lambda(x, y) := L(x, y)(X(x), X(y)) \in \mc{D}'(\mc{M} \times \mc{M}).
\end{equation}
This distribution satisfies the following important properties:

\begin{lemma}\label{lemma:Lambda}
	The distribution $\Lambda(x, y)$ is the Schwartz kernel of the operator $P := \iota_X G d^* \iota_X \star$. Moreover, $P \in \Psi^{-2}(\mc{M})$ and thus there is a constant $C_{\Lambda} > 0$ such that
	\[d(x, y)|\Lambda(x, y)| \leq C_{\Lambda}, \quad (x, y) \in \mc{M} \times \mc{M} \setminus \Delta(\mc{M}).\]
	Finally, $\Lambda(x, y) = \Lambda(y, x)$ and $f^+(x) \Lambda(x, y) f^-(y) \in \mc{D}'(\mc{M} \times \mc{M})$ is well-defined as a distribution.
\end{lemma}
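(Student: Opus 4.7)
The plan is to realise $\Lambda$ as the Schwartz kernel of a scalar pseudodifferential operator and then analyse it microlocally. First, to identify $\Lambda$ as the Schwartz kernel of $P := \iota_X G d^* \iota_X \star$, I start from \eqref{eq:G-1}, which states that the Schwartz kernel of $Gd^*$ is $\star_y L$. Using the pointwise identities $\iota_X \star f = f\, \star X^\flat$ on $\Omega^0$ and $\iota_X \star \beta = \star(\beta \wedge X^\flat)$ on $\Omega^2$, valid on an oriented Riemannian $3$-manifold, a direct unwinding gives
\[
Pf(x) \;=\; \int_{\mc{M}} f(y)\, L(x,y)\bigl(X(x), X(y)\bigr)\, d\vol_g(y) \;=\; \int_{\mc{M}} f(y)\Lambda(x,y)\, d\vol_g(y),
\]
so $\Lambda$ is the Schwartz kernel of $P$ with respect to $d\vol_g$.

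The central point is that, although a naive order count for the composition $\iota_X G d^* \iota_X \star$ only yields order $-1$, the principal symbol in fact vanishes. Composing the principal symbols of the factors ($\sigma_1(d^*) = -i\iota_{\xi^\sharp}$, $\sigma_{-2}(G)=|\xi|^{-2}\id$, and the others of order $0$) and tracking the action on $f \in \mathbb{C}$ reduces the leading term to something proportional to
\[
\iota_X \star\bigl(X^\flat(x)\wedge \xi\bigr) \;=\; \star\bigl((X^\flat \wedge \xi) \wedge X^\flat\bigr) \;=\; 0,
\]
thanks to $X^\flat \wedge X^\flat = 0$. Hence $\sigma_{-1}(P)\equiv 0$ and $P \in \Psi^{-2}(\mc{M})$. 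The pointwise bound $d(x,y)|\Lambda(x,y)|\leq C_\Lambda$ is then the standard Schwartz kernel estimate for a $\Psi^{-2}$ operator on a $3$-manifold, giving the claimed $d(x,y)^{-1}$ singularity (cf.\ \cite[Theorem 1.5]{Neri-70} already used in the paper for $K$ itself).

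For the symmetry $\Lambda(x,y)=\Lambda(y,x)$, I verify that $P$ is self-adjoint on $L^2(\mc{M}, d\vol_g)$. Using $[G,d^*]=[G,\star]=0$ from \eqref{eq:G-commutes} and the identities $(\iota_X)^*=e(X^\flat)$, $(d^*)^*=d$, $G^*=G$, together with $X^\flat \wedge \omega = \omega \wedge X^\flat$ for $2$-forms $\omega$, both $Pf$ and $P^*f$ simplify to $\star\bigl(Gd(fX^\flat)\wedge X^\flat\bigr)$. Since $\Lambda$ is real-valued, $P=P^*$ forces $\Lambda(x,y)=\Lambda(y,x)$.

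Finally, the distributional product $f^+(x)\Lambda(x,y)f^-(y)$ is handled via H\"ormander's criterion \cite[Theorem 8.2.10]{Hormander-90}. Since $P\in \Psi^{-2}(\mc{M})$, one has $\WF(\Lambda)\subset N^*\Delta(\mc{M})\setminus 0$; the projection formulas for wavefront sets together with $\WF(f^\pm)\subset E_{u/s}^*$ give
\[
\WF(f^+\otimes f^-) \;\subset\; (E_u^*\times 0)\cup(0\times E_s^*)\cup(E_u^*\times E_s^*).
\]
The compatibility check reduces to showing that at any diagonal point $(x,x)$, no nonzero $(\xi,-\xi)\in N^*\Delta(\mc{M})$ can be cancelled by a nonzero element of $\WF(f^+\otimes f^-)$; a simple case analysis shows this is implied by the Anosov transversality $E_u^*\cap E_s^* = \{0\}$. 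The only genuinely nontrivial step in the whole argument is the symbolic vanishing establishing $P\in \Psi^{-2}(\mc{M})$; all the remaining parts are microlocal bookkeeping.
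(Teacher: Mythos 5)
Your proposal is correct and follows essentially the same route as the paper: identify $\Lambda$ as the kernel of $P=\iota_X G d^*\iota_X\star$ via \eqref{eq:G-1}, get the $d(x,y)^{-1}$ bound from $P\in\Psi^{-2}(\mc{M})$, deduce the symmetry from formal self-adjointness of $P$ (using $G^*=G$ and \eqref{eq:G-commutes}), and justify the product $f^+\Lambda f^-$ by the wavefront calculus, since $\WF(\Lambda)$ is conormal to the diagonal and $E_u^*\cap E_s^*=\{0\}$. The only cosmetic difference is in how the order drop is exhibited: the paper writes $P=[\iota_X,G]d^*\iota_X\star+G(\iota_Xd^*+d^*\iota_X)\iota_X\star$ with both summands manifestly in $\Psi^{-2}$, while you compute the order $-1$ principal symbol directly and show it vanishes because $X^\flat\wedge\xi\wedge X^\flat=0$ — the same cancellation, phrased at the symbol level.
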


We remark that the bound on $\Lambda(x, y)$ was shown \cite[Lemma 9.2]{Coles-Sharp-21} by a computation in local coordinates, whereas here we employ a global approach.

\begin{proof}
	For any $f \in C^\infty(\mc{M})$ we compute:
	\begin{align*}
		Pf(x) &= \int_{y \in \mc{M}} f(y) \,\iota_{X(y)} d\vol_g(y) \wedge \iota_{X(x)} L(x, y)\\
		&= \int_{y \in \mc{M}} f(y)\, d\vol_g(y) \cdot \underbrace{\iota_{X(y)} \iota_{X(x)} L(x, y)}_{= \Lambda(x, y)}.
	\end{align*}
	Here we used \eqref{eq:G-1} in the first equality and the anti-commuting property of the contraction in the second; this completes the proof of the first claim.
	
	For the second claim, simply write:
	\[P = [\iota_X, G] d^* \iota_X \star + G \underbrace{(\iota_X d^* + d^*\iota_X)}_{=: Q} \iota_X \star \in \Psi^{-2}(\mc{M}),\]
	since the commutators $[\iota_X, G]$ and $Q$ are pseudodifferential operators of degrees $-3$ and $0$, respectively. Indeed, the principal symbol of $G$, given by $\sigma(G)(x, \xi) = |\xi|^{-2}_g(x, \xi) \times \id_{\Omega^1}$ is diagonal, and $\sigma(\iota_X)(x, \xi) = \iota_{X(x)}$ and $\sigma(d^*)(x, \xi) = \iota_{\xi^\sharp}$, where $\xi^\sharp$ is obtained by applying the musical isomorphism to $\xi$. Since $\iota_{X(x)} \iota_{\xi^\sharp} = -\iota_{\xi^\sharp} \iota_{X(x)}$, this shows that $Q$ has degree $0$, which completes the proof.
	
	Next, the symmetry $\Lambda(x, y) = \Lambda(y, x)$ is equivalent to $P$ being formally self-adjoint, which follows from integration by parts, by using that $G$ is self-adjoint, and applying \eqref{eq:G-commutes}.
	
	For the final claim it suffices to observe that 
	\begin{equation}\label{eq:wf-normal}
		\WF(\Lambda) \subset \{(x, x, \xi, -\xi) \mid x \in \mc{M},\, \xi \in T_x^*\mc{M}\}
	\end{equation}
	since $\Lambda$ is a kernel of a pseudodifferential operator, and that $f^+$ and $f^-$ have disjoint wavefront sets, hence the wavefront set calculus applies and the product $f^+(x)\Lambda(x, y) f^-(y)$ is well-defined.
\end{proof}

We proceed with the proof of Theorem \ref{thm:helicity-formula}, which is the main result of this section.

\begin{proof}[Proof of Theorem \ref{thm:helicity-formula}]
	
	Observe that for any $\alpha \in \mc{D}'(\mc{M}; \Omega^1)$ it holds that
\begin{equation}\label{eq:G-formula}
	G d^*d \alpha = G\Delta \alpha - G d d^* \alpha = \alpha - \mathcal{P}\alpha - dh,   
\end{equation}
where $h = Gd^*\alpha$, and we used \eqref{eq:G-def} and \eqref{eq:G-commutes} in the second equality. Now we may compute:
\begin{align}\label{eq:G-helicity}
\begin{split}
	\mc{H}(X) &= \int_{\mc{M}} \tau^- \wedge d\tau^+\\
	 			   &= \int_{\mc{M}} Gd^*d\tau^- \wedge d\tau^+\\
	 			   &= \int_{\mc{M}} \iota_X Gd^*\iota_X \star f^- \cdot f^+\, d\vol_g\\
	 			   &= \int_{\mc{M}} Pf^- \cdot f^+\, d\vol_g\\
	 			   &= \int_{(x, y) \in \mc{M} \times \mc{M}} f^+(x) \Lambda(x, y) f^-(y)\, d\vol_g(y) \times d\vol_g(x).
\end{split}
\end{align}
Here in the second line we used \eqref{eq:G-formula} for $\alpha = \tau^-$ and integration by parts, in the third line we used \eqref{eq:tau-pm} and \eqref{eq:fpm-def}, in the fourth we used the definition of $P$, and in the fifth we used that $f^+\Lambda f^-$ is a well-defined distribution by Lemma \ref{lemma:Lambda}. 

Finally, to interpret the integral in the fifth line as a limit of classical integrals, consider the regularisations $\Lambda_\varepsilon := E_\varepsilon \Lambda \in C^\infty(\mc{M} \times \mc{M})$ (where the mollifiers $E_\varepsilon$ are introduced in \S \ref{ssection:regularisation} below). Then we have the following chain of limits:
\begin{align*}
	\mc{H}(X) &= \lim_{\varepsilon \to 0} \int_{(x, y) \in \mc{M} \times \mc{M}} \Lambda_\varepsilon(x, y)\,  \SRB(x) \times \SRBs(y)\\
	&= \lim_{\varepsilon \to 0} \int_{(x, y) \in \mc{M} \times \mc{M} \setminus \Delta(\mc{M})} \Lambda_\varepsilon(x, y)\,  \SRB(x) \times \SRBs(y)\\
	&= \int_{(x, y) \in \mc{M} \times \mc{M} \setminus \Delta(\mc{M})} \Lambda(x, y)\,  \SRB(x) \times \SRBs(y).
\end{align*}
Here in the first equality we used the second part of Lemma \ref{lemma:mollification} to obtain $\Lambda_\varepsilon \to \Lambda$ in $\mc{D}'_\Gamma(\mc{M} \times \mc{M})$ where $\Gamma$ is given by the right hand side of \eqref{eq:wf-normal}, and we used the sequential continuity of multiplication under the wavefront set condition (see \cite[Chapter 8]{Hormander-90}). Next, in the second line we used \cite[Lemma 9.7]{Coles-Sharp-21}, which guarantees that the $\SRB \times \SRBs$ measure of $\Delta(\mc{M}) \subset \mc{M} \times \mc{M}$ is zero. In the final line we used the first part of Lemma \ref{lemma:mollification} for $S = \Delta(\mc{M})$ and $t = 1$, which says that $\Lambda_\varepsilon$ is dominated by the function $C d(x, y)^{-1} \sim Cd\big((x, y), \Delta(\mc{M})\big)^{-1}$ for some uniform $C > 0$ (recall by Lemma \ref{lemma:Lambda} that $\Lambda$ has the singularity $d(x, y)^{-1}$ at the diagonal and is smooth outside of it), which is integrable with respect to $\SRB \times \SRBs$ by \cite[Lemma 9.3(ii)]{Coles-Sharp-21}, as well as \cite[Lemma 9.6]{Coles-Sharp-21} (where the latter lemma is applied to H\"older continuous functions $\psi = -r^u$ and $\psi = r^s$ which give the SRB measures as equilibrium measures -- recall that $r^{u/s}$ were introduced in \eqref{eq:stable-unstable-flow}; see also \S \ref{ssection:classical-helicity}). This also shows the first part of the lemma and moreover, applying the Dominated Convergence Theorem completes the proof.
\end{proof}

\subsection{Regularisation of the kernel}\label{ssection:regularisation} We now discuss the auxiliary mollification result used in the proof of Theorem \ref{thm:helicity-formula}. Given a metric $g_N$ on a manifold $N$ of dimension $n = \dim N$, introduce the family of mollifiers defined for $u \in \mc{D}'(N)$ and $\varepsilon > 0$:
\begin{equation}
	E_\varepsilon u (x) := \frac{1}{F_\varepsilon(x)}\int_N \chi \Big(\frac{d(x, y)}{\varepsilon}\Big) u(y)\, d\vol_g(y), \quad x \in N.
\end{equation}
Here $\chi \in C^\infty(\mathbb{R}_{\geq 0}; [0, 1])$ is a non-increasing cut-off function supported in $[0, 1]$ with values in $[0, 1]$ such that $\chi = 1$ close to zero; also $F_\varepsilon \in C^\infty(N)$ is chosen such that $E_\varepsilon \mathbf{1} \equiv \mathbf{1}$ and for some $C > 1$
\begin{equation}\label{eq:mollifier0}
	\forall \varepsilon > 0, \forall x \in N, \quad C^{-1} \varepsilon^{n} \leq F_\varepsilon(x) \leq C\varepsilon^n.
\end{equation}
By \cite[eq. (2.18)]{Dyatlov-Zworski-16} we know that:
\begin{equation}\label{eq:mollifier}
	E_\varepsilon \in \Psi^{- \infty}(N), \quad E_\varepsilon \to_{\varepsilon \to 0} \id \,\,\,\,\mathrm{in}\,\,\,\, \Psi^{0+}(N),
\end{equation}
where the latter limit is understood in the Fr\'echet topologies of $\Psi^k(N)$ for every $k > 0$. We prove the following approximation result:

\begin{lemma}\label{lemma:mollification}
	Let $S \subset N$ be a smooth submanifold of dimension $s$. Denote by $d(\bullet, S)$ the distance function to $S$. Then for any $t \in (0, n -s)$, there exists $C_t > 0$ such that:
	\[\forall \varepsilon > 0, \forall x \not \in S, \quad \big[E_\varepsilon d(\bullet, S)^{-t}\big](x) \leq C_t d(x, S)^{-t}.\]
	Moreover, if $u \in \mc{D}'_{\Gamma}(N)$ for some closed conic set $\Gamma$, we have:
	\[E_\varepsilon u \to_{\varepsilon \to 0} u \,\,\,\,\mathrm{in}\,\,\,\, \mc{D}'_{\Gamma}(N).\]
\end{lemma}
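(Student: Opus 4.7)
}

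For the first statement, I would split into three regimes based on the ratio $\varepsilon/d(x,S)$. When $d(x,S) \geq 2\varepsilon$, the triangle inequality gives $d(y,S) \geq \tfrac{1}{2} d(x,S)$ on the support $B(x,\varepsilon)$ of the mollifier kernel; since $E_\varepsilon \mathbf{1} = \mathbf{1}$, this yields directly $E_\varepsilon d(\cdot,S)^{-t}(x) \leq 2^t d(x,S)^{-t}$. When $d(x,S) < 2\varepsilon$ and $\varepsilon < \varepsilon_0$ (where $\varepsilon_0$ is small enough that $S$ has a tubular neighbourhood of radius $3\varepsilon_0$), I would use tubular coordinates $(\sigma,\nu) \in S \times N_\sigma S$, in which $d(y,S) \sim |\nu|$, to estimate
\[
\int_{B(x,\varepsilon)} d(y,S)^{-t}\, d\vol_g(y) \;\leq\; C \,\vol_S\big(B_S(\sigma(x),2\varepsilon)\big) \int_{|\nu| \leq 2\varepsilon} |\nu|^{-t}\, d\nu \;\leq\; C\varepsilon^{s} \cdot \varepsilon^{n-s-t} \;=\; C\varepsilon^{n-t},
\]
where the condition $t < n-s$ is exactly what guarantees integrability. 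Combined with $F_\varepsilon(x) \gtrsim \varepsilon^n$ (cf.~\eqref{eq:mollifier0}) and $\varepsilon > \tfrac{1}{2} d(x,S)$, this gives $E_\varepsilon d(\cdot,S)^{-t}(x) \leq C\varepsilon^{-t} \leq C' d(x,S)^{-t}$. For the remaining regime $\varepsilon \geq \varepsilon_0$, $d(x,S)^{-t}$ is bounded below by a positive constant (since $N$ is compact), while the same tubular-coordinate computation shows $d(\cdot,S)^{-t} \in L^1(N)$, giving the bound immediately.

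For the second statement, I would use the characterisation of the H\"ormander topology on $\mc{D}'_\Gamma(N)$: $u_j \to u$ in $\mc{D}'_\Gamma$ iff $u_j \to u$ weakly in $\mc{D}'(N)$ \emph{and} for every properly supported $A \in \Psi^0(N)$ with $\WF'(A) \cap \Gamma = \emptyset$, one has $A u_j \to Au$ in $C^\infty(N)$. The weak convergence $E_\varepsilon u \to u$ is standard: the transpose $E_\varepsilon^T$ maps $C_c^\infty(N)$ into itself and converges to the identity uniformly with all derivatives on any compact set. For the microlocal part, pick $B \in \Psi^0(N)$ properly supported with $\WF'(B) \cap \Gamma = \emptyset$ and $B \equiv \id$ microlocally on $\WF'(A)$, so that $A - AB \in \Psi^{-\infty}(N)$ and $Bu \in C^\infty(N)$. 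I would then decompose
\[
AE_\varepsilon u - Au \;=\; (A - AB)(E_\varepsilon u - u) \;+\; A E_\varepsilon (Bu) - A(Bu) \;+\; A [B, E_\varepsilon] u.
\]
The first piece converges to $0$ in $C^\infty$ because smoothing operators map $\mc{D}'$-convergent sequences to $C^\infty$-convergent ones. The second piece converges to $0$ in $C^\infty$ because $Bu$ is smooth and $E_\varepsilon \to \id$ on $C^\infty(N)$ in its Fr\'echet topology.

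The main obstacle is the commutator term $A[B,E_\varepsilon]u$. Here I would exploit that $\WF'([B,E_\varepsilon]) \subset \WF'(B)$ (since $E_\varepsilon$ has operator wavefront set equal to the diagonal), hence is disjoint from $\Gamma \supset \WF(u)$; combined with $[B,E_\varepsilon] \to 0$ in $\Psi^{-1+\delta}$ for every $\delta > 0$ (which follows from \eqref{eq:mollifier} and the symbol calculus), this ensures that $[B,E_\varepsilon]u$ is smooth with seminorms tending to $0$. Concretely, inserting a further microlocal cutoff $B' \in \Psi^0(N)$ with $\WF'(B') \cap \Gamma = \emptyset$ and $B' \equiv \id$ microlocally on $\WF'(B)$, one writes $[B,E_\varepsilon] u = [B,E_\varepsilon] B' u + [B,E_\varepsilon](\id-B')u$; the first term involves the smooth function $B'u$, the second involves a smoothing operator applied to $u$, both of which can be handled by the uniform $\Psi^{-1+\delta}$ bounds. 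This uniform microlocal smoothing is the only delicate point; the rest of the argument is a standard combination of the symbol calculus with the Fr\'echet topology of \eqref{eq:mollifier}.
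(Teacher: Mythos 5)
Your proof is correct, and on the kernel estimate it follows essentially the same strategy as the paper: far from $S$ (say $d(x,S)\geq 2\varepsilon$) use the triangle inequality together with $E_\varepsilon\mathbf{1}=\mathbf{1}$; near $S$ pass to a tubular (Fermi) neighbourhood, integrate the fibrewise singularity $|\nu|^{-t}$ over the normal fibre (this is exactly where $t<n-s$ enters), and divide by $F_\varepsilon\gtrsim \varepsilon^n$ using $\varepsilon>\tfrac12 d(x,S)$ — the paper's extra scaling parameter $r$ plays no essential role, and your explicit treatment of the regime $\varepsilon\geq \varepsilon_0$ via integrability of $d(\cdot,S)^{-t}$ and compactness is a small completeness improvement, since the paper only argues for $\varepsilon$ small. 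Where you genuinely diverge is the convergence statement. The paper invokes H\"ormander's notion of convergence in $\mc{D}'_\Gamma$ (Definition 8.2.2), which only asks for weak convergence together with \emph{uniform boundedness} of $AE_\varepsilon u$ in $C^\infty$ for microlocal cutoffs $A$ with $\WF(A)\cap\Gamma=\emptyset$; this is achieved by the simple splitting $AE_\varepsilon u=(AE_\varepsilon B)u+AE_\varepsilon(\id-B)u$, with $AE_\varepsilon B$ uniformly smoothing and $(\id-B)u$ smooth, so no commutator ever appears. You instead prove the stronger statement that $AE_\varepsilon u\to Au$ in $C^\infty$, which forces the decomposition with the term $A[B,E_\varepsilon]u$ and the uniform microlocal smoothing bound for $[B,E_\varepsilon](\id-B')$ that you rightly flag as the delicate point; this can indeed be justified (the kernels of $E_\varepsilon$ are supported $\varepsilon$-close to the diagonal with symbol bounds uniform in $\Psi^{0+}$, so compositions and commutators against operators of disjoint microsupport are uniformly smoothing — the same uniformity the paper tacitly uses for $AE_\varepsilon B$), but it buys you convergence in a stronger topology than is needed for the application to products of distributions, at the price of a heavier argument. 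Either route is acceptable; the paper's is the more economical one.
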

	The sequential topology in the space $\mc{D}'_{\Gamma}(N)$ is introduced in \cite[Chapter 8]{Hormander-90}.
\begin{proof}
	We first show the second claim. For that, consider an arbitrary $\varphi \in C^\infty(N)$. By \eqref{eq:mollifier}, we have ${}^tE_\varepsilon \varphi \to \varphi$ in $C^\infty(N)$ as $\varepsilon \to 0$, where the superscript ${}^t$ denotes the transposed operator. It follows that 
	\[\langle{E_\varepsilon u, \varphi}\rangle = \langle{u, {}^tE_\varepsilon \varphi}\rangle \to_{\varepsilon \to 0} \langle{u, \varphi}\rangle,\]
	which means that $E_\varepsilon u \to_{\varepsilon \to 0} u$ in $\mc{D}'(N)$. To show convergence in $\mc{D}'_{\Gamma}(N)$, it suffices to show that for an arbitrary $A \in \Psi^0(N)$ with $\WF(A) \cap \Gamma = \emptyset$, $AE_\varepsilon u$ is uniformly (in $\varepsilon > 0$) bounded in $C^\infty(N)$ (see \cite[Definition 8.2.2]{Hormander-90}). Let $B \in \Psi^0(N)$ be such that $\WF(B) \cap \WF(A) = \emptyset$ and $\WF(\id - B)$ is contained in the complement of an open conic neighbourhood of $\Gamma$. Then
	\[AE_\varepsilon u = AE_\varepsilon B u + AE_\varepsilon (\id - B)u,\]
	where by construction and \eqref{eq:mollifier} we have $A E_\varepsilon B \in \Psi^{-\infty}(N)$ with uniformly bounded seminorms, $(\id - B)u \in C^\infty(N)$ (as $\WF(u) \subset \Gamma$) and since again by \eqref{eq:mollifier} we have $AE_\varepsilon \in \Psi^{0+}(N)$ with uniformly bounded seminorms. The claim immediately follows.
	
	For the first claim, from the definition of $E_\varepsilon$ and by the triangle inequality, it follows that
	\begin{equation*}
		\big[E_\varepsilon d(\bullet, S)^{-t}\big](x) \leq (d(x, S) - \varepsilon)^{-t}, \quad d(x, S) > \varepsilon,
	\end{equation*}
	and so we immediately get, setting $u_\varepsilon := E_\varepsilon d(\bullet, S)^{-t}$
	\begin{equation}\label{eq:interpolate-0}
		u_\varepsilon(x) \leq 2^t d(x, S)^{-t}, \quad d(x, S) \geq 2\varepsilon.
	\end{equation}
	
	Next, consider the injectivity radius $\iota_N > 0$ of the metric $g_N$. Identify an $\varepsilon_0$-neighbourhood of $S$ with $T := N_{\leq \varepsilon_0} S = \{(x, \xi) \in NS \mid x \in S,\, |\xi| \leq \varepsilon_0\}$ via the parametrisation by normal geodesics to $S$, where $NS$ denotes the normal bundle to $S$. By taking $\varepsilon_0 \in (0, \iota_N)$ small enough, we may assume there exists $C_0 > 1$ such that for any $(x, \xi), (y, \eta) \in T$ with distance at most $\varepsilon_0$ apart, we have
	\begin{equation}\label{eq:metric-taxi-metric}
		C_0^{-1} d\big((x, \xi), (y, \eta)\big) \leq d(x, y) + |\xi - P_{y \to x} \eta| \leq C_0 d\big((x, \xi), (y, \eta)\big),
	\end{equation}
	where $P_{y \to x}: T_yN \to T_xN$ is the parallel transport with respect to the Levi-Civita connection along the unique short geodesic from $y$ to $x$. For any $(x, \xi) \in T\setminus S$ with $d\big((x, \xi), S\big) \leq 2\varepsilon$ (and $\varepsilon > 0$ small enough) and for any $r \in (0, 1)$, we make the following computation:
	\begin{align*}
		u_\varepsilon(x, r\xi) &= \frac{1}{F_\varepsilon(x, r\xi)} \int_{y \in T_xS,\, |y| \leq C_1 \varepsilon} \int_{\eta \in N_x S,\, |\eta| \leq C_1 \varepsilon} \chi \Big(\frac{d\big((x, r\xi), (y, \eta)\big)}{\varepsilon}\Big) |\eta|^{-t} J_x(y, \eta)\, dy\, d\eta\\
		&\leq C\varepsilon^{-n} \int_{y \in T_xS,\, |y| \leq C_1 \varepsilon} \int_{\eta \in N_x S,\, |\eta| \leq C_1 \varepsilon} \chi\Big(\frac{C_0^{-1} \big(\frac{1}{r}|y| + |\xi - \frac{1}{r} \eta| \big)}{\varepsilon/r}\Big) |\eta|^{-t}J_x(y, \eta)\, dy\, d\eta\\
		&= C\varepsilon^{-n}r^{n - t} \int_{y' \in T_xS,\, |y'| \leq C_1 \frac{\varepsilon}{r}} \int_{\eta' \in N_x S,\, |\eta'| \leq C_1 \frac{\varepsilon}{r}} \chi\Big(\frac{C_0^{-1} \big(|y'| + |\xi - \eta'| \big)}{\varepsilon/r}\Big) |\eta'|^{-t}J_x(ry', r\eta')\, dy'\, d\eta'\\
		&\leq C\varepsilon^{-n}r^{n - t} \|J_x\|_{\infty} (2C_1)^{s} \Big(\frac{\varepsilon}{r}\Big)^s \int_{\eta' \in N_x S,\, |\eta'| \leq C_1 \frac{\varepsilon}{r}} |\eta'|^{-t}\, d\eta'\\
		&\leq \vol(\mathbb{S}^{n - s - 1}) C\varepsilon^{-n}r^{n - t} \|J_x\|_{\infty} (2C_1)^{s} \Big(\frac{\varepsilon}{r}\Big)^s \int_0^{C_1\frac{\varepsilon}{r}} \rho^{-t + n - s - 1}\, d\rho\\
		&\leq \frac{\vol(\mathbb{S}^{n - s - 1})}{-t + n - s} C\varepsilon^{-n}r^{n - t} \|J_x\|_{\infty} 2^s (C_1)^{-t + n} \Big(\frac{\varepsilon}{r}\Big)^{-t + n}\\
		&\leq C'(n, s, t) \varepsilon^{-t} \leq C' 2^t d\big((x, \xi), S\big)^{-t}.
	\end{align*}
	where in the first line we identified $y$ with points in a geodesic ball of radius $C_1\varepsilon$ centred at $x \in S$, for a suitable uniform constant $C_1 > 0$, and $\eta$ with vectors in the fibre $N_xS$ via parallel transport $P_{y \to x}$; $J_x(y, \eta)$ is the Jacobian of the volume form $d\vol_{g_N}$ in the coordinates $(y, \eta)$ (uniformly bounded from above). In the second line we used \eqref{eq:mollifier0}, \eqref{eq:metric-taxi-metric}, and the monotonicity of $\chi$, while in the third one we changed the coordinates by $\eta' = \frac{\eta}{r}$ and $y' = \frac{y}{r}$. In the fourth line we estimated $\chi$ by $1$ from above, integrated in $y'$ and bounded from above the volume of the unit ball in $\mathbb{R}^s$ by $2^s$; in the fifth we used the polar coordinate system in $N_xS$; throughout $\vol(\mathbb{S}^k)$ denotes the volume of the unit sphere $\mathbb{S}^k \subset \mathbb{R}^{k + 1}$. In the sixth line, we used the assumption that $d(\bullet, S)^{-t}$ is integrable (i.e. $t < n - s$). In the seventh line, we introduced the constant $C' = C'(n, s, t) > 0$. In the final estimate we used the assumption that $d\big((x, \xi), S\big) \leq 2\varepsilon$. (We note that the role of $r \in (0, 1)$ is to keep track of the natural scaling in this estimate.)
	
	Since $(x, \xi) \in T \setminus S$ with $d\big((x, \xi), S\big) \leq 2\varepsilon$ (with $\varepsilon > 0$ small enough), as well as $r \in (0, 1)$ were arbitrary, combining the preceding estimate with \eqref{eq:interpolate-0} completes the proof.
\end{proof}

\subsection{Relation with linking of closed orbits}\label{ssection:classical-helicity} Here we indicate how the results of \cite{Coles-Sharp-21} can be directly used in conjunction with Lemma \ref{thm:helicity-formula} to give another formula for the helicity in terms of linkings of closed orbits, generalising \cite[Theorem 1.1]{Coles-Sharp-21}. We first very briefly introduce some notation regarding the thermodynamic formalism that will be used only in this section -- we refer the reader to \cite[Section 3]{Coles-Sharp-21}, \cite{Hasselblatt-Katok-95} and \cite[Chapter 10]{Merry-Paternain-11} for more details. 

Denote by $\mc{PM}(X)$ the set of flow-invariant Borel probability measures on $\mc{M}$, and given $\nu \in \mc{PM}(X)$ denote by $h(\nu)$ the \emph{measure-theoretic entropy} of the time $1$ map, $\varphi_1$. Given a H\"older continuous function $\psi$, denote by $P(\psi)$ the \emph{pressure} of $\psi$, i.e.
\[P(\psi) = \sup \Big\{h(\nu) + \int_{\mc{M}} \psi\, d\nu \mid \nu \in \mc{PM}(X)\Big\}.\]
There exists a unique $\mu_\psi \in \mc{PM}(X)$ that attains the supremum above, called the \emph{equilibrium state}. It is well-known that the SRB measures are equilibrium states, more precisely we have (see \cite[Chapter 10.3]{Merry-Paternain-11})
\[\SRB = \mu_{-r^u}, \quad \SRBs = \mu_{r^s},\]
where we recall $r^{u/s}$ were defined in \eqref{eq:stable-unstable-flow}.

For $T > 0$, denote by $\mc{P}_T$ the set of closed orbits of $\varphi_t$ with period in $(T - 1, T]$; denote by $\mc{P}_T(0) \subset \mc{P}_T$ the subset of closed orbits which are trivial in homology $H_1(\mc{M}; \mathbb{R})$. Given $\gamma \in \mc{P}_T$, set $\mu_\gamma$ to be the probability Dirac measure on $\gamma$, that is, if $T_\gamma$ is the period of $\gamma$, then $\mu_\gamma(f) = \frac{1}{T_\gamma} \int_\gamma f$ for $f \in C^0(\mc{M})$. Given a H\"older function $\psi$, introduce the following weighted orbital probability measures \cite[Section 9]{Coles-Sharp-21}:
\begin{equation}\label{eq:weighted-orbital-measure}
	\mu_{\psi, T} := \frac{\sum_{\gamma \in \mc{P}_T} e^{\int_\gamma \psi} \mu_\gamma}{\sum_{\gamma \in \mc{P}_T} e^{\int_\gamma \psi}}, \quad \mu_{\psi, T}^0 := \frac{\sum_{\gamma \in \mc{P}_T(0)} e^{\int_\gamma \psi} \mu_\gamma}{\sum_{\gamma \in \mc{P}_T(0)} e^{\int_\gamma \psi}}.
\end{equation}
These measures are used to define the following weighted sums:
	\begin{align}\label{eq:classical-helicity}
	\begin{split}
		\mc{L}(T) &:= \int_{\M \times \M} \Lambda\,  d\mu^0_{-r^u, T} \times d\mu_{r^s, T + 1}\\
		&= \frac{\sum_{\gamma \in \mc{P}_T(0),\, \gamma' \in \mc{P}_{T + 1}} \frac{\lk(\gamma, \gamma')}{T_\gamma T_{\gamma'}} e^{-\int_\gamma r^u + \int_{\gamma '} r^s}}{\sum_{\gamma \in \mc{P}_T(0),\, \gamma' \in \mc{P}_{T + 1}} e^{-\int_\gamma r^u + \int_{\gamma '} r^s}},
	\end{split}
	\end{align}
	where we recall $\Lambda$ was introduced in \eqref{eq:lambda-def} (it is smooth outside of the diagonal), and in the second line we used the definition \eqref{eq:weighted-orbital-measure} of weighted orbital measures and the defining property of the linking form $L$ from \eqref{eq:linking-form-def-property}.
\begin{prop}\label{prop:classical-helicity}
	The following formula holds:
	\begin{align*}
		\mc{H}(X) &= \lim_{T \to \infty} \mc{L}(T).
	\end{align*}
\end{prop}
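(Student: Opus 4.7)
The plan is to combine Theorem \ref{thm:helicity-formula} with the equidistribution of weighted periodic orbit measures toward equilibrium states. By Theorem \ref{thm:helicity-formula} we know
\[
\mc{H}(X) = \int_{\M \times \M \setminus \Delta(\M)} \Lambda(x,y)\, \SRB(x) \times \SRBs(y),
\]
where the integrand is integrable by Lemma \ref{lemma:Lambda} and \cite[Lemmas 9.3 and 9.6]{Coles-Sharp-21}, using that $\SRB$ and $\SRBs$ are the equilibrium states $\mu_{-r^u}$ and $\mu_{r^s}$ respectively. It therefore suffices to show that
\[
\int_{\M \times \M \setminus \Delta(\M)} \Lambda\, d\mu^0_{-r^u, T} \times d\mu_{r^s, T+1} \;\xrightarrow[T\to\infty]{}\; \int_{\M \times \M \setminus \Delta(\M)} \Lambda\, \SRB \times \SRBs.
\]

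First I would invoke weak-$*$ convergence of the weighted orbital measures: the unrestricted measure $\mu_{r^s, T+1}$ converges to $\SRBs$ by Bowen's classical equidistribution of closed orbits for H\"older potentials, while the null-homologous version $\mu^0_{-r^u, T} \to \SRB$ follows from the Sharp-type refinement used in \cite{Coles-Sharp-21}, which is valid precisely because the winding cycle of $\SRB$ vanishes, i.e. $[\omega^+]=0$ (the standing assumption for the helicity to be defined). Together these give $\mu^0_{-r^u, T} \times \mu_{r^s, T+1} \to \SRB \times \SRBs$ in the weak-$*$ topology on $C^0(\M \times \M)$.

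Next I would cut off the diagonal singularity. Let $\chi_\delta \in C^\infty([0,\infty);[0,1])$ vanish on $[0,\delta/2]$ and equal $1$ on $[\delta,\infty)$, and set $\Lambda^\delta(x,y):=\chi_\delta(d(x,y))\,\Lambda(x,y)\in C(\M \times \M)$. For each fixed $\delta>0$ weak-$*$ convergence yields
\[
\int \Lambda^\delta\, d\mu^0_{-r^u,T} \times d\mu_{r^s,T+1} \;\xrightarrow[T\to\infty]{}\; \int \Lambda^\delta\, \SRB \times \SRBs,
\]
and the right-hand side converges to $\mc{H}(X)$ as $\delta\to 0$ by dominated convergence, using the integrability of $d(x,y)^{-1}$ with respect to $\SRB \times \SRBs$ coming from \cite[Lemma 9.7]{Coles-Sharp-21}. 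The remainder of the argument is then a standard $\varepsilon/3$ scheme once one controls the diagonal contribution uniformly in $T$.

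The main obstacle is precisely this uniform near-diagonal estimate:
\[
\sup_{T} \int_{d(x,y)\leq \delta} d(x,y)^{-1}\, d\mu^0_{-r^u,T} \times d\mu_{r^s,T+1} \;\xrightarrow[\delta \to 0]{}\; 0.
\]
This is the heart of the matter and is exactly the type of bound established in \cite[Section 9]{Coles-Sharp-21} in the volume-preserving case. I would transfer their argument to the dissipative setting by combining (i) Frostman-type upper bounds on small metric balls for the equilibrium states $\mu_{-r^u}$ and $\mu_{r^s}$ (which follow from their Gibbs property and the local product structure of Anosov flows), with (ii) the prime orbit theorem and its homologically restricted counterpart, which allow the Frostman bound for the target equilibrium measure to be inherited by the weighted orbital measures up to constants independent of $T$. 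Once such a uniform Frostman estimate on $\mu^0_{-r^u,T}$ and $\mu_{r^s,T+1}$ is in place, the bound on the $d(x,y)^{-1}$ integral follows by layer-cake, and the $\varepsilon/3$ argument closes the proof.
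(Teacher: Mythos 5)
Your skeleton is the same as the paper's: Theorem \ref{thm:helicity-formula} plus weak convergence of the weighted orbital measures, a uniform near-diagonal estimate, and an $\varepsilon/3$ (in the paper, $3\delta$) argument. Two points, however, deserve attention. First, the step you single out as ``the heart of the matter'' does not require any transfer to the dissipative setting: the estimates of \cite[Section 9]{Coles-Sharp-21}, in particular \cite[Lemma 9.9]{Coles-Sharp-21} combined with Fubini as on p.~30 there, are stated for weighted orbital measures attached to H\"older potentials and apply verbatim with the potentials $-r^u$ and $r^s$; they give constants $Q,\alpha>0$ with
\begin{equation*}
\Big|\int_{B_R} |\Lambda|\, d\mu^0_{-r^u,T}\times d\mu_{r^s,T+1}\Big| \leq Q R^{\alpha} \qquad \text{for all } R,T>0,
\end{equation*}
which is exactly the uniform-in-$T$ control you want, with no Frostman-type or prime-orbit-theorem input needed. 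Your proposed route via ball estimates for the equilibrium states inherited by the orbital measures is plausible but unexecuted, and it is redundant given that the cited lemma already covers this case.

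Second, and this is the genuine gap, the identification $\mu^0_{-r^u,T}\to\SRB$ is not simply ``the Sharp-type refinement, valid because $[\omega^+]=0$''. The homologically constrained equidistribution result \cite[Theorem 6.7]{Coles-Sharp-21} gives convergence to the equilibrium state $\mu_{-r^u+f_{[\tau]}}$, where $[\tau]\in H^1(\mc{M};\mathbb{R})$ is the minimiser of $\beta([\varpi]):=P(-r^u+f_{[\varpi]})$; a priori this limit need not be $\SRB$. One must argue that the minimiser is $[\tau]=0$: the vanishing of $[\omega^+]$ gives homological fullness (via \cite[Proposition 3.1]{Coles-Sharp-21}), which yields strict convexity and smoothness of $\beta$ (\cite[Proposition 5.1, Lemma 3.5]{Coles-Sharp-21}), and then the derivative computation $D\beta(0)([\varpi])=\int_{\mc{M}}\varpi(X)\,\SRB=0$ (again using $[\omega^+]=0$) identifies $0$ as the unique minimiser. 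Without this argument your first bullet asserts the conclusion rather than proving it; with it, and with the cited uniform bound replacing your sketched Frostman step, your cutoff-and-$\varepsilon/3$ scheme closes exactly as in the paper.
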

\begin{proof}
	By \cite[Theorem 4.1]{Coles-Sharp-21}, we get that $\mu_{r^s, T} \to \SRBs$ in the weak limit sense and moreover, by \cite[Theorem 6.7]{Coles-Sharp-21} that $\mu^0_{-r^u, T} \to \mu_{-r^u + f_{[\tau]}}$, where $[\tau] = [\tau](-r^u) \in H^1(\mc{M}; \mathbb{R})$ is a cohomology class depending on $-r^u$ and $f_{[\tau]}(x) = \tau(X(x))$. More precisely, $[\tau]$ is defined as the minimizer of the function $\beta: H^1(\mc{M}; \mathbb{R}) \to \mathbb{R}$ defined by $\beta([\varpi]) := P(-r^u + f_{[\varpi]})$ (as the notation suggests, $\mu_{-r^u + f_{[\tau]}}$ and $P(-r^u + f_{[\varpi]})$ do not depend on the choice of the primitives $\tau$ and $\varpi$):
	\[\beta([\tau]) = \inf_{[\varpi] \in H^1(\mc{M}; \mathbb{R})} P\big(-r^u + f_{[\varpi]}\big).\]
	That this infimum is uniquely attained follows from \cite[Proposition 5.1]{Coles-Sharp-21} by noting that $X$ is \emph{homologically full}, i.e. that every integral class in $H_1(\mc{M}; \mathbb{Z})$ is represented by a closed orbit, which in turn follows from the assumption that the winding cycle $[\omega^+]$ of $\SRB$ is zero and \cite[Proposition 3.1]{Coles-Sharp-21}. From \cite[Proposition 5.1]{Coles-Sharp-21} it also follows that the function $[\varpi] \mapsto \beta([\varpi])$ is strictly convex and by \cite{Lalley_87,Sharp_92} that it is real-analytic, and that its derivative at zero in the direction of $[\varpi] \in H^1(\mc{M}; \mathbb{R})$ is
	\[D \beta (0)([\varpi]) = \int_{\mc{M}} f_{[\varpi]}\, \SRB = \int_{\mc{M}} \varpi(X)\, \SRB = 0,\]
	as $[\omega^+] = 0$. By uniqueness of the infimum it follows that $[\tau] = 0$ and so $\mu^0_{-r^u, T} \to \SRB$ weakly.
	
	Next, for $R > 0$ set $B_R := \{(x, y) \in \mc{M} \times \mc{M} \mid d(x, y) < R\} \supset \Delta(\M)$. By \cite[Lemma 9.9]{Coles-Sharp-21} and as a consequence of Fubini's theorem as in \cite[proof of Lemma 9.9]{Coles-Sharp-21}, there exist $Q, \alpha > 0$ such that for all $R, T > 0$ (note here that the supports of $\mu^0_{-r^u, T}$ and $\mu_{r^s, T + 1}$ do not intersect so $\Lambda$ is smooth in the domain of the following integral)
	\begin{equation}\label{eq:weighted-orbital-measure-regularity}
		\int_{B_R} |\Lambda|\,  d\mu^0_{-r^u, T} \times d\mu_{r^s, T + 1} \leq QR^\alpha.
	\end{equation}
	Take an arbitrary $\delta > 0$. By \eqref{eq:weighted-orbital-measure-regularity} and Theorem \ref{thm:helicity-formula} we may take $R > 0$ small enough such that for all $T > 0$
	\[\int_{B_R} |\Lambda|\,  d\mu^0_{-r^u, T} \times d\mu_{r^s, T + 1} < \delta \quad \mathrm{and} \quad \Big|\int_{B_R} \Lambda\,  \SRB \times \SRBs \Big| < \delta,\]
	respectively. Next, by the preceding paragraph for $T > 0$ large enough we have
	\[\Big|\int_{\mc{M} \times \mc{M} \setminus B_R} \Lambda\,  d\mu^0_{-r^u, T} \times d\mu_{r^s, T + 1} - \int_{\mc{M} \times \mc{M} \setminus B_R} \Lambda\,  \SRB \times \SRBs \Big| < \delta.\]
	Combining the two previous inequalities and by the triangle inequality, for $R > 0$ small enough and $T > 0$ large enough we get:
	\[\Big|\int_{\mc{M} \times \mc{M}} \Lambda\,  d\mu^0_{-r^u, T} \times d\mu_{r^s, T + 1} - \int_{\mc{M} \times \mc{M}} \Lambda\,  \SRB \times \SRBs \Big| < 3\delta.\]
	Since $\delta > 0$ was arbitrary, using Theorem \ref{thm:helicity-formula} concludes the proof.
\end{proof}

\begin{Remark}\rm
	As we have already mentioned there are no known examples of Anosov flows with zero helicity. The formulas in \cite[Theorem 1.1]{Coles-Sharp-21} and Proposition \ref{prop:classical-helicity} give another point of view to this problem, however since the linkings of closed orbits can be both positive and negative in theory there could be cancellations in \eqref{eq:classical-helicity} that in the limit give zero.
\end{Remark}

\begin{Remark}\rm
	In the recent article Dang-Rivi\`ere \cite[Theorem 1.2]{Dang-Riviere-20} show that the linking of two distinct closed geodesics (which are homologically trivial) in the unit sphere bundle of a negatively curved surface can be expressed as the value at zero of a certain Poincar\'e series involving lengths of orthogeodesics.
\end{Remark}

\begin{Remark}\rm  In \cite{marty2023}, Marty proves the existence of a unique continuous symmetric bilinear form on the space of null homologous Borel invariant probability measures of a transitive Anosov flow such that it extends the linking number between two null homologous closed orbits. It is an interesting question to decide if $\mathcal H(X)$ agrees with Marty's linking between the SRB measures (when they are both null homologous).
\end{Remark}

\subsection{First variation of the helicity}
Here we compute the first variation formula for helicity and derive some consequences about the set of Anosov flows with zero helicity. We will use the notation and the perturbation theory derived in Section \ref{sec:perturbation}. We will assume \eqref{eq:tau-pm} for the vector field $X_0$. As follows from the second equality of \eqref{eq:G-helicity}, we have for $X \in \mc{W}$ near $X_0$
\begin{equation}\label{eq:G-helicity-2}
	\mc{H}(X) = \int_{\M} Gd^*\iota_{X}\SRBs \wedge \iota_{X} \SRB.
\end{equation} 
For technical reasons, we will now assume that $X_0$ preserves a smooth volume $\Omega$. Recall that the spaces $\mc{W}^\pm$ and $\mc{W} = \mc{W}^+ \cap \mc{W}^-$ defined in \eqref{eq:def-W-W-W+}, are locally $C^1$ Banach submanifolds of codimensions $b_1(\mc{M})$ and $2b_1(\M)$, respectively, by Lemma \ref{lemma:banach-manifold}. We prove: 

\begin{prop}\label{prop:helicity-zero}
	Assume $X_0$ preserves a smooth volume $\Omega$ such that $[\iota_{X_0} \Omega] = 0$ and $\mc{H}(X_0) = 0$. Then, the set $\{X \in \mc{W} \mid \mc{H}(X) = 0\} \subset \mc{W}$ is locally a $C^1$ Banach submanifold of codimension $1$.
\end{prop}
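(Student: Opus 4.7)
The plan is to apply the Implicit Function Theorem \cite[Theorem 5.9]{Lang-99} to the restriction of the helicity functional $\mc{H}$ to the $C^1$ Banach manifold $\mc{W}$ constructed in Lemma \ref{lemma:banach-manifold}. Starting from the identity \eqref{eq:G-helicity-2}, namely $\mc{H}(X) = \int_\M Gd^*\iota_X\SRBs(X) \wedge \iota_X\SRB(X)$, the first task is to verify that $\mc{H}\colon \mc{W} \to \mathbb{R}$ is $C^1$ near $X_0$. For this I will combine Lemma \ref{lemma:proj-regularity} (yielding $C^1$ regularity of $X \mapsto \iota_X\SRB^{\pm}(X)$ into suitable anisotropic Sobolev spaces) with the boundedness of the pseudodifferential operator $Gd^*$ and the continuity of the wedge-product pairing ensured by the complementary wavefront set conditions $\WF(\iota_X\SRB) \subset E_u^*$ and $\WF(\iota_X\SRBs) \subset E_s^*$.

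To establish surjectivity of $D_{X_0}\mc{H}\colon T_{X_0}\mc{W} \to \mathbb{R}$, I will restrict attention to divergence-free perturbations. For any $Y$ with $\Lie_Y\Omega = 0$ and $[\iota_Y\Omega] = 0$, the entire curve $X_t := X_0 + tY$ consists of volume-preserving Anosov flows with vanishing winding cycles, so $X_t \in \mc{W}$ for small $t$ and in particular $Y \in T_{X_0}\mc{W}$. Moreover $\SRB(X_t) = \SRBs(X_t) = \Omega$ along this curve, so differentiating \eqref{eq:G-helicity-2} directly and applying Stokes' theorem (using $\tau_0 := Gd^*\iota_{X_0}\Omega$, which satisfies $d\tau_0 = \iota_{X_0}\Omega$ by \eqref{eq:G-formula} since $[\iota_{X_0}\Omega] = 0$) yields
\[
D_{X_0}\mc{H}(Y) \;=\; 2\int_\M \tau_0 \wedge \iota_Y\Omega.
\]

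The final step is to parametrise such $Y$ by smooth $1$-forms $\sigma$ via $\iota_Y\Omega = d\sigma$, which is a bijection onto smooth exact $2$-forms since $\Omega$ is non-degenerate. A further integration by parts gives
\[
D_{X_0}\mc{H}(Y_\sigma) \;=\; 2\int_\M \iota_{X_0}\Omega \wedge \sigma \;=\; 2\int_\M \sigma(X_0)\,\Omega,
\]
and since $X_0$ is nowhere vanishing one may choose a smooth $\sigma$ with $\sigma(X_0) \equiv 1$, giving $D_{X_0}\mc{H}(Y_\sigma) = 2 \neq 0$. Thus $D_{X_0}\mc{H}|_{T_{X_0}\mc{W}}$ is surjective and the Implicit Function Theorem produces the claimed codimension-$1$ submanifold. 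The main obstacle is the first step: rigorously verifying $C^1$-regularity of $\mc{H}$ through \eqref{eq:G-helicity-2} requires careful bookkeeping of anisotropic Sobolev norms along the lines of the proof of Lemma \ref{lemma:proj-regularity}, whereas the derivative computation and the surjectivity check become routine once one restricts to volume-preserving directions.
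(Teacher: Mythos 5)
Your proof is correct, but the surjectivity step takes a genuinely different (and more elementary) route than the paper's. The paper first derives the general first variation formula \eqref{eq:first-variation-helicity}, valid for \emph{arbitrary} directions $Y$, via the perturbation formula for the SRB measures in Lemma \ref{lemma:proj-regularity}; it then uses the hypothesis $\mc{H}(X_0)=0$ to rule out the contact case and invokes Lemma \ref{lemma:auxiliary-transversal} to manufacture a direction $Y\in T_{X_0}\mc{W}$ with $\Pi_2^+\iota_Y\Omega=d\alpha^+$ and $\Pi_2^-\iota_Y\Omega=0$, for which the variation equals $1$. You instead test the differential only on the volume-preserving, null-homologous directions $Y_\sigma$ defined by $\iota_{Y_\sigma}\Omega=d\sigma$: along the line $X_0+tY_\sigma$ one has $\Lie_{X_t}\Omega=0$, so (using that a volume-preserving Anosov flow is transitive and its smooth invariant volume is the unique SRB measure) $\Omega_{\mathrm{SRB}}^{\pm}(X_t)=\Omega$ is frozen, the winding cycles vanish identically, and $\mc{H}(X_t)$ computed from \eqref{eq:G-helicity-2} is a quadratic polynomial in $t$ whose linear coefficient is obtained by a purely smooth Stokes computation, giving $D_{X_0}\mc{H}(Y_\sigma)=2\int_{\M}\sigma(X_0)\,\Omega$, which is nonzero for $\sigma$ with $\sigma(X_0)\equiv 1$ (this is consistent with \eqref{eq:first-variation-helicity}: for such $Y_\sigma$ the projector terms reduce to exactly this expression, and it is essentially the observation recorded in Remark \ref{rem:vol-pres-helicity} for the volume-preserving setting). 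What your route buys is that the delicate anisotropic machinery is confined to the $C^1$-regularity of $\mc{H}$ on $\mc{W}$ (needed anyway to apply the Implicit Function Theorem and to identify the curve derivative with $D_{X_0}\mc{H}(Y_\sigma)$ via the chain rule), and you never need Lemma \ref{lemma:auxiliary-transversal} nor the non-contact dichotomy, so $\mc{H}(X_0)=0$ is used only to place $X_0$ on the zero set; what the paper's route buys is the general variation formula \eqref{eq:first-variation-helicity}, valid in all directions, which has independent interest. Two small points to make explicit in a write-up: $Y_\sigma$ is smooth (hence in $C^N$) since $\Omega$ is a smooth volume and $\sigma$ is smooth, and the identity $d\big(Gd^*\iota_{Y_\sigma}\Omega\big)=\iota_{Y_\sigma}\Omega$ uses that $\iota_{Y_\sigma}\Omega=d\sigma$ is exact (not merely closed), exactly as $d\tau_0=\iota_{X_0}\Omega$ uses $[\iota_{X_0}\Omega]=0$.
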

\begin{proof}
	We first show that the map $\mc{W} \ni X \mapsto \mc{H}(X) \in \mathbb{R}$ is $C^1$-regular in suitable topologies (for $X \in C^N(\M; T\M)$ for $N$ large enough). Coming back to Lemma \ref{lemma:perturbation-theory}, it follows from \cite[Section 2]{Bonthonneau-19} that the function $m(x, \xi)$ and the analogous function $m'(x, \xi)$ for the flow $-X_0$ defined on $T^*\M$ may be chosen with values in $[-\frac{1}{2}, 1]$, such that: $m$ is equal to $-\frac{1}{2}$ in a conic neighbourhood of $E_u^*$ and equal to $1$ outside of slightly larger conic neighbourhood of $E_u^*$, and $m'$ is chosen to satisfy the analogous property with respect to $E_s^*$. If $G'(x, \xi) \sim m'(x, \xi) \log(1 + |\xi|)$ is a logarithmically growing symbol on $T^*\M$, for $r$ large enough, it follows that the distributional wedge product is well-defined as a map $\mc{H}_{rG, -3}(\M; \Omega^1) \times \mc{H}_{rG', -3}(\M; \Omega^2) \to \mc{D}'(\M; \Omega^3)$. More precisely, for this wedge product to make sense, by using the definition of anisotropic spaces, as well as integration by parts it suffices to have (for simplicity, this is viewed on functions)
	\[
		(1 + \Delta_g)^{\frac{3}{2}} e^{-r \Op(G)} e^{-r\Op(G')} (1 + \Delta_g)^{\frac{3}{2}} \in \Psi^0(\M).
	\]
	In turn, by the composition rule for pseudodifferential operators it suffices to have $r(m + m') - 6 \geq 0$. By construction we have $m + m' \geq \tfrac{1}{2}$, so this is satisfied for large enough $r$. Thus, by noting that the pseudodifferential operator $Gd^*$ of degree $-1$ maps $\mc{H}_{rG, -3}(\M; \Omega^2)$ to $\mc{H}_{rG, -3}(\M; \Omega^1)$, the required regularity follows from the expression \eqref{eq:G-helicity-2} by applying Lemma \ref{lemma:proj-regularity}.
	
	Next, we compute the first derivative $D_{X_0} \mc{H}$. By the final formula of Lemma \ref{lemma:proj-regularity} we get for $Y \in C^N(\M; T\M)$ that
\begin{equation}\label{eq:first-variation-helicity}
	D_{X_0} \mc{H} (Y) = \int_{\mc{M}} Gd^* \Pi_2^+ \iota_Y \SRB \wedge \iota_{X_0} \SRBs + \int_{\mc{M}} Gd^* \iota_{X_0} \SRB \wedge \Pi_2^- \iota_Y \SRBs.
\end{equation}
Here we used \eqref{eq:G-formula} for $\alpha = \iota_{X_0} R_2^{\pm, H} \iota_Y \Omega_{\mathrm{SRB}}^{\pm}$ and the fact that $\iota_{X_0} \alpha = 0$ to get rid of the second term in the last formula in Lemma \ref{lemma:proj-regularity}.

	Since $X_0$ has zero helicity, it is not a contact flow. Thus by Lemma \ref{lemma:auxiliary-transversal}, there exists $Y \in C^\infty(\M; T\M) \cap T_{X_0} \mc{W}$ such that $\Pi_2^+ \iota_Y \Omega = d\alpha^+$ and $\Pi_2^- \iota_Y \Omega = 0$ where $\Pi_1^+ \beta =: \alpha^+ \in \Res^{1, \infty}$ is defined for some $\beta \in C^\infty(\M; \Omega^1)$ with $\iota_{X_0} \beta = 1$ (the analogous construction for $\Res_*^{1, \infty}$ was carried out in Proposition \ref{prop:semisimplicity-fails}) and so $\alpha^+$ satisfies $\iota_{X_0} \alpha^+ = 1$. Then the first variation formula \eqref{eq:first-variation-helicity} above gives, using also \eqref{eq:G-formula}:
	\[D_{X_0} \mc{H}(Y) = \int_{\mc{M}} \alpha^+ \wedge d\tau^- + 0 = 1,\]
	since $\alpha^+ \wedge d\tau^- = \SRBs$, which shows that the derivative $D_{X_0} \mc{H}$ is surjective onto $\mathbb{R}$ and the main claim follows from the Implicit Function Theorem.
\end{proof}

\begin{Remark}\rm \label{rem:vol-pres-helicity}
	If we restrict to $\mc{V}_\Omega = \{X \in C^N(\M; T\M) \mid \Lie_X \Omega = 0\}$, i.e. flows preserving the fixed smooth volume form $\Omega$, then it is straightforward (even much simpler than the dissipative case, since there is no need for anisotropic spaces) to see that: 1) the manifolds $\mc{W} := \mc{W}^+ \cap \mc{V}_\Omega = \mc{W}^- \cap \mc{V}_\Omega \subset \mc{V}_\Omega$ are of codimension $b_1(\M)$ (follows from \eqref{eq:F-pm-def} and \eqref{eq:F-pm-derivative}, and the Implicit Function Theorem); 2) the first derivative of the helicity at $X_0$ is non-zero, so $\{X \in \mc{W} \mid \mc{H}(X) = 0\} \subset \mc{W}$ is locally a $C^1$-regular Banach manifold (follows from \eqref{eq:G-helicity-2} by noting that $\SRB = \SRBs = \Omega$ is fixed).
\end{Remark}

\appendix

\section{Revision of elementary facts about $\eta_\pm$}\label{app:A} 

Here we recall some properties of the operators $\eta_\pm$ introduced in \S \ref{ssec:geometry-surfaces}; we will follow the notation introduced in that section. Let $(x, y)$ denote local isothermal coordinates on $U \subset M$, so $g|_U = e^{2\psi} (dx^2 + dy^2)$. Then on $SM|_{U}$ we have another set of coordinates:
\[U \times \mathbb{S}^1 \ni (x, y, \theta) \mapsto (x, y, e^{-\psi} \cos \theta, e^{-\psi} \sin \theta) \in SM|_U.\]
In these coordinates, \cite[p. 36]{Merry-Paternain-11} show that
\begin{align}\label{eq:isothermal-X-H}
\begin{split}
	X(x, y, \theta) &= e^{-\psi} \Big(\cos \theta \cdot \partial_x + \sin \theta \cdot \partial_y + \Big(-\partial_x \psi \cdot \sin \theta + \partial_y \psi \cdot \cos \theta\Big)\partial_\theta\Big),\\
	H(x, y, \theta) &= e^{-\psi} \Big(-\sin \theta \cdot \partial_x + \cos \theta \cdot \partial_y - \Big(\partial_x \psi \cdot \cos \theta + \partial_y \psi \cdot \sin \theta\Big)\partial_\theta\Big),
\end{split}
\end{align}
so that we compute $\eta_\pm = \frac{X \mp iH}{2}$, $\frac{\partial}{\partial \bar{z}} = \frac{1}{2}\Big(\partial_x + i\partial_y\Big)$, $\frac{\partial}{\partial z} = \frac{1}{2}\Big(\partial_x - i\partial_y\Big)$:
\begin{align}\label{eq:eta-}
	\eta_-(x, y, \theta) &= e^{-\psi} e^{-i\theta} \Big(\frac{\partial}{\partial \bar{z}} - i \frac{\partial \psi}{\partial \bar{z}} \cdot \partial_\theta\Big),\\
	\eta_+(x, y, \theta) &= e^{-\psi} e^{i\theta} \Big(\frac{\partial}{\partial z} + i \frac{\partial \psi}{\partial z} \cdot \partial_\theta\Big).
\end{align}
Now let $A$ be a section of $\mathcal{K}^{\otimes m}$, where $\mathcal{K}$ are the holomorphic $1$-forms (spanned locally by $dz$). Then the pullback $\pi^*A$ is a section of $\pi^*\mc{K}^{\otimes m} \subset \otimes^m T^*(SM)$ (here the pullback is in the sense of tensor pullback and we recall $\pi: SM \to M$ is the projection), and in fact we claim:
\begin{lemma} 
	The bundle $\pi^*\mc{K} \subset T^*(SM)$ is spanned by $\alpha + i\beta$.
\end{lemma}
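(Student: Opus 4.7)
The plan is to verify the claim directly in local isothermal coordinates, by evaluating both $\pi^*dz$ and $\alpha + i\beta$ on the global frame $\{X, H, V\}$ and showing they differ by a nowhere-vanishing scalar function.

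First I would work in an open set $U \subset M$ equipped with isothermal coordinates $(x,y)$, so that $g|_U = e^{2\psi}(dx^2 + dy^2)$, and use the coordinates $(x,y,\theta)$ on $SM|_U$ introduced in \S\ref{ssec:geometry-surfaces}. Locally, $\mc{K}$ is generated by $dz = dx + i\,dy$, so $\pi^*\mc{K}$ is generated by $\pi^*dz$. Using the explicit formulas \eqref{eq:isothermal-X-H} for $X$ and $H$, together with $V = \partial_\theta$, a direct evaluation gives
\begin{align*}
\pi^*dz(X) &= e^{-\psi}(\cos\theta + i\sin\theta) = e^{-\psi}e^{i\theta},\\
\pi^*dz(H) &= e^{-\psi}(-\sin\theta + i\cos\theta) = i\,e^{-\psi}e^{i\theta},\\
\pi^*dz(V) &= 0.
\end{align*}

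Next, by duality of the frame $\{X,H,V\}$ and coframe $\{\alpha,\beta,\psi\}$, the complex $1$-form $\alpha + i\beta$ satisfies $(\alpha + i\beta)(X) = 1$, $(\alpha + i\beta)(H) = i$, and $(\alpha + i\beta)(V) = 0$. Comparing with the computation above yields the identity
\[
\pi^*dz = e^{-\psi}e^{i\theta}(\alpha + i\beta)
\]
on $SM|_U$. Since $e^{-\psi}e^{i\theta}$ is a nowhere-vanishing smooth function on $SM|_U$, we conclude that $\alpha + i\beta$ is a local generator of the line subbundle $\pi^*\mc{K} \subset T_{\mathbb{C}}^*(SM)$; as $\alpha + i\beta$ is defined globally and nowhere zero, it spans $\pi^*\mc{K}$ everywhere on $SM$.

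There is no real obstacle here: the only thing to verify is the explicit local expression, and the key point is that the transition function $e^{-\psi}e^{i\theta}$ relating $\pi^*dz$ and $\alpha + i\beta$ is precisely the one that absorbs both the conformal factor and the rotation in the fibre, reflecting the identification of $\mho_1$ with $\mc{K}$ via $\pi_1^*$ recalled in \S\ref{ssec:geometry-surfaces}.
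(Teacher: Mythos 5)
Your proof is correct and is essentially the paper's argument: both verify, in local isothermal coordinates, the identity $\pi^*dz = e^{-\psi}e^{i\theta}(\alpha + i\beta) = dz(v)\,(\alpha+i\beta)$, the only cosmetic difference being that you evaluate $\pi^*dz$ on the frame $\{X,H,V\}$ while the paper computes $(\alpha+i\beta)(x,v)(u)=g_x(v+iJv,d\pi\,u)$ on the base and identifies it with a multiple of $dz$. Since the proportionality factor is nowhere vanishing, both routes give the same conclusion.
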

\begin{proof}
	To see this, denote by $J$ the complex structure on the surface $(M, g)$, and note that by definition for any $u \in T_{(x, v)}SM$ we have
\[\alpha(x, v)(u) = g_x(v, d\pi(x, v)u) = \pi^*(g_x(v, \bullet)) (u), \quad \beta(x, v)(u) = g_x(Jv, d\pi(x, v)u) = \pi^*(g_x(Jv, \bullet)) (u),\]
so $\alpha$ and $\beta$ indeed define sections of $\pi^*\mc{K}$, and we get
\[(\alpha + i \beta)(x, v)(u) = g_x(v + iJ(x)v, d\pi(x, v) u).\]
It is easy to see, where $J = \begin{pmatrix}
0 & -1\\
1 & 0
\end{pmatrix}$ that
\[g_x(v + iJv, \partial_x) = e^{2\psi} (v_x - i v_y), \quad g_x(v + iJv, \partial_y) = e^{2\psi}(v_y + iv_x) = ie^{2\psi}(v_x - iv_y),\]
so we conclude that, we get
\[g_x(v + iJv, \bullet) = e^{2\psi} (v_x - iv_y) dz\]
and finally using that $e^{2\psi}(v_x^2 + v_y^2) = 1$
\begin{equation}\label{eq:canonical-bundle-pullback}
	(\alpha + i \beta)(x, v) = \frac{\pi^*dz}{dz(v)},
\end{equation}
which proves the claim. 
\end{proof}

Therefore if $A = fdz^m$ in $U$, we may write using \eqref{eq:canonical-bundle-pullback}:
\[\pi^*A = \pi_0^*f \cdot (\pi^*dz)^m = \pi_0^*f \cdot (dz(v))^m \cdot (\alpha + i\beta)^m = \underbrace{\pi_0^*f \cdot e^{-m\psi} e^{i m \theta}}_{\widetilde{a}:=} \cdot (\alpha + i \beta)^m,\]
where the function $\widetilde{a}$ defined by the local expression extends globally to $SM$ (since $\alpha$ and $\beta$ are global). Since $\pi_m^*(fdz^m)(z, v) = f(z) \cdot (dz(v))^m = \widetilde{a}(z, v)$ it follows that $\widetilde{a} = \pi_m^*A$, so
\[\pi^*A = \pi_m^*A \cdot (\alpha + i\beta)^m.\]

In \cite{Mettler-Paternain-19}, the authors write $\widetilde{a} = \frac{Va}{m} + ia = 2i a_m$, where $a = a_{-m} + a_m$ is real-valued (i.e. $a_{-m} = \overline{a_m}$), from where it follows
\begin{equation}\label{eq:A-a_m-relation}
	a_m = \frac{\pi_m^*A}{2i}.
\end{equation}
Therefore we obtain the relation between $\lambda := a$ and $A$:
\[\lambda = a = a_m + \overline{a_m} = 2\re(a_m) = 2\re\Big(\frac{\pi_m^*A}{2i}\Big) = -\re(i\pi_m^*A) = \im(\pi_m^*A).\]
Now using the convention that $|dz|^2 = \frac{1}{2}\Big(|dx|^2 + |dy|^2\Big) = e^{-2\psi}$, we get:
\[ |A(z)|^2_g = |f(z)|^2 \cdot |dz|^{2m} = |f(z)|^2 \cdot e^{-2m\psi},\]
and, using $dz(v) = e^{-\psi} e^{i\theta}$
\[4|a_m(z, v)|^2 = |\pi_m^*A(z, v)|^2 = |f(z)|^2 \cdot |dz(v)|^{2m} = |f(z)|^2 e^{-2m \psi}.\]
We conclude with the relation between the norms:
\begin{equation}\label{eq:A-a_m-norm-relation}
	4|a_m(z, v)|^2 = |f(z)|^2 \cdot e^{-2m\psi} = |A|_g^2.
\end{equation}
Next, we compute explicitly the expressions in local coordinates for the operators $\eta_\pm$ acting on tensor powers of the canonical bundle $\mc{K}$.
\begin{lemma}\label{lemma:eta-del-bar}
	The following identity holds for $m \geq 0$:
	\begin{align*}
		\eta_- \pi_m^*(f dz^m) &= \pi_{m-1}^*\bar{\partial}(f dz^m).
	\end{align*}
	Equivalently, we have:
	\begin{align*}
		\eta_- \pi_m^*(f dz^m) &= \pi_0^*\Big(\frac{\partial f}{\partial \bar{z}}\Big) e^{-2\psi} \cdot \pi_{m-1}^*(dz^{m-1}),\\
		\eta_+ \pi_m^*(f dz^m) &= e^{2m \psi} \frac{\partial	(f e^{-2m\psi})}{dz} \cdot \pi_{m + 1}^*(dz^{m+1}).
	\end{align*}
	Therefore $\bar{\partial} A = 0$ if and only if $\eta_- \pi_m^*A = 0$. If $m \geq 1$, then $\bar{\partial} A = 0$ if and only if $XV a = mHa$, or alternatively if and only if $HVa + mXa = 0$.
\end{lemma}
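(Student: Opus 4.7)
My plan is to work entirely in the local isothermal coordinates $(x,y,\theta)$ on $SM|_U$ where $g|_U = e^{2\psi}(dx^2+dy^2)$ and $v = e^{-\psi}(\cos\theta,\sin\theta)$. In these coordinates one has $dz(v) = e^{-\psi}e^{i\theta}$, so the lifted section is
\[
\pi_m^*(f\,dz^m)(x,y,\theta) = f(z)\,e^{-m\psi}e^{im\theta},
\]
and $\pi_{m-1}^*(dz^{m-1}) = e^{-(m-1)\psi}e^{i(m-1)\theta}$, $\pi_{m+1}^*(dz^{m+1})=e^{-(m+1)\psi}e^{i(m+1)\theta}$.

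The first step is to plug these expressions into the formulas
\[
\eta_{\mp} = e^{-\psi}e^{\mp i\theta}\Bigl(\tfrac{\partial}{\partial\bar z^{(z)}} \mp i\tfrac{\partial\psi}{\partial \bar z^{(z)}}\partial_\theta\Bigr),
\]
(with $\bar z$ for $\eta_-$ and $z$ for $\eta_+$). For $\eta_-$, the two terms acting on $f e^{-m\psi}e^{im\theta}$ give $(\partial_{\bar z}f)e^{-m\psi}e^{im\theta} - m(\partial_{\bar z}\psi)fe^{-m\psi}e^{im\theta}$ and $(-i\partial_{\bar z}\psi)(im)fe^{-m\psi}e^{im\theta} = m(\partial_{\bar z}\psi)fe^{-m\psi}e^{im\theta}$; the $\psi$-derivatives cancel exactly, leaving $(\partial_{\bar z} f)e^{-m\psi}e^{im\theta}$. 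Multiplying by $e^{-\psi}e^{-i\theta}$ one obtains $(\partial_{\bar z}f)e^{-(m+1)\psi}e^{i(m-1)\theta}$, which is exactly $\pi_0^*(\partial_{\bar z}f)\cdot e^{-2\psi}\cdot \pi_{m-1}^*(dz^{m-1})$. The analogous computation for $\eta_+$: the two terms are $(\partial_z f - m(\partial_z\psi)f)e^{-m\psi}e^{im\theta}$ and $-m(\partial_z\psi)fe^{-m\psi}e^{im\theta}$, so they combine to $(\partial_z f - 2m(\partial_z\psi)f)e^{-m\psi}e^{im\theta} = e^{2m\psi}\partial_z(fe^{-2m\psi})\cdot e^{-m\psi}e^{im\theta}$, giving the stated formula after multiplication by $e^{-\psi}e^{i\theta}$. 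The intrinsic reformulation $\eta_-\pi_m^*A = \pi_{m-1}^*\bar\partial A$ then follows by identifying $\overline{\mc K}\otimes\mc K^{\otimes m}\cong \mc K^{\otimes(m-1)}$ via the metric pairing $\langle d\bar z, dz\rangle_g = e^{-2\psi}$. Since $\partial_{\bar z}f$ vanishes identically on $U$ iff $\bar\partial A = 0$ there, the equivalence $\bar\partial A = 0 \Leftrightarrow \eta_-\pi_m^*A = 0$ follows globally.

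The final step is the conversion to the equations in terms of $a = a_{-m}+a_m$, where $a_m = \pi_m^*A/(2i)$ so that in particular $\eta_- a_m$ is a scalar multiple of $\eta_-\pi_m^*A$. Using $X = \eta_+ + \eta_-$, $H = i(\eta_+ - \eta_-)$ and $V a_{\pm m} = \pm im\, a_{\pm m}$ one computes
\[
Va = im(a_m - a_{-m}), \qquad XVa - mHa = 2im\bigl(\eta_- a_m - \eta_+ a_{-m}\bigr),
\]
and similarly $HVa + mXa = 2m(\eta_- a_m + \eta_+ a_{-m})$. Since $a_{-m} = \overline{a_m}$, we have $\eta_+ a_{-m} = \overline{\eta_- a_m}$. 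For $m\geq 2$ the two terms $\eta_- a_m \in H_{m-1}$ and $\eta_+ a_{-m}\in H_{-(m-1)}$ lie in distinct Fourier modes, so each of the identities $\eta_- a_m = \pm\eta_+ a_{-m}$ forces $\eta_- a_m = 0$, i.e.\ $\bar\partial A = 0$. Conversely $\bar\partial A = 0$ makes both sides of each identity vanish. The main thing to be careful about is bookkeeping of the $e^{-m\psi}$ factors and the cancellation of the $\partial\psi$ terms in Step 2 — this cancellation is the whole content of the lemma and is easy to miscount by a factor; once it is verified, the rest is elementary Fourier arithmetic on $SM$.
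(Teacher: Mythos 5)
Your proof is correct and follows essentially the same route as the paper's: the same isothermal-coordinate computation with the local formulas for $\eta_\pm$ (the paper organises the cancellation of the $\partial\psi$ terms as the identity $\eta_-(e^{-m\psi}e^{im\theta})=0$ after a Leibniz split), and the same Fourier arithmetic $X=\eta_++\eta_-$, $H=i(\eta_+-\eta_-)$, $\eta_+a_{-m}=\overline{\eta_-a_m}$ for the final claim. The only cosmetic difference is that you obtain $\eta_-\pi_m^*A=\pi_{m-1}^*\bar{\partial}A$ by contracting with the metric pairing on $\overline{\mathcal{K}}\otimes\mathcal{K}^{\otimes m}$, whereas the paper simply evaluates $\pi_{m-1}^*\bar{\partial}(f\,dz^m)$ in the same coordinates and matches the two expressions.

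One remark on the last step: your explicit restriction to $m\geq 2$ in the mode-separation argument is not a defect but is in fact necessary, even though the statement is phrased for $m\geq 1$. For $m=1$ both $\eta_-a_1$ and $\eta_+a_{-1}$ lie in $H_0$, and each of the two identities taken alone only says that $\eta_-a_1$ is real (respectively purely imaginary), i.e.\ that the real $1$-form $\im A$ is closed (respectively co-closed), not that $A$ is holomorphic: for instance $A=i\partial v$ with $v$ real and non-constant satisfies $XVa=Ha$ while $\bar{\partial}A\neq 0$. The paper's own proof makes the same silent appeal to mode separation; in the body of the paper the equivalence is only invoked for $m\geq 2$, and for $m=1$ only the trivial direction ($\eta_-\lambda_m=0$ implies $XV\lambda=mH\lambda$) is used, which your argument does cover.
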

Here $\bar{\partial}(fdz^m) = \frac{\partial f}{\partial \bar{z}} \cdot d\bar{z} \otimes dz^{m}$ is well-defined since $dz$ is a holomorphic section of the holomorphic vector bundle $\mc{K}$. 
\begin{proof}
	Compute using \eqref{eq:eta-}, and $dz(v) = e^{-\psi} e^{i\theta}$:
	\begin{align*}
		\eta_- \pi_m^* (f dz^m) &= \eta_-(\pi_0^*f e^{-m\pi_0^*\psi} e^{i m\theta})\\
						&= \eta_-(\pi_0^*f) \cdot \pi_m^*(dz^m) + \pi_0^*f \cdot \eta_-(e^{-m\pi_0^*\psi} e^{i m\theta})\\
						&= \pi_0^*\Big(\frac{\partial f}{\partial \bar{z}}\Big) \cdot \underbrace{e^{-(m + 1)\pi_0^*\psi} e^{i(m - 1)\theta}}_{= e^{-2\pi_0^*\psi} \pi_{m-1}^*(dz^{m - 1})} + \pi_0^*f \cdot \eta_-(e^{-m\pi_0^*\psi} e^{i m\theta}).
	\end{align*}
	We claim that $\eta_-(e^{-m\psi} e^{im\theta}) = 0$. Indeed, we compute using \eqref{eq:eta-} that
	\begin{align*}
		\eta_-(e^{-m\psi} e^{im\theta}) = e^{-(m + 1)\psi} e^{i(m - 1)\theta} \cdot \Big(-m\frac{\partial \psi}{\partial \bar{z}} -i \frac{\partial \psi}{\partial \bar{z}} \cdot im\Big) = 0.
	\end{align*}
	Next, compute the right hand side:
	\begin{align*}
		\pi_{m-1}^*\bar{\partial}(fdz^m) &= \pi_0^*\Big(\frac{\partial f}{\partial \bar{z}}\Big) \cdot \pi_1^*(d\bar{z}) \cdot \pi_1^*(dz)^{m}\\ 
		&= \pi_0^*\Big(\frac{\partial f}{\partial \bar{z}}\Big) \cdot e^{-(m + 1)\psi} \cdot e^{i(m - 1)\theta}\\
		&= \pi_0^*\Big(\frac{\partial f}{\partial \bar{z}}\Big) \cdot e^{-2\psi} \pi_{m-1}^*(dz^{m-1}),
	\end{align*}
	which completes the proof of the first identity. The other identity follows analogously:
	\begin{align*}
		\eta_+ \pi_m^* (f dz^m) &= \eta_+(\pi_0^*f) \cdot \pi_m^*(dz^m) + \pi_0^*f \cdot \eta_+(e^{-m\psi} e^{im\theta})\\
		&= e^{-(m + 1)\psi} e^{i (m + 1) \theta} \pi_0^* \Big(\frac{\partial f}{\partial z}\Big) + \pi_0^*f \cdot e^{-\psi} e^{i\theta} e^{-m\psi} e^{im\theta} \Big(-m\frac{\partial \psi}{\partial z} + i \frac{\partial \psi}{\partial z} \times im\Big)\\
		&= e^{-(m + 1) \psi} e^{i(m + 1)\theta} \Big(\pi_0^*\Big(\frac{\partial f}{\partial z}\Big) - 2m \pi_0^*f \times \frac{\partial \psi}{\partial z}\Big)\\
		&= \pi_{m + 1}^*(dz^{m+1}) \cdot e^{2m\psi} \frac{\partial (f e^{-2m\psi})}{\partial z}.
	\end{align*}
	
	For the final conclusion, observe
	\[XVa - mHa = 0 \iff X(ia_m - i a_{-m}) - H(a_m + a_{-m}) = 0 \iff (X + iH) a_m - (X - iH) a_{-m} = 0,\]
	which holds if and only if $\eta_- a_m = 0$, which by \eqref{eq:A-a_m-relation} completes the proof. The other equivalence is similarly obtained.
\end{proof}

Recall that $X_-$ on $H_{-1} \oplus H_1$ is defined by $X_- (f_{-1} + f_1) = \eta_+ f_{-1} + \eta_- f_1$. Then:

\begin{prop}\label{prop:X-}
	It holds that:
	\begin{equation*}
	X_- \pi_1^* \gamma = -\frac{1}{2} \pi_0^* d^* \gamma, \quad \forall \gamma \in C^\infty(SM; \Omega^1).
	\end{equation*}
\end{prop}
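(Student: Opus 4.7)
The plan is to reduce to a local coordinate computation after decomposing $\gamma$ into its $(1,0)$ and $(0,1)$ parts. Write $\gamma = \gamma^{1,0} + \gamma^{0,1}$ with $\gamma^{1,0} \in C^\infty(M; \mc{K})$ and $\gamma^{0,1} \in C^\infty(M; \overline{\mc{K}})$. Then $\pi_1^*\gamma^{1,0} \in H_1$ and $\pi_1^*\gamma^{0,1} \in H_{-1}$, so by the definition of $X_-$ the claim reduces, by linearity and complex conjugation, to verifying
\[
\eta_-\pi_1^*\gamma^{1,0} = -\tfrac{1}{2}\pi_0^*\delta\gamma^{1,0}
\]
on sections of $\mc{K}$; the companion identity $\eta_+\pi_1^*\gamma^{0,1} = -\frac{1}{2}\pi_0^*\delta\gamma^{0,1}$ then follows by conjugation since $\overline{\eta_-\varphi} = \eta_+\overline{\varphi}$ on complex-valued functions and $\delta$ is real.

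For the remaining identity, fix local isothermal coordinates on $U \subset M$ with $g|_U = e^{2\psi}(dx^2 + dy^2)$ and write $\gamma^{1,0} = f\,dz$ locally. Apply Lemma \ref{lemma:eta-del-bar} with $m = 1$ to obtain
\[
\eta_-\pi_1^*(f\,dz) = \pi_0^*\!\left(e^{-2\psi}\,\tfrac{\partial f}{\partial \bar z}\right),
\]
since $\pi_0^*(dz^0) = 1$. For the right-hand side, compute $\delta = -\star d\star$ on $1$-forms in dimension $2$. The Hodge star on $1$-forms is conformally invariant, so $\star\,dx = dy$ and $\star\,dy = -dx$; combined with $\star(dx\wedge dy) = e^{-2\psi}$, this yields for any smooth $1$-form $\gamma_x\,dx + \gamma_y\,dy$ the formula $\delta\gamma = -e^{-2\psi}(\partial_x\gamma_x + \partial_y\gamma_y)$. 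Specialising to $\gamma = f\,dz = f\,dx + if\,dy$ and using the Wirtinger identity $\partial_x + i\partial_y = 2\partial_{\bar z}$, this gives
\[
\delta(f\,dz) = -2\,e^{-2\psi}\,\tfrac{\partial f}{\partial \bar z}.
\]
Comparing, we find $\eta_-\pi_1^*(f\,dz) = -\tfrac{1}{2}\pi_0^*\delta(f\,dz)$, which is independent of the isothermal chart and therefore globalises by a partition of unity.

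The verification is essentially a bookkeeping exercise; the only delicate point is keeping track of the normalisations (the factor $\tfrac{1}{2}$ relating $\eta_\pm$ to $X \mp iH$, the Wirtinger conventions, and the Hodge star on the volume form). Once these are fixed consistently, both sides collapse to $e^{-2\psi}\partial_{\bar z}f$ up to the claimed factor, so no analytic obstacle arises.
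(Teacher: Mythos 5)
Your proof is correct and takes essentially the same route as the paper: reduce by conjugation to the $(1,0)$ part, apply Lemma \ref{lemma:eta-del-bar} with $m=1$, and check in isothermal coordinates that $\delta(f\,dz) = -2e^{-2\psi}\,\partial f/\partial\bar z$, which yields the factor $-\tfrac{1}{2}$. The only (cosmetic) difference is that you compute $\delta$ on a general real-coordinate expression $\gamma_x\,dx+\gamma_y\,dy$ and then invoke the Wirtinger identity, whereas the paper uses $\star dz = -i\,dz$ and $d\bar z \wedge dz = 2i\,dx\wedge dy$ directly; no partition of unity is actually needed since the identity is local and both sides are globally defined.
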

\begin{proof}
	It suffices to consider the operator $\eta_-$ and in local isothermal coordinates, $\gamma = f dz$. By Lemma \ref{lemma:eta-del-bar}:
	\begin{align*}
		\eta_- \pi_1^*(f dz) = e^{-2\psi}\pi_0^*\Big(\frac{\partial f}{\partial \bar{z}}\Big).
	\end{align*}
	On the other hand, we compute:
	\begin{align*}
		\pi_0^* d^* (fdz) &= -\pi_0^*(\star d \star f dz) = i\pi_0^*(\star df \wedge dz)\\
		&= i \pi_0^*\Big(\star \Big(\frac{\partial f}{\partial \bar{z}}\Big) 2i dx \wedge dy\Big)\\
		&= -2e^{-2\psi} \pi_0^*\Big(\frac{\partial f}{\partial \bar{z}}\Big).
	\end{align*}
	Here we used that in isothermal coordinates $\star dz = -idz$, $d\bar{z} \wedge dz = 2i dx \wedge dy$, and that the volume form is equal to $e^{2\psi} dx \wedge dy$.
\end{proof}

\begin{prop}\label{prop:H1}
	We have $\dim \ker (\eta_-|_{H_0}) = \dim \ker (\eta_+|_{H_{0}}) = 1$ and moreover, it holds that $\dim \ker (\eta_-|_{H_1}) = \dim \ker (\eta_+|_{H_{-1}}) = \frac{b_1(M)}{2}$.
\end{prop}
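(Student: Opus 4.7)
The plan is to reduce everything to classical facts about holomorphic sections of powers of the canonical bundle $\mc{K}$ via Lemma \ref{lemma:eta-del-bar}, and then to transfer the statements for $\eta_+$ from those for $\eta_-$ using complex conjugation.

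First, recall from \S \ref{ssec:geometry-surfaces} that $\pi_m^* : C^\infty(M; \mc{K}^{\otimes m}) \xrightarrow{\cong} H_m$ is an isomorphism for $m \geq 0$. By Lemma \ref{lemma:eta-del-bar}, under this identification $\eta_-$ corresponds to the Cauchy--Riemann operator $\bar{\partial}: C^\infty(M; \mc{K}^{\otimes m}) \to C^\infty(M; \mc{K}^{\otimes m} \otimes \overline{\mc{K}})$. Therefore $\ker(\eta_-|_{H_m}) \cong H^0(M, \mc{K}^{\otimes m})$, the space of global holomorphic sections. For $m = 0$, these are holomorphic functions on a compact Riemann surface, hence constants, so $\dim \ker(\eta_-|_{H_0}) = 1$. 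For $m = 1$, these are holomorphic $1$-forms on $M$, whose complex dimension equals the genus $g$ of $M$, and $g = b_1(M)/2$.

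Next I pass to $\eta_+$ using conjugation. Since $X$ and $H$ are real vector fields, complex conjugation satisfies $\overline{\eta_- f} = \eta_+ \bar{f}$ for any $f \in C^\infty(SM; \mathbb{C})$. Moreover $V$ is real, so $f \in H_k$ iff $\bar{f} \in H_{-k}$. Consequently the antilinear map $f \mapsto \bar{f}$ sends $\ker(\eta_-|_{H_k})$ bijectively onto $\ker(\eta_+|_{H_{-k}})$. Taking $k = 0$ and $k = 1$ yields $\dim \ker(\eta_+|_{H_0}) = 1$ and $\dim \ker(\eta_+|_{H_{-1}}) = b_1(M)/2$, which completes the proof.

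There is no real obstacle here; the only point to keep track of is the correct identification between the functional-analytic kernels in $H_m$ and the sheaf-theoretic spaces $H^0(M, \mc{K}^{\otimes m})$, which is precisely what Lemma \ref{lemma:eta-del-bar} provides.
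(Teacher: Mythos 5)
Your argument is correct, but it takes a different route from the paper. You reduce everything to Lemma \ref{lemma:eta-del-bar}: since $\eta_-\pi_m^* A=0$ if and only if $\bar\partial A=0$, you identify $\ker(\eta_-|_{H_m})$ with $H^0(M,\mc{K}^{\otimes m})$ and then quote the classical facts that global holomorphic functions on a compact Riemann surface are constant and that $\dim_{\mathbb C}H^0(M,\mc{K})=g=b_1(M)/2$; conjugation (using that $X,H,V$ are real, so $\overline{\eta_-f}=\eta_+\bar f$ and $H_k$ is sent to $H_{-k}$) then handles $\eta_+$, exactly as in the paper. The paper instead argues more "by hand" on $SM$: for $m=0$ it uses \eqref{eq:X-simple} to turn $\eta_-\pi_0^*f=0$ into $df=i\star df$ and concludes $f$ is constant, and for $m=\pm 1$ it uses Proposition \ref{prop:X-} together with $V\pi_1^*=-\pi_1^*\star$ to set up the correspondence \eqref{eq:correspondence-H1} between real harmonic $1$-forms and pairs $(f_{-1},f_1)\in\ker\eta_+|_{H_{-1}}\oplus\ker\eta_-|_{H_1}$, so that only the real Hodge theorem $\dim\mc{H}^1(M)=b_1(M)$ is invoked. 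The trade-off is that your proof is shorter and conceptually transparent (the kernel spaces are literally spaces of holomorphic sections), but it imports the dimension count for holomorphic $1$-forms as a black box, which is essentially the Hodge-theoretic content the paper chooses to re-derive on $SM$; the paper's version is more self-contained relative to the tools it has already set up (Proposition \ref{prop:X-} and the Fourier decomposition), and the correspondence \eqref{eq:correspondence-H1} it establishes is reused implicitly elsewhere (e.g. in the Fredholm/solvability arguments of Lemma \ref{lemma:k=+-1}). One small point of care in your write-up: Lemma \ref{lemma:eta-del-bar} identifies $\eta_-\pi_m^*$ with $\pi_{m-1}^*\bar\partial$ only after contracting the $\overline{\mc K}$ factor with the metric (the $e^{-2\psi}$ in the explicit formula), so the clean statement you should rely on is the kernel equivalence recorded in the last sentence of that lemma, which is indeed all you use.
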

	\begin{proof}
		For the first claim, by \eqref{eq:X-simple} note that $f \in \ker (\eta_-|_{H_0})$ if and only if $df = i\star df$. It follows that $\|df\|_{L^2(M)} = 0$ and so $f$ is constant. For the other case note that $\ker (\eta_+|_{H_0})$ is obtained by conjugation from $\ker (\eta_-|_{H_0})$.
		
		Next, we claim that the following map is well-defined and an isomorphism:
		\begin{equation}\label{eq:correspondence-H1}
			\mc{H}^1(M) \ni \gamma \mapsto (f_{-1}, f_1) \in \ker \eta_+|_{H_{-1}} \oplus \ker \eta_-|_{H_{1}},
		\end{equation}
		where $\pi_1^* \gamma = f_{-1} + f_1$ is the splitting of $\pi_1^* \gamma$ into Fourier modes. Since complex conjugation provides an isomorphism between $\ker \eta_+|_{H_{-1}}$ and $\ker \eta_-|_{H_{1}}$, the claim will follow.
		
		To see \eqref{eq:correspondence-H1}, by Proposition \ref{prop:X-} we have:
		\begin{align}\label{eq:system-H1}
		\begin{split}
			-\frac{1}{2} \pi_0^* (d^* \gamma) &= X_- \pi_1^* \gamma = \eta_+ f_{-1} + \eta_- f_1,\\
			- \frac{1}{2} \pi_0^* (d^* \star \gamma) &= X_- \pi_1^* \star \gamma = - X_- V \pi_1^* \gamma = -i(-\eta_+ f_1 + \eta_- f_1),
		\end{split}
		\end{align}
		where in the second line we also use $V\pi_1^* = -\pi_1^* \star$. Therefore, $\gamma$ is both closed and co-closed if and only if $\eta_- f_1 = \eta_+ f_{-1} = 0$. This shows that the map \eqref{eq:correspondence-H1} is well-defined and moreover an isomorphism, completing the proof.
	\end{proof}
	
\section{Conformal re-scaling of the geodesic vector field}\label{app:B}

In this appendix we study the behaviour of the geodesic vector field under a conformal re-scaling of a Riemannian metric on a surface. Let $g_1 = e^{-2f} g$ be a conformal scaling of the metric of the surface $(M, g)$, for some $f \in C^\infty(M)$. Consider the scaling diffeomorphism:
\[\ell_1: SM \to SM_1, \quad (x, v) \mapsto \Big(x, \frac{v}{\|v\|_{g_1}}\Big) = (x, e^f v),\]
where $SM_1$ denotes the unit sphere bundle of $(M, g_1)$. Let $X_1$ be the geodesic vector field on $SM_1$ and denote by $\pi_1$ the footpoint projection $SM_1 \to M$. Moreover, denote by $\alpha_1, \beta_1, \psi_1$ the global frame constructed in \S \ref{ssec:geometry-surfaces} on $SM_1$. Then:
\begin{lemma}\label{lemma:rescaling}
	For $(x, v) \in SM$, we have:
	\[X_f := \ell_1^*X_1 = e^{f}X + \pi_1^*(\star d (e^f)) V, \quad \ell_1^*V_1 = V.\]
	In particular $X_f$ is a time-change by $e^f$ of the thermostat flow $X - V (\pi_1^* (df)) V$ (as $\pi_1^*\star = -V\pi_1^*$). 
\end{lemma}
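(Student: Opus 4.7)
The plan is a direct computation in isothermal coordinates, which makes both sides of the claimed identity completely explicit. First, fix a chart $U \subset M$ with isothermal coordinates $(x, y)$, so that $g|_U = e^{2\psi}(dx^2 + dy^2)$ and hence $g_1|_U = e^{2(\psi - f)}(dx^2 + dy^2)$. Trivialise $SM|_U$ and $SM_1|_U$ by angle variables $\theta, \theta_1 \in \mathbb{S}^1$ via
\[
v = e^{-\psi}(\cos\theta, \sin\theta), \qquad v_1 = e^{-(\psi - f)}(\cos\theta_1, \sin\theta_1).
\]
Since $\ell_1(x, v) = (x, e^{f} v) = (x, e^{-(\psi - f)}(\cos\theta, \sin\theta))$, in these coordinates $\ell_1$ is the identity, so $\theta_1 \circ \ell_1 = \theta$. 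This immediately yields $\ell_1^* V_1 = \partial_{\theta_1} = \partial_\theta = V$.

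For the geodesic vector field, apply the formula \eqref{eq:isothermal-X-H} to the metric $g_1$ (with $\psi$ replaced by $\psi - f$) to get
\[
X_1 = e^{-(\psi-f)}\Big(\cos\theta\,\partial_x + \sin\theta\,\partial_y + \big(-\partial_x(\psi-f)\sin\theta + \partial_y(\psi-f)\cos\theta\big)\partial_\theta\Big).
\]
Splitting the $\partial_\theta$ coefficient as $[-\partial_x\psi\sin\theta + \partial_y\psi\cos\theta] + [\partial_xf\sin\theta - \partial_yf\cos\theta]$ and comparing with \eqref{eq:isothermal-X-H} for $X$, one obtains
\[
\ell_1^* X_1 = e^f X + e^{f-\psi}\big(\partial_xf\sin\theta - \partial_yf\cos\theta\big)\partial_\theta.
\]
Next, in isothermal coordinates $\star dx = dy$ and $\star dy = -dx$, so $\star d(e^f) = e^f(\partial_x f\, dy - \partial_y f\, dx)$, and therefore
\[
\pi_1^*(\star d(e^f))(x,v) = e^f\big(\partial_x f \cdot v_y - \partial_y f \cdot v_x\big) = e^{f-\psi}\big(\partial_x f\sin\theta - \partial_y f\cos\theta\big),
\]
which matches the coefficient of $V = \partial_\theta$ above. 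This proves the first identity.

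The ``in particular'' statement then follows formally: by \eqref{eq:X-simple} we have $\pi_1^* d = X\pi_0^*$, and the relation $V\pi_1^* = -\pi_1^*\star$ on $1$-forms gives $\pi_1^*(\star d(e^f)) = -VX(e^f) = -e^f V(Xf)$, where I used $V(e^f)=0$ since $e^f$ is pulled back from $M$. Hence $X_f = e^f(X - V(Xf)\,V)$, which is a time change by $e^f$ of the thermostat vector field $X - V(\pi_1^*df)\,V$.

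There is no real obstacle here: the only thing to be careful about is consistency of coordinate conventions (the sign conventions for $\star$, the fact that $\ell_1$ preserves the angular parameter because conformal changes preserve angles, and the formula \eqref{eq:isothermal-X-H} that makes the computation mechanical). If a more intrinsic proof is desired, one could instead use the standard conformal change formula for the Levi-Civita connection, $\nabla^1_YZ = \nabla_YZ - Yf\cdot Z - Zf\cdot Y + g(Y,Z)\nabla f$, applied to $Y = Z = \dot\gamma$, to characterise $g_1$-geodesics among reparametrisations of curves and read off the correction term proportional to $V$; but the isothermal computation seems quicker and makes the identification with $\pi_1^*(\star d(e^f))$ transparent.
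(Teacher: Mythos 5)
Your proof is correct, and it takes a genuinely different route from the paper. You work chart by chart in isothermal coordinates, exploiting that a conformal change preserves angles so that $\ell_1$ is literally the identity in the coordinates $(x,y,\theta)$; then the formula \eqref{eq:isothermal-X-H} applied to the conformal factor $\psi-f$ makes both $\ell_1^*V_1=V$ and the splitting $\ell_1^*X_1=e^fX+e^{f-\psi}(\partial_xf\sin\theta-\partial_yf\cos\theta)\partial_\theta$ mechanical, and your identification of the $\partial_\theta$-coefficient with $\pi_1^*(\star d(e^f))$ is consistent with the paper's orientation convention (as one can cross-check against the identity $df(x,iv)=-\star df(x,v)$ used in Appendix B). The paper instead argues intrinsically: it pulls back the dual coframe, computing $\ell_1^*\alpha_1=e^{-f}\alpha$, $\ell_1^*\beta_1=e^{-f}\beta$ and, via the connection map and the Koszul formula for the conformally rescaled Levi-Civita connection, $\ell_1^*\psi_1=\psi-\pi_1^*(df)\beta-\pi_1^*(\star df)\alpha$, and then solves for $\ell_1^*X_1$ from the duality relations $\ell_1^*\alpha_1(\ell_1^*X_1)=1$, $\ell_1^*\beta_1(\ell_1^*X_1)=\ell_1^*\psi_1(\ell_1^*X_1)=0$ — essentially the ``intrinsic'' alternative you sketch at the end. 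Your computation is shorter and makes the origin of the $\pi_1^*(\star d(e^f))$ term transparent (it is harmless that it is chart-wise, since both sides are globally defined vector fields); the paper's version avoids coordinate conventions and yields the extra byproduct of the explicit coframe pullbacks, in particular the formula for $\ell_1^*\psi_1$. Your derivation of the ``in particular'' statement via \eqref{eq:X-simple} and $V\pi_1^*=-\pi_1^*\star$ is also fine, with the understanding that $Xf$ there denotes $X\pi_0^*f=\pi_1^*(df)$ and that $V$ annihilates $\pi_0^*(e^f)$.
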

\begin{proof}
	Let us compute $\ell_1^*\psi_1, \ell_1^*\alpha_1, \ell_1^*\beta_1$ in terms of $\psi, \alpha, \beta$. We start with $\alpha_1$:
	\begin{align*}
		\ell_1^*\alpha_1(x,v)(\xi) = \alpha_1(x, e^f v)(d\ell_1(x, v) \xi) = \langle{\underbrace{d\pi_1 \circ d\ell_1}_{=d\pi}(x, v) \xi, e^f v}\rangle_{g_1} = e^{-f} \alpha(x, v)(\xi). 
	\end{align*}
	Then we have, if $\beta(x, v)(\xi) = \langle{d\pi(x, v) \xi, iv}\rangle_g$:
	\begin{align*}
		\ell_1^*\beta_1(x, v)(\xi) = \beta_1(x, e^f v)(d\ell_1(x, v) \xi) = \langle{d\pi_1 \circ d\ell_1(x, v) \xi, i e^f v}\rangle_{g_1} = e^{-f} \beta(x, v) (\xi).
	\end{align*}
	Recall that the $1$-form $\psi$ is defined as
	\[\psi(x, v)(\xi) = \langle{\mc{K}(x, v)\xi, iv}\rangle_g,\]
	where $\mc{K}$ is the connection map, defined by 
	\[\mc{K}(x, v)(\xi) = \frac{DZ}{dt}(0) \in T_xM,\]
	where for $c:(-\varepsilon, \varepsilon) \to TM$ is any curved satisfying $c(0) = (x, v)$, $\dot{c}(0) = \xi$, and $c(t) = (\gamma(t), Z(t))$, and $\frac{D}{dt}$ is the covariant derivative along $\gamma$ with respect to the Levi-Civita connection $\nabla$ of $(M, g)$. Let $\nabla^1$ be the Levi-Civita connection of $(M, g_1)$. Then by Koszul's formula we have for arbitrary vector fields $Y$ and $T$ (see \cite[Proposition 7.29]{Lee-18})
	\[\nabla^1_{Y}T = \nabla_{Y} T - Yf\cdot T - Tf\cdot Y + g(Y,T) \nabla f.\]
	It follows that
	\[\frac{D^1 Z(t)}{dt} = \nabla_{\dot{\gamma}}^1 Z = \nabla_{\dot{\gamma}} Z - df(\dot{\gamma}) \cdot Z - df(Z)\cdot \dot{\gamma} + \langle{\dot{\gamma}, Z}\rangle \cdot \nabla f.\]
	Recall that $Z(0) = v$ and $\dot{\gamma}(0) = d\pi(x, v) \xi$, and observe that $\ell_1 \circ c(t) = (\gamma(t), e^f Z(t))$ is a curve adapted to $(x, e^f v)$ and $d\ell_1(x, v) \xi$. Therefore:
	\begin{align*}
		\ell_1^*\psi_1(x, v)(\xi) &= \langle{\mc{K}_1(x, e^f v) (d\ell_1(x, v) \xi), ie^f v}\rangle_{g_1} = e^{-f} \Big\langle{\frac{D^1 (e^f Z)}{dt}|_{t = 0}, iv}\Big\rangle_g\\
		&= \Big\langle{\frac{D^1 Z}{dt}|_{t = 0}, iv}\Big\rangle_g\\
		&= \Big\langle{\frac{D Z}{dt}|_{t = 0}, iv}\Big\rangle_g - 0 - df(x, v)\cdot \langle{d\pi(x, v)\xi, iv}\rangle_g + \langle{d\pi(x, v)\xi, v}\rangle_g \cdot \langle{\nabla f(x), iv}\rangle_g\\
		&= \psi(x, v)(\xi) - df(x, v)\cdot \beta(x, v)(\xi) + df(x, iv)\cdot \alpha(x, v)(\xi)
	\end{align*}
	where we used that $Z(0) = v \perp iv$ in the second and third lines. We conclude that (since $df(x, iv) = -\star df(x, v)$, as both $i$ and $\star$ are rotations by $\frac{\pi}{2}$ counter-clockwise):
	\[\ell_1^*\psi_1 = \psi - \pi_1^*(df) \beta - \pi_1^*(\star df) \alpha.\]
	Now we use that
	\[\ell_1^*\alpha_1(\ell_1^*X_1) = e^{-f} \alpha(\ell_1^*X_1) = 1, \quad \ell_1^*\beta_1(\ell_1^*X_1) = \ell_1^*\psi_1(\ell_1^*X_1) = 0,\]
	to conclude that writing $\ell_1^*X_1 = aX + bH + cV$, by the above expressions:
	\[a = e^f, \quad b = 0, \quad c = \pi_1^*(\star df) e^f = \pi_1^*(\star d e^f),\]
	which completes the proof of the first claim. The formula for $\ell_1^*V_1$ readily follows as well, completing the proof.
\end{proof}

\bibliographystyle{alpha}
\bibliography{Biblio}
\end{document}